\newcommand{\BEA}{\begin{eqnarray}}
\newcommand{\EEA}{\end{eqnarray}}
\newcommand{\BR}{\mathbb{R}}
\newtheorem{lem}{Lemma}[section]
\newtheorem{theo}{Theorem}[section]
\newtheorem{prop}{Proposition}[section]
\newtheorem{coro}{Corollary}[section]
\newtheorem{assu}{Assumption}[section]
\newcommand{\comment}[1]{}
\newtheorem{remark}{Remark}
\newtheorem{definition}{Definition}[section]
\title{Radial Basis Approximation of Tensor Fields on Manifolds: From Operator Estimation to Manifold Learning}
\author{
John Harlim \\
Department of Mathematics, Department of Meteorology and Atmospheric
Science, \\
Institute for Computational and Data Sciences \\
The Pennsylvania State University, University Park, PA 16802, USA\\
\texttt{jharlim@psu.edu} \\
\And
Shixiao Willing Jiang \\
Institute of Mathematical Sciences \\
ShanghaiTech University, Shanghai 201210, China\\
\texttt{jiangshx@shanghaitech.edu.cn} 
\And
John Wilson Peoples \\
Department of Mathematics \\
The Pennsylvania State University, University Park, PA 16802, USA \\
\texttt{jwp5828@psu.edu}\\ 
}
\begin{document}

\maketitle

\begin{abstract}
In this paper, we study the Radial Basis Function (RBF) approximation to differential operators on smooth tensor fields defined on closed Riemannian submanifolds of Euclidean space, identified by randomly sampled point cloud data. {The formulation in this paper leverages a fundamental fact that the covariant derivative on a submanifold is the projection of the directional derivative in the ambient Euclidean space onto the tangent space of the submanifold. To differentiate a test function (or vector field) on the submanifold with respect to the Euclidean metric, the RBF interpolation is applied to extend the function (or vector field) in the ambient Euclidean space. When the manifolds are unknown, we develop an improved second-order local SVD technique for estimating local tangent spaces on the manifold. When the classical pointwise non-symmetric RBF formulation is used to solve Laplacian eigenvalue problems, we found that while accurate estimation of the leading spectra can be obtained with large enough data, such an approximation often produces irrelevant complex-valued spectra (or pollution) as the true spectra are real-valued and positive. To avoid such an issue,} we introduce a symmetric RBF discrete approximation of the Laplacians induced by a weak formulation on appropriate Hilbert spaces. Unlike the non-symmetric approximation, this formulation guarantees non-negative real-valued spectra and the orthogonality of the eigenvectors. Theoretically, we establish the convergence of the eigenpairs of both the Laplace-Beltrami operator and Bochner Laplacian {for the symmetric formulation} in the limit of large data with convergence rates. Numerically, we provide supporting examples for approximations of the Laplace-Beltrami operator and various vector Laplacians, including the Bochner, Hodge, and Lichnerowicz Laplacians.
\end{abstract}

\keywords{Radial Basis Functions (RBFs) \and Laplace-Beltrami operators \and vector Laplacians \and manifold learning \and operator estimation \and local SVD}

\section{Introduction}

Estimation of differential operators is an important computational task in applied mathematics and engineering science. While this estimation problem has been studied since the time of Euler in the mid-18th century, numerical differential equations emerges as an important sub-field of computational mathematics in the 1940s when modern computers were starting to be developed for solving differential equations. Among many available numerical methods, finite-difference, finite-volume, and finite-element methods are considered the most reliable algorithms to produce accurate solutions whenever one can control the distribution of points (or nodes) and meshes. Specification of the nodes, however, requires some knowledge of the domain and is subjected to the curse of dimension.

On the other hand, the Radial Basis Function (RBF) method \cite{b2003radial} has been considered as a promising alternative that can produce very accurate approximations \cite{narcowich1991norms,wu1993local} even with randomly distributed nodes \cite{kanagawa2018gaussian} on high-dimensional domains. Beyond the mesh-free approximation of differential operators, the deep connection of the RBF to kernel technique has also been documented in nonparametric statistical literature \cite{christmann2008support,kanagawa2018gaussian} for machine learning applications. While kernel methods play a significant role in supervised machine learning \cite{christmann2008support,cucker2002mathematical}, in unsupervised learning, a kernel approach usually corresponds to the construction of a graph whose spectral properties \cite{chung1997spectral,von2008consistency} can be used for clustering \cite{ng2002spectral}, dimensionality reduction \cite{belkin2003laplacian}, and manifold learning \cite{coifman2006diffusion}, among others. In the context of manifold learning, given a set of data that lie on a $d-$dimensional compact Riemannian sub-manifold of Euclidean domain $\mathbb{R}^n$, the objective is to represent observable with eigensolutions of an approximate Laplacian operator. For this purpose, spectral convergence type results, concerning the convergence of the graph Laplacian matrix induced by exponentially decaying kernels to the Laplacian operator on functions, are well-documented for closed manifolds \cite{belkin2007convergence,burago2014graph,trillos2020error,calder2019improved,dunson2021spectral}  and for manifolds with boundary \cite{peoples2021spectral}. Beyond Laplacian on functions, graph-based approximation of connection Laplacian on vector fields \cite{singer2017spectral} and a Galerkin-based approximation to Hodge Laplacian on 1-forms \cite{berry2020spectral} have also been considered separately. Since these manifold learning methods fundamentally approximate Laplacian operators that act on smooth tensor fields defined on manifolds, it is natural to ask whether this learning problem can be solved using the RBF method. Another motivating point is that the available graph-based approaches can only approximate a limited type of differential operators whereas RBF can approximate arbitrary differential operators, including general $k$-Laplacians.

Indeed, RBF has been proposed to solve PDEs on 2D surfaces \cite{Fuselier2009Stability,fuselier2012scattered,piret2012orthogonal,fuselier2013high}. In these papers, they showed that RBF solutions converge, especially when the point clouds are appropriately placed which requires some parameterization of the manifolds. When the surface parameterization is unknown, there are several approaches to characterize the manifolds, such as the
closest point method \cite{ruuth2008simple}, the orthogonal gradient \cite{piret2012orthogonal}, the moving least squares \cite{liang2013solving}, and the local SVD method \cite{donoho2003hessian,zhang2004principal,tyagi2013tangent}. The first two methods require laying additional grid points on the ambient space, which can be numerically expensive when the ambient dimension is high. The moving least squares locally fit multivariate quadratic functions of the local coordinates approximated by PCA (or local SVD) on the metric tensor. While fast theoretical convergence rate when the data point is well sampled (see the detailed discussion in Section~2.2 of \cite{liang2013solving}), based on the presented numerical results in the paper, it is unclear whether the same convergence rate can be achieved when the data are randomly sampled. The local SVD method, which is also used in determining the local coordinates in the moving least-squares method, can readily approximate the local tangent space for randomly sampled training data points. This information alone readily allows one to approximate differential operators on the manifolds. 

From the perspective of approximation theory, the radial basis type kernel is universal in the sense that the induced Reproducing Kernel Hilbert Space (RKHS) is dense in the space of continuous and bounded function on a compact domain, under the standard uniform norm \cite{steinwart2001influence,sriperumbudur2011universality}. While this property is very appealing, previous works on RBF suggest that the non-symmetric pointwise approximation to the Laplace-Beltrami operator can be numerically problematic \cite{Fuselier2009Stability,piret2012orthogonal, fuselier2013high}. In particular, they numerically reported that when the number of training data is small the eigenvalues of the non-symmetric RBF Laplacian matrix that approximates the negative-definite Laplace-Beltrami operator are not only complex-valued, but they can also be on the positive half-plane. These papers also empirically reported that this issue, which is related to spectral pollution \cite{llobet1990spectral,davies2004spectral,lewin2010spectral}, can be overcome with more data points. The work in this paper is somewhat motivated by many open questions from these empirical results.


\subsection{Contribution of This Paper and a Summary of Our Findings}

One of the objectives of this paper is to assess the potential of the RBF method in solving the manifold learning problem, involving approximating the Laplacians acting on functions and vector fields of smooth manifolds, where the underlying manifold is identified by a set of randomly sample point cloud data. { The work in this paper extends the fundamental fact that the covariant derivative on a submanifold is the projection of the directional derivative in the ambient Euclidean space onto the tangent space of the submanifold to various differential operators, including Laplacians on functions and vector fields. Since the formulation involves the ambient dimension, $n$, the resulting approximation will be computationally feasible for problems where the ambient dimension is much smaller than the data size, $n \ll N$. While such dimensionality scaling requires an extensive computational memory for manifold learning problems with $n=O(N)$, it can still be useful for a broad class of applications involving {eigenvalue problems and PDEs where the ambient dimension is moderately high, $n=10-100$, but much smaller than $N$. See eigenvalue problem examples in Sections \ref{sec:gentorus} and \ref{4dtorus}. Also see} e.g., the companion paper \cite{yan2023spectral} that uses the symmetric approximation discussed below to solve elliptic PDEs on unknown manifolds.}

First, we study the non-symmetric Laplacian RBF matrix, which is a pointwise approximation to the Laplacian operator, that is used in \cite{Fuselier2009Stability,piret2012orthogonal} to approximate the Laplace-Beltrami operator. Through a numerical study in Section~\ref{sec6}, we found that {non-symmetric RBF (NRBF) formulation} can produce a very accurate estimation of the leading spectral properties whenever the number of sample points used to approximate the RBF matrix is large enough and the local tangent space is sufficiently accurately approximated. In this paper:
\begin{itemize}
\item[1a)] We propose an improved local SVD algorithm for approximating the local tangent spaces of the manifolds, where the improvement is essentially due to the additional steps designed to correct errors induced by the curvatures (see Section~\ref{sec3.2}). We provide a theoretical error bound (see Theorem~\ref{main P Theorem}). We numerically find that this error (from the local tangent space approximation) dominates the errors induced by the non-symmetric RBF approximation of the leading eigensolutions (see Figures~\ref{fig_gentor_4} and \ref{fig_erreigflattori_1p5}).
On the upside, since this local SVD method is numerically not expensive even with very large data, applying NRBF with the accurately estimated local tangent space will give a very accurate estimation of the leading spectra with possibly expensive computational costs. Namely, solving eigenvalue problems of non-symmetric, dense discrete NRBF matrices, which can be very large, especially for the approximation of vector Laplacians. On the downside, since this pointwise estimation relies on the accuracy of the local tangent space approximation, such a high accuracy will not be attainable when the data is corrupted by noise.
\item[1b)] Through numerical verification in Sections~\ref{4dtorus} and \ref{sphereexample}, we detect another issue with the non-symmetric formulation. That is, when the training data size is not large enough, the non-symmetric RBF Laplacian matrix is subjected to spectral pollution \cite{llobet1990spectral,davies2004spectral,lewin2010spectral}. Specifically, the resulting matrix possesses eigenvalues that are  irrelevant to the true eigenvalues. If we increase the training data, while the leading eigenvalues (that are closer to zero) are accurately estimated, the irrelevant estimates will not disappear. Their occurrence will be on higher modes. This issue can be problematic in manifold learning applications since the spectra of the underlying Laplacian to be estimated are unknown. As we have pointed out in 1a), large data size may not be numerically feasible for the non-symmetric formulation, especially in solving eigenvalue problems corresponding to Laplacians on vector fields.
\end{itemize}
To overcome the limitation of the non-symmetric formulation, we consider a symmetric discrete formulation induced by a weak approximation of the Laplacians on appropriate Hilbert spaces. Several advantages of this symmetric approximation are that the estimated eigenvalues are guaranteed to be non-negative real-valued, and the corresponding estimates for the eigenvectors (or eigenvector fields) are real-valued and orthogonal. {Here, the Laplace-Beltrami operator is defined to be semi-positive definite.} The price we are paying to guarantee estimators with these nice properties is that the approximation is less accurate compared to the non-symmetric formulation provided that the latter works. Particularly, the error of the symmetric RBF is dominated by the {Monte-Carlo rate. Our findings are based on:}
\begin{itemize}
\item[2a)] A spectral convergence study with error bounds for the estimations of eigenvalues and eigenvectors (or eigenvector fields). See Theorems~\ref{eigvalconv} and \ref{conveigvec} for the approximation of eigenvalues and eigenfunctions of Laplace-Beltrami operator, respectively. See Theorems~\ref{eigvalconv Bochner} and \ref{conveigvec Bochner} for the approximation of eigenvalues and eigenvector fields of Bochner Laplacian, respectively.
\item[2b)] Numerical inspections on the estimation of Laplace-Beltrami operator. We show the empirical convergence as a function of training data size.
{Based on our numerical comparison on a 2D torus embedded in $\mathbb{R}^{21}$, we found that the symmetric RBF produces less accurate estimates compared to the Diffusion Maps (DM) algorithm \cite{coifman2006diffusion} in the estimation of leading eigenvalues, but more accurate in the estimation of non-leading eigenvalues.}
\item[2c)] Numerical inspections on the estimation of Bochner, Hodge, and Lichnerowicz Laplacians. We show the empirical convergence as a function of training data size. 
\end{itemize}


\subsection{Organization of This Paper}

\noindent {\bf Section~\ref{sec2}:} We provide a detailed formulation for discrete approximation of differential operators on smooth manifolds, where we will focus on the RBF technique as an interpolant. Overall, the nature of the approximation is ``exterior'' in the sense that the tangential derivatives will be represented as a projection (or restriction) of appropriate ambient derivatives onto the local tangent space. To clarify this formulation, we overview the notion of the projection matrix that allows the exterior representation that will be realized through appropriate discrete tensors. We give a concrete discrete formulation for gradient, the Laplace-Beltrami operator, covariant derivative, and the Bochner Laplacian. We discuss the symmetric and non-symmetric formulations of Laplacians. We list the RBF approximation in Table~\ref{tab:rbfd}, where we also include the estimation of the Hodge and Lichnerowicz Laplacian (which detailed derivations are reported in Appendix~\ref{app:A}).

\noindent {\bf Section~\ref{sec3}:} We present a novel algorithm to improve the characterization of the manifold from randomly sampled point cloud data, which subsequently allows us to improve the approximation of the projection matrix, which is explicitly not available when the manifold is unknown. The new local SVD method, which accounts for curvature errors, will improve the accuracy of RBF in the pointwise estimation of arbitrary differential operators.

{\noindent{\bf Section~\ref{section4}:} We deduce the spectral convergence of the symmetric estimation of the Laplace-Beltrami operator and the Bochner Laplacian. Error bounds for both the eigenvalues and eigenfunctions estimations will be given in terms of the number of training data, the smoothness, and the dimension and co-dimension of the manifolds. These results rely on a probabilistic type convergence result of the RBF interpolation error, {extending the deterministic error bound of the interpolation using the RKHS induced by the Mat\'ern kernel \cite{fuselier2012scattered}}, which is reported in Appendix~\ref{app:B}. To keep the section short, we only present the proof for the spectral convergence of the Laplace-Beltrami operator. We document the proofs of the intermediate bounds needed for this proof in Appendices ~\ref{app:B}, \ref{app:C1} and \ref{app:C2}. Since the proof of the eigenvector estimation is more technical, we also present it in Appendix~\ref{app:C3}. Since the proofs of the Bochner Laplacian approximation follow the same arguments as those for the Laplace-Beltrami operator, we document them in Appendix \ref{app:D}.}

\noindent{\bf Section~\ref{sec6}:} We present numerical examples to inspect the non-symmetric and symmetric RBF spectral approximations. The first two examples focus on the approximations of Laplace-Beltrami on functions. {In the first example (the two-dimensional general torus), we verify the effectiveness of the approximation when the low-dimensional manifold is embedded in a high-dimensional ambient space (high co-dimension). In this example, we also compare the results from a graph-based approach, diffusion maps \cite{coifman2006diffusion}. To ensure the diffusion maps result is representative, we report additional numerical results over an extensive parameter tuning in Appendix~\ref{app:E}. In the second example (four- and five-dimensional flat torus), our aim is to verify the effectiveness of the approximation when the intrinsic dimension of the manifold is higher than that in the first example. In the third example, we verify the accuracy of the spectral estimation of the Bochner, Hodge, and Lichnerowicz Laplacians on the sphere.}

\noindent{\bf Section~\ref{sec7}:} We close this paper with a short summary and discussion of remaining and emerging open problems.

\noindent { For reader's convenience, we provide a list of notations in Table~\ref{tab:notat}.}

\begin{table}[tbp]
\caption{List of notations. The notations below are used throughout the entire manuscript, and defined in Sections 2-4.}
\vspace{2pt}
\label{tab:notat}
\par
\begin{center}
\scalebox{0.95}[0.95]{
\begin{tabular}{p{30pt} p{170pt}|p{30pt} p{170pt}}
\hline
\textbf{Symbol} & \textbf{ Definition} & \textbf{Symbol} & \textbf{ Definition} \\
\hline
\textbf{Sec. } & \textbf{2} & & \\
\hline
$M$ & \text{a manifold} & $\mathbb{R}$ & \text{Euclidean space} \\
$d$ & \text{intrinsic dimension of $M$} & $n$ &\text{ambient space dimension}\\
$\theta^i$ & \text{intrinsic coordinate} $i=1,\ldots,d$ & $X^i$ & \text{ambient coordinate} $i=1,\ldots,n$\\
$\mathbf{P}$ & $P(x)$, \text{tangential project matrix} \text{ } & $\mathcal{P}$ & \text{tangential projection tensor in} (\ref{eqn:Pmatr}) \\
$P_{ij}$ & \text{entries of projection matrix} $i,j=1,\ldots,n$ & $\mathbf{p}_i$ & \text{$i$th column of projection matrix $\mathbf{P}$} $\mathbf{p}_i=(P_{1i}(x),\ldots,P_{ni}(x))$\\
$\iota$ & the local parameterization of the manifold & $D\iota$ & the pushforward or the $n\times d$ Jacobian matrix \\
$N$ & \text{number of data points} & $X$ & $X=\{x_i\}_{i=1}^N=\{x_1,\ldots,x_N\}$  \\
$R_N$ & restriction operator, $R_N f = \mathbf{f}$ \newline $=(f(x_1),\ldots,f(x_N))^\top $ & $|_M$, $|_X$& $|_M$ is the restriction on manifold $M$ \newline $|_X$ is the restriction on data set $X$ \\
$f$ & \text{a function} & $\mathbf{f}$ & \text{a function on data points}, \newline $\mathbf{f} = (f(x_1),\ldots,f(x_N))^\top$ \\
$I_{\phi_s}\mathbf{f}$ & \text{interpolation of $\mathbf{f}$ using RBF $\phi_s$ } & $s$ &\text{shape parameter}\\
$\boldsymbol{\Phi}$ & \text{interpolation matrix} & $\mathbf{c}$ & \text{interpolation coefficient}, $(c_1,\ldots,c_N)^\top$ \\
$\textrm{grad}_g$ & \text{gradient w.r.t.} {Riemannian metric,} $\textrm{grad}_g f = g^{ij}\frac{\partial f}{\partial \theta^i} \frac{\partial}{\partial \theta^j}$ & $\overline{\textrm{grad}}_{\mathbb{R}^n}$ & \text{gradient in Euclidean space}, $\overline{\textrm{grad}}_{\mathbb{R}^n} f = \delta^{ij}\frac{\partial f}{\partial X^i} \frac{\partial}{\partial X^j}$ \\
$\mathcal{G}_i$ & $\mathcal{G}_i = \mathbf{p}_i \cdot \overline{\textrm{grad}}_{\mathbb{R}^n}$   & $\mathbf{G}_i$ & $N\times N$ \text{matrix that approximates} $\mathcal{G}_i$\\
$\mathbf{T},\boldsymbol{\tau}_i$ &  $d$ \text{orthonormal tangent vectors}, \newline $\mathbf{T}=[\boldsymbol{\tau}_1,\ldots,\boldsymbol{\tau}_d]$ & $\mathbf{N},\mathbf{n}_i$ & \text{vectors that are orthogonal to} {tangent space}, $\mathbf{N}=[\mathbf{n}_1,\ldots,\mathbf{n}_{n-d}]$\\
$\nabla$ & Levi-Civita connection on $M$ & $\bar{\nabla}$ & Euclidean connection on $\mathbb{R}^n$ \\
$\Delta_M$ & \text{Laplace-Beltrami} & $\Delta_B$ &\text{Bochner Laplacian}\\
$q(x)$ & \text{sampling density} & $\mathbf{Q}$ & \text{diagonal matrix with entries} $q(x_i)$\\
$\mathcal{H}_i$ & $\mathcal{H}_i U(x)=\mathbf{P}\mathrm{diag}(\mathcal{G}_i,\ldots,\mathcal{G}_i)U(x)$ &
$\mathbf{H}_i$ & $Nn\times Nn$ \text{matrix that estimates} $\mathcal{H}_i$ \\
\hline
\textbf{Sec. } & \textbf{3} &  &  \\
\hline
$\boldsymbol{\rho}$ & \text{geodesic normal coordinate,} $\boldsymbol{\rho}=(\rho_1,\ldots,\rho_d)$ & $\rho$ & \text{geodesic distance}, $\rho=|\boldsymbol{\rho}|$\\
$\tilde{\mathbf{P}}$ & \text{1st-order SVD estimate of $\mathbf{P}$} & $\hat{\mathbf{P}}$ &\text{2nd-order SVD estimate of $\mathbf{P}$}\\
$K$ & $K$-\text{nearest neighbors} & $K_{max}$& \text{maximum principal curvature}\\
$D$ & $K>D:=\frac{1}{2}d(d+1)$, \text{the minimum} \text{number for the $K$-nearest neighbors} & $\mathbf{D}$ & $\mathbf{D}=[\mathbf{D}_1,\ldots,\mathbf{D}_K]$, \text{$\mathbf{D}_i=y_i-x$ is} \text{the $i$th column of $\mathbf{D}\in \mathbb{R}^{n\times K}$}\\
$\boldsymbol{\Psi}$ & \text{matrix with orthonormal tangent} \text{column vectors} $\boldsymbol{\Psi}=[\boldsymbol{{\psi}}_1,\ldots,\boldsymbol{{\psi}}_d]$ & $\boldsymbol{\hat{\Psi}}$ & $\boldsymbol{\hat{\Psi}}=[\boldsymbol{\hat{\psi}}_1,\ldots,\boldsymbol{\hat{\psi}}_d]\in \mathbb{R}^{n\times d}$ \text{is an} \text{estimator of} $\boldsymbol{\Psi}=[\boldsymbol{{\psi}}_1,\ldots,\boldsymbol{{\psi}}_d]$\\
\hline
\textbf{Sec. } & \textbf{4} & &\text{}\\
\hline
$\langle f,h\rangle_{L^2(M)}$ & \quad inner product in $L^2(M)$ defined as $\langle f,h\rangle_{L^2(M)} = \int_M fh \ d\mathrm{Vol}$ & $\langle\mathbf{f},\mathbf{h} \rangle_{L^2(\mu_N)}$ & \quad \ appropriate inner product defined as $\langle\mathbf{f},\mathbf{h} \rangle_{L^2(\mu_N)} = \frac{1}{N}\mathbf{f}^\top \mathbf{h}$\\
$\lambda_i$ & \text{eigenvalues}  &  $\hat{\lambda}_i$ &\text{approximate eigenvalues}\\
$m$ & the geometric multiplicity of eigenvalues & & \\
 \hline
\end{tabular}
}
\end{center}
\end{table}

\section{Basic Formulation for Estimating Differential Operators on Manifolds}\label{sec2}

In this section, we first review the approximations of the gradient, the Laplace-Beltrami operator, and covariant derivative and then formulate the detailed discrete approximations of the connection of a vector field and the Bochner Laplacian, on smooth closed manifolds. Both the Laplace-Beltrami operator and the Bochner Laplacian have two natural discrete estimators: symmetric and non-symmetric formulations. Since each formulation has its own practical and theoretical advantages, we describe in detail both approximations. Following similar derivations, we also report the discrete approximation to other differential operators that are relevant to vector fields, e.g. the Hodge and Lichnerowicz Laplacian (see Appendix~\ref{app:A} for the detailed derivations).

In this paper, we consider estimating differential operators acting on functions (and vector fields) defined on a $d-$dimensional closed manifold $M$ embedded in ambient space $\mathbb{R}^n$, where $d\leq n$. Each operator is estimated using an ambient space formulation followed by a projection onto the local tangent space of the manifold using a projection matrix $\mathbf{P}$. Before describing the discrete approximation of the operators, we introduce $\mathbf{P}$, discuss some of its basic properties, and quickly review the radial basis function (RBF) interpolation which is a convenient method to approximate functions and tensor fields from the point cloud training data $X = \{x_i\}_{i=1}^N$.

In the following, we periodically use the notation $\textup{diag}(a_1 , \dots , a_j)$ to denote a $j \times j$ diagonal matrix with the listed entries along the diagonal. We also define $f \mapsto R_N f = \mathbf{f} = ( f(x_1), f(x_2), \ldots, f(x_N) )^\top$ for all $f:M \to \mathbb{R}$, which is a restriction operator to the function values on training data set $X = \{x_1,\ldots,x_N\}$. In the remainder of this paper, we use boldface to denote discrete objects (vectors, matrices) and script font to denote operators on continuous objects.

\begin{definition}\label{defn2.1} For any point $x\in M$, the local parameterization $\iota :O\subseteq \mathbb{R}^{d} \longrightarrow M\subseteq \mathbb{R}^{n}$, is defined through the following map, $\boldsymbol{\Theta }_{\iota ^{-1}(x)} \boldsymbol{\longmapsto }\mathbf{X}%
_{x}$. Here, $O$ denotes a domain that contains the point $\iota ^{-1}(x)$, which we denoted as $\boldsymbol{\Theta }_{\iota ^{-1}(x)}$ in the canonical coordinates $\left(\frac{\partial}{\partial  \theta ^{1}}\Big\vert_{\iota ^{-1}(x)},\ldots,\frac{\partial}{\partial  \theta ^{d}}\Big|_{\iota ^{-1}(x)}\right)$ and $\mathbf{X}_{x}$ is the embedded point represented in the ambient coordinates $\left(\frac{\partial}{\partial  X ^{1}}\Big\vert_x,\ldots,\frac{\partial}{\partial  X ^{n}}\Big\vert_x\right)$. The pushforward $D \iota(x): T_{\iota^{-1}(x)}O \to T_xM$ is an $n\times d$ matrix given by
$
D\iota(x) =\left[\frac{\partial \mathbf{X}_x}{\partial
\theta ^{1}},\ldots ,\frac{\partial \mathbf{X}_x}{\partial \theta ^{d}}\right],
$
a matrix whose columns form a basis of $T_{x}M$ in $\mathbb{R}%
^{n}$.
\end{definition}

\begin{definition}\label{def:P} The projection matrix $\mathbf{P} =P(x) \in \mathbb{R}^{n\times n}$ on $x\in M$ is defined with the matrix-valued function $P: M \to \textup{Proj}(n, \mathbb{R}) \subset \mathbb{R}^{n\times n}$,
$$
P=[P_{st}]_{s,t=1}^n:= \left[ \frac{\partial X^{s}}{\partial \theta ^{i}}%
g^{ij}\frac{\partial X^{t}}{\partial \theta ^{j}}\right]_{s,t=1}^{n},
$$
where $g^{ij}$ denotes the matrix entries of the inverse of the Riemannian metric tensor $g_{ij}$.
Since $[g^{ij}]_{d\times d}=(D\iota ^{\top }D\iota)^{-1}$, the
projection matrix can be equivalently defined as%
\begin{equation}
P:= D\iota (D\iota ^{\top }D\iota \mathbf{)}^{-1}D\iota ^{\top
}.  \label{eqn:PDi}
\end{equation}%
\end{definition}

Before proving some basic properties of the projection matrix $\mathbf{P}$, we must fix a set of orthonormal vectors that span the tangent space $%
T_{x}M $ for each $x\in M$. In particular, for any $x\in M$, let $\{\boldsymbol{\tau }%
_{i}\in \mathbb{R}^{n\times 1}\}_{i=1}^{d}$ be the $d$ orthonormal vectors
that span $T_{x}M$ and let $\{\mathbf{n}_{i}\in \mathbb{R}^{n\times
1}\}_{i=1}^{n-d}$ be the $n-d$ orthonormal vectors that are orthogonal to $%
T_{x}M$. Here, we suppress the dependence of $\boldsymbol{\tau}_i$ and $\mathbf{n}_{i}$ on $x$ to simplify the notation. Further, let $\mathbf{T}=[\boldsymbol{\tau}_{1}$ $\ldots $ $%
\boldsymbol{\tau}_{d}]\in \mathbb{R}^{n\times d}$ and $\mathbf{N}=[\mathbf{n}%
_{1}$ $\ldots $ $\mathbf{n}_{n-d}]\in \mathbb{R}^{n\times (n-d)}$. Since $\begin{pmatrix}
\mathbf{T}$ $\mathbf{N} \end{pmatrix}\in \mathbb{R}^{n\times n}$ is an orthonormal
matrix, one has the following relation,
\begin{equation*}
\mathbf{I}=\begin{pmatrix}\mathbf{T} & \mathbf{N}\end{pmatrix}\begin{pmatrix}
\mathbf{T}^{\top } \\
\mathbf{N}^{\top }
\end{pmatrix} =\mathbf{TT}^{\top }+\mathbf{NN}^{\top }=\sum_{i=1}^{d}\boldsymbol{\tau}%
_{i}\boldsymbol{\tau}_{i}^{\top }+\sum_{i=1}^{n-d}\mathbf{n}_{i}\mathbf{n}%
_{i}^{\top }.
\end{equation*}%
We have the following proposition summarizing the basic properties of $\mathbf{P}$.

\begin{prop}
\label{propP} For any $x\in M$, let $\mathbf{P} = P(x)\in \mathbb{R}^{n\times n}$ be the projection matrix defined in Definition \ref{def:P} and let $\mathbf{T}=[\boldsymbol{\tau}_{1},\ldots, \boldsymbol{\tau}_{d}]\in
\mathbb{R}^{n\times d}$ be any $d$ orthonormal vectors
that span $T_{x}M$. Then \newline
\noindent (1) $\mathbf{P}$ is symmetric; \newline
\noindent (2) $\mathbf{P}^{2}=\mathbf{P}$; \newline
\noindent (3) $\mathbf{P} = P(x) = D\iota(x) (D\iota(x) ^{\top }D\iota(x) \mathbf{)}%
^{-1}D\iota(x)^{\top }=\mathbf{TT}^{\top }$. \newline
\noindent (4) $\sum_{i=1}^{n}|\mathbf{p}_{i}|^{2}=d$, where $\mathbf{p}%
_{i}=\left( P_{1i}(x),\ldots ,P_{ni}(x)\right) ^{\top }$ is the $i$th column of $%
\mathbf{P}$.
\end{prop}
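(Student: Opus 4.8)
The plan is to establish the four items in the stated order, since each relies on the previous ones. For item~(1), I would work from the factored form~(\ref{eqn:PDi}): taking the transpose gives $P^{\top} = D\iota\,\bigl((D\iota^{\top}D\iota)^{-1}\bigr)^{\top}D\iota^{\top}$, and since the Gram matrix $D\iota^{\top}D\iota = [g_{ij}]$ is symmetric, so is its inverse, whence $P^{\top}=P$. (One may equivalently read symmetry directly off the component formula $P_{st} = \frac{\partial X^{s}}{\partial \theta^{i}}g^{ij}\frac{\partial X^{t}}{\partial \theta^{j}}$ by relabeling $i\leftrightarrow j$ and using $g^{ij}=g^{ji}$.) For item~(2), I would compute $P^{2}$ directly in the factored form: the factor $D\iota^{\top}D\iota$ sits between the two copies of $(D\iota^{\top}D\iota)^{-1}$ and cancels one of them, leaving $P^{2}=D\iota(D\iota^{\top}D\iota)^{-1}D\iota^{\top}=P$.

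The only step with genuine content is item~(3), the identity $\mathbf{P} = \mathbf{T}\mathbf{T}^{\top}$. The first equality there is just the definition~(\ref{eqn:PDi}). For the second, I would use that the columns of both $D\iota$ and $\mathbf{T}$ form a basis of $T_{x}M$, so there is an invertible $d\times d$ matrix $A$ with $D\iota = \mathbf{T}A$; substituting and using the orthonormality relation $\mathbf{T}^{\top}\mathbf{T} = \mathbf{I}_{d}$ collapses $D\iota(D\iota^{\top}D\iota)^{-1}D\iota^{\top}$ to $\mathbf{T}A A^{-1}(A^{\top})^{-1}A^{\top}\mathbf{T}^{\top} = \mathbf{T}\mathbf{T}^{\top}$. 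A cleaner coordinate-free alternative is to invoke uniqueness of the orthogonal projector onto a fixed subspace: by items~(1) and~(2), $\mathbf{P}$ is a symmetric idempotent, hence the orthogonal projection onto $\mathrm{range}(\mathbf{P}) = \mathrm{range}(D\iota) = T_{x}M$; the matrix $\mathbf{T}\mathbf{T}^{\top}$ is likewise symmetric and idempotent (again via $\mathbf{T}^{\top}\mathbf{T} = \mathbf{I}_{d}$) with range $T_{x}M$, so the two coincide. This is the part I would flag as the crux, though it is routine linear algebra rather than a real obstacle.

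Finally, for item~(4) I would observe that $\sum_{i=1}^{n}|\mathbf{p}_{i}|^{2} = \sum_{i,j}P_{ji}^{2} = \mathrm{tr}(\mathbf{P}^{\top}\mathbf{P}) = \mathrm{tr}(\mathbf{P}^{2}) = \mathrm{tr}(\mathbf{P})$, using item~(1) then item~(2); and then $\mathrm{tr}(\mathbf{P}) = \mathrm{tr}(\mathbf{T}\mathbf{T}^{\top}) = \mathrm{tr}(\mathbf{T}^{\top}\mathbf{T}) = \mathrm{tr}(\mathbf{I}_{d}) = d$, by item~(3) and cyclicity of the trace. This closes the proof. I do not anticipate any serious difficulty; the entire argument is elementary linear algebra once the factorization $D\iota = \mathbf{T}A$ (or the uniqueness of orthogonal projections) is in hand.
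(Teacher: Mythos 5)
Your proof is correct and matches the paper's in its essentials: the trace chain for item~(4) is identical, and your ``cleaner coordinate-free alternative'' for item~(3) -- uniqueness of the orthogonal projector onto a fixed subspace -- is exactly the argument the paper uses, while the paper simply declares items~(1) and~(2) obvious from \eqref{eqn:PDi} where you spell them out. The only genuine addition is your primary route to~(3) via the explicit factorization $D\iota = \mathbf{T}A$, which is a self-contained computation rather than an appeal to uniqueness, but it buys nothing beyond making the step checkable line by line; both routes are routine and equivalent in content.
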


\begin{proof}
Properties (1) and (2) are obvious from the definition of $\mathbf{P}$\ in (%
\ref{eqn:PDi}).\ Property (3) can be easily obtained by observing that both sides of the equation are orthogonal projections, and $%
\mathrm{span}\left\{ \frac{\partial \mathbf{X}_x}{\partial \theta ^{1}},\ldots
,\frac{\partial \mathbf{X}_x}{\partial \theta ^{d}}\right\} =\mathrm{span}\{%
\boldsymbol{\tau}_{1}$ $\ldots $ $\boldsymbol{\tau}_{d}\}$. To see (4), for each point $x\in M$, write $\mathbf{P}$ as
\begin{equation*}
\mathbf{P}=\mathbf{TT}^{\top }=\left[
\begin{array}{ccc}
P_{11}(x) & \cdots & P_{1n}(x) \\
\vdots & \ddots & \vdots \\
P_{n1}(x) & \cdots & P_{nn}(x)%
\end{array}%
\right] =\left[
\begin{array}{c}
\mathbf{p}_{1}^{\top } \\
\vdots \\
\mathbf{p}_{n}^{\top }%
\end{array}%
\right] ,
\end{equation*}%
where $\mathbf{p}_{i}\in \mathbb{R}^{n\times 1}$ for $i=1,\ldots ,n$ is the $%
i$th column of $\mathbf{P}$\textbf{. } It remains to observe the following chain of equalities:
\begin{equation*}
\sum_{i=1}^{n}\left\vert \mathbf{p}_{i}\right\vert
^{2}=\mathrm{tr}\left( \mathbf{P^{\top }P}\right)
=\mathrm{tr}\left( \mathbf{P}\right) =\mathrm{tr}\left( \mathbf{TT}^{\top
}\right) =\mathrm{tr}\left( \mathbf{T}^{\top }\mathbf{T}\right) =\mathrm{tr}%
\left( \mathbf{I}_{d\times d}\right) =d.
\end{equation*}
\end{proof}
Notice that while the specification of $\mathbf{T}$ is not unique which can be different
by a rotation, the projection matrix $\mathbf{P}$ is uniquely determined at each point $x\in M$.

Let $f:M\rightarrow \mathbb{R}$ be an arbitrary some smooth function. Given
function values $\mathbf{f}:=(f(x_1),\ldots, f(x_N))^\top$\ at $X = \{x
_{j}\}_{j=1}^{N}$, the radial basis function (RBF) interpolant of $f$ at $x$ takes
the form%
\begin{equation}
I_{\phi_s}\mathbf{f}(x):=\sum_{k=1}^{N}c_{k}\phi_s \left( \left\Vert x-x%
_{k}\right\Vert \right) ,  \label{eqn:intp}
\end{equation}%
where $\phi_s$ denotes the kernel function with shape parameter $s$ and $\Vert \cdot \Vert$ denotes the standard Euclidean distance. {We should point out that by being a kernel, it is positive definite as in the standard nomenclature (see e.g., Theorem 4.16 in \cite{christmann2008support}).} Here, one can interpret $I_{\phi_s}:\mathbb{R}^{N} \to C^{\alpha}(\mathbb{R}^n)$, where $\alpha$ denotes the smoothness of $\phi_s$. In practice, common choices of kernel include the Gaussian $\phi_s(r) = \exp (-\left( s r\right) ^{2})$ \cite{fasshauer2012stable}, inverse quadratic function $\phi_s(r) = (1+\left( s r\right) ^{2})^{-1}$,  or Mat\'{e}rn class kernel  \cite{fuselier2013high}. In our numerical examples, we have tested these kernels and they do not make too much difference when the shape parameters are tuned properly. {However, we will develop the theoretical analysis with the Mat\'{e}rn kernel  in Section~\ref{sec4} as it induces a reproducing kernel Hilbert space (RKHS) with Sobolev-like norm.}

The expansion coefficients $\{c_{k}\}_{k=1}^{N}$ in \eqref{eqn:intp} can be determined by a collocation method, which enforces the interpolation condition $I_{\phi_s} \mathbf{f}(x_j) = f(x_j)$ for all $j=1,\ldots, N$, or  the following linear system with the interpolation matrix $\mathbf{\Phi}$:%
\begin{equation}
\underbrace{\left[
\begin{array}{cccc}
\phi_s \left( \Vert x_{1}-x_{1}\Vert \right) & \phi_s
\left( \Vert x_{1}-x_{2}\Vert \right) & \cdots &
\phi_s \left( \Vert x_{1}-x_{N}\Vert \right) \\
\phi_s \left( \Vert x_{2}-x_{1}\Vert \right) & \phi_s
\left( \Vert x_{2}-x_{2}\Vert \right) & \cdots &
\phi_s \left( \Vert x_{2}-x_{N}\Vert \right) \\
\vdots & \vdots & \ddots & \vdots \\
\phi_s \left( \Vert x_{N}-x_{1}\Vert \right) & \phi_s
\left( \Vert x_{N}-x_{2}\Vert \right) & \cdots &
\phi_s \left( \Vert x_{N}-x_{N}\Vert \right)%
\end{array}%
\right]}_{\boldsymbol{\Phi}} \underbrace{\left[
\begin{array}{c}
c_{1} \\
c_{2} \\
\vdots \\
c_{N}%
\end{array}%
\right]}_{\mathbf{c}} =\underbrace{\left[
\begin{array}{c}
f(x_{1}) \\
f(x_{2}) \\
\vdots \\
f(x_{N})%
\end{array}%
\right]}_{\mathbf{f}}.  \label{eqn:phicf}
\end{equation}%
In general, better accuracy is obtained for flat kernels (small $s$)
[see e.g., Chap. 16--17 of \cite{fasshauer2007meshfree}], however, the corresponding system in
(\ref{eqn:phicf}) becomes increasingly ill-conditioned \cite%
{fornberg2004stable,fornberg2011stable,fasshauer2012stable}. In this article, we will
not focus on the shape parameter issues. Numerically, we will empirically choose the shape
parameter and will use pseudo-inverse to solve the linear system in (\ref{eqn:phicf}) when it is effectively singular.

With these backgrounds, we are now ready to discuss the RBF approximation to various differential operators.

{
\subsection{Gradient of a Function}
We first review the RBF projection method proposed since \cite{kansa1990multiquadrics}'s pioneering work and following works in \cite{fuselier2013high,Natasha2015Solving,shankar2015radial,lehto2017radial} for approximating gradients of functions on
manifolds. The projection method represents the manifold differential operators as
tangential gradients, which are formulated as the projection of the
appropriate derivatives in the ambient space. Precisely, the manifold gradient on a smooth function $f:M\rightarrow
\mathbb{R}$ evaluated at $x\in M$ in the Cartesian coordinates is given as,
\[
\mathrm{grad}_{g}f({x}):=\mathbf{P}\overline{\mathrm{grad}}_{%
\mathbb{R}^{n}}f({x})=( \sum_{i=1}^{d}\boldsymbol{\tau}_{i}\boldsymbol{\tau}%
_{i}^{\top } ) \overline{\mathrm{grad}}_{\mathbb{R}^{n}}f({x}),
\]%
where the
subscript $g$ is to associate the differential operator to the Riemannian
metric $g$ induced by $M$ and $\overline{\mathrm{grad}}_{\mathbb{R}^{n}}=[\partial _{X^{1}},\cdots
,\partial _{X^{n}}]^{\top }$ is the usual Euclidean gradient operator in $\mathbb{R}^{n}$. Let $\mathbf{e}_{\ell
},\ell =1,...,n$ be the standard orthonormal vectors in $X^{\ell }$
direction in $\mathbb{R}^{n}$, we can rewrite above expression in component
form as
\BEA
\mathrm{grad}_{g}f({x}):=\left[
\begin{array}{c}
\left( \mathbf{e}_{1}\cdot \mathbf{P}\right) \overline{\mathrm{grad}}_{%
\mathbb{R}^{n}}f({x}) \\
\vdots  \\
\left( \mathbf{e}_{n}\cdot \mathbf{P}\right) \overline{\mathrm{grad}}_{%
\mathbb{R}^{n}}f({x})%
\end{array}%
\right] =\left[
\begin{array}{c}
\mathbf{p}_{1}\cdot \overline{\mathrm{grad}}_{\mathbb{R}^{n}}f({x})
\\
\vdots  \\
\mathbf{p}_{n}\cdot \overline{\mathrm{grad}}_{\mathbb{R}^{n}}f({x})%
\end{array}%
\right] :=\left[
\begin{array}{c}
\mathcal{G}_{1}f({x}) \\
\vdots  \\
\mathcal{G}_{n}f({x})%
\end{array}%
\right] \label{sec2.1:gradg}
\EEA%
where $\mathbf{p}_{\ell }$ is the $\ell -$th column of the projection matrix $%
\mathbf{P}$.

One can now consider estimating the gradient operator from the available training data set $X=\{x_1,\ldots, x_N\}$.
In such a case, one considers the RBF interpolant $ I_{\phi_s} \mathbf{f} \in C^\alpha(\mathbb{R}^n)$ in \eqref{eqn:intp} which interpolates
$f$ on the available training data set $X=\{x_1,\ldots, x_N\}$ by solving \eqref{eqn:phicf}.
Using the interpolant (\ref{eqn:intp}) and denoting $r_k:=\Vert x-x_k \Vert$, one can evaluate the
tangential derivative in the $X^{i}$ direction at each node $\{{x}_{j} \in X\}_{j=1}^{N}$ as,%
\begin{eqnarray}
\mathcal{G}_{i}I_{\phi_s }\mathbf{f}({x})|_{{x}={x}%
_{j}} &=&\mathbf{p}_{i}^{\top }\cdot \overline{\mathrm{grad}}_{\mathbb{R}^{n}}I_{\phi_s }\mathbf{f}(%
{x})|_{{x}={x}_{j}}=\mathbf{p}_{i}^{\top
}\cdot \overline{\mathrm{grad}}_{\mathbb{R}^{n}}\sum_{k=1}^{N}c_{k}\phi_s (r_{k}(%
{x}))|_{{x}={x}_{j}}  \notag \\
&=&\sum_{k=1}^{N}c_{k}\mathbf{p}_{i}^{\top }\cdot \overline{\mathrm{grad}}_{\mathbb{R}%
^{n}}\phi_s (r_{k}({x}))|_{{x}={x}%
_{j}} \notag \\
&=&\sum_{k=1}^{N}c_{k}\mathbf{p}_{i}^{\top }\cdot \left( {x}-%
{x}_{k}\right) \frac{\phi_s ^{\prime }(r_{k}({x}))}{%
r_{k}({x})}|_{{x}={x}_{j}}:=
\sum_{k=1}^{N}J_{jk}^{[i]}c_{k}.  \notag 
\end{eqnarray}%
Let $\mathbf{J}_{i}=%
[ J_{jk}^{[i]}] _{j,k=1}^{N}$, and also let $\mathbf{c=}\ (c_{1},\ldots ,c_{N})^\top$ and $\mathbf{f}=f({x}%
)|_{{X}}=(f({x}_{1}),\ldots ,f({x}%
_{N})) $ as defined in \eqref{eqn:phicf}. Above equation 
can be written in matrix form for $\mathcal{G}%
_{i}I_{\phi_s }f$ at all nodes ${X}$,
\begin{equation}
\left( \mathcal{G}_{i}I_{\phi_s }\mathbf{f}\right) |_{{X}}=\mathbf{J}_{i}%
\mathbf{c}=\mathbf{J}_{i}\boldsymbol{\Phi }^{-1}\mathbf{f}:= \mathbf{G}_{i}%
\mathbf{f},  \label{eqn:DGP}
\end{equation}%
for $i=1,\ldots ,n$. Thereafter, we define

\begin{definition}\label{def:G} Let $\mathcal{G}_i I_{\phi_s}: \mathbb{R}^N \to C^\alpha(\mathbb{R}^n)$ and $R_N:C(\mathbb{R}^n) \to \mathbb{R}^N$ be the restriction operator defined as $R_N f = \mathbf{f} = (f(x_1),\ldots, f(x_N))^\top$  for any $f\in C(M)$ and $ I_{\phi_s} \mathbf{f} \in C^{\alpha}(\mathbb{R}^n)$ be the RBF interpolant as defined in \eqref{eqn:intp}. We define a linear map $\mathbf{G}_i: \mathbb{R}^N \to \mathbb{R}^N$
\BEA
\mathbf{G}_i \mathbf{f} =  R_N \mathcal{G}_i I_{\phi_s} \mathbf{f}.\label{sec2.1:eq1}
\EEA
as a discrete estimator of the differential operator $\mathcal{G}_i$, restricted on the training data $X$. On the right-hand-side, we understood $R_N \mathcal{G}_i I_{\phi_s} \mathbf{f} = \mathcal{G}_i I_{\phi_s} \mathbf{f}|_X \in \mathbb{R}^N $.
\end{definition}

To conclude, we define a linear map $\mathbf{G}: \mathbb{R}^N \to \mathbb{R}^{nN}$ with,
\BEA
\mathbf{G} \mathbf{f} = \left( \mathbf{G}_1 \mathbf{f},
\mathbf{G}_2 \mathbf{f},
\ldots,
\mathbf{G}_n \mathbf{f}
\right)^\top, \label{sec2.1:eq2}
\EEA
as a discrete estimator to the gradient restricted on the training data set $X$, in the sense that
\BEA
 \mathbf{G}  \mathbf{f} = \mathbf{G} R_N f =  R_N \left( \textup{grad}_g I_{\phi_s} \mathbf{f} \right) = (\textup{grad}_g I_{\phi_s} \mathbf{f})|_X,\label{sec21:discrete_grad}
\EEA
where the first and third equalities follow from the definition of $R_N$, and the second equality follows from \eqref{sec2.1:gradg}, \eqref{sec2.1:eq1}, \eqref{sec2.1:eq2}.

}

\subsection{The Laplace-Beltrami Operator} \label{Laplace-Beltrami definition}
The Laplace-Beltrami operator on a smooth function $f \in C^\infty(M)$ is defined as $\Delta_M f = -\textup{div}_g ( \textup{grad}_g f )$ which is semi-positive definite. Using the previous ambient space formulation for gradient, one can equivalently write
\BEA
\Delta_M f(x) &:=& -\textup{div}_g ( \textup{grad}_g f (x))
=-( \mathbf{P}\overline{\mathrm{grad}}_{\mathbb{R}^{n}}) \cdot
\left( \mathbf{P}\overline{\mathrm{grad}}_{\mathbb{R}^{n}}\right) f({x}) \notag \\
&=& -(\mathcal{G}_1 \mathcal{G}_1  + \mathcal{G}_2 \mathcal{G}_2 + \dots + \mathcal{G}_n \mathcal{G}_n) f(x), \notag
\EEA
for any $x\in M$. This identity yields a pointwise estimate of $\Delta_M$ by composing the discrete estimators for $\textup{div}_g$  and $\textup{grad}_g$. Particularly, a {\bf non-symmetric estimator} of the Laplace-Beltrami operator is a map $\mathbb{R}^N \to \mathbb{R}^N$ given by
\BEA
\mathbf{f}  \mapsto -\left(\mathbf{G}_1 \mathbf{G}_1  + \dots + \mathbf{G}_n \mathbf{G}_n\right) \mathbf{f}.  \label{eqn:Nrbf}
\EEA
\begin{remark}
The above discrete version of Laplace-Beltrami is not new. In particular, it has been well studied in the deterministic setting. See \cite{dziuk2007surface,wardetzky2007discrete,dziuk2013finite} for a detailed review of using the ambient space for estimation of Laplace-Beltrami, as well as \cite{fuselier2013high}. {In Section~\ref{sec6}, we will numerically study the spectral convergence of this non-symmetric discretization in the setting where the data are sampled randomly from an unknown manifold. We will remark on the advantages and disadvantages of such a non-symmetric formulation for manifold learning tasks.}
\end{remark}

In the weak form, for $M$ without boundary, the Laplace-Beltrami operator can be written
\BEA
\int_M  h\Delta_M f d\textup{Vol} = \int_M \langle \textup{grad}_g h, \textup{grad}_g f \rangle d\textup{Vol}, \quad \forall f,h \in C^\infty(M), \notag
\EEA
where $\langle\cdot,\cdot\rangle$  denotes the Riemannian inner product of vector fields. Using the estimators from previous subsections, it is natural  to estimate the Laplace-Beltrami operator in this setting. Based on the weak formulation, we can estimate the gradient with the matrix $\mathbf{G}: \mathbb{R}^N \to \mathbb{R}^{Nn}$, then compose with the matrix adjoint of $\mathbf{G}$, where the domain and range of $\mathbf{G}$ are equipped with appropriate inner products approximating the corresponding inner products defined on the manifold. It turns out that the adjoint of $\mathbf{G}$ following this procedure is just the standard matrix transpose. In particular, we have that $\mathbf{G}^\top \mathbf{G} : \mathbb{R}^N \to \mathbb{R}^N$ given by
\BEA
\mathbf{G}^\top \mathbf{G}\, \mathbf{f}  := \left(\mathbf{G}_1^\top \mathbf{G}_1  + \mathbf{G}^\top_2 \mathbf{G}_2+ \ldots + \mathbf{G}_n^\top \mathbf{G}_n\right) \mathbf{f}  \notag
\EEA
is a {\bf symmetric estimator} of the Laplace-Beltrami operator.
\begin{remark}
\label{non unif remark}
The above symmetric formulation makes use of the discrete approximation of continuous inner products, and only holds when the data is sampled uniformly. For data sampled from a non-uniform density $q$, however, we can perform the standard technique of correcting for non-uniform data by dividing by the sampling density. For example, the symmetric approximation to the eigenvalue problem corresponds to solving $(\lambda,\mathbf{f})$ that satisfies
\BEA
\mathbf{f}^\top\mathbf{G}^\top \mathbf{Q}^{-1}\mathbf{G}\, \mathbf{f}  := \mathbf{f}^\top\left(\mathbf{G}_1^\top \mathbf{Q}^{-1}\mathbf{G}_1  + \mathbf{G}^\top_2 \mathbf{Q}^{-1} \mathbf{G}_2+ \ldots + \mathbf{G}_n^\top\mathbf{Q}^{-1} \mathbf{G}_n\right) \mathbf{f}   = \lambda \mathbf{f}^\top\mathbf{Q}^{-1}\mathbf{f}, \label{eqn:Srbf}
\EEA
where $\mathbf{Q} \in \mathbb{R}^{N\times N}$ is a diagonal matrix with diagonal entries of sampling density $\{q(x_i)\}_{i=1,\ldots,N}$.
This weighted Monte-Carlo provides an estimate for the $L^2(M)$ inner product in the weak formulation. When $q$ is unknown, one can approximate $q$ using standard density estimation techniques, such as Kernel Density Estimation methods {\cite{parzen1962estimation}. In our numerical simulations, we use the MATLAB built-in function {\rm \texttt{mvksdensity.m}}.} 
\end{remark}

{
\subsection{Covariant Derivative}

The basic idea here follows from the tangential connection on a submanifold
of $\mathbb{R}^{n}$\ in Example 4.9 of \cite{lee2018introduction}. For
smooth vector fields $u,y\in \mathfrak{X}\left( M\right) $, the tangential
connection can be defined as%
\begin{equation}
\nabla _{u}y=\mathcal{P}\left( \bar{\nabla}_{U}Y|_{M}\right) ,
\label{eqn:nabuy}
\end{equation}%
where $\nabla $ is the Levi-Civita connection on $M$, $\bar{\nabla}:%
\mathfrak{X}\left( \mathbb{R}^{n}\right) \times \mathfrak{X}\left( \mathbb{R}%
^{n}\right) \rightarrow \mathfrak{X}\left( \mathbb{R}^{n}\right) $ is the
Euclidean connection on $\mathbb{R}^{n}$\ mapping $(U,Y)$\ to $\bar{\nabla}%
_{U}Y$ (Example 4.8 of \cite{lee2018introduction}), $\mathcal{P}:T\mathbb{R}%
^{n}\rightarrow TM$ is the orthogonal projection onto $TM$, and $U$ and $Y$
are smooth extensions of $u$ and $y$ to an open subset in $\mathbb{R}^{n}$
satisfying $U|_{M}=u$ and $Y|_{M}=y$, respectively. Such extensions exist by
Exercise A.23 and the identity result does not depend on the chosen extension
by Proposition 4.5 in \cite{lee2018introduction}. The identity (\ref%
{eqn:nabuy}) holds true based on the properties and uniqueness of
Levi-Civita connection (see \cite{lee2018introduction}). More geometric intuition and detailed results can
also be found in \cite{do1992riemannian,morita2001geometry,crane2010trivial,2015Discrete} and references therein. The key observation
for identity (\ref{eqn:nabuy}) is that the covariant derivative can be
written in terms of the tangential projection and Euclidean derivative.
In the remainder of this section, we review the covariant derivative from a geometric viewpoint and then formulate
the tangential projection identity (\ref{eqn:nabuy}) from a computational viewpoint.



Let $u\in \mathfrak{X}(M)$ be a vector field on $M$, and let $U\in\mathfrak{X}(\mathbb{R}^{n})$ be an extension of $u$ to an open set $O \subseteq M$. Then $U$ is related to $u$ via the local parameterization as follows:
$
U\left( x\right) =D\iota \left( x\right) u\left( x\right), 
$
where $u(x)=(u^1(x),\ldots,u^d(x))$ is the coordinate representation of the vector field $u$ w.r.t. the basis $\left\{\frac{\partial}{\partial \theta^r}\right\}$.
Using $[D\iota(x)]_{sr} = \frac{\partial X^s}{\partial \theta^r}$, we have the following equation relating the components of the vector fields:
\BEA
U^s = u^r \frac{\partial X^s}{\partial \theta^r}. \label{sec2.2:eq1}
\EEA
Using this identity, we can derive
\begin{prop}\label{prop2p2} Let $U =  U^i \frac{\partial}{\partial X^i} \in \mathfrak{X}(\mathbb{R}^n)$ be a vector field such that $U|_M=u\in \mathfrak{X}(M)$. Using the notation in Definition~\ref{defn2.1},  we have
\BEA
\label{ambient cristoffel}
 \sum_r g^{ij} \frac{\partial X^r}{\partial \theta^j} \frac{\partial U^r }{\partial \theta^k} =  \frac{\partial u^i}{\partial \theta^k } +  u^p\Gamma^i_{pk}.
\EEA
\end{prop}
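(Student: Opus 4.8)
The plan is to substitute the extension relation \eqref{sec2.2:eq1} into the left-hand side of \eqref{ambient cristoffel}, expand by the product rule, and split the result into the two terms appearing on the right. First I would differentiate $U^r = u^p\,\partial X^r/\partial\theta^p$ with respect to $\theta^k$:
\[
\frac{\partial U^r}{\partial\theta^k}
= \frac{\partial u^p}{\partial\theta^k}\,\frac{\partial X^r}{\partial\theta^p}
  + u^p\,\frac{\partial^2 X^r}{\partial\theta^k\,\partial\theta^p}.
\]
Multiplying this by $g^{ij}\,\partial X^r/\partial\theta^j$ and summing over $r$ (and over the repeated $j$) produces exactly two pieces, which I treat in turn.

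For the first piece I would use that $\sum_r \frac{\partial X^r}{\partial\theta^j}\frac{\partial X^r}{\partial\theta^p} = g_{jp}$ — equivalently $D\iota^\top D\iota = [g_{ij}]$, as in Definition~\ref{def:P} — so it collapses to $g^{ij} g_{jp}\,\partial u^p/\partial\theta^k = \delta^i_p\,\partial u^p/\partial\theta^k = \partial u^i/\partial\theta^k$. For the second piece it remains to identify $g^{ij}\sum_r \frac{\partial X^r}{\partial\theta^j}\frac{\partial^2 X^r}{\partial\theta^k\partial\theta^p}$ with $\Gamma^i_{pk}$. I would obtain this from the Koszul formula $\Gamma^i_{pk} = \tfrac12 g^{il}\big(\partial_p g_{lk} + \partial_k g_{lp} - \partial_l g_{pk}\big)$: writing each $g_{ab} = \sum_s \frac{\partial X^s}{\partial\theta^a}\frac{\partial X^s}{\partial\theta^b}$, differentiating, and using equality of mixed second partials, the six resulting terms cancel in pairs down to $g^{il}\sum_s \frac{\partial X^s}{\partial\theta^l}\frac{\partial^2 X^s}{\partial\theta^p\partial\theta^k}$, which is precisely what is needed after relabeling $l\mapsto j$ and $s\mapsto r$. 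Summing the two pieces then yields \eqref{ambient cristoffel}.

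I do not anticipate a genuine obstacle here; the only thing requiring a bit of care is the index bookkeeping together with the decision to invoke — rather than re-derive from scratch — the classical expression for the Christoffel symbols of a Riemannian submanifold carrying the induced metric. An alternative, even shorter route would be to note that the left-hand side is the coordinate representation, in the frame $\{\partial/\partial\theta^r\}$, of $\mathcal{P}\big(\bar\nabla_{\partial/\partial\theta^k} U\big)\big|_M$, and then to invoke the tangential connection identity \eqref{eqn:nabuy} along with the standard coordinate formula $\nabla_{\partial/\partial\theta^k} u = \big(\partial u^i/\partial\theta^k + u^p\Gamma^i_{pk}\big)\,\partial/\partial\theta^i$; but the direct computation above is self-contained and no longer.
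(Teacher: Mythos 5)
Your proposal is correct, and the first two thirds of it match the paper's argument exactly: substitute $U^r = u^p\,\partial X^r/\partial\theta^p$, apply the product rule, and collapse the first piece via $\sum_r (\partial X^r/\partial\theta^j)(\partial X^r/\partial\theta^p) = g_{jp}$. The only place you diverge is in how you identify the remaining term $g^{ij}\sum_r (\partial X^r/\partial\theta^j)\,\partial^2 X^r/\partial\theta^k\partial\theta^p$ with $\Gamma^i_{pk}$. The paper invokes the Gauss decomposition directly — it writes $\partial^2\mathbf{X}/\partial\theta^k\partial\theta^p = \sum_r \Gamma^r_{pk}\,\partial\mathbf{X}/\partial\theta^r + \mathbf{n}$ with $\mathbf{n}$ normal, takes the inner product with $\partial\mathbf{X}/\partial\theta^j$, and kills the normal part — whereas you instead re-derive the same identity from the Koszul formula by expanding $g_{ab}=\sum_s(\partial X^s/\partial\theta^a)(\partial X^s/\partial\theta^b)$, differentiating, and cancelling in pairs. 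Your route is marginally more self-contained since it does not presuppose the Gauss formula, at the cost of a few extra lines of index bookkeeping; the paper's route is shorter because it treats the second-fundamental-form decomposition as known background. Both are sound, and the alternative you sketch at the end (reading the left-hand side as $\mathcal{P}\bigl(\bar\nabla_{\partial_{\theta^k}}U\bigr)$ and citing the tangential-connection identity) is exactly the conceptual fact the paper states right before the proposition, so it is an entirely legitimate shortcut as well.
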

The proof is relegated in appendix \ref{app:A}. As in the previous section, the projection matrix $\mathbf{P}$ in Definition \ref{def:P} has been used for approximating operators acting on functions as matrix-vector multiplication. In the following, we first introduce a tangential projection tensor in order to derive  identities for operators acting on tensor fields with extensions. Geometrically, the tangential projection tensor projects  a vector field of $\mathfrak{X}(\mathbb{R%
}^{n})$ onto a vector field of $\mathfrak{X}(M)$.

\begin{definition}
The tangential
projection tensor $\mathcal{P}:\mathfrak{X}(%
\mathbb{R}^{n})\rightarrow \mathfrak{X}(M)$ is defined as
\begin{equation}
\mathcal{P}=\delta _{sr}\frac{\partial X^{s}}{\partial \theta ^{i}}g^{ij}%
\frac{\partial X^{t}}{\partial \theta ^{j}}\mathrm{d}X^{r}\otimes \frac{%
\partial }{\partial X^{t}},  \label{eqn:Pmatr}
\end{equation}%
where $\delta _{sr}$ is the Kronecker delta function.
In particular, for a vector field $%
Y=Y^{k}\frac{\partial }{\partial X^{k}}\in \mathfrak{X}(\mathbb{R}^{n})$,\
one has
\begin{eqnarray*}
\mathcal{P}\left( Y|_{M}\right) &=&\delta _{sr}Y^{k}\frac{\partial X^{s}}{%
\partial \theta ^{i}}g^{ij}\frac{\partial X^{t}}{\partial \theta ^{j}}%
\mathrm{d}X^{r}\left( \frac{\partial }{\partial X^{k}}\right) \otimes \frac{%
\partial }{\partial X^{t}}=\delta _{sr}Y^{r}\frac{\partial X^{s}}{\partial
\theta ^{i}}g^{ij}\frac{\partial X^{t}}{\partial \theta ^{j}}\frac{\partial
}{\partial X^{t}} \\
&=&\delta _{sr}Y^{r}\frac{\partial X^{s}}{\partial \theta ^{i}}g^{ij}\frac{%
\partial }{\partial \theta ^{j}}\in \mathfrak{X}(M).
\end{eqnarray*}%
For convenience, we simplify our notation as $\mathcal{P}Y :=\mathcal{P}\left( Y|_{M}\right) $ in the rest of the paper since we only concern about the points restricted on manifold $M$.
Obviously, for any vector field $v\in
\mathfrak{X}(M)$, we have $\mathcal{P}v=v\in \mathfrak{X}(M)$.
\end{definition}

Using Definition in (\ref{eqn:Pmatr}) and Proposition in (\ref{ambient cristoffel}), we can examine the identity $\nabla _{u}y= \mathcal{P}\bar{\nabla}_{U}Y$ via a direct calculation (see appendix \ref{app:A}).
}

\subsection{Gradient of a Vector Field}

{
There are several frameworks for the discretization of vector Laplacians on manifolds such as Discrete exterior calculus (DEC)
\cite{hirani2003discrete}, finite element exterior calculus (FEC) \cite{arnold2006finite,arnold2010finite,gillette2017finite}, Generalized Moving Least Squares (GMLS) \cite{gross2020meshfree} and spectral exterior calculus (SEC) \cite{berrysoftware,berry2020spectral}.
All these methods provide pointwise consistent discrete estimates for vector field operators such as curl, gradient, and Hodge Laplacian.
Both DEC and FEC make strong use of a simplicial complex in their formulation which helps to achieve high-order accuracy in PDE problems but is not realistic for many data science applications. GMLS is a mesh-free method that is applied to solving vector PDEs on manifolds in \cite{gross2020meshfree}. SEC can be used for processing raw data as a mesh-free tool which is appropriate for manifold learning applications.
Here, we will only focus on Bochner Laplacian and derive the tangential projection identities for various vector field differential operators in terms of the tangential projection and Euclidean derivative acting on vector fields with extensions. The formulation for the Hodge and Lichnerowicz Laplacians is similar (see Appendix \ref{app:A}).
}

The gradient of a vector field $u\in \mathfrak{X}(M)$ is defined by  ${\mathrm{grad}}_{g}u{=\sharp \nabla u}$, where $\sharp$ is the  standard musical isomorphism notation to raise the index, in this case from a $(1,1)$ tensor to a $(2,0)$ tensor. In local intrinsic coordinates, one can calculate
\begin{equation*}
{\mathrm{grad}}_{g}u= g^{kj} \left( \frac{\partial u^i}{\partial \theta^k} + u^p\Gamma^i_{pk}  \right) \frac{\partial }{\partial \theta ^{i}}%
\otimes \frac{\partial }{\partial \theta ^{j}}. 
\end{equation*}%
Using \eqref{ambient cristoffel}, we can rewrite the above as
\begin{eqnarray}
{\mathrm{grad}}_{g}u &=&g^{km}\left( \delta _{rs}g^{ij}\frac{\partial X^{r}}{%
\partial \theta ^{j}}\frac{\partial U^{s}}{\partial \theta ^{k}}\right)
\frac{\partial }{\partial \theta ^{i}}\otimes \frac{\partial }{\partial
\theta ^{m}},  \notag \\
&=&g^{km}\delta _{rs}g^{ij}\frac{\partial X^{r}}{\partial \theta ^{j}}\left(
\frac{\partial U^{s}}{\partial X^{a}}\frac{\partial X^{a}}{\partial \theta
^{k}}\right) \left( \frac{\partial X^{b}}{\partial \theta ^{i}}\frac{%
\partial }{\partial X^{b}}\right) \otimes \left( \frac{\partial }{\partial
X^{c}}\frac{\partial X^{c}}{\partial \theta ^{m}}\right),  \notag \\
&=&\delta ^{ea}\left( \delta _{ep}\frac{\partial X^{c}}{\partial \theta ^{k}}%
g^{km}\frac{\partial X^{p}}{\partial \theta ^{m}}\right) \left( \delta _{rs}%
\frac{\partial X^{b}}{\partial \theta ^{i}}g^{ij}\frac{\partial X^{r}}{%
\partial \theta ^{j}}\right) \frac{\partial U^{s}}{\partial X^{a}}\frac{%
\partial }{\partial X^{b}}\otimes \frac{\partial }{\partial X^{c}},  \notag \\
&=&\mathcal{P}_{1}\mathcal{P}_{2}\left( \delta ^{ea}\frac{\partial U^{r}}{%
\partial X^{a}}\frac{\partial }{\partial X^{r}}\otimes \frac{\partial }{%
\partial X^{e}}\right) \equiv \mathcal{P}_{1}\mathcal{P}_{2}\overline{{%
\mathrm{grad}}}_{\mathbb{R}^{n}}U,  \label{eqn:grug}
\end{eqnarray}%
where $U=U^{s}\frac{\partial }{\partial X^{s}}=u^{p}\frac{\partial X^{s}}{%
\partial \theta ^{p}}\frac{\partial }{\partial X^{s}}$, and $\mathcal{P}_{1}%
\mathcal{=}\delta _{ts}\frac{\partial X^{t}}{\partial \theta ^{k}}g^{km}%
\frac{\partial X^{b}}{\partial \theta ^{m}}\mathrm{d}X^{s}\otimes \frac{%
\partial }{\partial X^{b}}$ acting on the first tensor component $\frac{\partial }{\partial X^r}$, and $\mathcal{P%
}_{2}=\delta _{rq}\frac{\partial X^{c}}{\partial \theta ^{i}}g^{ij}\frac{%
\partial X^{r}}{\partial \theta ^{j}}\mathrm{d}X^{q}\otimes \frac{\partial }{%
\partial X^{c}}$ acting on the second component $\frac{\partial }{\partial X^e}$. Evaluating at each $x\in M$ and using the notation in \eqref{eqn:PDi}, Eq. (\ref%
{eqn:grug}) can be written in a matrix form as,%
\begin{equation*}
\begin{small}
{\mathrm{grad}}_{g}u(x)=\mathbf{P}\left( \overline{{\mathrm{grad}}}_{\mathbb{R}%
^{n}}U(x)\right) \mathbf{P}=\left[
\begin{array}{ccc}
\mathcal{G}_{1}U^{1}(x) & \cdots & \mathcal{G}_{1}U^{n}(x) \\
\vdots & \ddots & \vdots \\
\mathcal{G}_{n}U^{1}(x) & \cdots & \mathcal{G}_{n}U^{n}(x)%
\end{array}%
\right] \left[
\begin{array}{ccc}
P_{11}(x) & \cdots & P_{1n}(x) \\
\vdots & \ddots & \vdots \\
P_{n1}(x) & \cdots & P_{nn}(x)%
\end{array}%
\right] . 
\end{small}
\end{equation*}%
Interpreting $U(x) = (U^1(x),\ldots, U^n(x))^\top \in \mathbb{R}^{n\times 1}$ as a vector, one sees immediately by taking the transpose of the above formula that
\BEA
\textup{grad}_g u(x) = \left( \mathcal{H}_1U(x), \dots , \mathcal{H}_nU(x)  \right),\label{sec2.4:eq1}
\EEA
where each component can be rewritten as,
$$
\mathcal{H}_i U(x) : = \mathbf{P} \textup{diag}\left( \mathcal{G}_i , \dots , \mathcal{G}_i \right) U(x),
$$
owing to the symmetry of $\mathbf{P}$.

To write the discrete approximation on the training data set $X= \{x_1,\ldots, x_n\}$, we define $\mathbf{U}^i = (U^i(x_1),\ldots,U^i(x_N))^\top \in \mathbb{R}^{N\times 1}$ and concatenate these $\mathbf{U}^i$ to form $\mathbf{U}=((\mathbf{U}^1)^\top ,\ldots, (\mathbf{U}^n)^\top )^\top \in \mathbb{R}^{nN\times 1}$. Consider now the map $\mathbf{H}_i : \mathbb{R}^{nN} \to \mathbb{R}^{nN}$ defined by
\BEA
\mathbf{H}_i \mathbf{U}  := R_N \mathcal{H}_iI_{\phi_s} \mathbf{U},  \label{sec2.4:eq2}
\EEA
where the interpolation is defined on each $\mathbf{U}^i\in \mathbb{R}^N$ such that $I_{\phi_s} \mathbf{U}^i \in C^{\alpha}(\mathbb{R}^n)$ and the restriction $R_N$ is applied on each row. Relating to $\mathbf{G}_i$ in Definition~\ref{def:G}, one can write
\begin{equation*}
\mathbf{H}_{i}\mathbf{U} =
\mathbf{P}^{\otimes }\left[
\begin{array}{ccc}
\mathbf{G}_{i} &  &  \\
& \ddots &  \\
&  & \mathbf{G}_{i}%
\end{array}%
\right] _{Nn\times Nn}\left[
\begin{array}{c}
\mathbf{U}^{1} \\
\vdots \\
\mathbf{U}^{n}%
\end{array}%
\right] _{Nn\times 1},
\end{equation*}%
where tensor projection matrix $\mathbf{P}^{\otimes }\in \mathbb{R}^{Nn\times Nn}$ is
given by
\begin{small}
\begin{equation*}
\mathbf{P}^{\otimes }=\sum_{k=1}^{N}\left[
\begin{array}{ccc}
P_{11}(x_{k}) & \cdots & P_{1n}\left({x}_{k}\right)
\\
\vdots & \ddots & \vdots \\
P_{n1}\left(x_{k}\right) & \cdots & P_{nn}\left( x_{k}\right)%
\end{array}%
\right] _{n\times n}\otimes \left[ \delta _{kk}\right] _{N\times N}=\left[
\begin{array}{ccc}
\mathrm{diag}(\mathbf{p}_{11}) & \cdots & \mathrm{diag}(\mathbf{p}_{1n}) \\
\vdots & \ddots & \vdots \\
\mathrm{diag}(\mathbf{p}_{n1}) & \cdots & \mathrm{diag}(\mathbf{p}_{nn})%
\end{array}%
\right] _{Nn\times Nn},
\end{equation*}%
\end{small}
where $\otimes $ is the Kronecker product between two matrices, $\delta
_{kk} $ has value 1 for the entry in $k$th row and $k$th column and has 0
values elsewhere, and $\mathbf{p}_{ij}=\left( P_{ij}({x}%
_{1}),\ldots ,P_{ij}({x}_{N})\right) \in \mathbb{R}^{N\times 1}$.
Finally, consider the linear map $\mathbf{H}: \mathbb{R}^{nN} \to \mathbb{R}^{nN \times n}$ given by
\BEA
\mathbf{H} \mathbf{U} := [\mathbf{H}_1 \mathbf{U}  , \dots , \mathbf{H}_n \mathbf{U}] = R_N \textup{grad}_g I_{\phi_s} \mathbf{U}, \label{definition_H}
\EEA
as an estimator of the gradient of any vector field $U$ restricted on the training data set $X$, where on the right-hand-side, the restriction is done to each function entry-wise which results in an $N \times 1$ column vector. In the last equality, we have used the identity in \eqref{sec2.4:eq2} and the representation in \eqref{sec2.4:eq1} for the gradient of the interpolated vector field $I_{\phi_s}\mathbf{U}$ whose components are functions in $C^{\alpha}(\mathbb{R}^n)$.

\subsection{Divergence of a (2,0) Tensor Field}
Let $v$ be a $(2,0)$ tensor field of $v=v^{jk}%
\frac{\partial }{\partial \theta ^{j}}\otimes \frac{\partial }{\partial
\theta ^{k}}$ and $V$ be the corresponding extension in ambient space. The divergence of $v$ is defined as
\BEA
\mathrm{div}_{1}^{1}\left( v\right)
=C_{1}^{1}(\nabla v) \label{div_1^1},
\EEA
where $C^1_1$ denotes the contraction operator. Following a similar layout as before, we obtain an ambient space formulation of the divergence of a $(2,0)$ tensor field (see Appendix~\ref{app:A} for the detailed derivations). Interpreting $V(x)$ as an $n\times n$  matrix with columns $\bar{V}_i(x)=(V_{i1},\ldots,V_{in})^\top$ , the divergence of a $%
(2,0)$ tensor field evaluated at any $x\in M$ can be written as
\begin{equation}
\mathrm{div}_{1}^{1}\left( v(x)\right) =\mathbf{P}\mathrm{tr}_{1}^{1}\left(
\mathbf{P}\bar{\nabla}_{\mathbb{R}^{n}}\left( V(x)\right) \right) =   \sum_i \mathbf{P} \textup{diag}(\mathcal{G}_i , \dots , \mathcal{G}_i) \bar{V}_i(x) = \sum_i \mathcal{H}_i \bar{V}_i(x). \label{eqn:divvec}
\end{equation}
Using the same procedure as before, employing $\mathrm{div}_{1}^{1}$ on the RBF interpolant (2,0) tensor field $I_{\phi_s}\mathbf{\bar{V}}_i$, where $\mathbf{\bar{V}}_i = \bar{V}_i|_X \in \mathbb{R}^{nN\times 1}$ denotes the restriction of $\bar{V}_i$ on the training data, we arrive at an estimator of the divergence $\textup{div}_1^1$ of a $(2,0)$ tensor field. Namely, replacing each $\mathcal{H}_i$ with the discrete version $\mathbf{H}_i$ as defined in \eqref{sec2.4:eq2}, we obtain a map $\mathbb{R}^{nN \times n} \to \mathbb{R}^{nN}$ given by
\BEA
[\mathbf{\bar{V}}_{1},\ldots ,\mathbf{\bar{V}}_{n}]\mapsto \mathbf{H}_{1}\mathbf{\bar{V}}%
_{1}+\cdots +\mathbf{H}_{n}\mathbf{\bar{V}}_{n}.\notag
\EEA

\subsection{Bochner Laplacian} \label{Bochner definition}
The Bochner Laplacian $\Delta_B : \mathfrak{X}(M) \to \mathfrak{X}(M)$ is defined by
\BEA
\Delta_B u = -\textup{div}^1_1 \left( \textup{grad}_g u \right), \notag
\EEA
where $\textup{div}^1_1$ is in fact the formal adjoint of $\textup{grad}_g$ acting on vector fields.
With an extension to Euclidean space, the Bochner Laplacian can be formulated as:
\BEA
\bar{\Delta}_B{U} = -( \mathcal{H}_1 \mathcal{H}_1 + \dots + \mathcal{H}_n \mathcal{H}_n) {U},  \notag
\EEA
where \eqref{sec2.4:eq1} and \eqref{eqn:divvec} have been used.
A natural way to estimate $\Delta_B$ is to compose the discrete estimators for $\textup{div}_1^1$ and $\textup{grad}_g$. In particular, a {\bf non-symmetric estimator} of the Bochner Laplacian is a map $\mathbb{R}^{nN} \to \mathbb{R}^{nN}$ given by
\BEA
\mathbf{U} \mapsto -( \mathbf{H}_1 \mathbf{H}_1 + \dots + \mathbf{H}_n \mathbf{H}_n) \mathbf{U},  \notag
\EEA
where $\mathbf{U} = U|_X \in \mathbb{R}^{nN\times 1}$ and $\mathbf{H}_i$ are as defined in \eqref{sec2.4:eq2}.

The second formulation relies on the fact that  $\textup{div}^1_1$ is indeed the formal adjoint of $\textup{grad}_g$ acting on a vector field
such that,
\BEA
\int_M \langle \Delta_B u, v \rangle_x d\textup{Vol}(x) = \int_M \langle \textup{grad}_g u, \textup{grad}_g v \rangle_x d\textup{Vol}(x),\quad\quad \forall u,v\in \mathfrak{X}(M), \notag \label{bochner:weak}
\EEA
where the inner product on the left is the Riemannian inner product of vector fields, and the inner product on the right is the Riemannian inner product of $(2,0)$ tensor fields. Similar to the symmetric discrete estimator of the Laplace-Beltrami operator, we take advantages of the ambient space formulations in previous two subsections to approximate the inner product  with appropriate normalized inner products in Euclidean space.

First, we notice that the transpose of the map $\mathbf{H}:
\mathbf{U}  \mapsto \begin{bmatrix}
\mathbf{H}_1 \mathbf{U}, \mathbf{H}_2 \mathbf{U}, \dots , \mathbf{H}_n \mathbf{U}
\end{bmatrix}
$
is given by the standard transpose. Due to the possibility, however, that $\mathbf{H}^\top$ may produce a vector corresponding to a vector field with components normal to the manifold which are nonzero, there is a need to compose such an estimator with the projection matrix. With this consideration, we have $\mathbf{P}^{\otimes}\mathbf{H}^\top \mathbf{H}\mathbf{P}^{\otimes}: \mathbb{R}^{nN} \to \mathbb{R}^{nN}$ given by
\BEA
\mathbf{P}^{\otimes} \mathbf{H}^\top \mathbf{H} \mathbf{P}^{\otimes} \mathbf{U}  = \mathbf{P}^{\otimes} \mathbf{H}_1^\top \mathbf{H}_1 \mathbf{P}^{\otimes} \mathbf{U}  + \mathbf{P}^{\otimes}\mathbf{H}^\top_2 \mathbf{H}_2 \mathbf{P}^{\otimes} \mathbf{U}  + \dots + \mathbf{P}^{\otimes} \mathbf{H}_n^\top \mathbf{H}_n \mathbf{P}^{\otimes} \mathbf{U}  \notag
\EEA
as a {\bf symmetric estimator} of the Bochner Laplacian on vector fields.
\begin{remark}
Again, we note that this symmetric formulation makes use of approximating continuous inner products, and hence obviously holds only for uniform data. For data sampled from a non-uniform density $q$, we perform the same trick mentioned in Remark \ref{non unif remark}.
\end{remark}

We conclude this section with a list of RBF discrete formulation in Table~\ref{tab:rbfd}. {One can see the detailed derivation for the non-symmetric approximations of Hodge and Lichnerowicz Laplacians in Appendix~\ref{app:A}. We neglect the derivations for the symmetric approximations of the Hodge and Lichnerowicz Laplacians as they are analogous to that for the Bochner Laplacian but involve more terms. }

\begin{table}[tbp]
\caption{RBF formulation for functions and vector fields from Riemannian
geometry.
Here, {\textit{non-symmetric}} and {\textit{symmetric}} correspond to the non-symmetric and symmetric approximations
to the differential operator.
The asterisk $^*$ is the formal adjoint of the differential operator.
$\mathcal{S}_i$ and $\mathbf{S}_i$ are defined around (\ref{eqn:Vcol}) in Appendix \ref{app:A}. }
\label{tab:rbfd}\renewcommand\arraystretch{1.5}
\par
\begin{center}
\scalebox{0.75}[0.75]{
\begin{tabular}{c c c}
\hline\hline
\text{Object} & \text{Continuous operator} & \text{Discrete matrix} \\ \hline
\text{gradient} & $\mathrm{grad}_{g}:C^{\infty }(M)\rightarrow \mathfrak{X}%
(M) $ & $\mathrm{grad}_{g}:\mathbb{R}^{N}\rightarrow \mathbb{R}^{N\times n}$
\\
\text{functions} & {\quad\quad\quad} $f\mapsto \text{ }\left[ \mathcal{G}%
_{1}f,\ldots ,\mathcal{G}_{n}f\right]$ & \quad\quad\quad\ $\mathbf{f}\mapsto
(\mathbf{G}_{1}\mathbf{f,\ldots ,G}_{n}\mathbf{f})$ \\ \hline
\text{divergence} & $\mathrm{div}_{g}:\mathfrak{X}(M)\rightarrow C^{\infty
}(M)$ & $\mathrm{div}_{g}:\mathbb{R}^{N\times n}\rightarrow \mathbb{R}%
^{N\times 1}$ \\
\text{vector fields} & {\quad\quad\ } $U\mapsto \mathcal{G}_{1}U^{1}+\cdots +%
\mathcal{G}_{n}U^{n}$ & {\quad\quad\ } $\mathbf{U}\mapsto \mathbf{G}_{1}%
\mathbf{U}^{1}+\cdots +\mathbf{G}_{n}\mathbf{U}^{n}$ \\ \hline
\text{Laplace-Beltrami } & $\Delta _{g}:C^{\infty }(M)\rightarrow C^{\infty
}(M)$ & $\Delta _{g}:\mathbb{R}^{N\times 1}\rightarrow \mathbb{R}^{N\times
1} $ \\
\textit{non-symmetric} & {\quad\quad} ${f}\mapsto -\left( \mathcal{G}_{1}\mathcal{G}%
_{1}+\cdots +\mathcal{G}_{n}\mathcal{G}_{n}\right) f$ & {\quad\quad} $%
\mathbf{f}\mapsto -\left( \mathbf{G}_{1}\mathbf{G}_{1}+\cdots +\mathbf{G}_{n}%
\mathbf{G}_{n}\right) \mathbf{f}$ \\
\textit{symmetric} & {\quad\quad} ${f}\mapsto \left( \mathcal{G}_{1}^{*}\mathcal{G}%
_{1}+\cdots +\mathcal{G}_{n}^{*}\mathcal{G}_{n}\right) f$ & {\quad\quad} $%
\mathbf{f}\mapsto \left( \mathbf{G}_{1}^\top\mathbf{G}_{1}+\cdots +\mathbf{G}_{n}^\top%
\mathbf{G}_{n}\right) \mathbf{f}$ \\
\hline
\text{gradient } & ${\mathrm{grad}}_{g}:\mathfrak{X}(M)\rightarrow \mathfrak{%
X}(M)\times \mathfrak{X}(M)$ & ${\mathrm{grad}}_{g}:\mathbb{R}^{Nn\times
1}\rightarrow \mathbb{R}^{Nn\times n}$ \\
\text{vector fields} & {\quad\quad\quad} $U\mapsto \left[ \mathcal{H}%
_{1}U,\ldots ,\mathcal{H}_{n}U\right]$ & {\quad\quad\quad} $\mathbf{U}%
\mapsto \left[ \mathbf{H}_{1}\mathbf{U},\ldots ,\mathbf{H}_{n}\mathbf{U}%
\right]$ \\ \hline
\text{divergence} & $\mathrm{div}_{1}^{1}:\mathfrak{X}(M)\times \mathfrak{X}%
(M)\rightarrow \mathfrak{X}(M)$ & $\mathrm{div}_{1}^{1}:\mathbb{R}^{Nn\times
n}\rightarrow \mathbb{R}^{Nn\times 1}$ \\
\text{(2,0) tensor fields} & {\quad\quad\ } $V\mapsto \mathcal{H}_{1}\bar{V}%
_{1}+\cdots +\mathcal{H}_{n}\bar{V}_{n}$ & {\quad\quad} $[\mathbf{\bar{V}}%
_{1},\ldots ,\mathbf{\bar{V}}_{n}]\mapsto \mathbf{H}_{1}\mathbf{\bar{V}}%
_{1}+\cdots +\mathbf{H}_{n}\mathbf{\bar{V}}_{n}$ \\
& {\quad\quad\ } $\bar{V}_{i}\text{ is the }i\text{th row of }V$ & {%
\quad\quad} $\mathbf{\bar{V}}_i = [\mathbf{V}_{i1}, \ldots,\mathbf{V}_{in}]
\in \mathbb{R}^{Nn\times 1}$ \\ \hline
\text{Bochner Laplacian } & $\Delta _{B}:\mathfrak{X}(M)\rightarrow
\mathfrak{X}(M)$ & $\Delta _{B}:\mathbb{R}^{Nn\times 1}\rightarrow \mathbb{R}%
^{Nn\times 1}$ \\
\textit{non-symmetric} & $U\mapsto -(\mathcal{H}_{1}\mathcal{H}_{1}+\cdots +%
\mathcal{H}_{n}\mathcal{H}_{n})U$ & $\mathbf{U}\mapsto -( \mathbf{H}_{1}\mathbf{H%
}_{1}+\cdots +\mathbf{H}_{n}\mathbf{H}_{n})\mathbf{U}$ \\
\textit{symmetric} & $U\mapsto (\mathcal{H}_{1}^{*}\mathcal{H}_{1}+\cdots +%
\mathcal{H}_{n}^{*}\mathcal{H}_{n})U$ & $\mathbf{U}\mapsto (\mathbf{P}^{\otimes} \mathbf{H}_{1}^\top\mathbf{H%
}_{1}\mathbf{P}^{\otimes}+\cdots +\mathbf{P}^{\otimes}\mathbf{H}_{n}^\top\mathbf{H}_{n}\mathbf{P}^{\otimes})\mathbf{U}$ \\ \hline
\text{Hodge Laplacian } & $\Delta _{H}:\mathfrak{X}(M)\rightarrow \mathfrak{X%
}(M)$ & $\Delta _{H}:\mathbb{R}^{Nn\times 1}\rightarrow \mathbb{R}^{Nn\times
1}$ \\
\text{vector fields} & $U\mapsto -\left[
\begin{array}{c}
\mathcal{H}_{1} \\
\vdots \\
\mathcal{H}_{n}%
\end{array}%
\right] \cdot \mathrm{Ant}\left[
\begin{array}{c}
\mathcal{H}_{1}U \\
\vdots \\
\mathcal{H}_{n}U%
\end{array}%
\right]$ & $\mathbf{U}\mapsto -\left[
\begin{array}{c}
\mathbf{H}_{1} \\
\vdots \\
\mathbf{H}_{n}%
\end{array}%
\right] \cdot \mathrm{Ant}\left[
\begin{array}{c}
\mathbf{H}_{1}\mathbf{U} \\
\vdots \\
\mathbf{H}_{n}\mathbf{U}%
\end{array}%
\right]$ \\
\shortstack{\text{Ant} is the anti-\\
symmetric part} & {\quad\ } $-\left[
\begin{array}{c}
\mathcal{G}_{1} \\
\vdots \\
\mathcal{G}_{n}%
\end{array}%
\right] \left( \sum\limits_{k=1}^{n}\mathcal{G}_{k}U^{k}\right) $ & {\quad\ }
$-\left[
\begin{array}{c}
\mathbf{G}_{1} \\
\vdots \\
\mathbf{G}_{n}%
\end{array}%
\right] \left( \sum\limits_{k=1}^{n}\mathbf{G}_{k}\mathbf{U}^{k}\right)$ \\
{\textit{non-symmetric}} & $=- \sum_{i=1}^{n}\mathcal{H}_{i}(\mathcal{H}_{i}-\mathcal{S}_{i}%
) U$ & $=- \sum_{i=1}^{n}\mathbf{H}_{i}(\mathbf{H}_{i}-\mathbf{S}_{i}%
\mathbf{)} \mathbf{U}$\\
& $-[\mathcal{G}_{j}\mathcal{G}_{k}]_{j,k=1}^{n} U$ & $- [\mathbf{G}_{j}\mathbf{G}_{k}]_{j,k=1}^{n} \mathbf{U}$ \\
{\textit{symmetric}} & $U \mapsto \frac{1}{2}\sum_{i=1}^{n}(\mathcal{H}_{i}-\mathcal{S}_{i}\mathbf{%
)}^{\ast }(\mathcal{H}_{i}-\mathcal{S}_{i}\mathbf{)}U$ & $\mathbf{U} \mapsto \frac{1}{2}\sum_{i=1}^{n}\mathbf{P}^{\otimes }(\mathbf{H%
}_{i}-\mathbf{S}_{i}\mathbf{)}^{\top }(\mathbf{H}_{i}-\mathbf{S}_{i}\mathbf{%
)P^{\otimes }U}$ \\
& $+[\mathcal{G}_{j}^{\ast }\mathcal{G}_{k}]_{j,k=1}^{n}U$ & $+\mathbf{P}^{\otimes }[\mathbf{G}_{j}^{\top }\mathbf{G}_{k}]_{j,k=1}^{n}%
\mathbf{P^{\otimes }U}$ \\
\hline
\text{Lichnerowicz Lap. } & $\Delta _{L}:\mathfrak{X}(M)\rightarrow
\mathfrak{X}(M)$ & $\Delta _{L}:\mathbb{R}^{Nn\times 1}\rightarrow \mathbb{R}%
^{Nn\times 1}$ \\
\shortstack{\text{Sym} is the \\
symmetric part } & $U\mapsto -\left[
\begin{array}{c}
\mathcal{H}_{1} \\
\vdots \\
\mathcal{H}_{n}%
\end{array}%
\right] \cdot \mathrm{Sym}\left[
\begin{array}{c}
\mathcal{H}_{1}U \\
\vdots \\
\mathcal{H}_{n}U%
\end{array}%
\right]$ & $\mathbf{U}\mapsto -\left[
\begin{array}{c}
\mathbf{H}_{1} \\
\vdots \\
\mathbf{H}_{n}%
\end{array}%
\right] \cdot \mathrm{Sym}\left[
\begin{array}{c}
\mathbf{H}_{1}\mathbf{U} \\
\vdots \\
\mathbf{H}_{n}\mathbf{U}%
\end{array}%
\right]$ \\
{\textit{non-symmetric}} & $=-\sum_{i=1}^{n}\mathcal{H}_{i}(\mathcal{H}_{i}+\mathcal{S}_{i})U$ &
$=-\sum_{i=1}^{n}\mathbf{H}_{i}(\mathbf{H}_{i}+\mathbf{S}_{i})\mathbf{U}$\\
{\textit{symmetric}} & $U\mapsto \frac{1}{2}\sum_{i=1}^{n}(\mathcal{H}_{i}+\mathcal{S}_{i}\mathbf{)}%
^{\ast }(\mathcal{H}_{i}+\mathcal{S}_{i}\mathbf{)}U$ & $\mathbf{U}\mapsto \frac{1}{2}\sum_{i=1}^{n}\mathbf{P}^{\otimes }(\mathbf{H}_{i}+\mathbf{S}_{i})^{\top }(\mathbf{H}_{i}+\mathbf{S}_{i})\mathbf{P}^{\otimes }\mathbf{U}$ \\
\hline
\text{covariant derivative} & $\nabla :\mathfrak{X}(M)\times \mathfrak{X}%
(M)\rightarrow \mathfrak{X}(M)$ & $\nabla :\mathbb{R}^{Nn\times 1}\times
\mathbb{R}^{Nn\times 1}\rightarrow \mathbb{R}^{Nn\times 1}$ \\
& $\left( U,Y\right) \mapsto \mathcal{P}\bar{\nabla}_{U}Y$ & $\left( \mathbf{U%
},\mathbf{Y}\right) \mapsto \mathbf{P}^{\otimes }\bar{\nabla}_{%
\mathbf{U}}\mathbf{Y}$ \\ \hline\hline
\end{tabular}%
}
\end{center}
\end{table}

\subsection{Numerical Verification for Operator Approximation}

\begin{figure*}[tbp]
{\scriptsize \centering
\begin{tabular}{ccc}
{\footnotesize (a) Truth of Bochner Laplacian} & {\footnotesize (b) Truth of
Lich. Laplacian} & {\footnotesize (c) Truth of Covariant Deriv.} \\
\includegraphics[width=1.8
in, height=1.4 in]{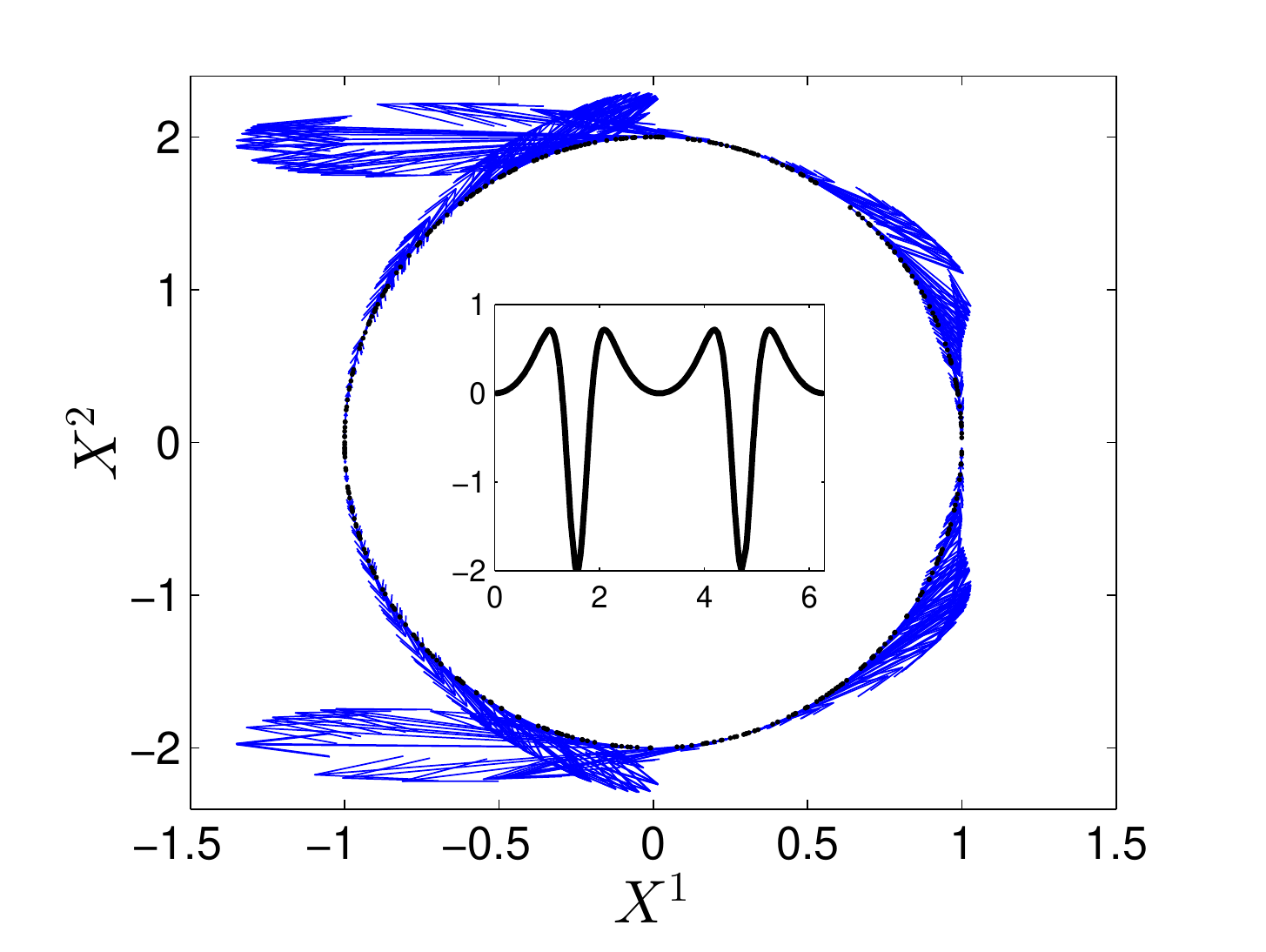}
&
\includegraphics[width=1.8
in, height=1.4 in]{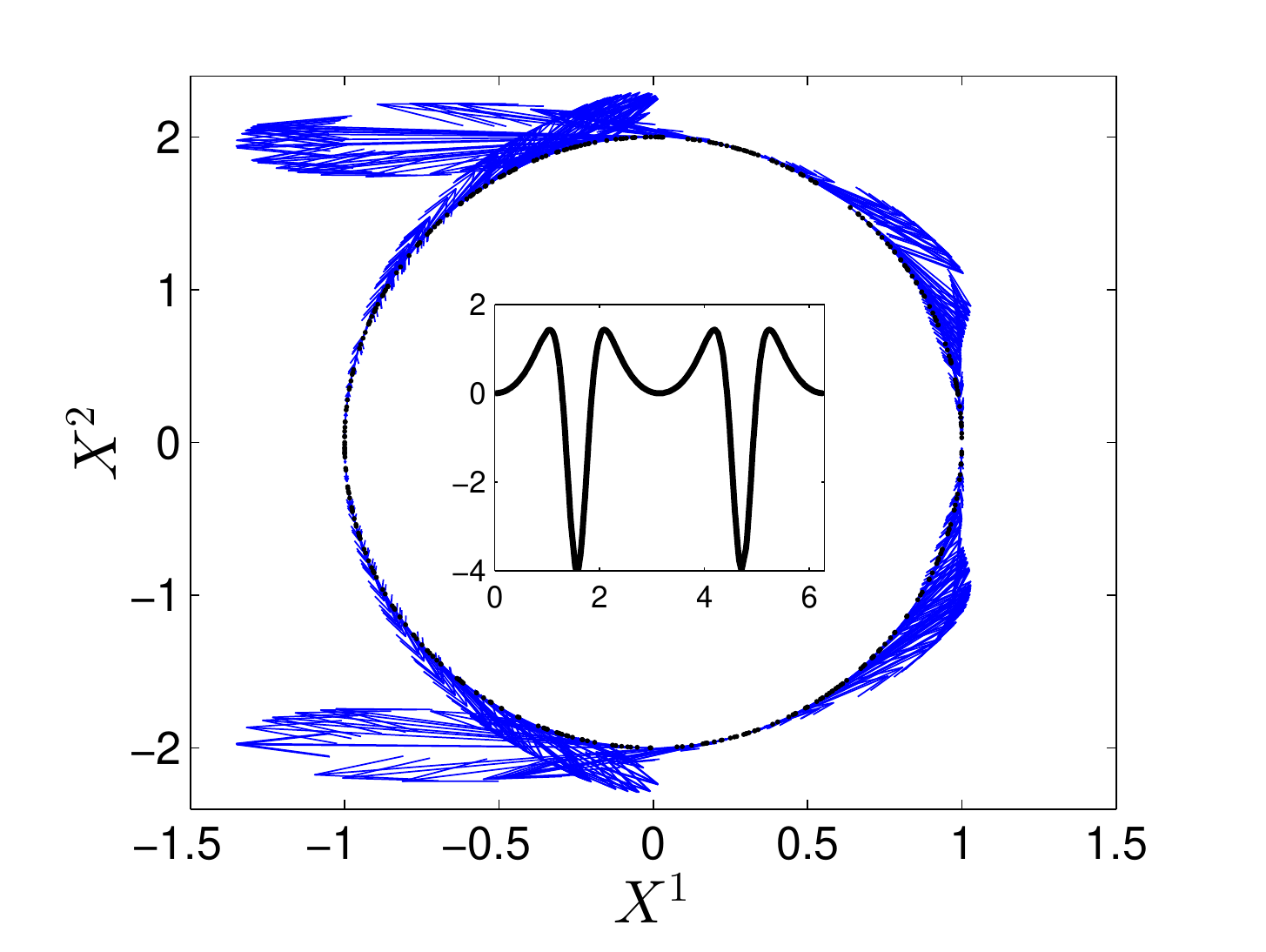}
&
\includegraphics[width=1.8
in, height=1.4 in]{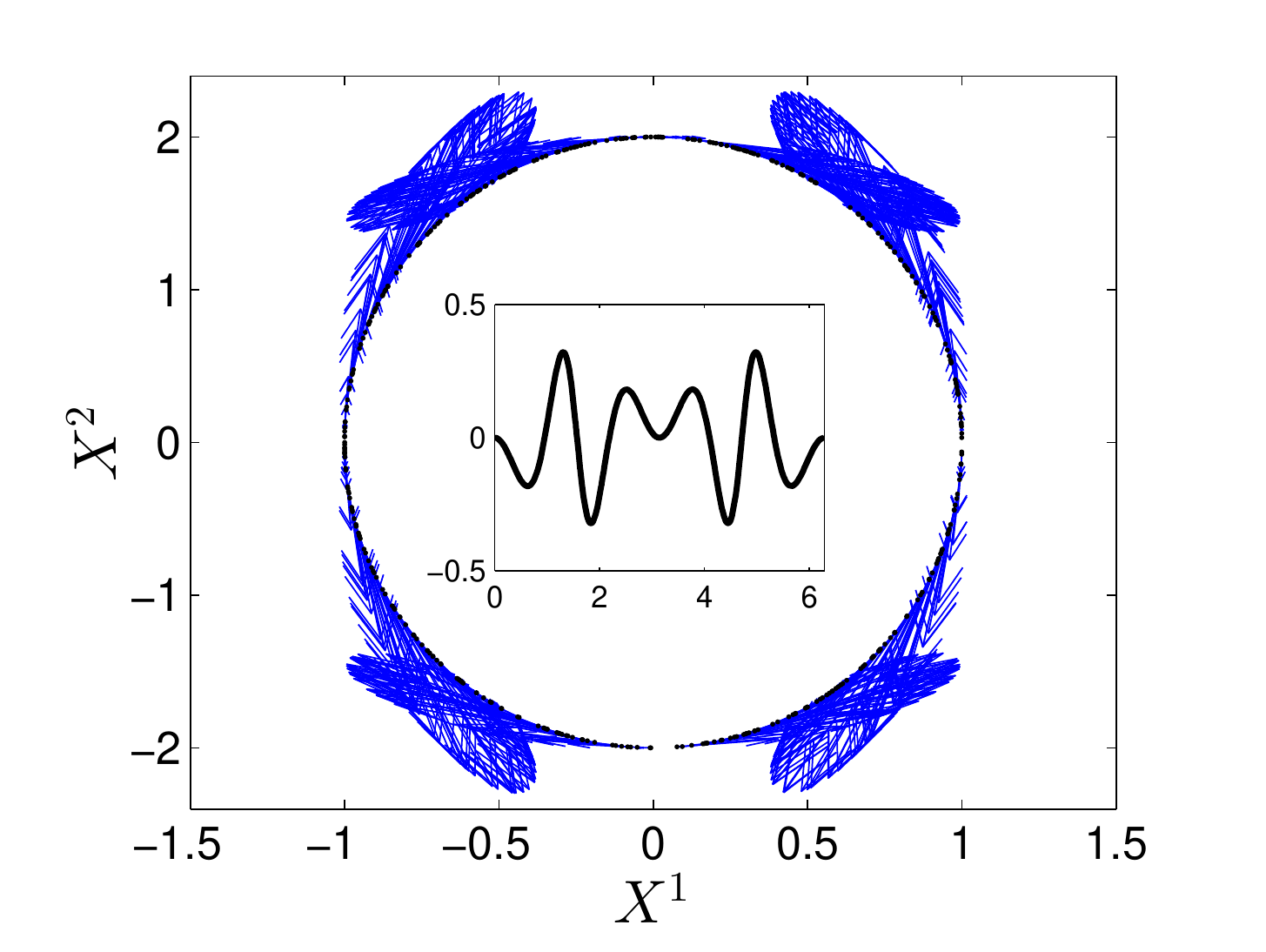}
\\
{\footnotesize (d) Error of Boch. Laplacian} & {\footnotesize (e) Error of Lich.
Laplacian} & {\footnotesize (f) Error of Covariant Deriv. } \\
\includegraphics[width=1.8
in, height=1.4 in]{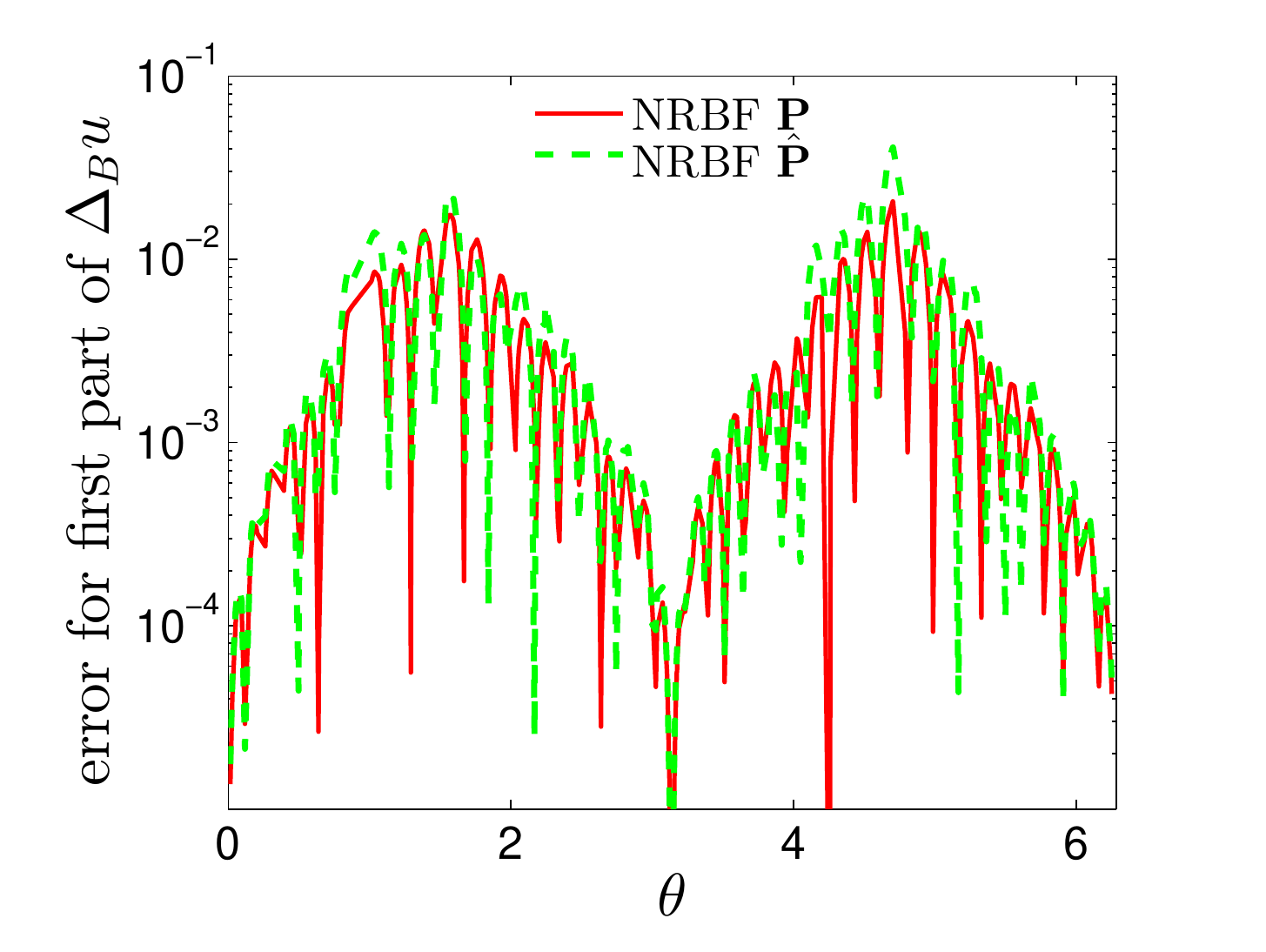} &
\includegraphics[width=1.8
in, height=1.4 in]{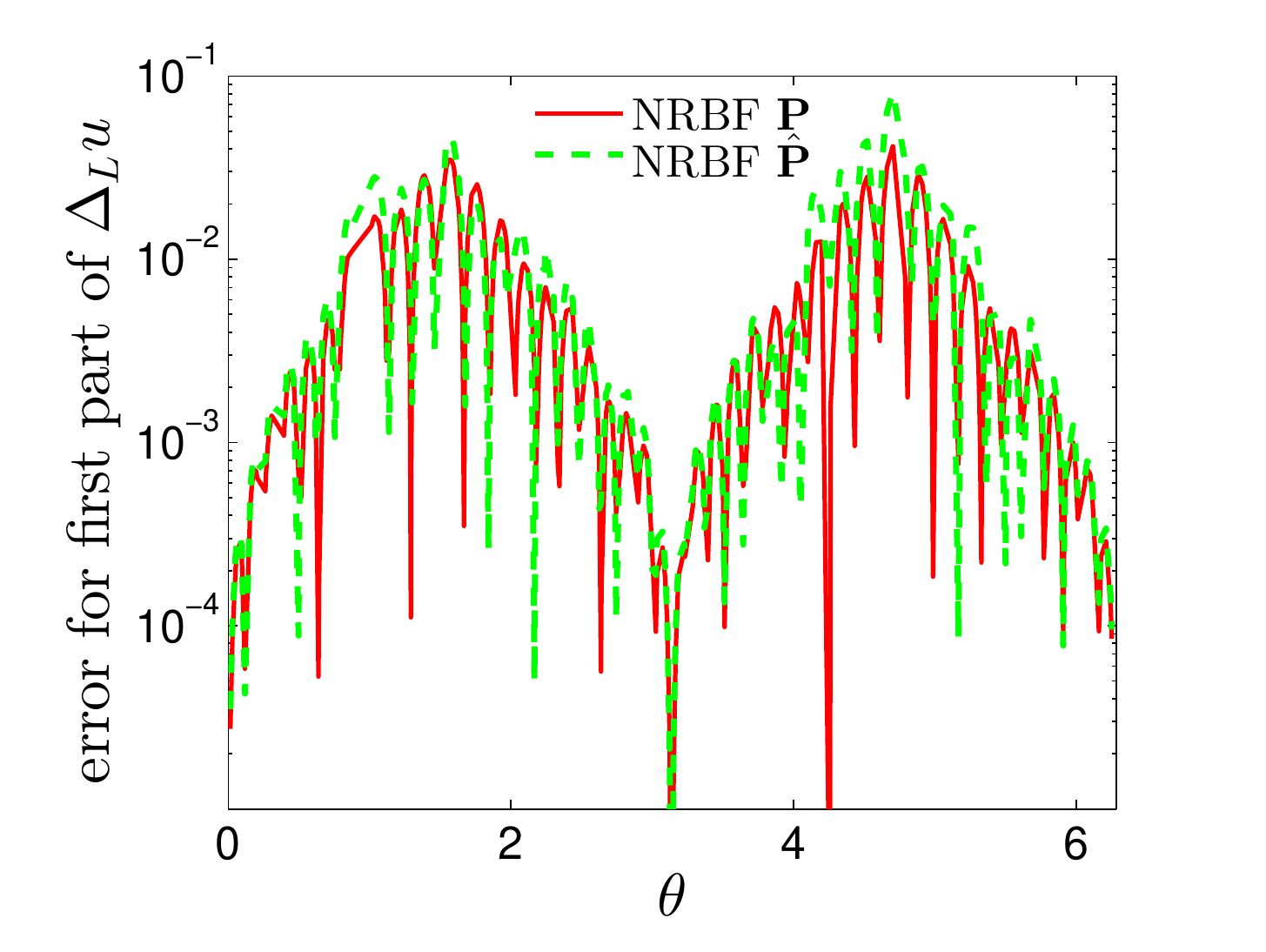} &
\includegraphics[width=1.8
in, height=1.4 in]{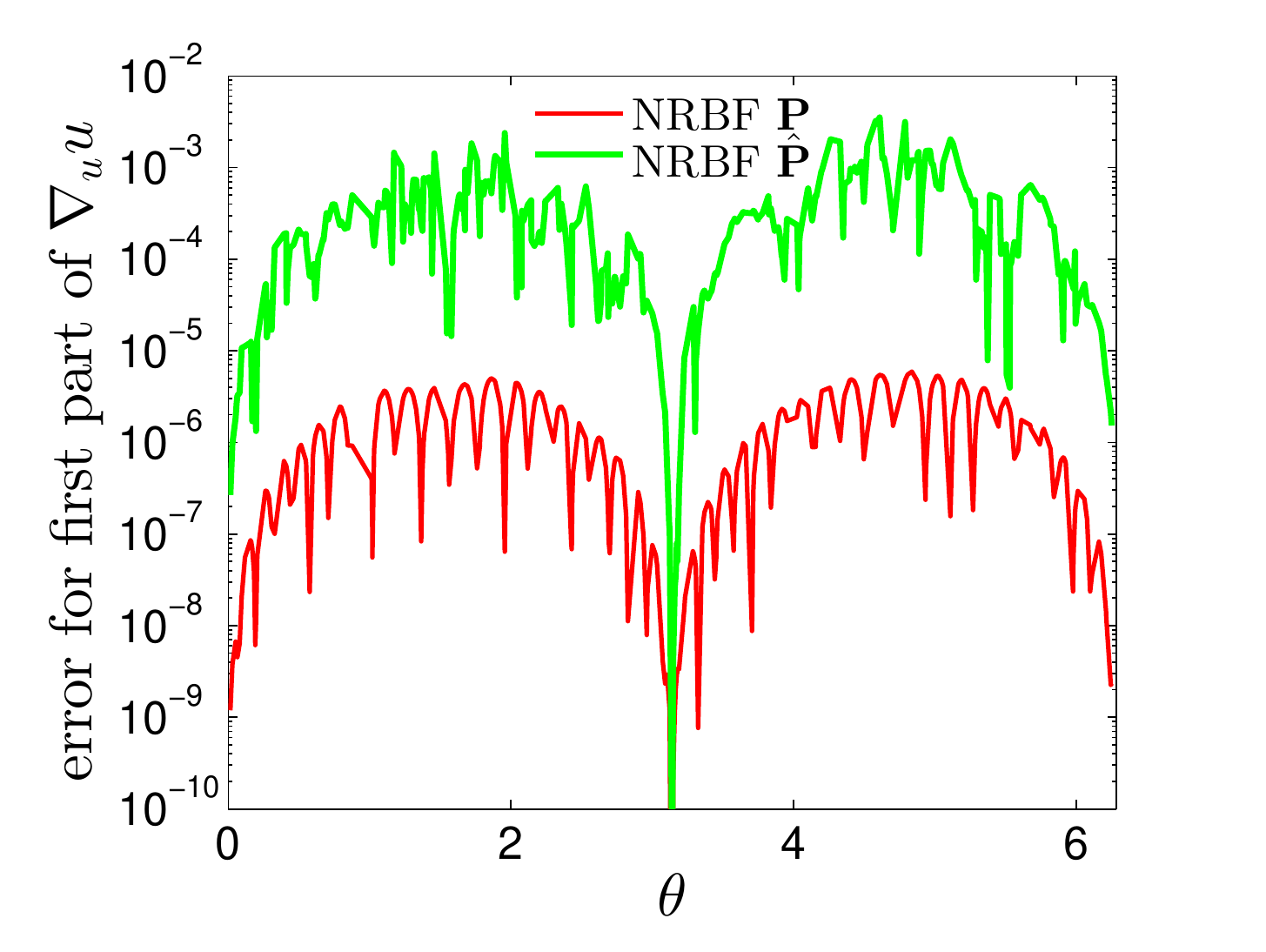}%
\end{tabular}
}
\caption{{\bf 1D ellipse in $\mathbb{R}^2$.} The upper panels
display the truth of (a) Bochner Laplacian, (b) Lichnerowicz Laplacian, and
(c) covariant derivative of a vector field. The insets of upper panels
display the first components of these operator approximations. The bottom
panels display the errors of NRBF approximations using analytic $\mathbf{P}$
(red curve) and approximated $\hat{\mathbf{P}}$ (green curve) for (d)
Bochner Laplacian, (e) Lichnerowicz Laplacian, and (f) covariant derivative.
The Gaussian kernel with shape parameter $s =1.5$ was used. The $N=400$ data
points are randomly distributed. }
\label{fig1_rbfopercheck}
\end{figure*}

We now show the non-symmetric RBF (NRBF) estimates for vector Laplacians and covariant
derivative. The manifold is a one-dimensional full ellipse,
\begin{equation}
{x}=(x^{1},x^{2})=(\cos \theta ,a\sin \theta ),  \label{eqn:xellips}
\end{equation}%
defined with the Riemannian metric $g=\sin ^{2}\theta +a^{2}\cos ^{2}\theta $
for $0\leq \theta < 2\pi ,$where $a=2>1$. The $N=400$ data points are
randomly distributed on the ellipse. The Gaussian kernel with the shape
parameter $s=1.5$ was used.

We first approximate the vector Laplacians. We take a vector field $u=u^{1}%
\frac{\partial }{\partial \theta }$ with $u^{1}\left( x\right) \equiv
u^{1}\left( x\left( \theta \right) \right) =\sin \theta $. The Bochner
Laplacian acting on $u$ can be calculated as $\Delta _{B}u=g^{-1}u_{,11}^{1}%
\frac{\partial }{\partial \theta }$, where $u_{,11}^{1}=\frac{\partial
^{2}u^{1}}{\partial \theta ^{2}}+\frac{\partial u^{1}}{\partial \theta }%
\Gamma _{11}^{1}+u^{1}\frac{\partial \Gamma _{11}^{1}}{\partial \theta }$
with $\Gamma _{11}^{1}=\frac{1}{2}g^{-1}\frac{\partial g}{\partial \theta }$%
. Numerically, Fig. \ref{fig1_rbfopercheck}(a) shows the true Bochner
Laplacian $\Delta _{B}u=\bar{\Delta}_{B}U$ pointwisely, which is a $2\times 1$ vector
lying in the tangent space of each given point. The inset of Fig.
\ref{fig1_rbfopercheck}(a) displays the first vector component of the
Bochner Laplacian as a function of the intrinsic coordinate $\theta $.
Figure \ref{fig1_rbfopercheck}(d) displays the error for the first
vector component of Bochner Laplacian as a function of $\theta $. Here, we show the errors of using the analytic
$\mathbf{P}$\ and an approximated $\mathbf{\hat{P}}$. Here (and in the remainder of this paper), we used the notation $\mathbf{\hat{P}}$ to denote the approximated projection matrix obtained from a second-order method discussed in Section~\ref{sec3}. One can
clearly see that the errors for NRBF using both analytic $\mathbf{P}$\ and
approximated $\mathbf{\hat{P}}$\ are small about $0.01$. Since Hodge
Laplacian is identical to Bochner Laplacian on a 1D manifold, the results
for Hodge Laplacian are almost the same as\ those for Bochner [not shown
here]. The Lichnerowicz Laplacian is the double of Bochner Laplacian, $%
\Delta _{L}u=2g^{-1}u_{,11}^{1}\frac{\partial }{\partial \theta }$, as shown
in Fig. \ref{fig1_rbfopercheck}(b). The error for the first vector
component of Lichnerowicz Laplacian is also nearly doubled as shown in Fig.
\ref{fig1_rbfopercheck}(e).

We next approximate the covariant derivative. The covariant derivative can
be calculated as $\nabla _{u}u=u^{1}(\frac{\partial u^{1}}{\partial \theta }%
+u^{1}\Gamma _{11}^{1})\frac{\partial }{\partial \theta }$ as shown in Fig.
\ref{fig1_rbfopercheck}(c). Figure \ref{fig1_rbfopercheck}%
(f) displays the error of NRBF approximation\ for the first vector component
of covariant derivative $\nabla _{u}u$. One can see from Fig. \ref%
{fig1_rbfopercheck}(f)\ that the error for analytic $\mathbf{P}$ (red) is
very small about $10^{-6}$ and the error for approximated $\mathbf{\hat{P}}$
(green)\ is about $10^{-3}$.

\section{Estimation of the Projection Matrix}\label{sec3}

When the manifold $M$ is unknown and identified only by a point cloud data $X=\{x_{1},\ldots,x_N\}$, where $x_i\in M$, we do not immediately have access to the matrix-valued function $P$. In this section, we first give a quick overview of the existing first-order local SVD method for estimating $\mathbf{P}=P(x)$ on each $x\in M$. Subsequently, we present a novel second-order method (which is a local SVD method that corrects the estimation error induced by the curvature) under the assumption that the data set $X$ lies on a $C^{3}$ $d$-dimensional Riemannian manifold $M$ embedded in $\mathbb{R}^{n}$.


Let $x, y \in X \subset M$ such that $\left\vert
y-x\right\vert = {O(\rho)}$. Define $\gamma $\ to be a geodesic, connecting $x$
and $y$. The curve is parametrized by the arc-length,
\begin{equation*}
\rho=\int_{0}^{\rho}\left\vert \gamma ^{\prime }(t)\right\vert dt,
\end{equation*}%
where $\gamma (0)=x,\gamma (\rho)=y.$ Taking\ derivative with respect to $\rho$,
we obtain constant velocity, $1=\left\vert \gamma ^{\prime }(t)\right\vert $
for all $0\leq t\leq \rho$. Let $\boldsymbol{\rho}=\left( \rho_{1},\ldots ,\rho_{d}\right) $ be
the geodesic normal coordinate of $y$ defined by an exponential map $\exp
_{x}:T_{x}M\rightarrow M$. Then $\boldsymbol{\rho}$ satisfies
\begin{equation*}
\rho\gamma ^{\prime }(0)=\boldsymbol{\rho}=\exp _{x}^{-1}(y),
\end{equation*}%
where
\begin{equation*}
\rho^{2}=\rho^{2}\left\vert \gamma ^{\prime }(0)\right\vert ^{2}=\left\vert \boldsymbol{\rho}%
\right\vert ^{2}=\sum_{i=1}^{d}\rho_{i}^{2}.
\end{equation*}%
For any point $x,y \in M$, let $\iota $ be the local parametrization of manifold such that $\iota
\left( \boldsymbol{\rho}\right)=y$ and $\iota(\mathbf{0})= x$. Consider the Taylor expansion of $%
\iota \left( \boldsymbol{\rho}\right) $ centered at $\mathbf{0},$%
\begin{equation}
\iota \left( \boldsymbol{\rho}\right) =\iota (\mathbf{0})+\sum_{i=1}^{d}\rho_{i}\frac{%
\partial \iota (\mathbf{0})}{\partial \rho_{i}}+\frac{1}{2}%
\sum_{i,j=1}^{d}\rho_{i}\rho_{j}\frac{\partial ^{2}\iota (\mathbf{0})}{\partial
\rho_{i}\partial \rho_{j}}+O(\rho^{3}).  \label{eqn:iots}
\end{equation}%
Since the Riemannian metric tensor at the based point $\mathbf{0}$ is an identity matrix, $\bm{\tau}_{i}=\frac{\partial \iota (\mathbf{0})}{\partial \rho_{i}}\Big/\left|\frac{\partial\iota (\mathbf{0})}{\partial \rho_{i}}\right| = \frac{\partial \iota (\mathbf{0})}{\partial \rho_{i}} $ are $d$ orthonormal tangent vectors that span $T_{x}M$.

\subsection{First-order Local SVD Method}

The classical local SVD method \cite{donoho2003hessian,zhang2004principal,tyagi2013tangent} uses the difference vector $y-x = \iota(\boldsymbol{\rho})-\iota(\mathbf{0})$ to estimate $\mathbf{T} = (\bm{\tau}_1,\ldots,\bm{\tau}_d)$ (up to an orthogonal rotation) and subsequently use it to approximate $\mathbf{P}=\mathbf{T}\mathbf{T}^\top$. { The same technique has also been proposed to estimate the intrinsic dimension of the manifold given noisy data \cite{little2009estimation}.}
Numerically, the first-order local SVD proceeds as follows:
\begin{enumerate}
\item For each $x\in X$, let $\{y_1,\ldots, y_K\} \subset X$ be the $K$-nearest neighbor (one can also use a radius neighbor) of $x$. Construct the distance matrix $\mathbf{D}:=[\mathbf{D}_1,\ldots, \mathbf{D}_K] \in \mathbb{R}^{n\times K}$, where $K>d$ and $\mathbf{D}_i: = y_i-x$.
\item Take a singular value decomposition of $\mathbf{D} = \mathbf{U}\mathbf{\Sigma}\mathbf{V}^\top$. Then the leading $d-$columns of $\mathbf{U}$ consists of $\mathbf{\tilde{T}}$ which approximates a span of column vectors of $\mathbf{T}$, which forms a basis of $T_xM$.
\item Approximate $\mathbf{P}$ with $\mathbf{\tilde{P}} = \mathbf{\tilde{T}}\mathbf{\tilde{T}}^\top$.
\end{enumerate}
Based on the Taylor's expansion in \eqref{eqn:iots}, {intuitively,} such an approximation can only provide an estimate with accuracy $\Vert \mathbf{\tilde{P}-P}\Vert _{F}=O(\rho)$, which is an order-one scheme, { where the constant in the big-oh notation, $O(\rho)$, depends on the base point $x$ through the curvature, number of nearest neighbors $K$, intrinsic dimension $d$, and extrinsic dimension $n$, as we discuss next. Here $\Vert \cdot \Vert_F$ denotes the Frobenius matrix norm.} For uniformly sampled data, we state the following definition and probabilistic type convergence result (Theorem~2 of \cite{tyagi2013tangent}) for this local SVD method, which will be useful in our convergence study.

\begin{definition}
For each point $x\in M$, where $M$ is a $d$-dimensional smooth manifold embedded in $\BR^n$, where $d+1\leq n$.
We define $N_\epsilon(x) = M \cap B_x(\sqrt{\epsilon})$, where $B_x(\sqrt{\epsilon})$ denotes the Euclidean ball (in $\BR^n$) centered at $x$ with radius $\sqrt{\epsilon}$. If $M$ has a positive injectivity radius  $\sqrt{\epsilon}>0$ at $x\in M$, then there is a diffeomorphism between $N_\epsilon(x)$ and $T_xM$. In such a case, there exists a local one-to-one map $T_xM \ni\boldsymbol{\rho}\mapsto y = \exp_x(\boldsymbol{\rho}):=(\boldsymbol{\rho}, f_{1}(\boldsymbol{\rho}),\ldots f_{n-d}(\boldsymbol{\rho}) ) \in M \subset \BR^n$, for $y\in M$ neighboring to $x$, with smooth functions $f_\ell:T_xM \to\BR$ for $\ell = 1,\ldots,n-d$. We also denote the maximum principal curvature at $x$ as $K_{max}$.
\end{definition}

\begin{theo}
\label{local svd result}
Suppose that $\{y_i \in M \}_{i=1}^K$ are the $K$-nearest neighbor data points of $x$ such that their orthogonal projections, $\boldsymbol{\rho}^{(i)} \sim U[-\sqrt{\epsilon},\sqrt{\epsilon}]^d \subset T_x M$ at $x$ are i.i.d. Let $\mathbf{W} \in \BR^{(n-d)\times (n-d)}$ be a matrix with components given as,
\[
W_{ij} = \mathbb{E}_{\boldsymbol{\rho}\sim U[-\sqrt{\epsilon},\sqrt{\epsilon}]^d} [f_{q,i}(\boldsymbol{\rho})f_{q,j}(\boldsymbol{\rho})],
\]
where $f_{q,\ell}$ denotes a quadratic form of $f_\ell$ for $\ell=1,\ldots,n-d$, which is the second-order Taylor expansion about the base point $\mathbf{0}$, involving the curvature at $x$ of $M$. Then, for any $\tau \in (0,1)$, in high probability,
\[
\Vert \mathbf{\tilde{P}-P}\Vert _{F} \leq \sqrt{2}\tau,
\]
if $\epsilon = O(n^{-1}d^{-2}|K_{max}|^{-2})$ and  $K = O(\tau^{-2} d^2\log n)$. Here $\|\cdot\|_F$ denotes the standard Frobenius matrix norm.
\end{theo}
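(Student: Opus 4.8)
The plan is to follow the argument of \cite{tyagi2013tangent}: reduce the claim to a spectral‑perturbation statement about the sample second‑moment matrix $\hat{\mathbf C}:=\frac1K\mathbf D\mathbf D^\top=\frac1K\sum_{i=1}^K(y_i-x)(y_i-x)^\top$, whose leading $d$‑dimensional eigenspace is exactly the column span of $\tilde{\mathbf T}$, hence the range of $\tilde{\mathbf P}$. First I would fix an orthonormal basis of $\mathbb R^n$ whose first $d$ vectors span $T_xM$, so that $\mathbf P=\mathrm{diag}(I_d,0)$ and, by the graph parametrization in the definition preceding the theorem, $y_i-x=(\boldsymbol\rho^{(i)},f(\boldsymbol\rho^{(i)}))$ with $f=(f_1,\dots,f_{n-d})$, $f_\ell(\boldsymbol\rho)=f_{q,\ell}(\boldsymbol\rho)+O(|\boldsymbol\rho|^3)$, no linear term (the tangent plane is the coordinate plane), and the quadratic part $f_{q,\ell}$ having operator norm controlled by $K_{max}$.

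Next I would compute the population matrix $\mathbf C:=\mathbb E_{\boldsymbol\rho\sim U[-\sqrt\epsilon,\sqrt\epsilon]^d}\!\big[(y-x)(y-x)^\top\big]$ blockwise: the tangent--tangent block is $\mathbb E[\boldsymbol\rho\boldsymbol\rho^\top]=\tfrac\epsilon3 I_d$; the tangent--normal block $\mathbb E[\boldsymbol\rho f^\top]$ is $O(K_{max}\epsilon^2)$ entrywise, because all odd moments of $\boldsymbol\rho$ vanish so the quadratic part of $f$ contributes nothing and only the cubic remainder survives; and the normal--normal block is $\mathbb E[ff^\top]=\mathbf W+O(\epsilon^{5/2})$ with $\|\mathbf W\|=O(K_{max}^2\epsilon^2)$. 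Thus $\lambda_d(\mathbf C)\ge\tfrac\epsilon3-o(\epsilon)$ while $\lambda_{d+1}(\mathbf C)=o(\epsilon)$ under $\epsilon=O(n^{-1}d^{-2}K_{max}^{-2})$, so there is a spectral gap $g=\Theta(\epsilon)$ isolating the tangent directions; moreover a Davis--Kahan estimate comparing $\mathbf C$ to its block‑diagonal part bounds the $\sin\Theta$ distance between the leading $d$‑eigenspace of $\mathbf C$ and $T_xM$ by $O(\sqrt{d(n-d)}\,K_{max}\epsilon)$, which for $\epsilon$ small enough as in the hypothesis is at most $\tau/2$.

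Then I would invoke matrix concentration. The summands $(y_i-x)(y_i-x)^\top$ are i.i.d.\ and positive semidefinite, with $\|(y_i-x)(y_i-x)^\top\|=\|y_i-x\|^2\le d\epsilon+O((n-d)K_{max}^2\epsilon^2)=O(d\epsilon)$ under the stated $\epsilon$ scaling, and total matrix variance $\le K\cdot O(d\epsilon)\cdot\|\mathbf C\|=O(Kd\epsilon^2)$. Matrix Bernstein then yields, with probability at least $1-n^{-c}$,
\[
\|\hat{\mathbf C}-\mathbf C\|\;\le\;C\epsilon\Big(\sqrt{\tfrac{d\log n}{K}}+\tfrac{d\log n}{K}\Big)=:\delta .
\]
By Weyl the gap $g=\Theta(\epsilon)$ persists for $\hat{\mathbf C}$, and the $\sin\Theta$ theorem gives $\|\sin\Theta\big(\mathrm{range}\,\tilde{\mathbf P},\ \text{leading $d$-eigenspace of }\mathbf C\big)\|\le\delta/g$ in spectral norm. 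Converting to the Frobenius norm at the cost of $\sqrt d$, combining with the bias above via the triangle inequality, and using the identity $\|\tilde{\mathbf P}-\mathbf P\|_F=\sqrt2\,\|\sin\Theta(\mathrm{range}\,\tilde{\mathbf P},T_xM)\|_F$ for orthogonal projections onto equidimensional subspaces, I obtain $\|\tilde{\mathbf P}-\mathbf P\|_F\le C'\sqrt d\,\delta/g+\tau/2$; requiring $C'\sqrt d\,\delta/g\le\tau/2$ (using $g=\Theta(\epsilon)$ and the form of $\delta$) forces $K=O(\tau^{-2}d^2\log n)$, which completes the argument after absorbing absolute constants.

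The hard part will be the bookkeeping that ties the three error sources --- the matrix‑Bernstein fluctuation, the curvature‑induced bias in $\mathbf C$, and the normal‑coordinate remainders summed over all $n-d$ codimensions --- into exactly the stated hypotheses. One must check that the per‑summand norm bound and the variance proxy each carry one factor of $d$, that converting the operator‑norm subspace estimate to the Frobenius norm of $\tilde{\mathbf P}-\mathbf P$ costs another $\sqrt d$ so that $\delta/g\lesssim\tau/\sqrt d$ translates to $d^2$ in the sample‑size requirement, and that the $n^{-1}$ in the constraint on $\epsilon$ is precisely what keeps the aggregate $O((n-d)K_{max}^2\epsilon^2)$ contributed by the normal directions --- both inside $\|y_i-x\|^2$ and inside the curvature bias --- below the $\Theta(\epsilon)$ tangent signal and spectral gap. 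With the coordinate setup and these scalings in hand, the remaining pieces (the blockwise moment computations, Weyl's inequality, the $\sin\Theta$ theorem, and the $\sin\Theta$‑to‑Frobenius identity) are routine.
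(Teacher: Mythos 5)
The paper does not prove this statement: Theorem~\ref{local svd result} is cited verbatim as Theorem~2 of \cite{tyagi2013tangent}, and no proof appears in the manuscript. Your proposal is therefore best compared with that reference's argument, which it in fact reconstructs faithfully: adapt coordinates so $\mathbf{P}=\mathrm{diag}(I_d,0)$ and $y_i-x=(\boldsymbol{\rho}^{(i)},f(\boldsymbol{\rho}^{(i)}))$, compute the population second-moment matrix $\mathbf{C}$ block-wise to isolate the $\tfrac{\epsilon}{3}I_d$ signal from the curvature-order off-diagonal and normal--normal ($\mathbf{W}$) blocks, apply a matrix concentration inequality (matrix Bernstein, as you propose; Tyagi et al.\ use an Ahlswede--Winter type bound to the same effect) to control $\|\hat{\mathbf{C}}-\mathbf{C}\|$, and finish with a $\sin\Theta$ perturbation estimate plus the identity $\|\tilde{\mathbf{P}}-\mathbf{P}\|_F=\sqrt{2}\,\|\sin\Theta\|_F$. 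The scaling bookkeeping you sketch --- one power of $d$ from the per-summand norm and the variance proxy, another $\sqrt{d}$ from passing from the operator norm of $\sin\Theta$ to the Frobenius norm of the projector difference, and the role of the $n^{-1}d^{-2}K_{max}^{-2}$ constraint on $\epsilon$ in keeping the aggregate codimension-$(n-d)$ curvature bias below the $\Theta(\epsilon)$ spectral gap --- is precisely the ledger settled in the cited proof, so I see no genuine gap, only work you have declared deferred. One caveat worth being precise about when you carry it out: since $|f_{q,\ell}(\boldsymbol{\rho})|\lesssim K_{max}|\boldsymbol{\rho}|^2=O(K_{max}d\epsilon)$, the normal--normal block has entries of size $O(K_{max}^2d^2\epsilon^2)$ and hence spectral norm as large as $O((n-d)K_{max}^2d^2\epsilon^2)$, not merely the $O(K_{max}^2\epsilon^2)$ you wrote for $\|\mathbf{W}\|$; the missing powers of $d$ are exactly what the $d^{-2}$ in the hypothesis on $\epsilon$ is there to absorb, and tracking them is where the deferred bookkeeping is easiest to lose.
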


\begin{remark}\label{choice of K}
We should point out that the result above implies that the local SVD has a Monte-Carlo error rate, $\tau = O(K^{-1/2})$ and the choice of local neighbor of radius $\sqrt{\epsilon} = O(\rho)$ should be inversely proportional to the maximum principal curvature and the dimension of the manifold and ambient space. Thus, to expect an error $\tau = O(\sqrt{\epsilon}) {= O(\rho)}$, by balancing $n^{-1}d^{-2}|K_{max}|^{-2} \sim K^{-1}d^2\log n$, this result suggests that one should choose $K \sim d^4 n \log n |K_{max}|^2.$ {The numerical result in the torus suggests of rate $\sqrt{\epsilon}=O(N^{-1/2})$ (see Figure~\ref{Fig_torustan}). For general $d$-dimensional manifolds, the error rate is expected to be $\sqrt{\epsilon} \sim N^{-1/d}$ due to the fact that $\rho \propto N^{-1/d}$.}
\end{remark}

\subsection{Second-order Local SVD Method}\label{sec3.2}

In this section, we devise an improved scheme to achieve the tangent space approximation with accuracy up to order of $O(\rho^{2})$,
by accounting for the Hessian components in \eqref{eqn:iots}. The algorithm proceeds as follows:
\begin{enumerate}
\item Perform the first-order local SVD algorithm and attain the $d$ approximated tangent vectors $\mathbf{\tilde{T}} = [\bm{\tilde{t}}_1,\ldots, \bm{\tilde{t}}_d] \in \mathbb{R}^{n\times d}$.
\item For each neighbor $\{y_i\}_{i=1,\ldots,K}$ of $x$, compute $\boldsymbol{\tilde{\rho}}^{(i)} = (\tilde{\rho}^{(i)}_1,\ldots,\tilde{\rho}^{(i)}_d)$, where
\[
\tilde{\rho}^{(i)}_j = \mathbf{D}_i^\top \bm{\tilde{t}}_j, \quad i= 1,\ldots, K, j=1,\ldots, d,
\]
where $\mathbf{D}_i: = y_i-x$ is the $i$th column of $\mathbf{D}\in \mathbb{R}^{n\times K}.$

\item Approximate the Hessian $\mathbf{Y}_{p}=\frac{\partial ^{2}\iota (%
\mathbf{0})}{\partial \rho_{i}\partial \rho_{j}} \in \mathbb{R}^n$ {up to a difference of a vector in $T_x M$} using the following ordinary least
squared regression, with $p= 1,\ldots ,D=d(d+1)/2$ denoting the upper triangular components ($p\mapsto (i,j)$ such that  $i\leq  j$) of symmetric Hessian matrix. Notice that for each $y_\ell\in \{y_1,\ldots, y_K\}$ neighbor of $x$, the equation (\ref%
{eqn:iots}) can be written as
\begin{equation}
\sum_{i,j=1}^{d}\rho_{i}^{(\ell)}\rho_{j}^{(\ell)}\frac{\partial ^{2}\iota (\mathbf{0})}{\partial
\rho_{i}\partial \rho_{j}}=2\left( \iota ( {\boldsymbol{\rho}^{(\ell)}}) -\iota (\mathbf{0}%
)\right) - 2\sum_{i=1}^{d}\rho_{i}^{(\ell)}\bm{\tau}_i + O(\rho^3), \quad \quad \ell=1,\ldots K,  \label{eqn:sisj}
\end{equation}%
{ where $\boldsymbol{\rho}^{(\ell)} := (\rho^{(\ell)}_1,\ldots, {\rho}^{(\ell)}_d)$ denotes the geodesic coordinate that satisfies $\iota \left( \boldsymbol{\rho}^{(\ell)}\right)=y_\ell$. }
In compact form, we can rewrite \eqref{eqn:sisj} as a linear system,
\BEA
\mathbf{A} \mathbf{Y} = 2\mathbf{D}^\top - 2\mathbf{R}+ O(\rho^3),\label{eqn:Yti}
\EEA
where $\mathbf{R}^\top = (\mathbf{r}_1,\ldots,\mathbf{r}_K)\in\mathbb{R}^{n\times K}$ denotes the order-$\rho$ residual term {in the tangential directions,}
\BEA
\mathbf{r}_{j} = \sum_{i=1}^{d}\rho_{i}^{(j)}\bm{\tau}_i, \quad j=1,\ldots,K. \label{def_r}
\EEA

Here, $\mathbf{Y} \in \mathbb{R}^{D\times n}$ is a matrix whose $p$th row is  $\mathbf{Y}_{p}$
and
\BEA
\mathbf{A} = \begin{pmatrix} (\rho_1^{(1)})^2 & (\rho_2^{(1)})^2 & \ldots & (\rho_d^{(1)})^2 & 2(\rho_1^{(1)}\rho_2^{(1)}) & \ldots & 2(\rho_{d-1}^{(1)}\rho_d^{(1)})  \\ \vdots & \vdots  & & \vdots & \vdots  && \vdots\\ (\rho_1^{({K})})^2 & (\rho_2^{({K})})^2 & \ldots & (\rho_d^{({K})})^2 & 2(\rho_1^{({K})}\rho_2^{({K})}) & \ldots & 2(\rho_{d-1}^{({K})}\rho_d^{({K})})\end{pmatrix} \in \mathbb{R}^{K\times D}.\label{Atilde}
\EEA
With the choice of $K$ in Remark~\ref{choice of K}, $K>D$, we approximate $\mathbf{Y}$ by solving an over-determined linear problem
\BEA
\mathbf{\tilde{A}} \mathbf{Y} = 2\mathbf{D}^\top,\label{eqn:our2ys}
\EEA
where $\mathbf{\tilde{A}}$ is defined as in \eqref{Atilde} except that  $\rho_i^{(j)}$ in the matrix entries is replaced by $\tilde{\rho}_i^{(j)}$.
The regression solution is given by $\tilde{\mathbf{Y}} =2(\tilde{\mathbf{A}}^{\top }\tilde{\mathbf{A}})^{-1}\tilde{\mathbf{A}}^{\top }\mathbf{D}^\top.$ Here $\tilde{\mathbf{Y}}_{ij} = \tilde{Y}_i^{(j)}$, $i = 1,\ldots, D,j=1,\ldots, n$. Here, each row of $\tilde{\mathbf{Y}}$ is denoted as $\tilde{\mathbf{Y}}_p = (\tilde{Y}_p^{(1)},\ldots,\tilde{Y}_p^{(n)})\in \mathbb{R}^{1\times n}$, which is an estimator of $\mathbf{Y}_p$.

\item Apply SVD to
\BEA
2\tilde{\mathbf{R}}^\top :=  2\mathbf{D} - (\tilde{\mathbf{A}}\tilde{\mathbf{Y}})^\top \in
\mathbb{R}^{n\times K}.\label{eq:Rtilde}
\EEA
Let the leading $d$ left singular vectors be denoted as $\bm{\hat{\Psi}}=[\hat{\boldsymbol{\psi}}_{1},\ldots ,\hat{\boldsymbol{\psi}}_{d}] \in \mathbb{R}^{n\times d}$, which is an estimator of $\bm{\Psi}=[\bm{\psi}_{1},\ldots, \bm{\psi}_{d}]$, where $\bm{\psi}_{j}$ are the leading $d$ left singular vectors of $\mathbf{R}$ as defined in \eqref{eqn:Yti}. We define $\mathbf{\hat{P}=\bm{\hat{\Psi}}\bm{\hat{\Psi}}}^{\top }$ as the estimator for $\mathbf{P} = \bm{\Psi}\bm{\Psi}^\top$, where the last equality is valid due to Proposition~\ref{propP}(3).
\end{enumerate}

Figure \ref{Fig_torustan} shows the manifold learning results on a torus with
randomly distributed data. One can see that error of the first-order local SVD method is $O(N^{-1/2})$ whereas second-order method is $O(N^{-1})$.

\begin{figure}[tbp]
{\scriptsize \centering
\begin{tabular}{cc}
{\normalsize (a) max of Frob. norm error} & {\normalsize (b) mean of Frob.
norm error} \\
\includegraphics[width=3
in, height=2 in]{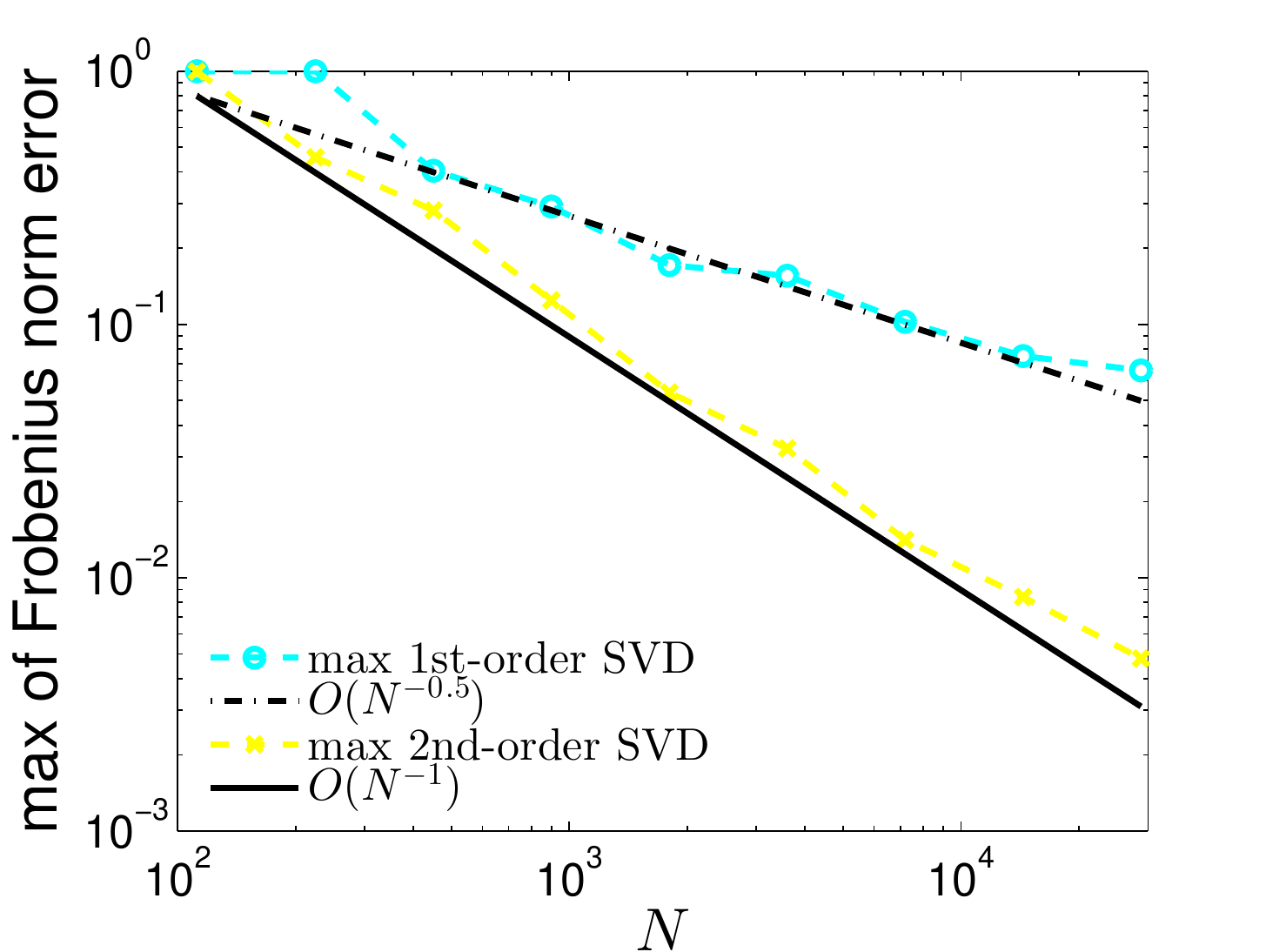}
&
\includegraphics[width=3
in, height=2 in]{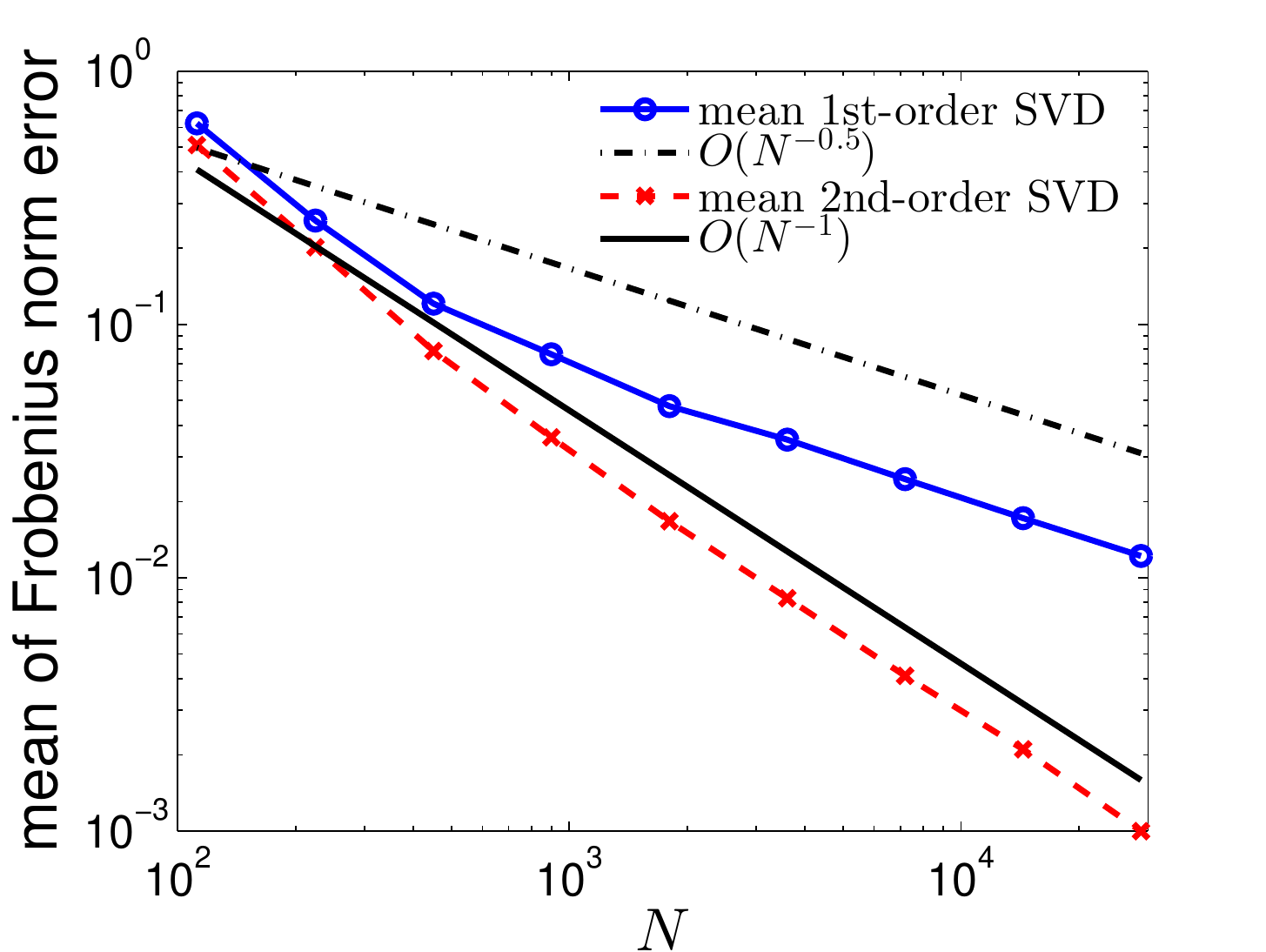}%
\end{tabular}
}
\caption{{\bf 2D torus in $\mathbb{R}^3$.} Comparison of
convergence rates between 1st-order SVD and our 2nd-order SVD for
approximating the tangential projection matrix $\mathbf{P}$.
Panels (a) and (b) show the maximum and the mean of Frobenius norm errors, respectively. The
error of $||\mathbf{\tilde{P}-P}||_{F}$\ is $O(N^{-1/2})$ for
the 1st-order SVD whereas the error of $||\mathbf{\hat{P}-P}||_{F}$ is $O(N^{-1})$ for our 2nd-order SVD.
{The $K=40$ nearest neighbors are fixed for all the simulations.  The data points are uniformly distributed in intrinsic coordinates $[0,2\pi) \times [0,2\pi)$ and are then mapped onto the torus in Euclidean space.} }
\label{Fig_torustan}
\end{figure}

Theoretically, we can deduce the following error bound.
\begin{theo}
\label{main P Theorem} Let the assumptions in Theorem~\ref{local svd result} be valid, particularly $\rho \sim \epsilon^{1/2}$. Suppose that the matrix $\mathbf{D}\in \mathbb{R}^{n\times K}$ is defined as in Step 1 of the algorithm with a fixed $K$ is chosen as in Remark~\ref{choice of K} in addition to $K > D= \frac{1}{2}d(d+1)$. Assume that ${ \|\mathbf{A}\|_2 / \kappa_2(\mathbf{A})} =K \omega(\epsilon^{3/2})$ as $\epsilon\to 0$, {where $\kappa_2(\mathbf{A})$ denotes the condition number of matrix $\mathbf{A}$ based on spectral matrix norm, $\|\cdot\|_2$}, and the eigenvalues $\{\lambda_i\}_{i=1,\ldots,n}$ of $\mathbf{R}^\top {\mathbf{B}^\top\mathbf{B}}\mathbf{R}$, where $ \mathbf{R}^\top \in \mathbb{R}^{n \times K}$ as defined in \eqref{def_r}, are simple with spectral gap $g_i:= \min_{j\neq i}|\lambda_i-\lambda_j| > c\epsilon$ for some $c>0$ and all $i=1,\ldots,n$. Here, $\mathbf{B}:= \mathbf{I}_{K}-\mathbf{\tilde{A}(\tilde{A}}^{\top }\mathbf{\tilde{A})}^{-1}\mathbf{\tilde{A}}^{\top } \in \mathbb{R}^{K\times K}$. Let $\mathbf{\hat{P}=\bm{\hat{\Psi}}\bm{\hat{\Psi}}}^{\top }$ be the second-order estimator of $\mathbf{P}$, where columns of $\hat{\bm{\Psi}}$ are the leading $d$ left singular vectors of $\tilde{\mathbf{R}} $ as defined in \eqref{eq:Rtilde}.
Then, with high probability,
$$
\| \mathbf{\hat{P}}- \mathbf{P} \|_F = O(\epsilon),
$$
as $\epsilon\to0$.
\end{theo}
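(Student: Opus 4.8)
The plan is to turn the statement into a perturbation bound for the leading $d$-dimensional left singular subspace of the residual matrix in \eqref{eq:Rtilde}. First I would record the exact algebra behind Steps~3--4. Since the regression solution is $\tilde{\mathbf{Y}}=2(\tilde{\mathbf{A}}^{\top}\tilde{\mathbf{A}})^{-1}\tilde{\mathbf{A}}^{\top}\mathbf{D}^{\top}$, the fitted matrix is $\tilde{\mathbf{A}}\tilde{\mathbf{Y}}=2\tilde{\boldsymbol{\Pi}}\mathbf{D}^{\top}$ with $\tilde{\boldsymbol{\Pi}}=\tilde{\mathbf{A}}(\tilde{\mathbf{A}}^{\top}\tilde{\mathbf{A}})^{-1}\tilde{\mathbf{A}}^{\top}$ the orthogonal projector onto $\mathrm{col}(\tilde{\mathbf{A}})$, so \eqref{eq:Rtilde} reads exactly $\tilde{\mathbf{R}}^{\top}=\mathbf{D}\mathbf{B}$ with $\mathbf{B}=\mathbf{I}_{K}-\tilde{\boldsymbol{\Pi}}$, the projector of the hypotheses. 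On the geometric side, the $C^{3}$ Taylor expansion \eqref{eqn:iots}, written column by column over the $K$ neighbors of $x$, gives
\[
\mathbf{D}^{\top}=\mathbf{R}+\tfrac12\mathbf{A}\mathbf{Y}+\mathbf{E},\qquad \|\mathbf{E}\|_{F}=O(\rho^{3})=O(\epsilon^{3/2}),
\]
where $\mathbf{R}$ is as in \eqref{def_r}, $\mathbf{A}$ is the exact-coordinate analogue of \eqref{Atilde}, and $\mathbf{Y}$ stacks the Hessians $\partial^{2}\iota(\mathbf{0})/\partial\rho_{i}\partial\rho_{j}$, which in geodesic normal coordinates equal the (normal-valued) second fundamental form; hence $\|\mathbf{Y}\|_{F}=O(1)$ and the columns of $\mathbf{R}^{\top}=(\mathbf{r}_{1},\dots,\mathbf{r}_{K})$ span exactly $T_{x}M=\mathrm{range}(\boldsymbol{\Psi})$.

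Next I would propagate the first-order SVD error into $\mathbf{B}$. By Theorem~\ref{local svd result}, with high probability $\|\tilde{\mathbf{P}}-\mathbf{P}\|_{F}=O(\rho)=O(\epsilon^{1/2})$, so there is an orthogonal $\mathbf{O}\in\mathbb{R}^{d\times d}$ with $\|\tilde{\mathbf{T}}-\mathbf{T}\mathbf{O}\|_{F}=O(\epsilon^{1/2})$. Since $\tilde{\rho}_{i}^{(\ell)}=\tilde{\mathbf{t}}_{i}^{\top}\mathbf{D}_{\ell}$, $\|\mathbf{D}_{\ell}\|=O(\epsilon^{1/2})$, and $\mathbf{T}^{\top}\mathbf{D}_{\ell}=\boldsymbol{\rho}^{(\ell)}+O(\epsilon^{3/2})$ (again because the quadratic term of \eqref{eqn:iots} is normal), the approximate coordinates obey $\tilde{\rho}_{i}^{(\ell)}=(\mathbf{O}^{\top}\boldsymbol{\rho}^{(\ell)})_{i}+O(\epsilon)$. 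As the entries of $\mathbf{A}$ are homogeneous quadratic in coordinates of size $O(\epsilon^{1/2})$, this forces $\tilde{\mathbf{A}}=\mathbf{A}\mathbf{M}+\boldsymbol{\Delta}$, where $\mathbf{M}=\mathbf{M}(\mathbf{O})$ is a fixed invertible $D\times D$ matrix with $\|\mathbf{M}\|_{2},\|\mathbf{M}^{-1}\|_{2}=O(1)$ and $\|\boldsymbol{\Delta}\|_{F}=O(\epsilon^{3/2})$. The hypothesis $\|\mathbf{A}\|_{2}/\kappa_{2}(\mathbf{A})=\sigma_{\min}(\mathbf{A})=K\,\omega(\epsilon^{3/2})$ (and $K$ fixed) then guarantees, by Weyl, that $\tilde{\mathbf{A}}$ keeps full column rank $D$, so $\mathbf{B}$ is a bona fide orthogonal projector; and combining $(\mathbf{I}-\tilde{\boldsymbol{\Pi}})\tilde{\mathbf{A}}=\mathbf{0}$ with $\mathbf{A}=(\tilde{\mathbf{A}}-\boldsymbol{\Delta})\mathbf{M}^{-1}$ gives $\|\mathbf{B}\mathbf{A}\|_{2}=\|(\mathbf{I}-\tilde{\boldsymbol{\Pi}})\mathbf{A}\|_{2}\le\|\boldsymbol{\Delta}\|_{2}\|\mathbf{M}^{-1}\|_{2}=O(\epsilon^{3/2})$.

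Combining the two displays, $\tilde{\mathbf{R}}^{\top}=\mathbf{R}^{\top}\mathbf{B}+\mathbf{Z}$ with $\|\mathbf{Z}\|_{F}\le\tfrac12\|\mathbf{B}\mathbf{A}\|_{2}\|\mathbf{Y}\|_{F}+\|\mathbf{E}\|_{F}\|\mathbf{B}\|_{2}=O(\epsilon^{3/2})$, since $\|\mathbf{B}\|_{2}\le1$. The ``signal'' $\mathbf{R}^{\top}\mathbf{B}$ has column space exactly $\mathrm{range}(\boldsymbol{\Psi})$, and $(\mathbf{R}^{\top}\mathbf{B})(\mathbf{R}^{\top}\mathbf{B})^{\top}=\mathbf{R}^{\top}\mathbf{B}^{\top}\mathbf{B}\mathbf{R}$, so its nonzero squared singular values are the eigenvalues $\lambda_{1}\ge\cdots\ge\lambda_{d}$ of the hypotheses; the gap assumption $g_{i}>c\epsilon$ (with $\lambda_{d+1}=\cdots=\lambda_{n}=0$) forces $\lambda_{d}>c\epsilon$, i.e. $\sigma_{d}(\mathbf{R}^{\top}\mathbf{B})>\sqrt{c\epsilon}$ while $\sigma_{d+1}(\mathbf{R}^{\top}\mathbf{B})=0$. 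Wedin's $\sin\Theta$ theorem applied to $\tilde{\mathbf{R}}^{\top}=\mathbf{R}^{\top}\mathbf{B}+\mathbf{Z}$ then bounds the principal angles between $\mathrm{range}(\hat{\boldsymbol{\Psi}})$ (the leading $d$ left singular vectors of $\tilde{\mathbf{R}}^{\top}$) and $\mathrm{range}(\boldsymbol{\Psi})$ by $\|\mathbf{Z}\|_{2}/(\sqrt{c\epsilon}-\|\mathbf{Z}\|_{2})=O(\epsilon^{3/2}/\epsilon^{1/2})=O(\epsilon)$, whence $\|\hat{\mathbf{P}}-\mathbf{P}\|_{F}=\|\hat{\boldsymbol{\Psi}}\hat{\boldsymbol{\Psi}}^{\top}-\boldsymbol{\Psi}\boldsymbol{\Psi}^{\top}\|_{F}=\sqrt{2}\,\|\sin\Theta\|_{F}=O(\epsilon)$; the high-probability qualifier is inherited from Theorem~\ref{local svd result}, the remaining structural conditions being assumed.

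The main obstacle is the bookkeeping in the second paragraph: one must verify that the $O(\epsilon^{1/2})$ error of the first-order frame enters the final residual only at order $O(\epsilon^{3/2})$ --- this ``squaring away'', coming jointly from $\mathbf{A}$ being quadratic in small coordinates and from $\mathbf{B}\mathbf{A}$ annihilating at the interpolation node, is exactly what upgrades the scheme from first to second order, and is where the conditioning hypothesis $\sigma_{\min}(\mathbf{A})=K\,\omega(\epsilon^{3/2})$ is needed (to keep $\mathbf{B}$ well posed). A related but more routine point is to check carefully that in geodesic normal coordinates the quadratic term of \eqref{eqn:iots} is purely normal, so that $\mathbf{T}^{\top}\mathbf{D}_{\ell}$ matches the geodesic coordinate to order $O(\rho^{3})$, and that the $C^{3}$ Taylor remainder is $O(\rho^{3})$ uniformly over the $K$ neighbors of $x$.
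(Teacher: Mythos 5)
Your proof is correct and reaches the paper's conclusion by essentially the same structural route: recognize $\tilde{\mathbf{R}}^\top = \mathbf{D}\mathbf{B}$, Taylor-expand $\mathbf{D}$ as in \eqref{eqn:iots} so that the linear part is $\mathbf{R}^\top$ and the quadratic part lies in $\mathrm{col}(\mathbf{A})$, use the first-order tangent error to show $\tilde{\mathbf{A}}$ is an $O(\epsilon^{3/2})$ perturbation of the true design matrix so that $\mathbf{B}$ nearly annihilates the quadratic term, obtain $\tilde{\mathbf{R}}^\top = \mathbf{R}^\top\mathbf{B} + O(\epsilon^{3/2})$, and invoke a perturbation bound against the spectral gap $g_i > c\epsilon$. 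The two places where you diverge from the paper are worth noting. First, you carry the rotational ambiguity of the first-order frame explicitly via a fixed invertible $D\times D$ matrix $\mathbf{M}$ acting on the monomial basis, whereas the paper absorbs the orthogonal $\mathbf{O}$ into the choice of true frame and bounds $\|\mathbf{A}-\tilde{\mathbf{A}}\|_2$ entrywise; both give $\|\mathbf{B}\mathbf{A}\|_2 = O(\epsilon^{3/2})$. Second, and more substantively, you apply Wedin's $\sin\Theta$ theorem directly to the unsquared matrix $\tilde{\mathbf{R}}^\top = \mathbf{R}^\top\mathbf{B}+\mathbf{Z}$ with singular-value gap $\sqrt{c\epsilon}$ and perturbation $\|\mathbf{Z}\|_2 = O(\epsilon^{3/2})$, working at the subspace level, while the paper squares to form $\tilde{\mathbf{R}}^\top\tilde{\mathbf{R}}$, bounds $\|\tilde{\mathbf{R}}^\top\tilde{\mathbf{R}}-\mathbf{R}^\top\mathbf{B}^\top\mathbf{B}\mathbf{R}\|_2 = O(\epsilon^2)$, and applies the classical eigenvector-by-eigenvector bound (Demmel, Theorem~5.4) against the gap $c\epsilon$, then sums over the $d$ columns. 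Your route is a bit cleaner in that the Wedin bound controls the whole leading $d$-dimensional subspace at once and does not logically require simple eigenvalues (only a gap between $\sigma_d$ and $\sigma_{d+1}=0$), whereas the paper's eigenvector-wise argument genuinely uses the simplicity hypothesis; both give $\|\hat{\mathbf{P}}-\mathbf{P}\|_F = O(\epsilon^{3/2}/\epsilon^{1/2}) = O(\epsilon)$.
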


{ The assumption of $\|\mathbf{A}\|_2 / \kappa_2(\mathbf{A}) =K \omega(\epsilon^{3/2})$ as $\epsilon\to 0$ is to ensure that the perturbed matrix, $\tilde{\mathbf{A}}$, is still full rank. As we will show, this condition arises from the fact that the minimum relative size of the perturbation for the perturbed matrix to be not full rank is $\frac{1}{\kappa_2(\mathbf{A})}$, i.e.,
\[
\min\left\{ \frac{\|\mathbf{\tilde{A}}-\mathbf{A}\|_2}{\|\mathbf{A}\|_2} \,:\, \tilde{\mathbf{A}} \textup{ is not full rank}\right\} = \frac{1}{\kappa_2(\mathbf{A})}.
\]


The simple eigenvalues and spectral gap conditions in the theorem above are two technical assumptions needed for applying the classical perturbation theory of eigenvectors estimation (see e.g. Theorem~5.4 of \cite{demmel1997applied}), which allow one to bound the angle between eigenvectors of unperturbed and perturbed matrices by the ratio of the perturbation error and the spectral gap as we shall see in the proof below.
One can employ the result in Theorem~\ref{main P Theorem} whenever $\mathbf{B}\mathbf{R}$
can be approximated by $\tilde{\mathbf{R}}$ sufficiently well (with an error smaller than the spectral gap of the corresponding eigenvalue). One can see from Fig. \ref{Fig_torustan}(b)  that the asymptotics break down when $N$ decreases to around $500$. Moreover, when $N$ is around $500$, the max norm errors are already large up to at least 0.3 for both 1st-order and 2nd-order methods as seen from Fig. \ref{Fig_torustan}(a).

While we have no access to $\mathbf{R}^\top\mathbf{B}^\top\mathbf{BR}$, since it can be accurately estimated by $\mathbf{\tilde{R}} ^\top \mathbf{\tilde{R}}$ in the sense of $\| \mathbf{\tilde{R}} ^\top \mathbf{\tilde{R}} - \mathbf{R}^\top \mathbf{B}^\top \mathbf{B} \mathbf{R}\|_2 = O(\epsilon^2)$ as we shall see in the following proof, let us investigate the eigenvalues of the approximate matrix $\tilde{\mathbf{R}}^\top\tilde{\mathbf{R}}$ (or equivalently the singular values of $2\tilde{\mathbf{R}}^\top$). Figures \ref{fig_svd}(a) and (b) show the first two singular values of $2\tilde{\mathbf{R}}^\top$ and their difference for the torus and sphere examples, respectively. One can see the spectral gap $|\tilde{\lambda}_1-\tilde{\lambda}_2|$ on the same scale of $\tilde{\lambda}_1$ and $\tilde{\lambda}_2$ almost surely, regardless whether the geometry is highly symmetric (e.g. the sphere) or not (e.g. torus) for a fixed $N$. This empirical verification
suggests that the two assumptions (simple eigenvalues and spectral gap condition) are not unreasonable. In fact, in Fig. \ref{fig_svd}(c), we found that the corresponding pointwise Frobenius norm errors, $\|\mathbf{\hat{P}}-\mathbf{P}\|_F$, for the torus and sphere examples using the 2nd order SVD method are small, especially for the highly symmetric sphere. While these examples suggest that the eigenvalues are most likely simple for randomly sample data of any fixed $N$, we believe that one can use other results from perturbation theory that may require different assumptions if the eigenvalues are non-simple. See, for instance, Chapter 2, Section 6.2 of \cite{kato2013perturbation}.


}

\begin{figure*}[tbp]
{\scriptsize \centering
\begin{tabular}{ccc}
{\small (a) Torus, singular values of $2\tilde{\mathbf{R}}^\top$} & {\small (b) Sphere, singular values of $2\tilde{\mathbf{R}}^\top$} &
{\small (c) Pointwise Frobenius norm error} \\
\includegraphics[width=1.9
in, height=1.4 in]{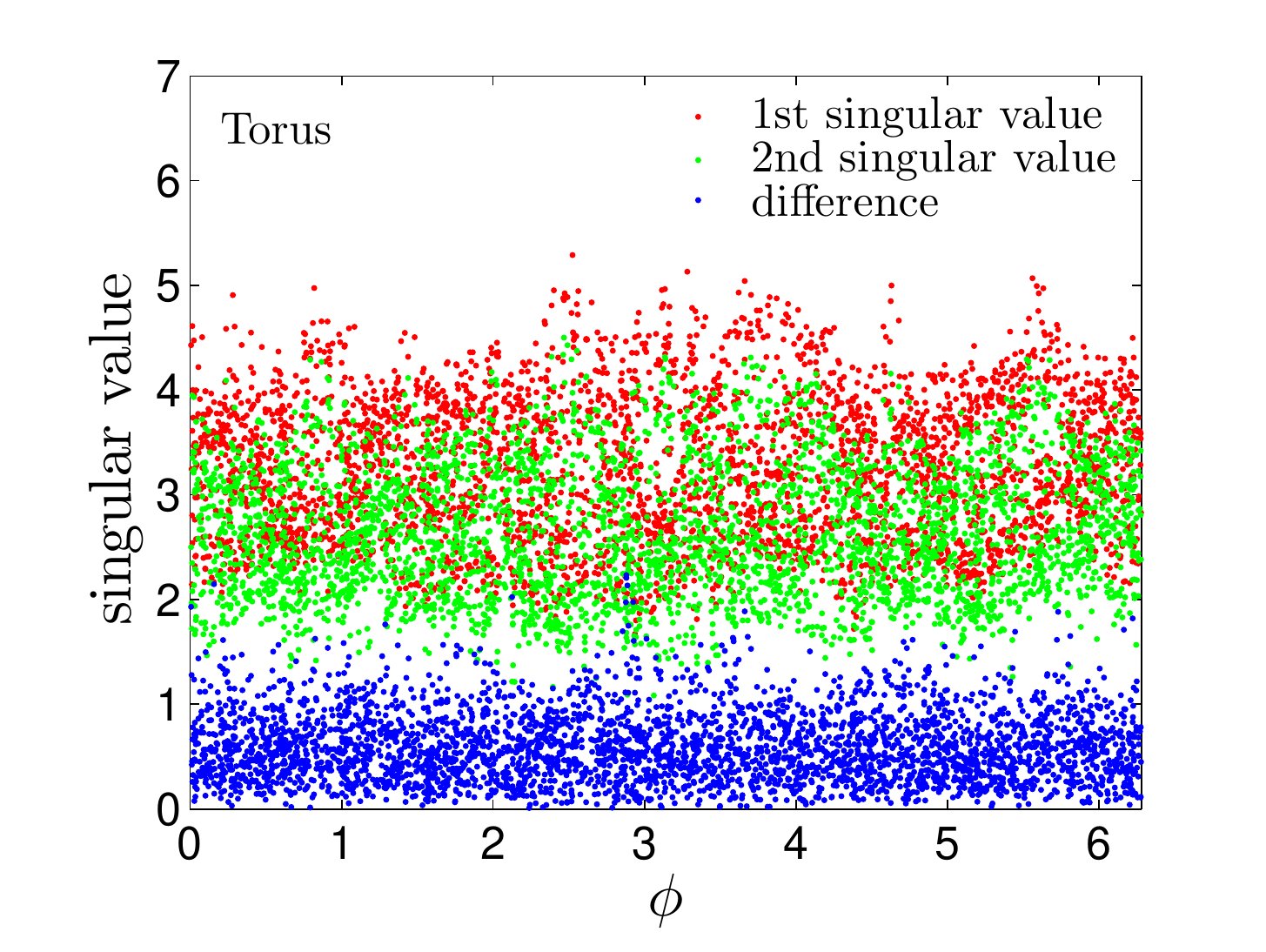}
&
\includegraphics[width=1.9
in, height=1.4 in]{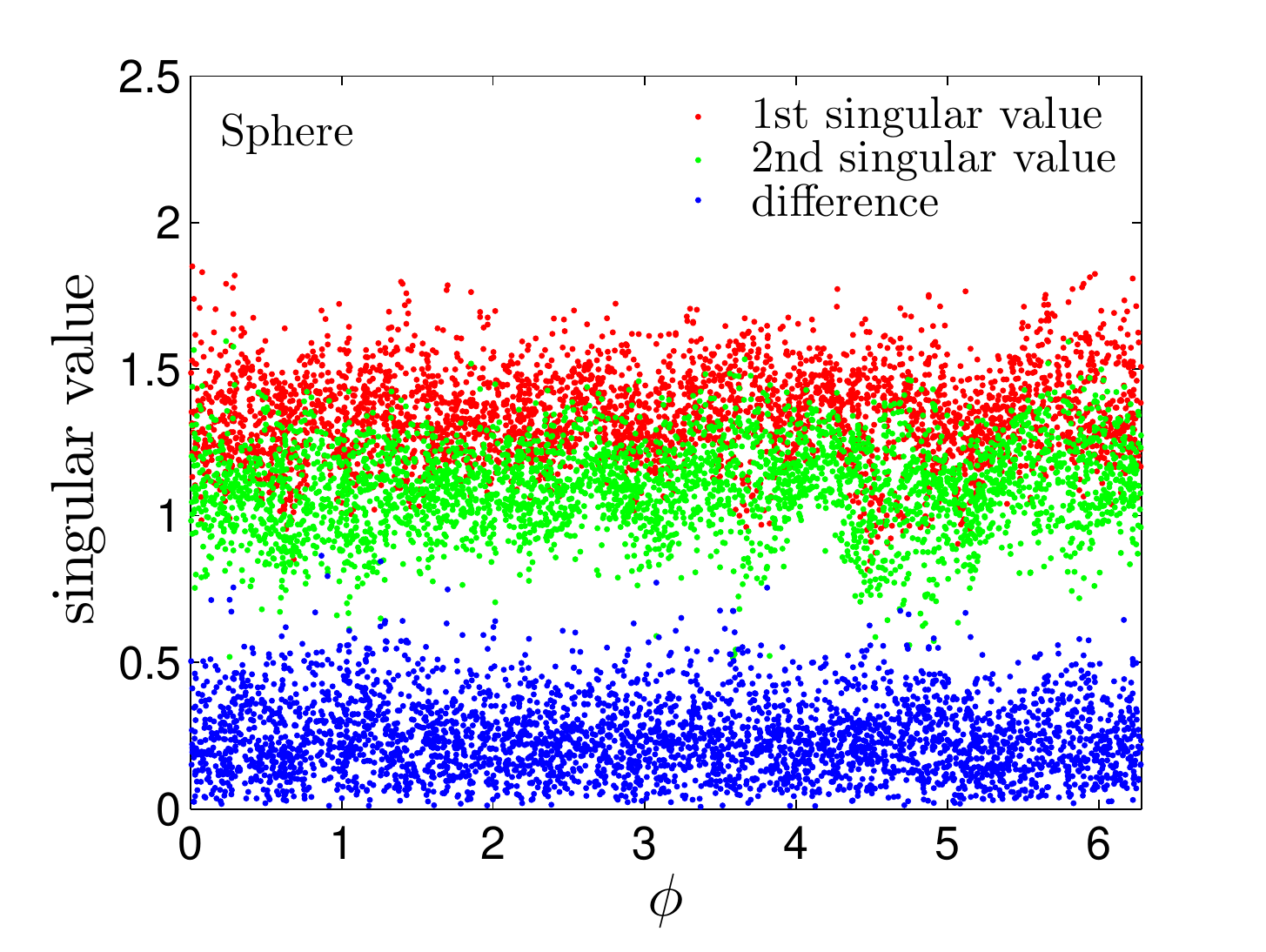}
&
\includegraphics[width=1.9
in, height=1.4 in]{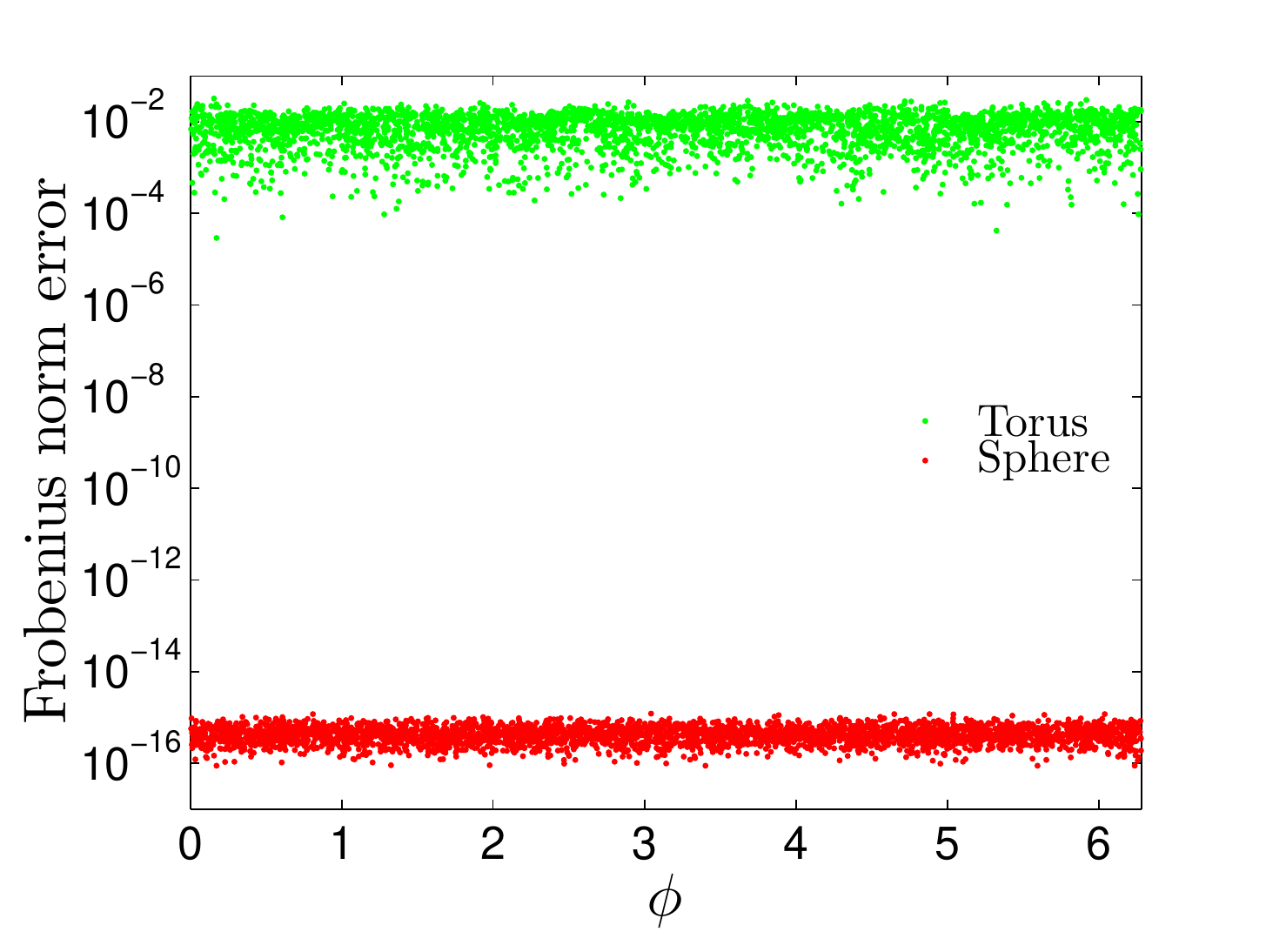}%
\end{tabular}
}
\caption{{\bf 2D torus and 2D sphere in $\mathbb{R}^{3}$.} Comparison of
the 1st  singular value $\tilde{\lambda}_1$ of $2\tilde{\mathbf{R}}^\top$ (red dots), the 2nd singular value $\tilde{\lambda}_2$ of
$2\tilde{\mathbf{R}}^\top$ (green dots),
and their difference $|\tilde{\lambda}_1 - \tilde{\lambda}_2|$ (blue dots) for (a) 2D torus and (b) 2D unit sphere examples.
(c) Pointwise Frobenius norm errors, $\|\hat{\mathbf{P}}-\mathbf{P}\|_F$ , for the 2D torus (green dots) and 2D sphere (red dots) examples. The horizontal axis corresponds
to one of the intrinsic coordinate $\phi \in [0, 2\pi)$.
Randomly distributed $N=3600$ data
points are used on the manifolds. }
\label{fig_svd}
\end{figure*}

We are now ready to prove Theorem \ref{main P Theorem}.

\begin{proof}
{Recall that $\mathbf{T}=(\boldsymbol{\tau}_1,\ldots,\boldsymbol{\tau}_d) \in \mathbb{R}^{n\times d}$ is a set of orthonormal tangent vectors  such that $\mathbf{P} = \mathbf{T}\mathbf{T}^\top$. Also recall that
$\mathbf{\tilde{T}}=(\boldsymbol{\tilde{t}}_1,\ldots,\boldsymbol{\tilde{t}}_d) \in \mathbb{R}^{n\times d}$ is the $d$
approximated tangent vectors such that $\mathbf{\tilde{P}} = \mathbf{\tilde{T}}\mathbf{\tilde{T}}^\top$. Then there exists some orthogonal matrix $\mathbf{O} \in \mathbb{R}^{d \times d}$ such that we have $\mathbf{\tilde{T}} = \mathbf{T}\mathbf{O}+O(\epsilon^{1/2})$ based on the first-order approximation result $\Vert \mathbf{\tilde{P}} - \mathbf{P}\Vert_F = O(\epsilon^{1/2})$.

This claim can be verified as follows. First, since columns of $\mathbf{\tilde{T}}$ are eigenvectors of $\mathbf{\tilde{P}}$ corresponding to eigenvalue one, it is clear that $\mathbf{P}\bm{\tilde{t}}_i = \mathbf{\tilde{P}}\bm{\tilde{t}}_i + (\mathbf{P}-\mathbf{\tilde{P}})\bm{\tilde{t}}_i = \bm{\tilde{t}}_i + O(\epsilon^{1/2})$. This also means that $(\mathbf{{P}}\bm{\tilde{t}}_i)^\top(\mathbf{{P}}\bm{\tilde{t}}_j) = \delta_{i,j} + O(\epsilon^{1/2})$ for $i \neq j$. Since columns of $\mathbf{P}\mathbf{\tilde{T}}$ are in $\textup{span}\{\boldsymbol{\tau}_1,\ldots,\boldsymbol{\tau}_d\}$, then it is clear that there exists an orthogonal matrix $\mathbf{O}$ such that $\mathbf{P}\mathbf{\tilde{T}} = \mathbf{T}\mathbf{O} +  O(\epsilon^{1/2})$.
Thus, $\tilde{\mathbf{T}} = \mathbf{\tilde{P}}\tilde{\mathbf{T}} = \mathbf{P}\mathbf{\tilde{T}}  + O(\epsilon^{1/2}) = \mathbf{T}\mathbf{O} +  O(\epsilon^{1/2})$, where the second equality follows from the fact that $\mathbf{\tilde{P}} - \mathbf{P} = O(\epsilon^{1/2})$ for each entry.

Let $\mathbf{TO}=(\boldsymbol{t}_1,\ldots,\boldsymbol{t}_d) \in \mathbb{R}^{n\times d}$, and we have $\bm{\tilde{t}}_j - \bm{t}_j =O(\epsilon^{1/2})$ for $j=1,\ldots,d$. We can write
\[
\rho_{j}^{(i)} = \mathbf{D}_i^\top\bm{t}_j + O(\epsilon),
\]
using the fact that components of $\mathbf{D}_i$ are  $O(\epsilon^{1/2})$. Based on Step 2 of the Algorithm above and Theorem~\ref{local svd result}, we have in high probability, for any $i=1,\ldots, {K}$,
\[
|\tilde{\rho}_{j}^{(i)}-\rho_{j}^{(i)}| = | \mathbf{D}_i^\top( \bm{\tilde{t}}_j-\bm{t}_j)| + O(\epsilon) \leq \|\mathbf{D}_i\|  \|\bm{\tilde{t}}_j-\bm{t}_j\| + O(\epsilon) \leq  c\epsilon^{1/2}\epsilon^{1/2}  + O(\epsilon)\leq 2c\epsilon.
\]
for some $c>0$, as $\epsilon\to 0$, using the fact that $\|\mathbf{D}_i\| = O(\epsilon^{1/2})$.
}
This implies that,
\[
|\tilde{\rho}_{i}^{(k)}\tilde{\rho}_{j}^{(k)} -\rho_{i}^{(k)}\rho_{j}^{(k)}| \leq  |\tilde{\rho}_{i}^{(k)}||\tilde{\rho}_{j}^{(k)}-\rho_{j}^{(k)}| + |\rho_{j}^{(k)} || \tilde{\rho}_{i}^{(k)} - \rho_{i}^{(k)} | = O(\epsilon^{3/2}),
\]
where we used again $|\rho_j| = O(\epsilon^{1/2})$. This means, $\|\mathbf{A} - \tilde{\mathbf{A}}\|_2  \leq \sqrt{KD}\|\mathbf{A} - \tilde{\mathbf{A}}\|_{max} =O(K\epsilon^{3/2})$.


We note that the components of each row of matrix $\mathbf{A}$ forms a homogeneous polynomial of degree-2, so they are linearly independent. Since the data points are uniformly sampled from $[-\sqrt{\epsilon},\sqrt{\epsilon}]^d$, for large enough samples, $K > D$, these sample points will not lie on a subspace of dimension strictly less than $d$. Thus, $\mathbf{A}$ is not degenerate and $ \textup{rank} (\mathbf{A}) = D$ almost surely.

Furthermore, there exists a constant $C>0$ such that,
\[
\frac{\|\mathbf{A} - \mathbf{\tilde{A}}\|_2}{\|\mathbf{A}\|_2} \leq  \frac{CK\epsilon^{3/2}}{\|\mathbf{A}\|_2} < {\frac{1}{\kappa_{2}(\mathbf{A})}},
\]
where we used the assumption $\|\mathbf{A}\|_2/\kappa_2(\mathbf{A}) = K\omega(\epsilon^{3/2})$, as $\epsilon \to 0$ for the last inequality, {to ensure that the perturbed matrix $\mathbf{\tilde{A}}$ is still full rank.

From \eqref{eqn:Yti}, one can deduce,
\begin{equation}
\underbrace{\mathbf{Y}}_{O(1)}=\underbrace{\mathbf{(A}^{\top }\mathbf{A)}%
^{-1}\mathbf{A}^{\top }}_{O(\frac{1}{\rho ^{2}})}\underbrace{( 2\mathbf{%
D}^{\top }-2\mathbf{R}) }_{O(\rho ^{2})}+O(\rho ),  \label{eqn:YAD}
\end{equation}%
where the leading order terms on both sides are $O(1)$ since both terms $\mathbf{A}$ and $2\mathbf{D}^{\top }-2\mathbf{R}$
are $O(\rho ^{2})$. Since $\tilde{\mathbf{A}}$ is full rank, we can solve the regression problem in \eqref{eqn:our2ys} as,
\begin{equation}
\mathbf{\tilde{Y}}=2\mathbf{(\tilde{A}}^{\top }\mathbf{\tilde{A})}^{-1}%
\mathbf{\tilde{A}}^{\top }\mathbf{D}^{\top }.  \label{eqn:yadtid}
\end{equation}
Using (\ref{eqn:YAD}) and (\ref{eqn:yadtid}), one has
\begin{eqnarray*}
2\tilde{\mathbf{R}}&=& 2\mathbf{D}^{\top }-\mathbf{\tilde{A}\tilde{Y}}
 =[2\mathbf{D}^{\top }-%
\mathbf{AY]+[AY}-\mathbf{\tilde{A}\tilde{Y}]} \\
&=&[2\mathbf{R}+O(\rho ^{3})]+[\mathbf{A(A}^{\top }\mathbf{A)}^{-1}\mathbf{A}%
^{\top }( 2\mathbf{D}^{\top }-2\mathbf{R}) +O(\rho ^{3})-\mathbf{%
\tilde{A}(\tilde{A}}^{\top }\mathbf{\tilde{A})}^{-1}\mathbf{\tilde{A}}^{\top
}2\mathbf{D}^{\top }] \\
&=&2\mathbf{R+(A(A}^{\top }\mathbf{A)}^{-1}\mathbf{A}^{\top }-\mathbf{\tilde{%
A}(\tilde{A}}^{\top }\mathbf{\tilde{A})}^{-1}\mathbf{\tilde{A}}^{\top
})( 2\mathbf{D}^{\top }-2\mathbf{R}) -\mathbf{\tilde{A}(\tilde{A}}%
^{\top }\mathbf{\tilde{A})}^{-1}\mathbf{\tilde{A}}^{\top }2\mathbf{R}+O(\rho
^{3}) \\
&=&\underbrace{\mathbf{(A(A}^{\top }\mathbf{A)}^{-1}\mathbf{A}^{\top }-%
\mathbf{\tilde{A}(\tilde{A}}^{\top }\mathbf{\tilde{A})}^{-1}\mathbf{\tilde{A}%
}^{\top })}_{O(\rho )}\underbrace{( 2\mathbf{D}^{\top }-2\mathbf{R}%
) }_{O(\rho ^{2})} \\
&&+\underbrace{(\mathbf{I}_{K}-\mathbf{\tilde{A}(\tilde{A}}^{\top }\mathbf{%
\tilde{A})}^{-1}\mathbf{\tilde{A}}^{\top })2\mathbf{R}}_{O(\rho )}+O(\rho
^{3}) \\
&=&(\mathbf{I}_{K}-\mathbf{\tilde{A}(\tilde{A}}^{\top }\mathbf{\tilde{A})}%
^{-1}\mathbf{\tilde{A}}^{\top })2\mathbf{R}+O(\rho ^{3}).
\end{eqnarray*}%
The key point here is to notice that the remaining term $2\mathbf{B}\mathbf{R}$, where $\mathbf{B}:= \mathbf{I}_{K}-%
\mathbf{\tilde{A}(\tilde{A}}^{\top }\mathbf{\tilde{A})}^{-1}\mathbf{\tilde{A}%
}^{\top }$, is in the tangent space and all the normal direction
terms up to order of $O(\rho ^{2})$ are cancelled out. Then, one can identify $\text{span}\{\boldsymbol{\tau}_1, \ldots,\boldsymbol{\tau}_d\}$ (or span of the leading $d$-eigenvectors of $\mathbf{R}^\top  \mathbf{B}^\top\mathbf{B} \mathbf{R}$ corresponding to nontrivial spectra) by computing the leading $d$-eigenvectors of $\mathbf{\tilde{R}}^\top \mathbf{\tilde{R}}$.
Moreover,
\[
\| \mathbf{\tilde{R}} ^\top \mathbf{\tilde{R}}  - \mathbf{R}^\top \mathbf{B}^\top \mathbf{B} \mathbf{R}\|_\infty \leq \| \mathbf{\tilde{R}}^\top \|_\infty \| \mathbf{\tilde{R}}  -  \mathbf{B} \mathbf{R} \|_\infty + \|\mathbf{\tilde{R}}^\top   - \mathbf{R}^\top  \mathbf{B}^\top \|_\infty \| \mathbf{B} \mathbf{R} \|_\infty =  O(\rho^4).
\]
Similarly, $\| \mathbf{\tilde{R}} ^\top \mathbf{\tilde{R}}  - \mathbf{R}^\top \mathbf{B}^\top \mathbf{B} \mathbf{R}\|_1 =O(\rho^4)$. Therefore, $\| \mathbf{\tilde{R}} ^\top \mathbf{\tilde{R}}  - \mathbf{R}^\top \mathbf{B}^\top \mathbf{B} \mathbf{R}\|_2 = O(\rho^4)=O(\epsilon^2)$. }

Let $\{\bm{\psi}_i\}_{i=1,\ldots, n}$ and $\{\bm{\tilde{\psi}}_i\}_{i=1,\ldots, n}$ be the unit eigenvectors of $\mathbf{R}^\top{\mathbf{B}^\top\mathbf{B}}\mathbf{R}$ and $\tilde{\mathbf{R}}^\top\tilde{\mathbf{R}}$, respectively. By Theorem~5.4 of \cite{demmel1997applied} and the assumption on the spectral gap of $\mathbf{R}^\top{\mathbf{B}^\top\mathbf{B}}\mathbf{R}$, $g_i> c\epsilon$ for some $c>0$, the acute angle between $\bm{\psi}_i$ and $\bm{\tilde{\psi}}_i$ satisfies,
\[
\sin(2\theta) \leq 2\frac{\|\mathbf{R}^\top{\mathbf{B}^\top\mathbf{B}}\mathbf{R} - \tilde{\mathbf{R}}^\top\tilde{\mathbf{R}}\|_2}{g_i} = O(\epsilon),
\]
where the constant in the big-oh notation above depends on $n$.
Then,
\BEA
\| \bm{\psi}_i -\bm{\tilde{\psi}}_i \|^2 &=& \| \bm{\psi}_i\|^2+\|\bm{\tilde{\psi}}_i\|^2 - 2 \bm{\psi}_i^\top \bm{\tilde{\psi}}_i = 2(1-\cos(\theta)) = 4\sin^2 \left(\frac{\theta}{2}\right) =O(\epsilon^2). \label{eigvecerror}
\EEA
By Proposition~\ref{propP}(3), it is clear that $\mathbf{{P}}:= \bm{{\Psi}} \bm{{\Psi}}^\top$. Based on the step~4 of the Algorithm,
$\mathbf{\hat{P}}:= \bm{\hat{\Psi}} \bm{\hat{\Psi}}^\top$ and from the error bound in \eqref{eigvecerror}, the proof is complete.

\end{proof}
\begin{remark} \label{rem5}The result above is consistent with the intuition that the scheme is of order $\rho^2 = O(\epsilon)$. The numerical result in the torus suggests a rate of $\epsilon=O(N^{-1})$. For general $d$-dimensional manifolds, the error rate for the second-order method is expected to be $\epsilon\sim N^{-2/d}$, due to the fact that $\rho \propto N^{-1/d}$.
\end{remark}

\section{Spectral Convergence Results}\label{section4}
In this section, we { state} spectral convergence results for { the symmetric estimator} $\mathbf{G}^\top\mathbf{G}$ to the Laplace-Beltrami operator $\Delta_M$, {as well as analogous results for the symmetric estimator of the Bochner Laplacian $\mathbf{P}^{\otimes} \mathbf{H}^\top \mathbf{H} \mathbf{P}^{\otimes}$ to the Bochner Laplacian $\Delta_B$ on vector fields. For definitions of the operators and estimators of this section, please see Sections \ref{Laplace-Beltrami definition} and \ref{Bochner definition}.
To keep the section short, we only present the proof for the spectral convergence of the Laplace-Beltrami operator. We document the proofs of the intermediate bounds needed for this proof in Appendices ~\ref{app:B}, \ref{app:C1} and \ref{app:C2}. We should point out that the symmetry of discrete estimators $\mathbf{G}^\top\mathbf{G}$ and $\mathbf{P}^{\otimes} \mathbf{H}^\top \mathbf{H} \mathbf{P}^{\otimes}$ allows the convergence of the eigenvectors to be proved as well. Since the proof of eigenvector convergence is more technical, we present it in Appendix~\ref{app:C3}.
The same techniques used to prove convergence results for the Laplace-Beltrami operator are used to show the results for the Bochner Laplacian acting on vector fields, thanks to the similarity in definitions for the Bochner Laplacian and the Laplace-Beltrami operator and the similarity in our discrete estimators. Since these proofs follow the same arguments as those for the Laplace-Beltrami operator, we document them in Appendix \ref{app:D}.
While we suspect the proof technique can also be used to show the spectral convergence for the Hodge and Lichnerowicz Laplacians, the exact calculation can be more involved since the weak forms of these two Laplacians have more terms compared to that of the Bochner Laplacian.}

\subsection{Spectral Convergence for Laplace-Beltrami}\label{sec4}

Given a data set of points $ X = \{x_1 , \dots x_N \} \subset M$ and a function $f:M \to \mathbb{R}$, recall that the interpolation $I_{\phi_s} f$ only depends on $f(x_1), \dots, f(x_N)$. Hence, $I_{\phi_s}$ can be viewed, after restricting to the manifold, as a map either from $\{ f: M \to \mathbb{R} \} \to C^{\alpha - \frac{(n-d)}{2}}(M)$, or as a map
$$
I_{\phi_s} : \mathbb{R}^N \to C^{\alpha - \frac{(n-d)}{2}}(M).
$$
For details regarding the loss of regularity which occurs when restricting to the $d$-dimensional submanifold $M\subset \BR^n$, see the beginning of Section 2.3 in \cite{fuselier2012scattered}. For the Laplace-Beltrami operator, our focus is on continuous estimators, so we presently regard $I_{\phi_s}$ as a map
$$
I_{\phi_s}: \{ f:M \to \mathbb{R} \} \to C^{\alpha - \frac{(n-d)}{2}}(M).
$$
{  Based on Theorem 10 in \cite{fuselier2012scattered}, one can deduce the following interpolation error.
\begin{lem}
\label{function interpolation}
Let $\phi_s$ be a kernel whose RKHS norm equivalent to Sobolev space of order $\alpha > n/2$. Then there is sufficiently large $N = |X|$ such that with probability higher than $1 - \frac{1}{N} $, for all $f \in H^{\alpha - \frac{(n-d)}{2}}(M)$, we have
$$
\| I_{\phi_s} f - f\|_{L^2(M)} = O\left( N^{\frac{-2\alpha + (n-d)}{2d}} \right).
$$
\end{lem}

\begin{proof}
See Appendix~\ref{sec:B2}.
\end{proof}

To ensure that derivatives of interpolations of smooth functions are bounded, we must have an interpolator that is sufficiently regular. While many results in this paper hold whenever the RKHS induced by $I_{\phi_s}$ is norm equivalent to a Sobolev space of order $\alpha > n/2$, we require slightly higher regularity to prove spectral convergence.} In particular, we assume the following.
{
\begin{assu}[Sufficiently Regular Interpolator]\label{weaklyunstable}
Assume that $I_{\phi_s}$ has an RKHS norm equivalent to $H^\alpha(\mathbb{R}^n)$ with $\alpha \geq n/2 + 3$.
\end{assu}
This assumption allows us to conclude that whenever $f \in C^\infty(M)$, we have the following equation: $\| I_{\phi_s} f\|_{W^{2, \infty}(M)} = O(1)$, where the constants depend on $M, \|f\|_{W^{2, \infty}(M)}, $ and $\|f\|_{H^{\alpha - (n-d)/2}(M)}$. This assumption will be needed for the uniform boundedness of random variables in the concentration inequality in the following lemmas. This statement is made reported concisely as Lemma~\ref{bounded derivatives} in Appendix~\ref{app:B}. Before we prove the spectral convergence result, let us state the following two concentration bounds that will be needed in the proof.

\begin{lem}
\label{weak G convergence continuous}
Let $f,h \in C^\infty(M)$. Then with probability higher than $1 - \frac{2}{N},$
$$
\left| \langle \Delta_M f , h \rangle_{L^2(M)} -\langle \textup{grad}_g I_{\phi_s} f , \textup{grad}_g I_{\phi_s} h \rangle_{L^2(\mathfrak{X}(M))}  \right| \leq C N^{\frac{-2\alpha + (n-d)}{2d}},
$$
as $N\to\infty$, where the constant $C$ depends on $ \|\Delta_M f\|_{L^2(M)} + \| \Delta_M I_{\phi_s} h \|_{L^2(M)}$.
\end{lem}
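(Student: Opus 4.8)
The plan is to reduce the statement to Green's identity on the closed manifold $M$ together with the two estimates already available, namely the $L^2$ interpolation bound (Lemma~\ref{function interpolation}) and the uniform $W^{2,\infty}$ bound on interpolants of smooth functions (Lemma~\ref{bounded derivatives} in Appendix~\ref{app:B}). First I would recall that, by the fundamental identity relating the covariant derivative to the projected ambient derivative, $\textup{grad}_g I_{\phi_s} f = \mathbf{P}\,\overline{\textup{grad}}_{\mathbb{R}^n} I_{\phi_s} f$ restricted to $M$ equals the intrinsic gradient of the restriction $I_{\phi_s} f|_M$; by Assumption~\ref{weaklyunstable}, $I_{\phi_s} f|_M$ and $I_{\phi_s} h|_M$ lie in $H^{\alpha - (n-d)/2}(M) \subseteq H^2(M)$, since $\alpha - (n-d)/2 \geq d/2 + 3 \geq 2$. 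Hence Green's identity on the closed manifold gives
\[
\langle \textup{grad}_g I_{\phi_s} f , \textup{grad}_g I_{\phi_s} h \rangle_{L^2(\mathfrak{X}(M))} = \langle I_{\phi_s} f , \Delta_M I_{\phi_s} h \rangle_{L^2(M)} ,
\]
and self-adjointness of $\Delta_M$ on $C^\infty(M)$ gives $\langle \Delta_M f , h \rangle_{L^2(M)} = \langle f , \Delta_M h \rangle_{L^2(M)}$.

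Next I would insert $\pm \langle f , \Delta_M I_{\phi_s} h \rangle_{L^2(M)}$ to split the difference as
\[
\langle I_{\phi_s} f , \Delta_M I_{\phi_s} h \rangle_{L^2(M)} - \langle f , \Delta_M h \rangle_{L^2(M)} = \langle I_{\phi_s} f - f , \Delta_M I_{\phi_s} h \rangle_{L^2(M)} + \langle f , \Delta_M (I_{\phi_s} h - h) \rangle_{L^2(M)} ,
\]
and then integrate by parts once more in the last term (legitimate because $f \in C^\infty(M)$ and $I_{\phi_s} h - h \in H^2(M)$), rewriting it as $\langle \Delta_M f , I_{\phi_s} h - h \rangle_{L^2(M)}$. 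Cauchy--Schwarz then yields
\[
\big| \langle I_{\phi_s} f - f , \Delta_M I_{\phi_s} h \rangle_{L^2(M)} \big| \leq \| I_{\phi_s} f - f \|_{L^2(M)} \, \| \Delta_M I_{\phi_s} h \|_{L^2(M)} ,
\]
\[
\big| \langle \Delta_M f , I_{\phi_s} h - h \rangle_{L^2(M)} \big| \leq \| \Delta_M f \|_{L^2(M)} \, \| I_{\phi_s} h - h \|_{L^2(M)} .
\]
For $\| I_{\phi_s} f - f \|_{L^2(M)}$ and $\| I_{\phi_s} h - h \|_{L^2(M)}$ I would invoke Lemma~\ref{function interpolation}, which holds for all $H^{\alpha - (n-d)/2}(M)$ functions simultaneously on a single event of probability at least $1 - 1/N$, giving the rate $N^{\frac{-2\alpha + (n-d)}{2d}}$; the factor $\| \Delta_M f \|_{L^2(M)}$ is a fixed constant since $f$ is smooth; and $\| \Delta_M I_{\phi_s} h \|_{L^2(M)} \leq C \, \| I_{\phi_s} h \|_{H^2(M)} \leq C \, \textup{Vol}(M)^{1/2} \| I_{\phi_s} h \|_{W^{2,\infty}(M)} = O(1)$ by Lemma~\ref{bounded derivatives}, which holds on an event of probability at least $1 - 1/N$. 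A union bound over these two exceptional events gives probability at least $1 - 2/N$, and collecting constants produces the claimed inequality with $C$ depending on $\| \Delta_M f \|_{L^2(M)} + \| \Delta_M I_{\phi_s} h \|_{L^2(M)}$.

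The main obstacle, and the reason the weak (rather than pointwise) formulation is used, is that one cannot directly control $\| \Delta_M I_{\phi_s} f - \Delta_M f \|_{L^2(M)}$: no convergence rate is available for second derivatives of the RBF interpolant, only the $O(1)$ boundedness of Lemma~\ref{bounded derivatives}. The integration-by-parts maneuver above circumvents this by always transferring the ``extra'' pair of derivatives onto one of the genuinely smooth arguments $f$ or $h$, so that every error term reduces to an $L^2$ interpolation error multiplied by an $O(1)$ quantity. A secondary technical point is to check that the Green identities are valid at the Sobolev regularity actually possessed by $I_{\phi_s} f|_M$ and $I_{\phi_s} h|_M$, which is precisely what the strengthened hypothesis $\alpha \geq n/2 + 3$ in Assumption~\ref{weaklyunstable} guarantees.
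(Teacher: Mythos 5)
Your proof is correct and, after unwinding, is essentially the same as the paper's. The paper inserts the middle term $\pm\langle \textup{grad}_g f, \textup{grad}_g I_{\phi_s} h\rangle_{L^2(\mathfrak{X}(M))}$ and then bounds each half using Lemma~\ref{continuous gradient estimation} (which is itself just integration by parts plus Cauchy--Schwarz); applying Green's identity to that middle term gives exactly your $\langle f, \Delta_M I_{\phi_s} h\rangle_{L^2(M)}$, and the resulting pair of terms and error bounds $\epsilon_h\|\Delta_M f\|_{L^2(M)} + \epsilon_f\|\Delta_M I_{\phi_s} h\|_{L^2(M)}$ coincide with yours. One small difference in emphasis: you invoke Lemma~\ref{bounded derivatives} to justify that $\|\Delta_M I_{\phi_s} h\|_{L^2(M)} = O(1)$, whereas the statement of the lemma simply allows the constant $C$ to depend on this quantity and defers its uniform boundedness to Remark~\ref{bounded interpolation remark}; this is not a gap, just a shift of where that bookkeeping is done.
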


\begin{proof}
See Appendix~\ref{app:C1}. In the course of the proof, we note that the prove inherits the interpolation error rate in Lemma~\ref{function interpolation}.
\end{proof}

For the discretization, we need to define an appropriate inner product such that it is consistent with the inner product of $L^2(M)$ as the number of data points $N$ approaches infinity. In particular, we have the following definition.
\begin{definition}
    \label{discrete L2}
Given two vectors $\mathbf{f}, \mathbf{h} \in \mathbb{R}^N$, we define
\BEA
\langle \mathbf{f} , \mathbf{h} \rangle_{L^2(\mu_N)} : = \frac{1}{N} \mathbf{f}^\top \mathbf{h}. \notag
\EEA
Similarly, we denote by $\| \cdot \|_{L^2(\mu_N)}$ the norm induced by the above inner product.
\end{definition}
We remark that when $\mathbf{f}$ and $\mathbf{h}$ are restrictions of functions $f$ and $h$, respectively, then the above can be evaluated as $\langle \mathbf{f} , \mathbf{h} \rangle_{L^2(\mu_N)} =  \frac{1}{N} \sum_{i=1}^N f(x_i) h(x_i)$.

By the sufficiently regular interpolator in Assumption~\ref{weaklyunstable}, it is clear that the concentration bound in the lemma above converges as $N \to \infty$. The next concentration bound is as follows:

\begin{lem}
\label{weak G convergence empirical}
Let $f,h \in C^\infty(M)$. Let the sufficiently regular interpolator Assumption~\ref{weaklyunstable} be valid. Then with probability higher than $1-\frac{2}{N}$,
$$
\left| \langle \mathbf{G}^\top \mathbf{G} \mathbf{f}, \mathbf{h} \rangle_{L^2(\mu_N)} - \langle \textup{grad}_g I_{\phi_s} f, \textup{grad}_g I_{\phi_s} h \rangle_{L^2(\mathfrak{X}(M))} \right| \leq C\frac{\sqrt{\log N}}{\sqrt{N}}
$$
for some constant $C>0$, as $N\to \infty$.
\end{lem}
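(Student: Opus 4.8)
The plan is to turn the empirical bilinear form into a Monte-Carlo average and then compare it to the corresponding integral. Using the exact identity $\mathbf{G}\mathbf{f} = (\textup{grad}_g I_{\phi_s}\mathbf{f})|_X$ from \eqref{sec21:discrete_grad} (and the same for $\mathbf{h}$), together with Definition~\ref{discrete L2},
\[
\langle \mathbf{G}^\top\mathbf{G}\mathbf{f}, \mathbf{h}\rangle_{L^2(\mu_N)} = \frac{1}{N}\big(\mathbf{G}\mathbf{f}\big)^\top\big(\mathbf{G}\mathbf{h}\big) = \frac{1}{N}\sum_{j=1}^N w(x_j), \qquad w := \langle \textup{grad}_g I_{\phi_s}f,\, \textup{grad}_g I_{\phi_s}h\rangle \in C(M),
\]
whereas by definition of the Riemannian $L^2$ inner product of vector fields $\langle \textup{grad}_g I_{\phi_s}f, \textup{grad}_g I_{\phi_s}h\rangle_{L^2(\mathfrak{X}(M))} = \int_M w\, d\textup{Vol}$. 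Hence the quantity to be estimated is the Monte-Carlo deviation $\big|\frac{1}{N}\sum_{j} w(x_j) - \int_M w\, d\textup{Vol}\big|$ (for uniformly sampled data; the general case follows after the density reweighting of Remark~\ref{non unif remark}). The one point that makes this nontrivial is that $w$ is \emph{not} a fixed function: the interpolant $I_{\phi_s}f$ depends on the entire sample $x_1,\dots,x_N$, so the summands $w(x_j)$ are correlated and a plain Hoeffding/Bernstein bound for i.i.d.\ terms does not directly apply.

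To get around this I would peel off a deterministic reference function. Put $w_0 := \langle \textup{grad}_g f, \textup{grad}_g h\rangle \in C^\infty(M)$, so that $\int_M w_0\, d\textup{Vol} = \langle \Delta_M f, h\rangle_{L^2(M)}$ by integration by parts on the closed manifold $M$, and write
\[
\frac{1}{N}\sum_{j=1}^N w(x_j) - \int_M w\, d\textup{Vol} = \underbrace{\Big(\frac{1}{N}\sum_{j=1}^N w_0(x_j) - \int_M w_0\, d\textup{Vol}\Big)}_{(\mathrm{I})} + \underbrace{\frac{1}{N}\sum_{j=1}^N (w-w_0)(x_j)}_{(\mathrm{II})} - \underbrace{\Big(\int_M w\, d\textup{Vol} - \int_M w_0\, d\textup{Vol}\Big)}_{(\mathrm{III})}.
\]
Term $(\mathrm{I})$ is an average of i.i.d.\ bounded random variables, since $w_0$ is fixed and smooth; Hoeffding's inequality at confidence $1-2/N$ gives $|(\mathrm{I})| \le C\sqrt{\log N / N}$. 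Term $(\mathrm{III})$ equals $\langle \textup{grad}_g I_{\phi_s}f, \textup{grad}_g I_{\phi_s}h\rangle_{L^2(\mathfrak{X}(M))} - \langle \Delta_M f, h\rangle_{L^2(M)}$, which is exactly what Lemma~\ref{weak G convergence continuous} bounds: on an event of probability at least $1-2/N$ it is $O\!\big(N^{(-2\alpha+(n-d))/(2d)}\big)$, and since $\alpha \ge n/2+3 > n/2$ this is $o(N^{-1/2})$, hence dominated by the asserted rate.

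The load-bearing term is $(\mathrm{II})$. Expanding $w - w_0 = \langle \textup{grad}_g(I_{\phi_s}f - f),\, \textup{grad}_g I_{\phi_s}h\rangle + \langle \textup{grad}_g f,\, \textup{grad}_g(I_{\phi_s}h - h)\rangle$ and invoking the uniform-in-$N$ bound $\|I_{\phi_s}h\|_{W^{2,\infty}(M)} = O(1)$ guaranteed by Assumption~\ref{weaklyunstable} (Lemma~\ref{bounded derivatives}), one gets $|(\mathrm{II})| \le \|w-w_0\|_{L^\infty(M)} \le C\big(\|I_{\phi_s}f - f\|_{W^{1,\infty}(M)} + \|I_{\phi_s}h - h\|_{W^{1,\infty}(M)}\big)$. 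Sobolev embedding on the $d$-dimensional $M$ reduces this to an $H^\mu(M)$-interpolation error with $\mu$ slightly above $d/2+1$, and the interpolation estimate of \cite{fuselier2012scattered} underlying Lemma~\ref{function interpolation}, applied in this norm, makes it $o(N^{-1/2})$ under Assumption~\ref{weaklyunstable}; thus $|(\mathrm{II})|$ is again dominated by $\sqrt{\log N/N}$. Intersecting the three high-probability events and absorbing constants gives the claim with failure probability at most $2/N$. The main obstacle is precisely the statistical coupling between the interpolant and the design points in $(\mathrm{II})$; handling it cleanly is what forces the use of the uniform derivative control of interpolants of smooth functions, and it is the smoothness budget $\alpha \ge n/2+3$ — large enough to survive the $(n-d)/2$ restriction loss and still leave room for a $W^{1,\infty}$-interpolation rate beating $N^{-1/2}$ — that makes the argument close. (If one prefers not to pass through an $L^\infty$ interpolation bound, $(\mathrm{II})$ can instead be controlled by a uniform empirical-process estimate over a fixed ball of $C^{\lfloor d/2\rfloor + 2}(M)$, which is a Donsker class under the same assumption, reaching the $\sqrt{\log N/N}$ rate directly by a chaining/Talagrand argument.)
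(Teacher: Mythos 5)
Your decomposition is a genuinely different route from the paper's proof, and it treats more carefully an issue the paper glosses over. The paper's proof is a single step: after writing $\langle \mathbf{G}^\top\mathbf{G}\mathbf{f},\mathbf{h}\rangle_{L^2(\mu_N)} = \frac{1}{N}\sum_j w(x_j)$ with $w = \sum_i (\mathcal{G}_i I_{\phi_s}f)(\mathcal{G}_i I_{\phi_s}h)$, it notes that $w$ is uniformly bounded (via Assumption~\ref{weaklyunstable} and Lemma~\ref{bounded derivatives}) and then applies Hoeffding's inequality directly to compare $\frac{1}{N}\sum_j w(x_j)$ with $\int_M w\,d\mathrm{Vol}$, without introducing a deterministic reference function. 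As you correctly observe, this is delicate because $w$ is built from the interpolant $I_{\phi_s}f$, which depends on the entire sample $X$; the summands $w(x_1),\dots,w(x_N)$ are therefore not i.i.d., and Hoeffding does not apply to them verbatim. Your three-term decomposition --- genuine Monte Carlo for the fixed function $w_0 = \langle\mathrm{grad}_g f, \mathrm{grad}_g h\rangle$, plus interpolation-error corrections (II) and (III) --- is the principled way to make this step rigorous, and your identification of (III) with the content of Lemma~\ref{weak G convergence continuous} is exactly right.

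The one place where you overclaim is the control of term (II). Bounding $|(\mathrm{II})| \le \|w - w_0\|_{L^\infty(M)}$ and invoking the $W^{1,\infty}$ interpolation estimate underlying Lemma~\ref{bounded derivatives} (with $\mu = 1$) gives $\|I_{\phi_s}f - f\|_{W^{1,\infty}(M)} \le C\,h_{X,M}^{\alpha - n/2 - 1}$, which with $h_{X,M} \sim (\log N/N)^{1/d}$ is $O\bigl((\log N/N)^{(\alpha - n/2 - 1)/d}\bigr)$. For this to be $O(\sqrt{\log N/N})$ one needs $\alpha \ge n/2 + 1 + d/2$; the minimal hypothesis $\alpha \ge n/2 + 3$ of Assumption~\ref{weaklyunstable} secures this only for $d \le 4$, and your Sobolev-embedding route ($W^{1,\infty}(M) \hookleftarrow H^{\mu}(M)$ with $\mu > d/2 + 1$) hits the very same threshold. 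In particular, for the paper's own $d=5$ flat-torus example, term (II) as you bound it decays more slowly than the asserted $\sqrt{\log N/N}$ rate. The cleanest fix is either to strengthen the assumption to $\alpha \ge n/2 + 1 + d/2$, or to state the lemma with an additional additive $O\bigl(h_{X,M}^{\alpha - n/2 - 1}\bigr)$ term --- harmless downstream, since Theorems~\ref{eigvalconv} and \ref{conveigvec} already carry a separate interpolation-rate term. Your fallback of a uniform empirical-process bound over a fixed Sobolev ball (chaining/Talagrand) would also close the gap, but at the cost of substantially more machinery than the paper develops. (Also, a small accounting point: intersecting Hoeffding, Lemma~\ref{weak G convergence continuous}, and Lemma~\ref{bounded derivatives} for both $f$ and $h$ costs more than the stated $2/N$ in failure probability.)
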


\begin{proof}
See Appendix~\ref{app:C2}.
\end{proof}
}
The main results for the Laplace-Beltrami operator are as follows. First, we have the following eigenvalue convergence result.
\begin{theo}
\label{eigvalconv}
(convergence of eigenvalues: symmetric formulation) Let $\lambda_i$ denote the $i$-th eigenvalue of $\Delta_M$, enumerated $\lambda_1 \leq \lambda_2 \leq \dots $, and { fix some} $ i \in \mathbb{N}$. Assume that $\mathbf{G}$ is as defined in \eqref{sec2.1:eq1} with interpolation operator that satisfies the hypothesis in Assumption~\ref{weaklyunstable}. Then there exists an eigenvalue $\hat{\lambda}_i$ of $\mathbf{G}^\top\mathbf{G}$ such that
\BEA
\left|\lambda_i - \hat{\lambda}_i \right| \leq O\left(N^{-\frac{1}{2}}\right) + {O\left( N^{\frac{-2\alpha + (n-d)}{2d}} \right)},\label{spectralerrorrate}
\EEA
with probability greater than $1 - {\frac{12}{N}}$.
\end{theo}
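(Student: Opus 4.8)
## Proof Proposal for Theorem~\ref{eigvalconv}

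The plan is to use the min-max (Courant–Fischer) characterization of eigenvalues of the self-adjoint operator $\Delta_M$ and of the symmetric positive semi-definite matrix $\mathbf{G}^\top\mathbf{G}$, and to transfer the variational quantities between the continuous and discrete settings using the two concentration bounds in Lemmas~\ref{weak G convergence continuous} and \ref{weak G convergence empirical}. Recall that
\[
\lambda_i = \min_{\substack{S \subseteq C^\infty(M) \\ \dim S = i}} \ \max_{\substack{f \in S \\ \|f\|_{L^2(M)} = 1}} \langle \Delta_M f, f\rangle_{L^2(M)},
\]
while the $i$-th eigenvalue of $\mathbf{G}^\top\mathbf{G}$ admits the analogous characterization with the quadratic form $\langle \mathbf{G}^\top\mathbf{G}\,\mathbf{f},\mathbf{f}\rangle_{L^2(\mu_N)} = \|\mathbf{G}\mathbf{f}\|^2 / N$ over $i$-dimensional subspaces of $\mathbb{R}^N$ normalized in $\|\cdot\|_{L^2(\mu_N)}$. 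The strategy is the standard two-sided sandwich: to get an upper bound on $\hat\lambda_i$, restrict the discrete Rayleigh quotient to the image under $R_N$ of the span of the first $i$ true eigenfunctions (after interpolating); to get a matching lower bound (i.e.\ an upper bound on $\lambda_i$ in terms of $\hat\lambda_i$), lift the discrete eigenvectors back to continuous functions via $I_{\phi_s}$ and use them as trial functions in the continuous min-max.

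The key steps, in order, are as follows. First, I would fix an $i$-dimensional subspace $V_i = \mathrm{span}\{\varphi_1,\dots,\varphi_i\} \subseteq C^\infty(M)$ spanned by the first $i$ eigenfunctions of $\Delta_M$ (these are smooth by elliptic regularity). For any unit-norm $f = \sum_k a_k \varphi_k \in V_i$ one has $\langle \Delta_M f, f\rangle_{L^2(M)} \le \lambda_i$. Applying the polarization of Lemma~\ref{weak G convergence empirical} together with Lemma~\ref{weak G convergence continuous} to each pair $\varphi_k,\varphi_\ell$, I would show that the discrete quadratic form $\langle \mathbf{G}^\top\mathbf{G}R_N f, R_N f\rangle_{L^2(\mu_N)}$ and the discrete norm $\|R_N f\|_{L^2(\mu_N)}^2$ both differ from their continuous counterparts by $O(N^{-1/2}\sqrt{\log N}) + O(N^{(-2\alpha+(n-d))/(2d)})$ uniformly over the (finite-dimensional, hence compact) unit sphere of $V_i$; here I use a union bound over a basis of $V_i$, which only inflates the failure probability by a constant factor and a factor depending on $i$ — this is where the $12/N$ comes from. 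A minor technical point is that $R_N|_{V_i}$ is injective for $N$ large (its image is $i$-dimensional), so it yields a legitimate $i$-dimensional competitor subspace in the discrete min-max; combining these estimates gives $\hat\lambda_i \le \lambda_i + O(N^{-1/2}) + O(N^{(-2\alpha+(n-d))/(2d)})$ after absorbing the $\sqrt{\log N}$ and using that the denominator $\|R_N f\|^2_{L^2(\mu_N)}$ is bounded away from $0$.

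For the reverse inequality I would take $\hat V_i \subseteq \mathbb{R}^N$ to be the span of the first $i$ eigenvectors of $\mathbf{G}^\top\mathbf{G}$ and consider the continuous subspace $W_i = I_{\phi_s}(\hat V_i) \subseteq C^{\alpha-(n-d)/2}(M)$; by Assumption~\ref{weaklyunstable} these interpolants lie in $W^{2,\infty}(M)$ with controlled norms (Lemma~\ref{bounded derivatives}), and for $N$ large $\dim W_i = i$. For $g = I_{\phi_s}\mathbf{g}$ with $\mathbf{g}\in\hat V_i$, the quantity $\langle \mathrm{grad}_g g,\mathrm{grad}_g g\rangle_{L^2(\mathfrak X(M))}$ is, by Lemma~\ref{weak G convergence empirical}, within the stated error of $\langle \mathbf{G}^\top\mathbf{G}\mathbf{g},\mathbf{g}\rangle_{L^2(\mu_N)} \le \hat\lambda_i \|\mathbf{g}\|^2_{L^2(\mu_N)}$, and $\|\mathbf{g}\|^2_{L^2(\mu_N)}$ is within the error of $\|g\|^2_{L^2(M)}$; since $\langle \Delta_M g, g\rangle_{L^2(M)} = \langle \mathrm{grad}_g g, \mathrm{grad}_g g\rangle_{L^2(\mathfrak X(M))}$ by the weak formulation, this bounds the continuous Rayleigh quotient of $g$ and hence $\lambda_i \le \hat\lambda_i + O(N^{-1/2}) + O(N^{(-2\alpha+(n-d))/(2d)})$. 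Intersecting the two high-probability events and rescaling the $\log N$ into the $N^{-1/2}$ rate (which is harmless since $\sqrt{\log N}/\sqrt N$ is still $o(N^{-1/2+\delta})$, or one can simply keep the honest rate) yields \eqref{spectralerrorrate}.

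The main obstacle I anticipate is controlling the competitor subspaces uniformly rather than pointwise: the concentration lemmas are stated for fixed smooth $f,h$, so to run min-max I must upgrade them to hold simultaneously over an $i$-dimensional subspace, and I must verify that the relevant Gram matrices (the discrete $L^2(\mu_N)$ Gram matrix of $\{\varphi_k\}$ and of $\{I_{\phi_s}\mathbf{g}_k\}$) are uniformly well-conditioned so that normalization does not blow up the error constants. This is handled by a finite union bound plus the fact that on a fixed finite-dimensional space all norms are equivalent, but it requires care to keep the failure probability at $O(1/N)$ and to ensure the constants depend only on $i$, $M$, and Sobolev norms of the fixed eigenfunctions — not on $N$. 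A secondary subtlety is that $W_i = I_{\phi_s}(\hat V_i)$ lives only in $C^{\alpha-(n-d)/2}(M)$, not $C^\infty$, so the min-max for $\Delta_M$ must be invoked in its form valid for the form domain $H^1(M)$ (Dirichlet energy), which is legitimate since $I_{\phi_s}\mathbf{g} \in H^1(M)$ under Assumption~\ref{weaklyunstable}.
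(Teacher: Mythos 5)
Your proposal is correct and follows the same overall Courant--Fischer strategy as the paper, invoking the same two concentration bounds (Lemmas~\ref{weak G convergence continuous} and \ref{weak G convergence empirical}) and the interpolation error in Lemma~\ref{function interpolation}. The one genuine difference is in the choice of competitor subspaces for the direction $\lambda_i \le \hat\lambda_i + \cdots$. You lift the span of the first $i$ discrete eigenvectors to $M$ via $I_{\phi_s}$ and feed $W_i = I_{\phi_s}(\hat V_i)$ into the continuous min--max; the paper instead sidesteps the lifting entirely by working with the $i$-dimensional \emph{smooth} subspace $\mathcal{S}_i'$ on which $\max_{f}\langle \mathbf{G}^\top\mathbf{G} R_N f, R_N f\rangle_{L^2(\mu_N)} / \|R_N f\|^2_{L^2(\mu_N)}$ is minimized (legitimate because $R_N$ maps $C^\infty(M)$ onto $\mathbb{R}^N$, so this minimum equals $\hat\lambda_i$), and then applies the concentration lemmas to the maximizer $\tilde f\in \mathcal{S}_i'$ of the continuous Rayleigh quotient. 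The paper's device keeps all trial functions in $C^\infty(M)$, while yours produces competitors of only finite Sobolev regularity, which you correctly note must be justified via Assumption~\ref{weaklyunstable} (so that $I_{\phi_s}\mathbf{g}\in W^{2,\infty}(M)$ and both the Dirichlet form and Lemma~\ref{weak G convergence empirical} apply); on the other hand, in that direction you only need the empirical lemma, since for $g = I_{\phi_s}\mathbf{g}$ the interpolation error term degenerates as $I_{\phi_s}R_N g = g$. Your direction $\hat\lambda_i \le \lambda_i + \cdots$, which you make explicit using the true eigenspace $V_i$, is the one the paper relegates to ``the same argument yields''; so effectively you have swapped which direction is spelled out. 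The uniformity caveat you raise (the competitor subspace in one direction is data-dependent, so the constants in the concentration lemmas must be controlled uniformly over it) applies equally to $\tilde f$ in the paper's argument; neither treatment resolves this fully rigorously, so your proposal is at the same level of rigor as the paper's own proof.
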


\begin{proof}
Enumerate the eigenvalues of $\mathbf{G}^\top\mathbf{G}$ and label them $\hat{\lambda}_1 \leq \hat{\lambda}_2 \leq \dots \leq \hat{\lambda}_N$. Let $\mathcal{S}_i' \subseteq C^\infty(M)$ denote an $i$-dimensional subspace of smooth functions on which the quantity $\textup{max}_{f \in \mathcal{S}_i} \frac{\langle \mathbf{G}^\top\mathbf{G} R_Nf , R_Nf \rangle_{L^2(\mu_N)}}{\|R_N f \|_{L^2(\mu_N)}}$ achieves its minimum. Let $\tilde{f} \in \mathcal{S}_i'$ be the function on which the maximum $\textup{max}_{f \in \mathcal{S}_i'} \langle \Delta_M f , f \rangle_{L^2(M)}$ occurs. WLOG, assume that $\|\tilde{f}\|^2_{L^2(M)} = 1.$ Assume that $N$ is sufficiently large so that by Hoeffding's inequality $\left| \|R_N \tilde{f} \|^2_{L^2(\mu_N)} -  1 \right| \leq \frac{\textup{Const}}{\sqrt{N}} \leq 1/2$, with probability $1 - \frac{2}{N}$, so that $\|R_N \tilde{f} \|^2_{L^2(\mu_N)}$ is bounded away from zero. Hence, we can Taylor expand $ \frac{\langle \mathbf{G}^\top\mathbf{G} R_N \tilde{f} , R_N \tilde{f} \rangle_{L^2(\mu_N)}}{\|R_N \tilde{f} \|^2_{L^2(\mu_N)}}$ to obtain
$$
 \frac{\langle \mathbf{G}^\top\mathbf{G} R_N \tilde{f} , R_N \tilde{f} \rangle_{L^2(\mu_N)}}{ \|R_N \tilde{f}\|^2_{L^2(\mu_N)}} = \langle \mathbf{G}^\top\mathbf{G} R_N \tilde{f} , R_N \tilde{f} \rangle_{L^2(\mu_N)} -  \frac{ \textup{Const} \langle \mathbf{G}^\top\mathbf{G} R_N \tilde{f} , R_N \tilde{f} \rangle_{L^2(\mu_N)} }{\sqrt{N}}.
$$
{ By Lemmas~\ref{weak G convergence continuous} and \ref{weak G convergence empirical}, with probability higher than $1 - \frac{4}{N}$, we have that
$$
\left| \langle \mathbf{G}^\top\mathbf{G} R_N \tilde{f} , R_N \tilde{f} \rangle_{L^2(\mu_N)} - \langle \Delta_M \tilde{f} , \tilde{f} \rangle_{L^2(M)} \right| = O \left( N^{-\frac{1}{2}} \right) + {O\left( N^{\frac{-2\alpha + (n-d)}{2d}} \right)}.
$$
Combining the two bounds above, we obtain that with probability higher than $1 - {\frac{6}{N}}$,
$$
\langle \Delta_M \tilde{f} , \tilde{f} \rangle_{L^2(M)} \leq   \frac{\langle \mathbf{G}^\top\mathbf{G} R_N\tilde{f} , R_N \tilde{f} \rangle_{L^2(\mu_N)}}{\|R_N \tilde{f} \|^2_{L^2(\mu_N)}}  + O\left(  N^{-\frac{1}{2}} \right) + {O\left( N^{\frac{-2\alpha + (n-d)}{2d}} \right)}.
$$}
Since $\tilde{f}$ is the function on which $\langle \Delta_M f, f \rangle_{L^2(M)}$ achieves its maximum over all $f \in \mathcal{S}_i'$, and since certainly
\[
\frac{\langle \mathbf{G}^\top\mathbf{G} R_N \tilde{f} , R_N \tilde{f} \rangle_{L^2(\mu_N)}}{\|R_N \tilde{f} \|^2_{L^2(\mu_N)}} \leq \textup{max}_{f \in \mathcal{S}_i'} \frac{\langle \mathbf{G}^\top\mathbf{G} R_Nf , R_Nf \rangle_{L^2(\mu_N)}}{\|R_N f \|^2_{L^2(\mu_N)}},
\]
we have the following:
$$
\textup{max}_{f \in \mathcal{S}_i'} \langle \Delta_M f , f \rangle_{L^2(M)} \leq  \textup{max}_{f \in \mathcal{S}_i'} \frac{\langle \mathbf{G}^\top\mathbf{G} R_Nf , R_Nf \rangle_{L^2(\mu_N)}}{\|R_N f \|^2_{L^2(\mu_N)}}  +  O\left( N^{-\frac{1}{2}} \right) + {O\left( N^{\frac{-2\alpha + (n-d)}{2d}} \right)}.
$$
But we assumed that $\mathcal{S}_i'$ is the exact subspace on which $\textup{max}_{f \in \mathcal{S}_i} \frac{\langle \mathbf{G}^\top\mathbf{G} R_Nf , R_Nf \rangle_{L^2(\mu_N)}}{\|R_N f \|^2_{L^2(\mu_N)}}$ achieves its minimum. Hence,
$$
\textup{max}_{f \in \mathcal{S}_i'} \langle \Delta_M f , f \rangle_{L^2(M)} \leq \hat{\lambda}_i +  O\left( N^{-\frac{1}{2}}\right) + {O\left( N^{\frac{-2\alpha + (n-d)}{2d}} \right)}.
$$
But the left-hand-side certainly bounds from above by the minimum of $\textup{max}_{f \in \mathcal{S}_i} \langle \Delta_M f , f \rangle_{L^2(M)}$ over all $i$-dimensional smooth subspaces $\mathcal{S}_i$. Hence,
$$
\lambda_i \leq \hat{\lambda}_i +  O\left( N^{-\frac{1}{2}}\right) + {O\left( N^{\frac{-2\alpha + (n-d)}{2d}} \right)}.
$$
The same argument yields that $\hat{\lambda}_i \leq \lambda_i +  O\left( N^{-\frac{1}{2}}\right) + {O\left( N^{\frac{-2\alpha + (n-d)}{2d}} \right)}$, with probability higher than $1 - \frac{6}{N}$. This completes the proof.
\end{proof}

The first error term in \eqref{spectralerrorrate} can be seen as coming from discretizing a continuous operator, while the second error term in \eqref{spectralerrorrate} comes from the fact that continuous estimators in our setting differ from the true Laplace-Beltrami by pre-composing with interpolation. {The convergence holds true for eigenvectors,} though in this case, the constants involved depend heavily on the multiplicity of the eigenvalues.

{ The convergence of eigenvector result is stated as follows.}
\begin{theo}\label{conveigvec}
Let $\epsilon_{\lambda_\ell}:=|\lambda_\ell-\hat{\lambda}_\ell|$ denote the error in approximating the $\ell$-th \underline{distinct} eigenvalue, $\lambda_\ell$, as defined in Theorem~\ref{eigvalconv}. Let Assumption~\ref{weaklyunstable} be valid.
For any $\ell$, there is a constant $c_\ell$ such that whenever $\epsilon_{\lambda_{\ell-1}}, \epsilon_{\lambda_{\ell+1}}  < c_\ell$, then with probability higher than $1 - {\left( \frac{2m^2 + 5m + 24}{N}\right)}$, {where $m$ is the geometric multiplicity of eigenvalue $\lambda_\ell$,} we have the following situation:
for any normalized eigenvector $\mathbf{u}$ of $\mathbf{G}^\top\mathbf{G}$ with eigenvalue $\hat{\lambda}_\ell$, there is a normalized eigenfunction $f$ of $\Delta_M$ with eigenvalue $\lambda_\ell$ such that
\begin{equation*}
    \|R_N f - \mathbf{u} \|_{L^2(\mu_N)} =  O\left(N^{-\frac{1}{4}}\right) + {O\left( N^{\frac{-2\alpha + (n-d)}{4d}} \right)}.
\end{equation*}
\end{theo}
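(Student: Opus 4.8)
The plan is to lift the discrete eigenvector $\mathbf{u}$ to a function $g:=I_{\phi_s}\mathbf{u}$ on $M$, expand $g$ in an $L^2(M)$-orthonormal eigenbasis of $\Delta_M$, and show that its mass concentrates on the $\lambda_\ell$-eigenspace. The subtlety — and the reason the argument must be an induction on the distinct-eigenvalue index $\ell$ — is that a small Rayleigh quotient alone does \emph{not} force such concentration (mass can spread between modes below and above $\lambda_\ell$), so one has to exploit that $\mathbf{u}$ is a genuine eigenvector of the symmetric matrix $\mathbf{G}^\top\mathbf{G}$, hence $L^2(\mu_N)$-orthogonal to every eigenvector associated with the lower part of the approximate spectrum; converting that orthogonality into smallness of the low-frequency coefficients of $g$ is where the inductive hypothesis on $\lambda_1,\dots,\lambda_{\ell-1}$ is used, and it is where the smallness assumptions on $\epsilon_{\lambda_{\ell-1}},\epsilon_{\lambda_{\ell+1}}$ enter.

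First I would set up the lifting. Normalize so $\|\mathbf{u}\|_{L^2(\mu_N)}=1$, put $g:=I_{\phi_s}\mathbf{u}$, and note the interpolation property gives $R_N g=\mathbf{u}$, so $I_{\phi_s}R_N g=g$ and $\textup{grad}_g I_{\phi_s}R_N g=\textup{grad}_g g$. Under Assumption~\ref{weaklyunstable}, $g\in C^{\alpha-(n-d)/2}(M)\subset C^3(M)$, and the norms of $g$ needed below are $O(1)$ (using the interpolant-derivative bounds behind Lemma~\ref{bounded derivatives} in Appendix~\ref{app:B} together with $\hat\lambda_\ell=O(1)$ from Theorem~\ref{eigvalconv}). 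Applying the $C^3$-versions of Lemmas~\ref{weak G convergence continuous} and \ref{weak G convergence empirical} to $g$, plus Hoeffding's inequality for $\|R_N g\|_{L^2(\mu_N)}^2$ and Theorem~\ref{eigvalconv} applied to $\lambda_\ell$ itself, yields with probability $\geq 1-O(1/N)$ that $\|g\|_{L^2(M)}^2=1+O(r_N)$ and $\langle\Delta_M g,g\rangle_{L^2(M)}=\langle\mathbf{G}^\top\mathbf{G}\mathbf{u},\mathbf{u}\rangle_{L^2(\mu_N)}+O(r_N)=\lambda_\ell+O(r_N)$, where $r_N:=N^{-1/2}+N^{(-2\alpha+(n-d))/(2d)}$ is the eigenvalue error rate of Theorem~\ref{eigvalconv} (logarithmic factors from Hoeffding being absorbed).

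Next, expand $g=\sum_j c_j f_j$ in an $L^2(M)$-orthonormal eigenbasis $\{f_j\}$, $\Delta_M f_j=\nu_j f_j$, and split $a_{<}:=\sum_{\nu_j<\lambda_\ell}c_j^2$, $a_{=}:=\sum_{\nu_j=\lambda_\ell}c_j^2$, $a_{>}:=\sum_{\nu_j>\lambda_\ell}c_j^2$, so $a_{<}+a_{=}+a_{>}=1+O(r_N)$ and $\sum_j\nu_j c_j^2=\lambda_\ell+O(r_N)$. The inductive step: assuming a subspace version of the theorem for $\lambda_1,\dots,\lambda_{\ell-1}$ (so that for each $i<\ell$ the restriction $R_N E_{\lambda_i}$ of the true eigenspace is $L^2(\mu_N)$-close to the span of eigenvectors of $\mathbf{G}^\top\mathbf{G}$ with eigenvalue within $c_\ell$ of $\lambda_i$), every $R_N f_j$ with $\nu_j<\lambda_\ell$ is $L^2(\mu_N)$-close to that lower approximate eigenspace; since $\hat\lambda_\ell$ lies strictly above it — this being where $\epsilon_{\lambda_{\ell-1}},\epsilon_{\lambda_{\ell+1}}<c_\ell$ guarantee, via Theorem~\ref{eigvalconv}, a clean gap in the approximate spectrum for $N$ large — $\mathbf{u}$ is $L^2(\mu_N)$-orthogonal to all of them, and a Monte-Carlo estimate ($\|g f_j\|_\infty=O(1)$) gives $c_j=\langle g,f_j\rangle_{L^2(M)}=\langle\mathbf{u},R_N f_j\rangle_{L^2(\mu_N)}+O(r_N)=O(r_N)$ for each such $j$; there being finitely many, $a_{<}=O(r_N)$. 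The base case $\ell=1$ is immediate: $\lambda_1=0$, the eigenspace is the (locally) constant functions, $\mathbf{G}^\top\mathbf{G}$ annihilates their restrictions, and by Theorem~\ref{eigvalconv} plus the spectral gap no other eigenvalue of $\mathbf{G}^\top\mathbf{G}$ is near $0$, so $\mathbf{u}$ equals a normalized constant. Then from $\sum_j\nu_j c_j^2\geq\lambda_\ell a_{=}+\lambda_{\ell+1}a_{>}$ and $\sum_j\nu_j c_j^2=\lambda_\ell(a_{<}+a_{=}+a_{>})+O(r_N)$ one gets $(\lambda_{\ell+1}-\lambda_\ell)a_{>}\leq\lambda_\ell a_{<}+O(r_N)=O(r_N)$, hence $a_{>}=O(r_N)$ after dividing by the fixed gap. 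Thus $\sum_{\nu_j\neq\lambda_\ell}c_j^2=a_{<}+a_{>}=O(r_N)$ and $a_{=}=1-O(r_N)$. Setting $f:=P_{\lambda_\ell}g/\|P_{\lambda_\ell}g\|_{L^2(M)}$ (with $P_{\lambda_\ell}$ the $L^2(M)$-orthogonal projector onto the $\lambda_\ell$-eigenspace; well defined since $\|P_{\lambda_\ell}g\|_{L^2(M)}^2=a_{=}>0$), $f$ is a normalized eigenfunction with eigenvalue $\lambda_\ell$, and $\|g-f\|_{L^2(M)}\leq\|g-P_{\lambda_\ell}g\|_{L^2(M)}+\bigl|1-\|P_{\lambda_\ell}g\|_{L^2(M)}\bigr|=O(r_N^{1/2})$. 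Finally, Hoeffding for $\|R_N(f-g)\|_{L^2(\mu_N)}^2$ ($\|f-g\|_\infty=O(1)$) gives $\|R_N f-\mathbf{u}\|_{L^2(\mu_N)}^2=\|f-g\|_{L^2(M)}^2+O(r_N)=O(r_N)$, so $\|R_N f-\mathbf{u}\|_{L^2(\mu_N)}=O(r_N^{1/2})=O(N^{-1/4})+O(N^{(-2\alpha+(n-d))/(4d)})$. A union bound over the $O(m^2)$ Monte-Carlo estimates (Gram matrices / inner products of the $O(m)$ eigenfunctions involved), the $O(m)$ gradient-energy estimates, and the $O(1)$ eigenvalue-convergence events for $\lambda_{\ell-1},\lambda_\ell,\lambda_{\ell+1}$ — each failing with probability $O(1/N)$ — delivers the stated probability $1-(2m^2+5m+24)/N$.

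The hard part is the inductive control of the low-frequency coefficients: extracting $a_{<}=O(r_N)$ from the orthogonality of $\mathbf{u}$ to the lower approximate eigenspaces forces one to carry through the induction a two-sided statement asserting that those spaces are genuinely close to $R_N E_{\lambda_i}$, and to verify that the approximate spectrum really does separate into clusters around the distinct $\lambda_i$ — exactly what the hypotheses on $\epsilon_{\lambda_{\ell\pm 1}}$ provide. The remaining ingredients — extending Lemmas~\ref{weak G convergence continuous} and \ref{weak G convergence empirical} from $C^\infty$ to the finite regularity of $g=I_{\phi_s}\mathbf{u}$, and bounding the ambient and tangential norms of $g$ despite $\mathbf{u}$ not being a priori a sampled smooth function — are routine but need care; all error terms collapse to $O(r_N)$, and the square-root loss relative to Theorem~\ref{eigvalconv} is intrinsic to this type of gap-plus-Rayleigh-quotient argument.
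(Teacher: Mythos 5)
Your proposal takes a genuinely different route from the paper: you lift the discrete eigenvector $\mathbf{u}$ to a continuous function $g = I_{\phi_s}\mathbf{u}$, expand in the eigenbasis of $\Delta_M$, and show the mass concentrates on the $\lambda_\ell$-eigenspace. The paper instead goes in the opposite direction: it starts from the \emph{true} eigenfunctions $f_1,\dots,f_m$ for $\lambda_\ell$, shows via the $L^2$-consistency of $\mathbf{G}^\top\mathbf{G}R_N f_j$ with $R_N\Delta_M f_j$ and the spectral gap that $\|P_S^\perp R_N f_j\|_{L^2(\mu_N)}$ is small (where $S=\mathrm{span}\{\hat{\mathbf{u}}_{i+1},\dots,\hat{\mathbf{u}}_{i+m}\}$ is the matching discrete eigenspace), applies Gram--Schmidt to $\{P_S R_N f_j\}$ to get an orthonormal basis $\{\mathbf{v}_j\}$ of $S$ close to $\{R_N f_j\}$, and then for $\mathbf{u}=\sum b_j\mathbf{v}_j$ produces $f=\sum b_j f_j$ directly. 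The paper's direction has two structural advantages: it needs no induction on $\ell$ (the spectral-gap hypothesis on $\epsilon_{\lambda_{\ell\pm 1}}$ suffices locally), and crucially it only ever applies the consistency lemmas to \emph{genuine} eigenfunctions of $\Delta_M$, which are $C^\infty$ and uniformly bounded independently of $N$.

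The gap in your version is the step you flag as ``routine but need care'': bounding $\|g\|_{W^{2,\infty}(M)}$ or even $\|g\|_{H^{\alpha-(n-d)/2}(M)}$ for $g=I_{\phi_s}\mathbf{u}$. Every quantitative tool you invoke on $g$ --- the transfer from $\langle\mathbf{G}^\top\mathbf{G}\mathbf{u},\mathbf{u}\rangle_{L^2(\mu_N)}$ to $\langle\Delta_M g,g\rangle_{L^2(M)}$ via Lemmas~\ref{weak G convergence continuous}--\ref{weak G convergence empirical}, the Hoeffding step converting $\langle g,f_j\rangle_{L^2(M)}$ to $\langle\mathbf{u},R_N f_j\rangle_{L^2(\mu_N)}$, and the final Hoeffding on $\|R_N(f-g)\|^2$ --- requires uniform (in $N$) pointwise or Sobolev control on $g$, and Lemma~\ref{bounded derivatives} only delivers this when the interpolated data is the restriction of a function in $H^{\alpha-(n-d)/2}(M)$ with $N$-independent norm. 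But $\mathbf{u}$ is \emph{not} a priori such a restriction: its RKHS norm is $\|g\|_{\mathcal{H}}^2 = \mathbf{u}^\top\boldsymbol{\Phi}^{-1}\mathbf{u}$, and $\boldsymbol{\Phi}^{-1}$ grows unboundedly as $N\to\infty$ (this is exactly the ill-conditioning the paper discusses and handles numerically via pseudo-inversion). The bound $\hat\lambda_\ell=\frac{1}{N}\|\mathbf{G}\mathbf{u}\|^2=O(1)$ only gives discrete $H^1$-type control at the data points, which does not bootstrap to the $W^{2,\infty}$ bound you need without an inverse inequality that would itself blow up with $N$. This is not a cosmetic issue: without it the Rayleigh-quotient identity $\langle\Delta_M g,g\rangle=\lambda_\ell+O(r_N)$ and the coefficient estimates $c_j$ are unsupported. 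A secondary, fixable inaccuracy: the inductive hypothesis only gives $\|R_N f_j - P_{S_i}R_N f_j\|=O(r_N^{1/2})$, hence $c_j = O(r_N^{1/2})$ rather than $O(r_N)$; this still yields $a_< = \sum c_j^2 = O(r_N)$ so your downstream estimates survive, but as written the claim is off by a square root.
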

{ As the proof is more technical, we present it in Appendix \ref{app:C}}.  It is important to note that the results of this section use analytic $\mathbf{P}$. This allows us to conclude that, depending on the smoothness of the kernel, any error slower than the Monte-Carlo convergence rate observed numerically is introduced through the approximation of $\mathbf{P}$, as discussed in the previous section.

\subsection{Spectral Convergence for the Bochner Laplacian}\label{sec5}
The Bochner Laplacian on vector fields is defined in such a way that makes the theoretical discussion in this setting almost identical to that of the Laplace-Beltrami operator. { Hence, we relegate the proofs of the results below to Appendix~\ref{app:D}. We emphasize that the results below will also rely on the Assumption \ref{weaklyunstable} which allows us to have a stable interpolator of smooth vector fields. In particular, as a corollary to Lemma~\ref{bounded derivatives vector field}, the Assumption \ref{weaklyunstable} gives $\|I_{\phi_s} u\|^2_{W^{2, \infty}(\mathfrak{X}(M))} = O(1)$ for any vector field $u \in \mathfrak{X}(M)$ whose components are $C^\infty(M)$ functions. Details regarding the previous statement are found in Appendix \ref{app:B}.}

The main results for the Bochner Laplacian are as follows. First, we have the following eigenvalue convergence result.
\begin{theo}
\label{eigvalconv Bochner}
(convergence of eigenvalues: symmetric formulation) Let $\lambda_i$ denote the $i$-th eigenvalue of $\Delta_B$, enumerated $\lambda_1 \leq \lambda_2 \leq \dots $. Let Assumption \ref{weaklyunstable} be valid. For {some fixed} $i \in \mathbb{N}$, there exists an eigenvalue $\hat{\lambda}_i$ of $\mathbf{P}^\otimes \mathbf{H}^\top\mathbf{H} \mathbf{P}^\otimes$ such that
$$
\left|\lambda_i - \hat{\lambda}_i \right| \leq O\left( N^{- \frac{1}{2}} \right) + {O\left( N^{\frac{-2\alpha + (n-d)}{2d}} \right)},
$$
with probability greater than $1 - {\left( \frac{4n+4}{N} \right)}$.
\end{theo}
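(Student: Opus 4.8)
The plan is to mirror the proof of Theorem~\ref{eigvalconv} essentially verbatim, replacing the Laplace--Beltrami operator $\Delta_M$ by the Bochner Laplacian $\Delta_B$, scalar functions by vector fields, the estimator $\mathbf{G}^\top\mathbf{G}$ by $\mathbf{P}^{\otimes}\mathbf{H}^\top\mathbf{H}\mathbf{P}^{\otimes}$, and the scalar weak identity by its Bochner analogue $\int_M \langle \Delta_B u, v\rangle_x\, d\textup{Vol} = \int_M \langle \textup{grad}_g u, \textup{grad}_g v\rangle_x\, d\textup{Vol}$ stated in Section~\ref{Bochner definition}. Since $\mathbf{P}^{\otimes}\mathbf{H}^\top\mathbf{H}\mathbf{P}^{\otimes}$ is symmetric and positive semi-definite, its eigenvalues admit a Courant--Fischer min--max characterization; restricting the min--max to subspaces spanned by restrictions of smooth vector fields on $M$ is lossless, because every tangential discrete vector field $\mathbf{P}^{\otimes}\mathbf{U}$ equals $R_N\!\left(P\, I_{\phi_s}\mathbf{U}\right)$, the restriction of a smooth vector field. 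The zero eigenvalues coming from $\ker \mathbf{P}^{\otimes}$ sit at the bottom of the discrete spectrum together with the (possibly trivial) kernel of $\Delta_B$, so they do not interfere with the comparison of the fixed $i$-th eigenvalue once $N$ is large.

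The two concentration ingredients needed are the vector-field analogues of Lemmas~\ref{weak G convergence continuous} and \ref{weak G convergence empirical}, which are exactly the statements established in Appendix~\ref{app:D}. For the continuous bound I would start from $\langle \Delta_B u, v\rangle_{L^2(\mathfrak{X}(M))} = \langle \textup{grad}_g u, \textup{grad}_g v\rangle_{L^2(\mathfrak{X}(M))}$, replace $u,v$ on the right by $I_{\phi_s} u, I_{\phi_s} v$, and bound the difference via integration by parts (using that $\textup{div}_1^1$ is the formal adjoint of $\textup{grad}_g$) together with the $L^2(M)$ interpolation error of Lemma~\ref{function interpolation} applied to each of the $n$ ambient components $U^1,\dots,U^n$; the constant picks up $\|\Delta_B u\|_{L^2} + \|\bar{\Delta}_B I_{\phi_s} v\|_{L^2}$ just as in the scalar case. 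For the empirical bound I would rewrite $\langle \mathbf{P}^{\otimes}\mathbf{H}^\top\mathbf{H}\mathbf{P}^{\otimes} R_N u, R_N v\rangle_{L^2(\mu_N)}$ as the average over the $N$ sample points of $\langle \textup{grad}_g I_{\phi_s} u, \textup{grad}_g I_{\phi_s} v\rangle$ evaluated pointwise (using symmetry of $\mathbf{P}$ and $\mathbf{P}^{\otimes} R_N u = R_N u$ for tangential $u$), observe that each summand is a uniformly bounded random variable by the $W^{2,\infty}$ bound $\|I_{\phi_s} u\|_{W^{2,\infty}(\mathfrak{X}(M))} = O(1)$ guaranteed by Assumption~\ref{weaklyunstable} and Lemma~\ref{bounded derivatives vector field}, and apply Hoeffding's inequality to obtain the $\sqrt{\log N / N}$ rate.

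With these two bounds the endgame is identical to Theorem~\ref{eigvalconv}: take $\mathcal{S}_i'$ to be the $i$-dimensional subspace of smooth vector fields realizing the minimum of the discrete Rayleigh quotient, pick $\tilde{u}\in\mathcal{S}_i'$ normalized in $L^2(M)$ maximizing $\langle \Delta_B u, u\rangle_{L^2(M)}$, use Hoeffding to keep $\|R_N\tilde{u}\|_{L^2(\mu_N)}^2$ bounded away from zero, Taylor-expand the quotient, apply the two concentration bounds to sandwich $\langle \Delta_B \tilde{u}, \tilde{u}\rangle_{L^2(M)}$ between the discrete quotient and an $O(N^{-1/2}) + O\!\left( N^{\frac{-2\alpha+(n-d)}{2d}} \right)$ error, and then invoke Courant--Fischer in both directions. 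The probability bookkeeping is where the $n$-dependence enters: each of the $n$ componentwise applications of Lemma~\ref{function interpolation} fails with probability at most $1/N$, and the handful of Hoeffding events contribute the remaining constant, yielding the stated $1 - (4n+4)/N$. The only genuinely new point relative to the scalar case --- and the step I would check most carefully --- is the bookkeeping of the projection $\mathbf{P}^{\otimes}$: one must verify that pre- and post-composing with $\mathbf{P}^{\otimes}$ preserves self-adjointness and the reduction of the min--max to smooth tangential test fields, and that the spurious kernel of $\mathbf{P}^{\otimes}$ does not shift the index $i$. This is routine but is the place where an error could hide.
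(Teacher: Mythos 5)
Your proposal is correct and follows essentially the same route as the paper's proof in Appendix~\ref{app:D}: translate each ingredient of Theorem~\ref{eigvalconv} to the vector-field setting, use Lemmas~\ref{weak H convergence continuous} and~\ref{weak H convergence empirical} (the Bochner analogues of Lemmas~\ref{weak G convergence continuous} and~\ref{weak G convergence empirical}), and run the min--max argument over $i$-dimensional subspaces of smooth vector fields. Your caution at the end about the kernel of $\mathbf{P}^{\otimes}$ is well-placed: since $\mathbf{P}^{\otimes}\mathbf{H}^\top\mathbf{H}\mathbf{P}^{\otimes}$ is an $nN\times nN$ matrix with $(n-d)N$ spurious zero eigenvalues from $\ker\mathbf{P}^{\otimes}$, the quantity produced by restricting the Courant--Fischer min--max to restrictions of smooth \emph{tangential} vector fields is the $i$-th nontrivial eigenvalue of the matrix restricted to the tangential subspace rather than the $i$-th eigenvalue in the full ordering; the paper's proof passes over this indexing point silently (it even writes the enumeration as $\hat\lambda_1\le\cdots\le\hat\lambda_N$, omitting the factor of $n$), though the theorem statement's ``there exists an eigenvalue $\hat\lambda_i$'' leaves enough slack that the conclusion still holds.
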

The same rate holds true for convergence of eigenvectors, just as in the Laplace-Beltrami case. In fact, the proof of convergence of eigenvectors follows the same argument in Section~\ref{app:C3}. {Before stating the main result, we have the following definition.
\begin{definition}
    Given two vectors $\mathbf{U},\mathbf{V} \in \mathbb{R}^{nN}$ representing restrictions of vector fields, define discrete inner product $L^2(\mu_{N,n})$ in the following way:
$$
\langle \mathbf{U}, \mathbf{V} \rangle_{L^2(\mu_{N,n})} = \frac{1}{N} \sum_{j=1}^{n} \mathbf{U}^j \cdot \mathbf{V}^j,
$$
where $\mathbf{U}^j \in \mathbb{R}^N$,  $\mathbf{U} = ((\mathbf{U}^1)^{\top} , \dots , (\mathbf{U}^{n})^{\top})^{\top}$, and similarly for $\mathbf{V}^j$ and $\mathbf{V}$.
\end{definition}
With this definition, we can formally state the convergence of eigenvectors result for the Bochner Laplacian.}

\begin{theo}\label{conveigvec Bochner}
Let $\epsilon_{\lambda_\ell}:=|\lambda_\ell - \hat{\lambda}_\ell|$ denote the error in approximating the $\ell$-th \underline{distinct} eigenvalue,  following the notation in Theorem~\ref{eigvalconv Bochner}. Let the Assumption \ref{weaklyunstable} be valid. For any $\ell$, assume that there is a constant $c_\ell$ such that if $\epsilon_{\lambda_{\ell-1}}, \epsilon_{\lambda_{\ell+1}}  < c_\ell$, then with probability higher than $1 - {\left( \frac{2m^2 + 2m + 3nm + 8n + 8}{N}\right)} $, we have the following situation: for any normalized eigenvector $\mathbf{U}$ of $\mathbf{P}^\otimes\mathbf{H}^\top\mathbf{H} \mathbf{P}^\otimes$ with eigenvalue $\hat{\lambda}_\ell$, there is a normalized eigenvector field $v$ of $\Delta_B$ with eigenvalue $\lambda_\ell$ such that
\begin{equation*}
    \|R_N v - \mathbf{U} \|_{L^2(\mu_{N,n})} = {O\left(N^{-\frac{1}{4}} \right)} + {O\left( N^{\frac{-2\alpha + (n-d)}{4d}} \right)},
\end{equation*}
where $m$ is the geometric multiplicity of eigenvalue $\lambda_\ell$.
\end{theo}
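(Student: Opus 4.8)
The plan is to run the same argument used for the Laplace--Beltrami operator in Appendix~\ref{app:C3}, replacing scalar functions by tangential vector fields and the interpolant $I_{\phi_s}$ by the tangentially projected interpolant $\mathcal{P}I_{\phi_s}$, and using the Bochner analogues of the quadratic-form estimates proved in Appendix~\ref{app:D}. The first step is a structural reduction. Since $\mathbf{P}^\otimes$ is symmetric and idempotent (an orthogonal projection, by Proposition~\ref{propP}), any eigenvector $\mathbf{U}$ of $\mathbf{P}^\otimes\mathbf{H}^\top\mathbf{H}\mathbf{P}^\otimes$ with eigenvalue $\hat{\lambda}_\ell>0$ lies in the range of $\mathbf{P}^\otimes$, hence $\mathbf{P}^\otimes\mathbf{U}=\mathbf{U}$. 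Setting $g:=\mathcal{P}I_{\phi_s}\mathbf{U}\in\mathfrak{X}(M)$, the interpolation property together with $R_N\mathcal{P}I_{\phi_s}=\mathbf{P}^\otimes R_N I_{\phi_s}$ gives $R_N g=\mathbf{P}^\otimes\mathbf{U}=\mathbf{U}$ exactly. This reduces the statement to: $g$ is close in $L^2(M)$ to a unit eigenvector field of $\Delta_B$ in the eigenspace $E_{\lambda_\ell}$, and then transferring that bound to $L^2(\mu_{N,n})$ via a Hoeffding estimate on $\|R_N(v-g)\|^2_{L^2(\mu_{N,n})}-\|v-g\|^2_{L^2(M)}$, which is valid because the components of $v$ (smooth, by elliptic regularity) and of $g$ (controlled by Assumption~\ref{weaklyunstable} and Lemma~\ref{bounded derivatives vector field}) are bounded in $W^{2,\infty}(\mathfrak{X}(M))$.

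Next I would assemble the two quantitative inputs. Applying the Bochner versions of Lemmas~\ref{weak G convergence continuous} and \ref{weak G convergence empirical} to the quadratic form $\langle\mathbf{P}^\otimes\mathbf{H}^\top\mathbf{H}\mathbf{P}^\otimes\mathbf{U},\mathbf{U}\rangle_{L^2(\mu_{N,n})}=\hat{\lambda}_\ell$, together with $\hat{\lambda}_\ell=\lambda_\ell+O(N^{-1/2})+O(N^{(-2\alpha+(n-d))/(2d)})$ from Theorem~\ref{eigvalconv Bochner}, yields $\langle\Delta_B g,g\rangle_{L^2(M)}=\lambda_\ell+\eta_N$ with $\eta_N=O(N^{-1/2})+O(N^{(-2\alpha+(n-d))/(2d)})$; and Hoeffding applied to $\|R_N g\|^2_{L^2(\mu_{N,n})}=\|\mathbf{U}\|^2=1$ gives $\|g\|^2_{L^2(M)}=1+O(N^{-1/2})$. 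Expanding $g=\sum_j c_j v_j$ in an $L^2$-orthonormal eigenbasis of $\Delta_B$ and splitting $g=g_-+g_0+g_+$ according to whether $\lambda_j<\lambda_\ell$, $=\lambda_\ell$, or $>\lambda_\ell$, the energy identity $\langle\Delta_B g,g\rangle-\lambda_\ell\|g\|^2=\sum_j(\lambda_j-\lambda_\ell)c_j^2$ on its own does not control $\|g_-\|^2$ because of sign cancellation between the $g_-$ and $g_+$ contributions; resolving this is the main obstacle. To handle it I would use that $\mathbf{U}$ is $L^2(\mu_{N,n})$-orthogonal to every discrete eigenvector whose eigenvalue lies below the spectral window around $\lambda_\ell$ --- this is precisely where the hypotheses $\epsilon_{\lambda_{\ell-1}},\epsilon_{\lambda_{\ell+1}}<c_\ell$ are used, forcing the $m$ discrete eigenvalues converging to $\lambda_\ell$ to be cleanly separated from the rest --- and then invoke the eigenvector convergence statement for the lower distinct eigenvalues (an induction on the distinct-eigenvalue index $\ell$), so that the span of the lifts $\mathcal{P}I_{\phi_s}\mathbf{U}_j$ of those lower discrete eigenvectors is $O(\eta_N^{1/2})$-close to $\bigoplus_{\lambda<\lambda_\ell}E_\lambda$. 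Converting the discrete orthogonalities $R_N g\perp R_N\mathcal{P}I_{\phi_s}\mathbf{U}_j$ into approximate continuous orthogonalities by Hoeffding then gives $\|g_-\|^2_{L^2(M)}=O(\eta_N)$, and feeding this back into the energy inequality with the spectral gap $\gamma_\ell:=\lambda_{\ell+1}-\lambda_\ell>0$ of $\Delta_B$ yields $\|g_+\|^2_{L^2(M)}=O(\eta_N)$ as well.

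Finally, with $\|g-g_0\|^2_{L^2(M)}=\|g_-\|^2+\|g_+\|^2=O(\eta_N)$ and $\|g\|^2_{L^2(M)}=1+O(N^{-1/2})$, the normalized projection $v:=g_0/\|g_0\|_{L^2(M)}$ is a unit eigenvector field of $\Delta_B$ with eigenvalue $\lambda_\ell$ satisfying $\|v-g\|^2_{L^2(M)}=O(\eta_N)$. Transferring to the discrete norm via the Hoeffding estimate from the first paragraph, $\|R_N v-\mathbf{U}\|^2_{L^2(\mu_{N,n})}=\|R_N(v-g)\|^2_{L^2(\mu_{N,n})}=\|v-g\|^2_{L^2(M)}+O(N^{-1/2})=O(\eta_N)$, and taking square roots produces the stated rate $O(N^{-1/4})+O(N^{(-2\alpha+(n-d))/(4d)})$; the square-root loss relative to the eigenvalue rate is exactly the standard loss incurred when an eigenvector estimate is extracted from an energy-level estimate. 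The probability bound is obtained by a union bound over the finitely many failure events: the $O(1/N)$ term from Theorem~\ref{eigvalconv Bochner}, the $O(n)$- and $O(nm)$-type terms inherited from the vector-field versions of the Appendix~\ref{app:D} lemmas applied to $g$ and to the basis of $E_{\lambda_\ell}$, and the $O(m^2)$ term from the Hoeffding estimates on all pairs of restricted eigenfunctions in the $m$-dimensional eigenspace and on the lower discrete eigenvectors, which together accumulate to the claimed $1-\frac{2m^2+2m+3nm+8n+8}{N}$. The two delicate points to watch are the sign-cancellation issue in the spectral expansion (handled by the induction and the orthogonality to lower modes) and the careful tracking of how the constants depend on $m$ and $n$ as one converts repeatedly between the discrete inner product $L^2(\mu_{N,n})$ and the continuous inner product on $\mathfrak{X}(M)$.
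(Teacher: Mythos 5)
Your proposal runs the argument in the opposite direction from the paper, and that inversion is where it fails. The paper's proof (Appendix~\ref{app:C3}, carried over to the Bochner case verbatim per the remark closing Appendix~\ref{app:D}) starts from a \emph{fixed} orthonormal basis $\{v_1,\dots,v_m\}$ of the true eigenspace, bounds $\|P_S^\perp R_N v_j\|_{L^2(\mu_{N,n})}$ by $c_\ell^{-1}\|R_N\Delta_B v_j-\mathbf{P}^{\otimes}\mathbf{H}^\top\mathbf{H}\mathbf{P}^{\otimes}R_Nv_j\|_{L^2(\mu_{N,n})}$ using the orthonormal discrete eigenbasis of the symmetric estimator (in that basis the relevant norm is $\bigl(\sum_j(\lambda_\ell-\hat\lambda_j)^2\langle\cdot,\hat{\mathbf{u}}_j\rangle^2\bigr)^{1/2}$, so the sign-cancellation problem you worry about never arises), then Gram--Schmidts the projections $P_SR_Nv_j$ to span $S$ and matches an arbitrary discrete eigenvector with the corresponding linear combination of the $v_j$. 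Crucially, every concentration inequality in that argument is applied to a deterministic object: a true eigenvector field, its Bochner Laplacian, or a product of two of them (Lemmas~\ref{norm H convergence continuous} and \ref{norm H convergence empirical} plus Hoeffding on the Gram matrix of the $R_Nv_j$).

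Your route lifts the \emph{data-dependent} eigenvector $\mathbf{U}$ to $g=\mathcal{P}I_{\phi_s}\mathbf{U}$ and expands $g$ in the continuous eigenbasis. The gap is that every quantitative step then requires Hoeffding-type concentration for functions that depend on the sample: relating $\langle\Delta_Bg,g\rangle_{L^2(\mathfrak{X}(M))}$ to the discrete quadratic form, relating $\|g\|^2_{L^2(\mathfrak{X}(M))}$ to $\|R_Ng\|^2_{L^2(\mu_{N,n})}=1$, converting the discrete orthogonalities against lower modes into continuous ones, and the final transfer $\|R_N(v-g)\|_{L^2(\mu_{N,n})}\approx\|v-g\|_{L^2(\mathfrak{X}(M))}$ with $v$ itself determined by $g$. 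Lemma~\ref{weak H convergence empirical} and its relatives are proved by Hoeffding's inequality for a \emph{fixed} smooth vector field and do not apply to $g$; making them uniform over the relevant RKHS ball would require covering-number or Rademacher-complexity machinery that the paper never develops. Two further symptoms of the wrong orientation: the cancellation obstacle you identify would disappear if you controlled $\|(\Delta_B-\lambda_\ell)g\|^2_{L^2}=\sum_j(\lambda_j-\lambda_\ell)^2c_j^2$ instead of the energy, but that residual is again a data-dependent quantity; and your induction over all lower distinct eigenvalues accumulates failure probabilities (and multiplicities) that are not present in, and cannot be reconciled with, the stated bound $1-(2m^2+2m+3nm+8n+8)/N$, which the paper obtains by a single union bound over events attached to the fixed basis of $E_{\lambda_\ell}$.
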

{ Proofs of the results for the Bochner Laplacian can be found in Appendix \ref{app:D}.}
\section{Numerical Study of Eigenvalue Problems}
\label{sec6}
In this section, we first discuss two examples of eigenvalue problems of functions defined on simple manifolds: one being a 2D generalized torus embedded in $\mathbb{R}^{21}$ and the other 4D flat torus embedded in $\mathbb{R}^{16}$. In these two examples, we will compare the results between the Non-symmetric and Symmetric RBFs, which we refer to as NRBF and SRBF respectively, using analytic $\mathbf{P}$ and the approximated $\mathbf{\hat{P}}$. {In the first example, we further compare with diffusion maps (DM) algorithm, which is an important manifold learning algorithm that estimates eigen-solutions of the Laplace-Beltrami operator.} When the manifold is unknown, as is often the case in practical applications, one does not have access to analytic $\mathbf{P}$. Hence, it is most reasonable to compare DM and RBF with $\mathbf{\hat{P}}$. { While DM algorithm can be implemented with a sparse approximation via the $K$-Nearest Neighbors (KNN) algorithm, we would like to verify how SRBF $\mathbf{\hat{P}}$, which is a dense approximation, performs compared to DM for various degree of sparseness (including not using KNN as reported in Appendix~\ref{app:E}).} Next, we discuss an example of eigenvalue problems of vector fields defined on a sphere. Numerically, we compare the results between the RBF method and the analytic truth. Since the size of our vector Laplacian approximation is $Nn\times Nn$, the current RBF methods are only numerically feasible for data sets with small ambient dimension $n$.


\subsection{Numerical Setups}

In the following, we introduce our numerical setups for NRBF, SRBF, and DM
methods  of finding approximate  solutions to eigenvalue problems  associated to Laplace-Beltrami or vector Laplacians on
manifolds.

\noindent \textbf{Parameter specification for RBF:} For the implementation of RBF methods, there are two groups of kernels to be used. One group includes infinitely smooth RBFs, such as Gaussian (GA), multi-quadric (MQ), inverse multiquadric (IMQ), inverse quadratic (IQ), and Bessel (BE) \cite{fornberg2011stable,Natasha2015Solving,Fornberg2011JCP}. The other group includes piecewise smooth RBFs, such as Polyharmonic spline (PHS), Wendland (WE), and Mat\'{e}rn (MA) \cite%
{Natasha2015Solving,Fuselier2009Stability,1995Piecewise,Wendland2005Scat}. In this work, we only apply GA and IQ kernels and test their numerical performances. To compute the interpolant matrix $\mathbf{\Phi }$ in \eqref{eqn:phicf}, all points are connected and we did not use KNN truncations. The shape parameter $s$ is manually tuned but fixed for
different $N$ when we examine the convergence of eigenmodes.

Despite  not needing a structured mesh, many RBF techniques impose strict requirements for uniformity of the underlying data points. For uniformly distributed grid points, it often occurs that the operator approximation error decreases rapidly with the number
of data $N$ until the calculation breaks down due to the increasing
ill-conditioning of the interpolant matrix $\boldsymbol{\Phi }$ defined in \eqref{eqn:phicf} \cite%
{Tarwater1985Parameter,Schaback1995Error,Natasha2015Solving}. In this numerical section, we consider data points randomly distributed on manifolds, which means that two neighboring points can be very close to each other. In this case, the interpolant matrix $\mathbf{\Phi }$ involved in most of the global RBF techniques tends to be ill-conditioned or even singular for sufficiently large $N$. In fact, one can show that with inverse quadratic kernel, the condition number of the matrix $\boldsymbol{\Phi }$ grows exponentially as a function of $N$.
To resolve such an ill-conditioning issue, we apply the pseudo inversion instead of the direct inversion in approximating the interpolant matrix $\mathbf{\Phi }$. In our implementation, we will take the tolerance parameter of pseudo-inverse around $10^{-9}\sim 10^{-4}$.



If the parametrization or the level set representation of the manifold is
known, we can apply the analytic tangential projection matrix $\mathbf{P}$ for constructing the RBF Laplacian matrix. If the parametrization is unknown, that is, only the point cloud is given, we can first learn $\mathbf{%
\hat{P}}$ using the 2nd-order SVD method and then construct  the RBF
Laplacian.  Notice that we can also construct the Laplacian matrix using $%
\mathbf{\tilde{P}}$ estimated from the 1st-order SVD (not shown in this
work). We found that the results of eigenvalues and eigenvectors using $\mathbf{\tilde{P}}$ are not as
good as those using $\mathbf{\hat{P}}$ from our 2nd-order SVD. We also
notice that the estimation of $\mathbf{P}$ and the construction of the Laplacian matrix can be performed separately using two sets of points. For example, one can use $10,000$ points to approximate $\mathbf{P}$ but use only $2500$ points to construct the Laplacian matrix. This allows one to leverage large amounts of data in the estimation of $\mathbf{P}$, while too much data may not be computationally feasible with graph Laplacian-based approximators such as the diffusion maps algorithm.

For SRBF, the estimated sampling density is needed if the distribution of
the data set is unknown. Note that for NRBF, the sampling density is not needed for constructing Laplacian. In our numerical experiments, {we apply the MATLAB built-in kernel density estimations (KDEs)
function \texttt{mvksdensity.m} 
for approximating the sampling density. We also apply Silverman's rule
of thumb \cite{silverman2018density} for tuning the bandwidth parameter in the KDEs.}

\noindent \textbf{Eigenvalue problem solver for RBF: } For NRBF, we apply the non-symmetric estimator in \eqref{eqn:Nrbf} for solving the eigenvalue problem. The NRBF eigenvectors might be complex-valued and are only linearly independent (i.e., they are not necessarily orthonormal). For SRBF, we apply the symmetric estimator in \eqref{eqn:Srbf} for solving the generalized eigenvalue problem. When the sampling density $q$ is unknown, we employ the symmetric formulation with the estimated sampling density $\tilde{q}(x)$ obtained from the KDE.

Since we used pseudo inversion to resolve the ill-conditioning issue of $%
\mathbf{\Phi }$, the resulting RBF Laplacian matrix will be of low rank, $L$, and will have many zero eigenvalues, depending on the choice of tolerance in the pseudo-inverse algorithm. Two issues naturally arise in this situation. First, it becomes difficult to compute the eigenspace
corresponding to the zero eigenvalue(s), especially for the eigenvector-field problem. At this moment, we have not developed appropriate schemes to detect the existence of the nullspace and estimate the harmonic functions in this nullspace if it exists. Second, finding even the leading nonzero eigenvalues (that are close to zero) can be numerically expensive. Based on the rank of the RBF $\mathbf{\Phi }$, for symmetric approximation, one can use the ordered real-valued eigenvalues to attain the nontrivial $L$ eigenvalues in descending (ascending) order when $L$ is small (large). For the non-symmetric approximation, one can also employ a similar idea by sorting the magnitude of the eigenvalues (since the eigenvalues may be complex-valued). This naive method, however, can be very expensive when the number of data points $N$ is large and when the rank of the matrix $L$ is neither $O(10)$ nor close to $N$.

\noindent \textbf{Comparison of eigenvectors for repeating eigenvalues:}
When an eigenvalue is non-simple, one needs to be careful in quantifying the errors of eigenvectors for these repeated
eigenvalues since the set of true orthonormal eigenvectors is only unique  up to a rotation matrix. To quantify the errors of eigenvectors,
we apply the following Ordinary Least Square (OLS) method. Let $\mathbf{F} =\left(
\mathbf{f} _{1},\ldots ,\mathbf{f} _{m}\right) $ be the true eigenfunctions located at $X$
corresponding to one repeated eigenvalue $\lambda_i$ with multiplicity $m$%
, and  let $\mathbf{\tilde{U}}=\left( \mathbf{\tilde{u}}_{1},\ldots ,\mathbf{\tilde{u}}_{m}\right) $ be their DM or RBF approximations. Assume that the
linear regression model is written as $\mathbf{F} =\mathbf{\tilde{U}}\beta +\varepsilon $%
, where $\varepsilon $ is an $N\times m$\ matrix representing the errors and
$\beta $ is a $m\times m$\ matrix representing the regression coefficients.
The coefficients matrix $\beta $ can be approximated using OLS by $\hat{\beta%
}=(\mathbf{\tilde{U}}^{\top}\mathbf{\tilde{U}})^{-1}(\mathbf{\tilde{U}}^{\top}\mathbf{F} )$. The
rotated DM or RBF eigenvectors can be written as a linear combination, $\mathbf{\hat{V}}=$ $\mathbf{\tilde{U}}\hat{\beta}$, where these new $\mathbf{\hat{V}}=\left( \mathbf{\hat{v}}_{1},\ldots ,\mathbf{\hat{v}}_{m}\right) $ are in ${\mathrm{Span}}\{%
\mathbf{\tilde{u}}_{1},\ldots ,\mathbf{\tilde{u}}_{m}\}$. Finally, we can quantify
the pointwise errors of eigenvectors between $\mathbf{f} _{j}$ and $\mathbf{\hat{%
v}}_{j}$ for each $j=1,\ldots ,m$. For eigenvector fields, we can follow
 a similar outline to quantify the errors of vector fields.
Incidentally, we mention that there are many ways to measure  eigenvector errors since the approximation of the rotational coefficient matrix $%
\beta $ is not unique. Here, we only provide  a practical metric that we will use in our numerical examples below.

\comment{\color{blue}
\noindent \textbf{Diffusion Maps hyperparameters:} In the next two subsections, we compare the RBF with the DM in approximating the Laplace-Beltrami operator. In this experiment, we employ the diffusion maps algorithm with a Gaussian kernel, $K_\epsilon(x,y) = \exp(-\frac{\|x-y\|^2}{4\epsilon})$, where $\epsilon$ denotes the bandwidth parameter to be specified. The sampling density is estimated using the KDE estimator proposed by \cite{lq:65}. For an efficient implementation, we use the $K$-nearest neighbors algorithm to avoid computing the graph affinity between pair of points that are sufficiently far away. We should point out that the same results hold even if we do not use $K$-nearest neighbors (results are not reported). In our implementation, we set $K\sim \sqrt{N}$ following the theoretical guideline in \cite{calder2019improved} that guarantees a convergence rate of $\epsilon = O\left( (\frac{K}{N})^{2/d}\right)$. Once $K$ is fixed, parameter $\epsilon$ is selected based on the auto-tuned algorithm introduced in \cite{coifman2008TuningEpsilon}.
To further check the numerical optimality of the choice of bandwidth parameter $\epsilon$, we also empirically check whether an improved estimate can be attained by varying $\epsilon$ around the auto-tuned value (see e.g. Figure~\ref{fig_gentor_5} where we will see that the accuracy of the estimates cannot be improved when $\epsilon$ is varied).
}


\subsection{2D General Torus\label{sec:gentorus}}

In this section, we investigate the eigenmodes of the Laplace-Beltrami operator
on a general torus. The parameterization of the general torus is given by
\begin{equation}
{x}=\left[
\begin{array}{c}
x^{1} \\
x^{2} \\
\vdots \\
x^{n-2} \\
x^{n-1} \\
x^{n}%
\end{array}%
\right] =\left[
\begin{array}{c}
(a+\cos \theta )\cos \phi \\
(a+\cos \theta )\sin \phi \\
\vdots \\
\frac{2}{n-1}(a+\cos \theta )\cos \frac{n-1}{2}\phi \\
\frac{2}{n-1}(a+\cos \theta )\sin \frac{n-1}{2}\phi \\
\sqrt{\sum_{i=1}^{(n-1)/2}\frac{1}{i^{2}}}\sin \theta%
\end{array}%
\right] ,  \label{eqn:gentor}
\end{equation}%
where the two intrinsic coordinates $0\leq \theta ,\phi \leq 2\pi $ and the
radius $a=2>1$. The Riemannian metric is
\begin{equation}
g=\left[
\begin{array}{cc}
b_{\frac{n-1}{2}} & 0 \\
0 & \frac{n-1}{2}(a+\cos \theta )^{2}%
\end{array}%
\right],  \label{eqn:metgentors}
\end{equation}%
where $b_{\frac{n-1}{2}} :=  {\sum_{i=1}^{(n-1)/2}\frac{1}{i^{2}}}$.
We  solve the following eigenvalue problem for Laplace-Beltrami
operator:%
\begin{equation}
\Delta _{g}\psi _{k}=\frac{-1}{(a+\cos \theta )}\left[ \frac{\partial }{%
\partial \theta }\left( \left( a+\cos \theta \right) \frac{1}{b_{\frac{n-1}{2}}}\frac{%
\partial \psi _{k}}{\partial \theta }\right) +\frac{\partial }{\partial \phi
}\left( \frac{2}{n-1}\frac{1}{(a+\cos \theta )}\frac{\partial \psi _{k}}{%
\partial \phi }\right) \right] =\lambda _{k}\psi _{k},  \label{eqn:psigento}
\end{equation}%
where $\lambda _{k}$ and $\psi _{k}$ are the eigenvalues and eigenfunctions,
respectively. After separation of variables (that is, we set $\psi _{k}=\Phi
_{k}\left( \phi \right) \Theta _{k}\left( \theta \right) $ and substitute $%
\psi _{k}$ back into (\ref{eqn:psigento}) to deduce the equations for $\Phi
_{k}$ and $\Theta _{k}$ ), we obtain:%
\begin{eqnarray*}
\frac{{\mathrm{d}}^{2}\Phi _{k}}{{\mathrm{d}}\phi ^{2}}+m_{k}^{2}\Phi _{k}
&=&0, \\
\frac{{\mathrm{d}}}{{\mathrm{d}}\theta }\left( (a+\cos \theta )\frac{{%
\mathrm{d}}\Theta _{k}}{{\mathrm{d}}\theta }\right) -\frac{b_{\frac{n-1}{2}}m_{k}^{2}}{%
\frac{n-1}{2}\left( a+\cos \theta \right) }\Theta _{k} &=&-b_{\frac{n-1}{2}}\left( a+\cos \theta
\right) \lambda _{k}\Theta _{k}.
\end{eqnarray*}%
The eigenvalues to the first equation are $m_{k}^{2}=k^{2}$ with $%
k=0,1,2,\ldots $ and the associated eigenvectors are $\Phi _{k}=\left\{
1,\sin k\phi ,\cos k\phi \right\} $. The second eigenvalue problem can be
written in the Sturm--Liouville form and then numerically solved on a fine
uniform grid with $N_{\theta }$ points $\{\theta _{j}=\frac{2\pi j}{%
N_{\theta }}\}_{j=0}^{N_{\theta }-1}$ \cite{prycel1993SL}. The eigenvalues $%
\lambda _{k}$ associated with the eigenfunctions $\psi _{k}$ obtained above
are referred to as the true semi-analytic solutions to the eigenvalue
problem (\ref{eqn:psigento}).

\begin{figure*}[tbp]
{\scriptsize \centering
\begin{tabular}{c}
\includegraphics[height=1.15
in]{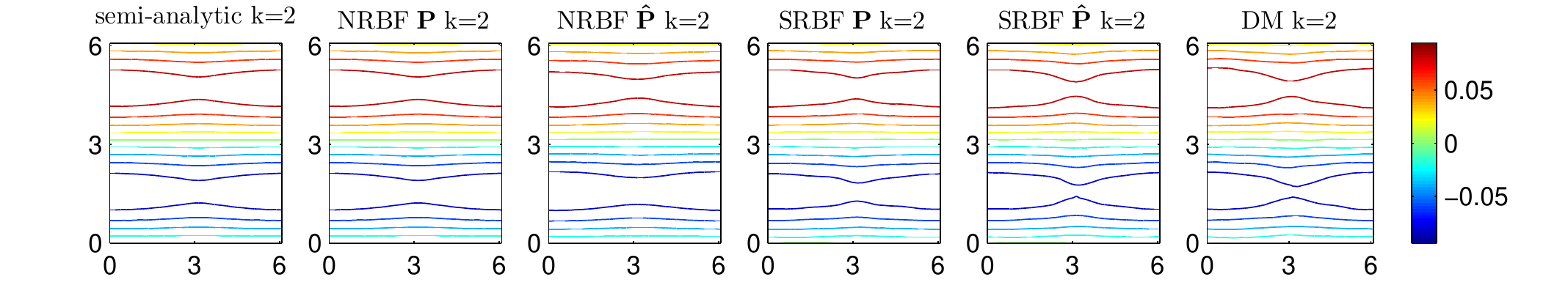}
\\
\includegraphics[height=1.15
in]{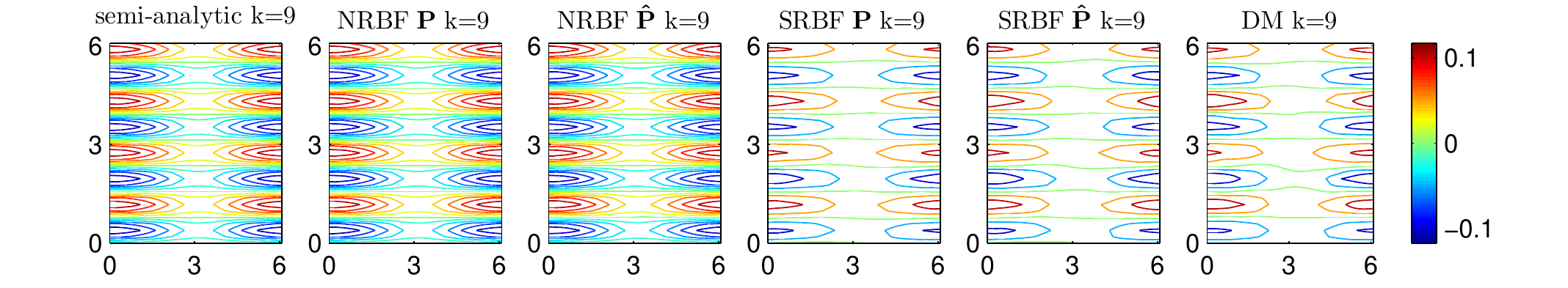}
\\
\includegraphics[height=1.15
in]{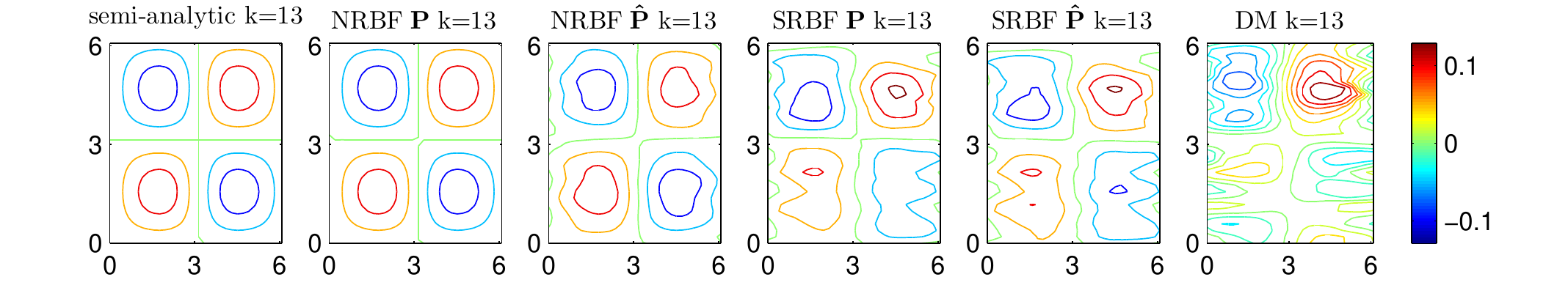}%
\end{tabular}
}
\caption{{\bf 2D general torus in $\mathbb{R}^{21}$.} Comparison of
eigenfunctions of Laplace -Beltrami for $k=2, 9, 13$ among NRBF using $%
\mathbf{P}$ and $\mathbf{\hat{P}}$, SRBF using $\mathbf{P}$ and $\mathbf{%
\hat{P}}$, and DM using $K=100$. For NRBF, IQ kernel with $s=0.5$ is used, and for SRBF,
IQ kernel with $s=0.1$ is used. For DM with $K=100$, the auto-tuned algorithm for the bandwidth
$\epsilon$ is used. The horizontal and vertical axes correspond
to $\protect\theta$ and $\protect\phi$, respectively. $N=2500$ randomly distributed data points on the manifold are used for computing the
eigenvalue problem. The eigenvectors are then generalized onto the $%
32\times 32$ well-sampled grid points $\{\protect\theta_i,\protect\phi_j\}=\{%
\frac{2\protect\pi i}{32},\frac{2\protect\pi j}{32}\}_{i,j=0}^{31}$ for  plotting.}
\label{fig_gentor_1}
\end{figure*}

In our numerical implementation, data points are randomly sampled from the general torus with uniform distribution in intrinsic coordinates. {For this example, we also show results based on the diffusion maps algorithm with a Gaussian kernel, $K_\epsilon(x,y) = \exp(-\frac{\|x-y\|^2}{4\epsilon})$, where $\epsilon$ denotes the bandwidth parameter to be specified. The sampling density is estimated using the KDE estimator proposed by \cite{lq:65}. For an efficient implementation, we use the $K$-nearest neighbors algorithm to avoid computing the graph affinity between pair of points that are sufficiently far away. In Appendix~\ref{app:E}, additional numerical results for other choices of $K$ (including $K=N$ or no KNN being used) are reported. It is worth noting that the additional results in Appendix~\ref{app:E} suggest that the accuracy of the estimation of eigenvectors does not improve for any values of $K$ that we tested.
To further check the numerical optimality of the choice of bandwidth parameter $\epsilon$, we also empirically check whether an improved estimate can be attained by varying $\epsilon$ around the auto-tuned value. We found that the auto-tuned method is more effective when $K$ is relatively small, which motivates the use of small $K$ in verifying the numerical convergence. Figure~\ref{fig_gentor_5} shows the sensitivity of the estimates as $\epsilon$ is varied for fixed $K=100$ and $N=2500$. For the convergence result, we choose $K \sim \sqrt{N}$ following the theoretical guideline in \cite{calder2019improved} that guarantees a convergence rate of $\epsilon = O\left( (\frac{K}{N})^{2/d}\right)$. Once $K$ is fixed, the parameter $\epsilon$ is selected based on the auto-tuned algorithm introduced in \cite{coifman2008TuningEpsilon}. Specifically, we set $K=60,100,144,200$ for $N = 1024,2500,5000,10000$, respectively.}

To apply NRBF, we use IQ kernel with $s=0.5$. To apply SRBF, we use IQ kernel with $s=0.1$. Figure \ref{fig_gentor_1} shows the comparison of eigenfunctions for modes $k=2,9,13$ among the semi-analytic
truth, NRBF with $\mathbf{P}$ and $\mathbf{\hat{P}}$, SRBF with $\mathbf{P}$ and $\mathbf{\hat{P}}$, and DM. One can see from the first row of Fig. \ref{fig_gentor_1} that when $k=2$ is very small, all the methods can provide excellent approximations of eigenfunctions. For larger $k$, such as $9$ and $13$, NRBF methods with $\mathbf{P}$ or $\mathbf{\hat{P}}$ provide more accurate approximations compared to SRBF with $\mathbf{P}$ and $\mathbf{\hat{P}}$ and DM. In fact, the eigenvectors obtained from NRBF with $\mathbf{P}$ are accurate and very smooth as seen from the second column of Fig. \ref{fig_gentor_1}. { On the other hand, SRBF with $\mathbf{\hat{P}}$ does not produce eigenvectors that are qualitatively much better than those of DM. }

\begin{figure*}[tbp]
{\scriptsize \centering
\begin{tabular}{ccc}
{\small (a) Eigenvalues} & {\small (b) Error of Eigenvalues} &
{\small (c) Error of Eigenvectors } \\
\includegraphics[width=1.9
in, height=1.4 in]{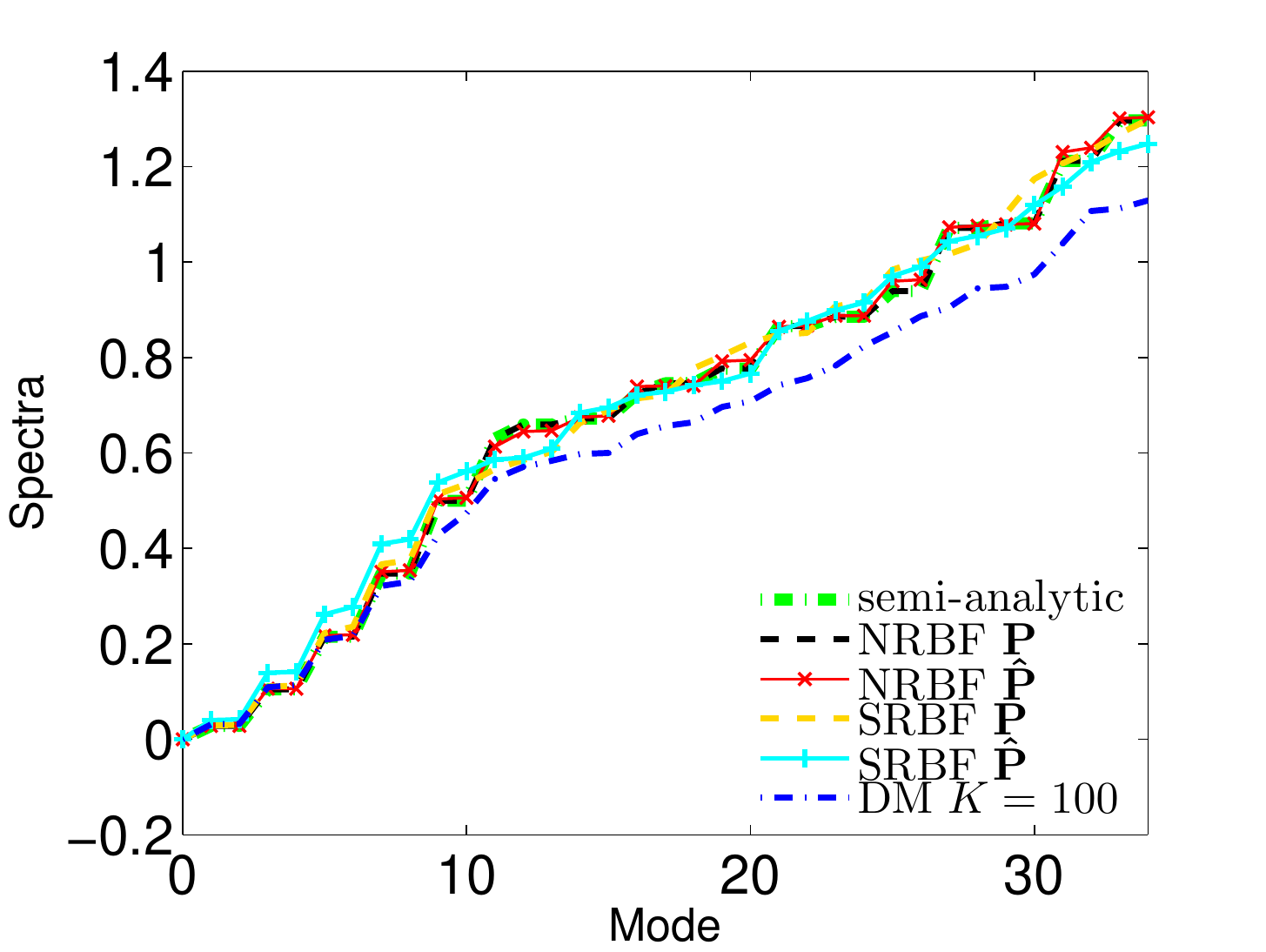}
&
\includegraphics[width=1.9
in, height=1.4 in]{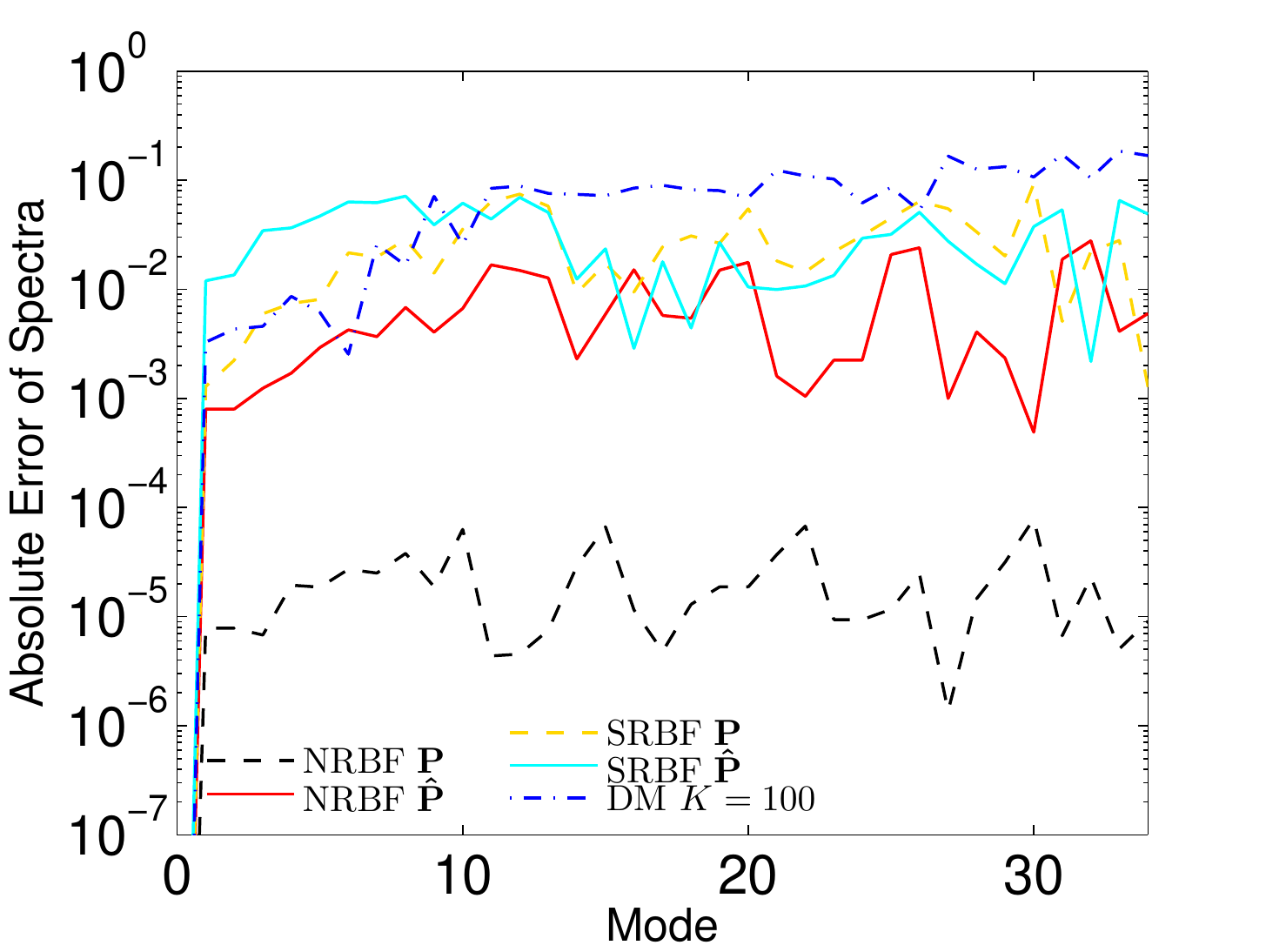}
&
\includegraphics[width=1.9
in, height=1.4 in]{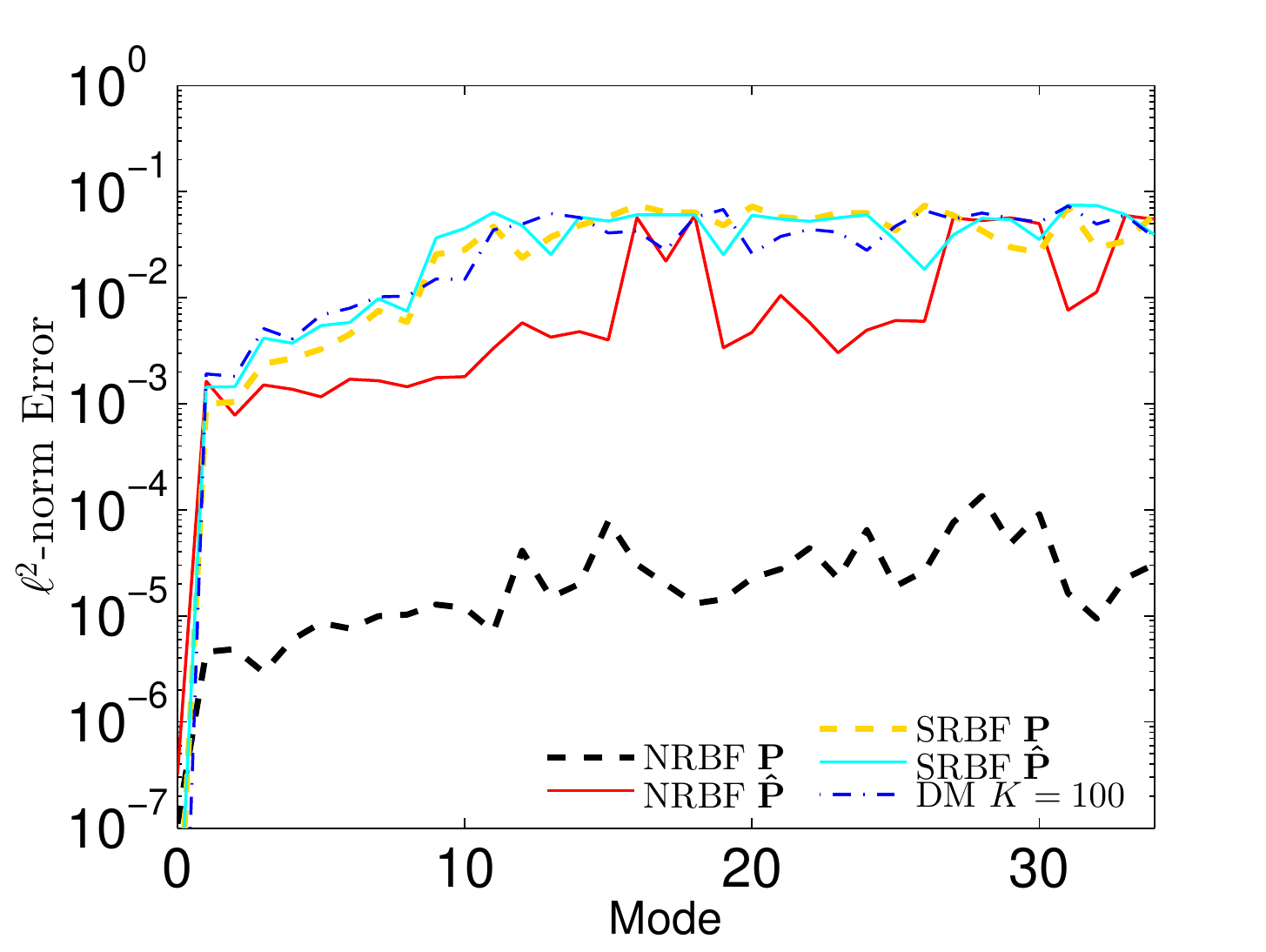}%
\end{tabular}
}
\caption{{\bf 2D general torus in $\mathbb{R}^{21}$.} Comparison of
(a) eigenvalues, (b) error of eigenvalues, and (c) error of eigenvectors,
among NRBF, SRBF, and DM. For NRBF, IQ kernel with $s=0.5$ is used, and for
SRBF, IQ kernel with $s=0.1$ is used. For SRBF, the KDE estimated $\tilde{q}$ is used. Randomly distributed $N=2500$ data
points on the manifold are used for solving the eigenvalue problem. }
\label{fig_gentor_2}
\end{figure*}

Figure \ref{fig_gentor_2} further quantifies the errors of eigenvalues and
eigenfunctions for all the methods. One can see that NRBF with $\mathbf{P}$ performs much better than all  other methods on this 2D manifold example. When the manifold is unknown, one can diagnose the manifold learning capabilities of the symmetric and non-symmetric RBF using $\mathbf{\hat{P}}$ compared to DM. One can see that NRBF with $\mathbf{\hat{P}}$\ (red curve) performs better than the other two methods. One can also see that DM (blue curve) performs slightly better than SRBF with $\mathbf{\hat{P}}$ (cyan curve) in estimating the leading eigenvalues but somewhat worst in estimating eigenvalues corresponding to the higher modes (Fig. \ref{fig_gentor_2}(b)). In terms of the estimation of eigenvectors, they are comparable  (blue and cyan curves Fig. \ref{fig_gentor_2}(c)). Additionally, SRBF with $\mathbf{P}$ (yellow curve) and SRBF with $\mathbf{\hat{P}}$ (cyan curve) produce comparable accuracies in terms of the eigenvector estimation. This result, where no advantage is observed using the analytic $\mathbf{P}$ over the approximated $\mathbf{\hat{P}}$, is consistent with the theory which suggests that the error bound is dominated by the Monte-Carlo error rate provided smooth enough kernels are used.

\begin{figure*}[htbp]
{\scriptsize \centering
\begin{tabular}{cc}
{\small (a) Error of Spectra vs. parameter} & {\small (b) Error of Eigenfunctions vs. parameter} \\
\includegraphics[width=3.0
in, height=2.2 in]{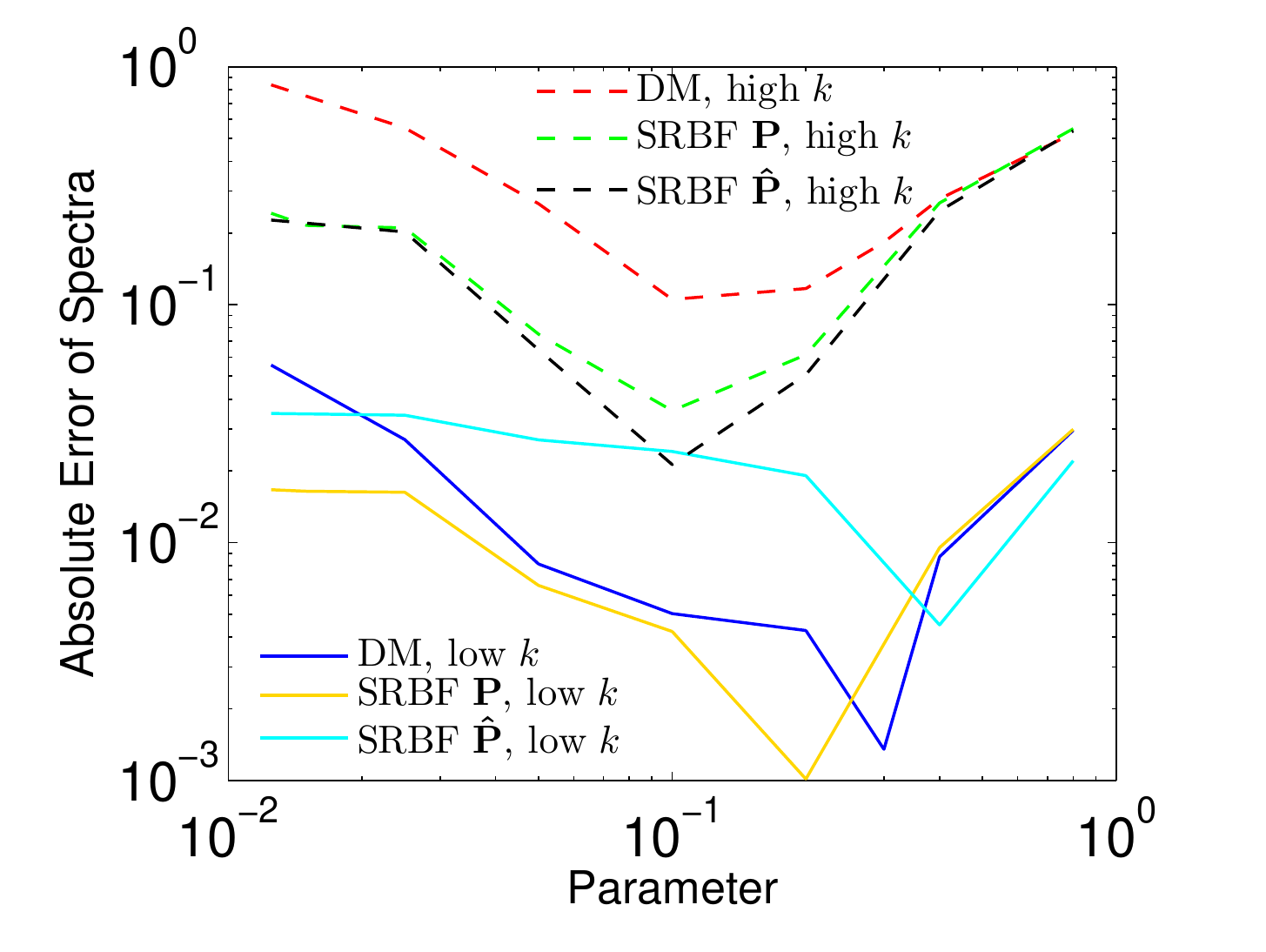}
&
\includegraphics[width=3.0
in, height=2.2 in]{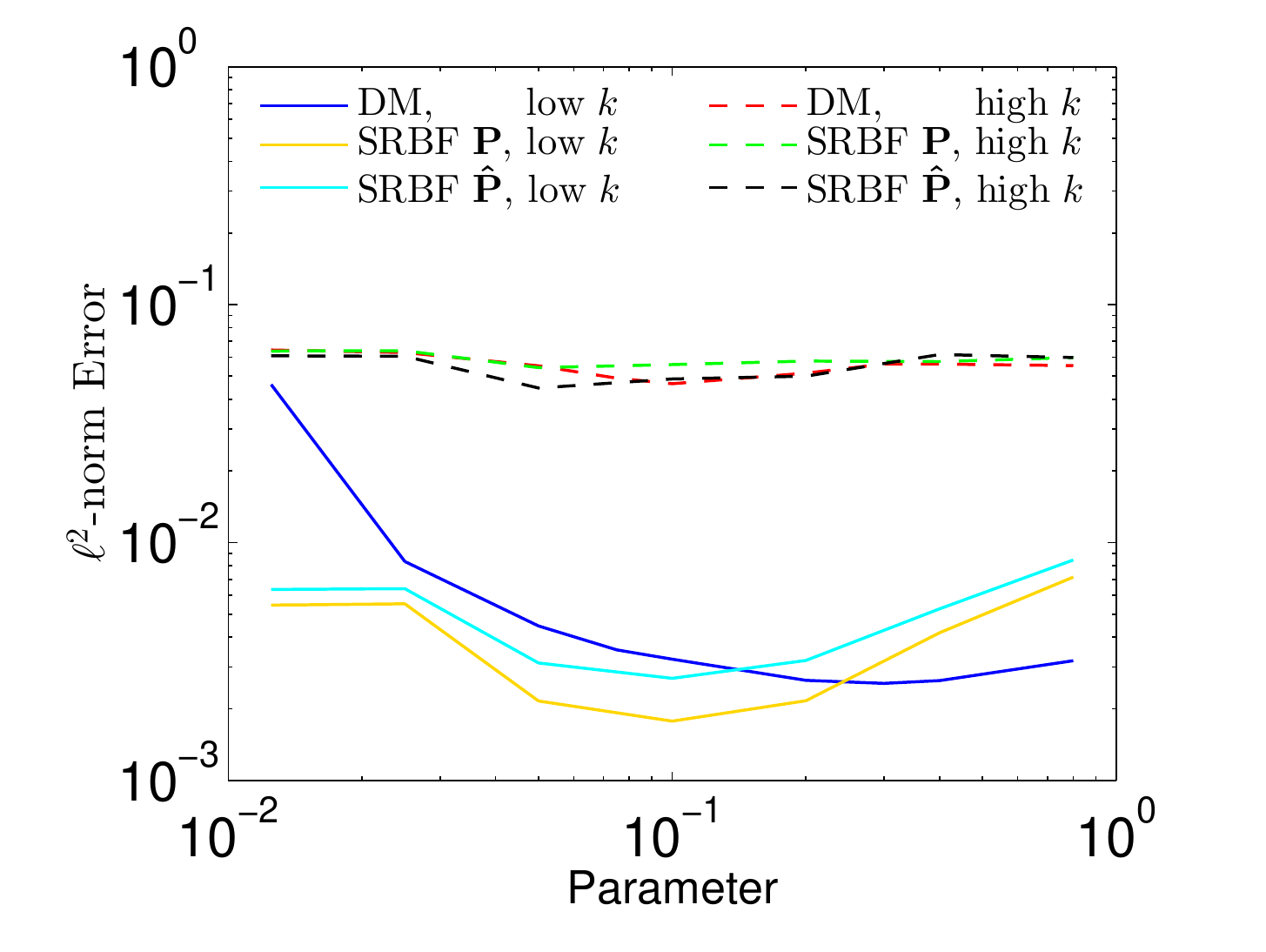}
\end{tabular}
}
\caption{{\bf 2D general torus in $\mathbb{R}^{21}$.}
Sensitivity of (a) eigenvalues and (b) eigenfunctions with respect to the parameter (bandwidth for DM and shape parameter for SRBF using true $\mathbf{P}$ and SRBF using estimated $\mathbf{\hat{P}}$).
The blue, yellow and cyan curves denote the average error of eigenvalues or eigenvectors over modes 2-5 (low $k$), while the red, green, and black curves denote the average error of eigenvalues or eigenvectors over modes 21-30 (high $k$).
For SRBF, IQ kernel is used and shape parameter $s$ ranges from $10^{-2}$ to $10^0$.
For DM, $K=100$ nearest neighbors are used and bandwidth  $\epsilon$ ranges from $10^{-2}$ to $10^0$.
In this experiment, we fixed the $N=2500$ data points which are randomly distributed on the general torus with uniform distribution in the intrinsic coordinates.  }
\label{fig_gentor_5}
\end{figure*}
In the previous two figures, we showed the SRBF estimates corresponding to a specific choice of $s=0.1$. Now let us check the robustness of the method with respect to other choices of the shape parameter. In Fig.~\ref{fig_gentor_5}, we show the errors in the estimation of eigenvalues and eigenvectors. Specifically, we report the average errors of low modes (between modes 2-5) for DM (blue), SRBF with $\mathbf{P}$ (yellow), and SRBF with $\mathbf{\hat{P}}$ (cyan). For these low modes, notice that {with the optimal shape parameter, $s=0.4$, the eigenvalue estimates from SRBF with $\mathbf{\hat{P}}$ are slightly less accurate than the diffusion maps.} For this shape parameter value, the SRBF with $\mathbf{\hat{P}}$ produces an even more accurate estimation of the leading eigenvalues. However, the accuracy of the SRBF eigenvectors decreases slightly under this parameter value. We also report the average errors of high modes (between modes 21-30) for DM (red), SRBF with $\mathbf{P}$ (green), and SRBF with $\mathbf{\hat{P}}$ (black). For these high modes, both SRBFs are uniformly more accurate than DM in the estimation of eigenvalues, but the accuracies of the estimation of eigenvectors are comparable.  {More comparisons can be found in Appendix \ref{app:E}. Overall, SRBF and DM show comparable results for the eigenvalue problem. For both SRBF and DM, the eigenvalue estimates are sensitive to the choice of the parameter while the eigenvector estimates are not.  Based on numerical experiments with a wide range of parameters, we empirically found that DM has a slight advantage in the estimation of leading spectrum while the SRBF has a slight advantage in the estimation of non-leading spectrum.}

\begin{figure*}[tbp]
{\scriptsize \centering
\begin{tabular}{cc}
{\small (a) Conv. of NRBF wrt $N$} & {\small (b) Conv. of NRBF wrt $\mathbf{\hat{P}}$} \\
\includegraphics[width=3.0
in, height=2.2 in]{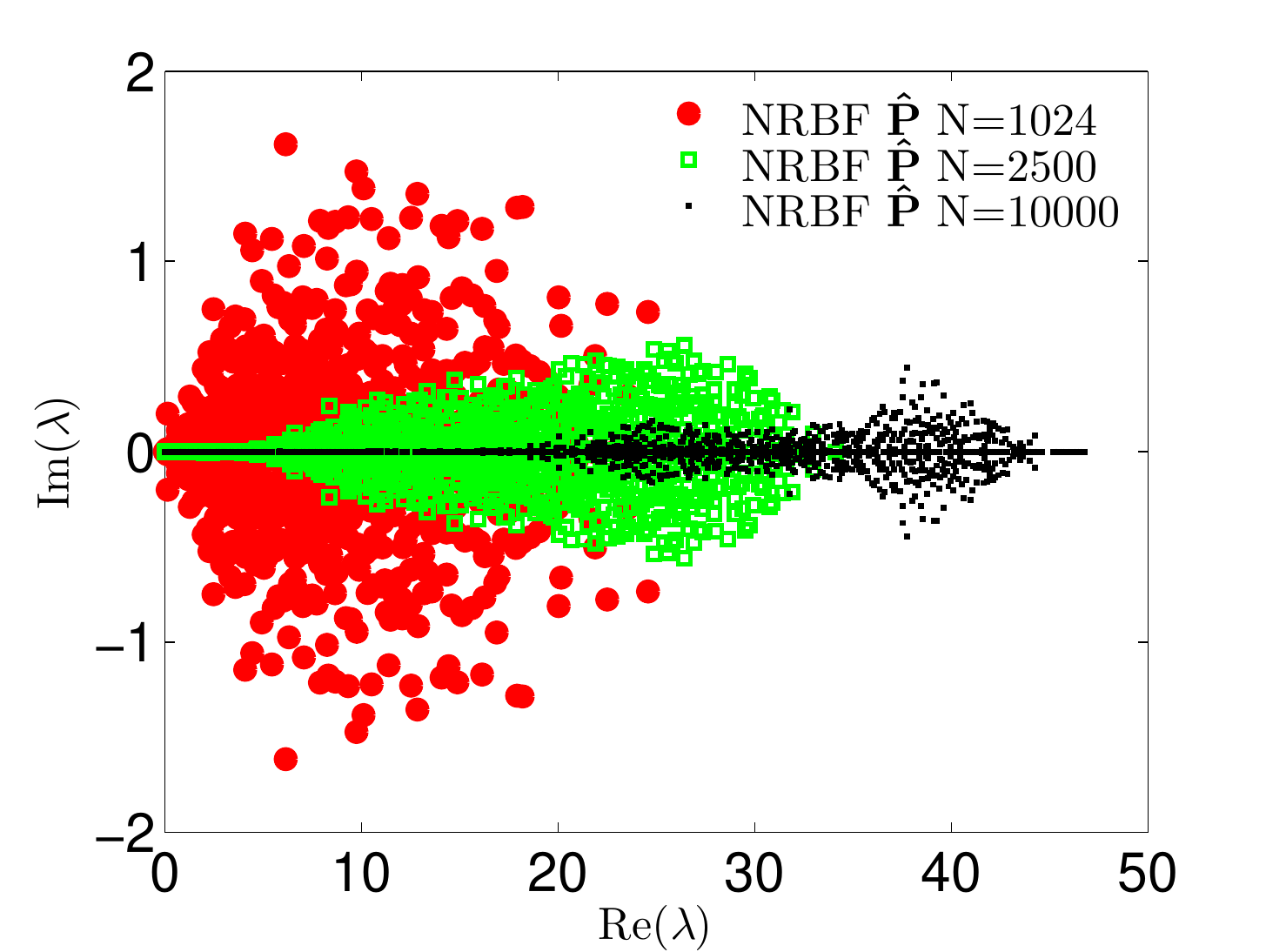}
&
\includegraphics[width=3.0
in, height=2.2 in]{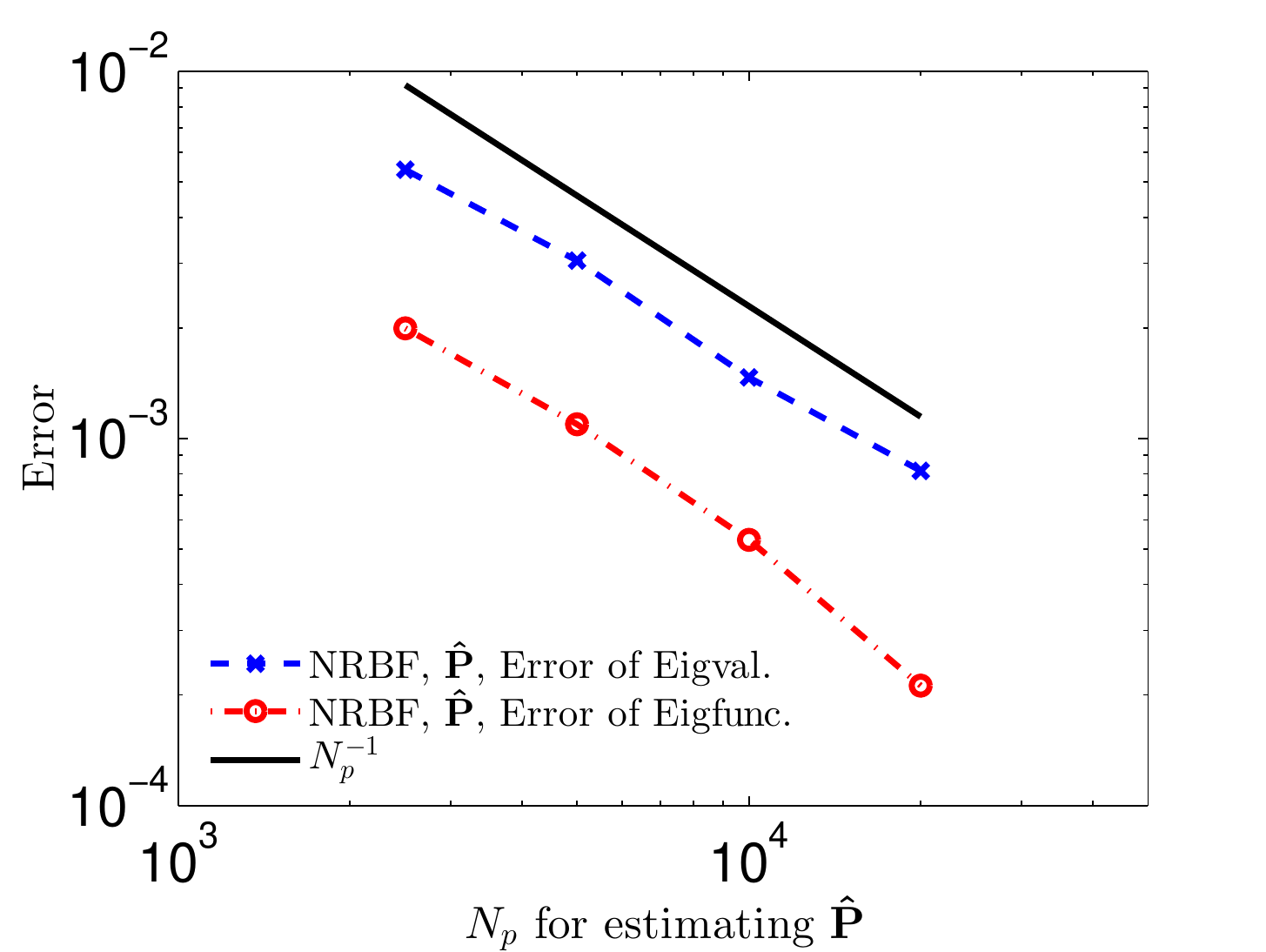}\\
{\small (c) Conv. of Spectra wrt $\mathbf{\hat{P}}$} & {\small (d) Conv. of Eigenfunc. wrt $\mathbf{\hat{P}}$} \\
\includegraphics[width=3.0
in, height=2.2 in]{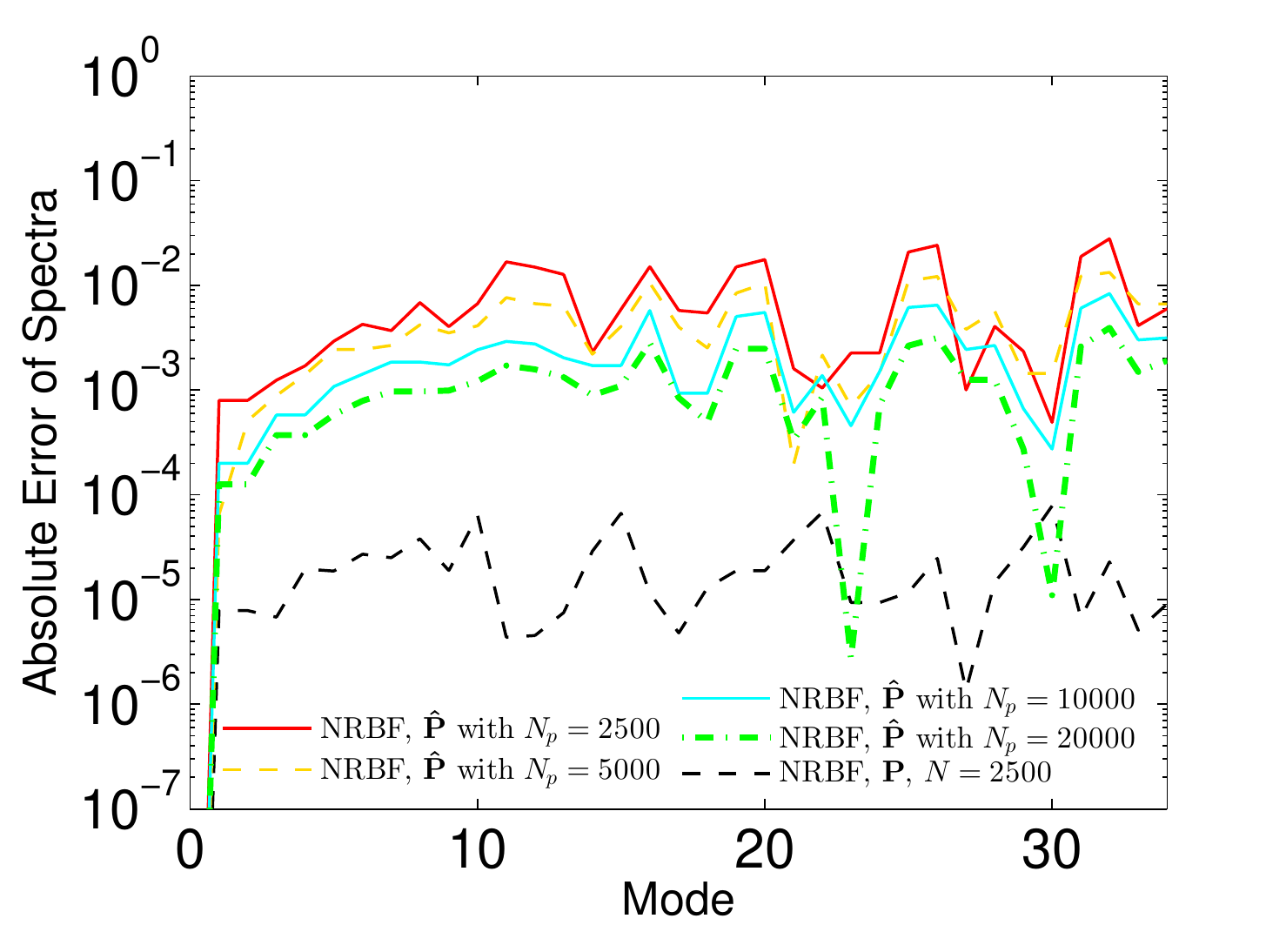}
&
\includegraphics[width=3.0
in, height=2.2 in]{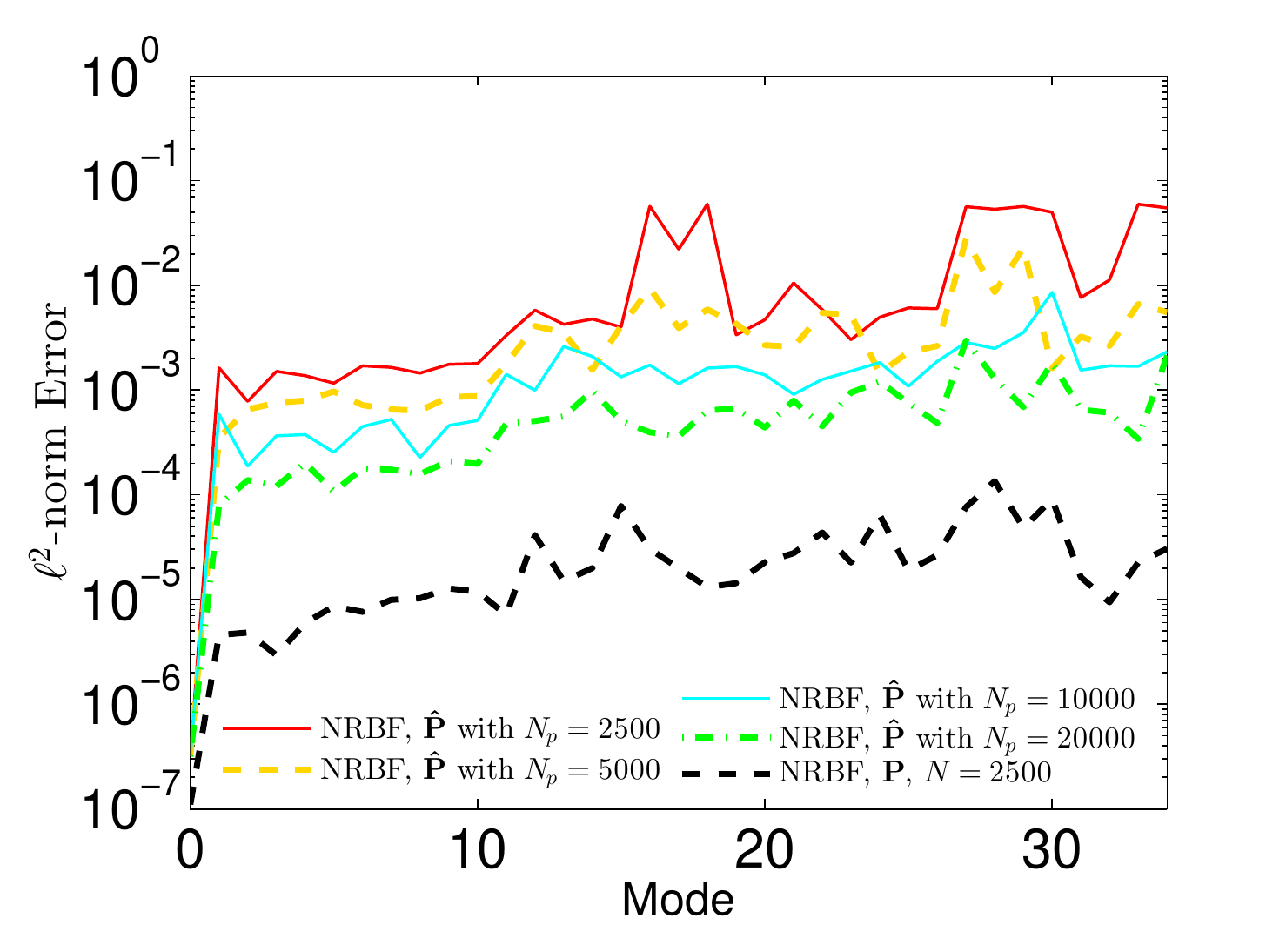}
\end{tabular}
}
\caption{{\bf 2D general torus in $\mathbb{R}^{21}$.}
Convergence of eigenvalues and eigenvectors for the NRBF method. Here $N$ denotes the number of points for solving the eigenvalue problem, while $N_p$ denotes the number of points for estimating $\mathbf{\hat{P}}$.
(a) Convergence of eigenvalues with respect to $N$ for NRBF using $\mathbf{\hat{P}}$. In panel (a), $N$ data points are used for both approximating $\mathbf{\hat{P}}$ and evaluating NRBF matrices.
In panels (c) and (d), shown is the convergence of NRBF
eigenvalues and NRBF eigenfunctions with respect to $\mathbf{\hat{P}}$ for varying values of $N_p$, but with fixed $N=2500$ data points used for solving the NRBF eigenvalue problem.
(b) For each $N_p$, plotted
are the averages of errors of eigenvalues or eigenfunctions for the leading
12 modes (2nd-13rd modes). The convergence rate of eigenvalues and
eigenfunctions are both $N_p^{-1}$.
IQ kernel with
$s=0.5$ was fixed for all cases. The data points
are randomly distributed on the general torus according to a uniform distribution in the intrinsic coordinates.
}
\label{fig_gentor_4}
\end{figure*}

In Fig. \ref{fig_gentor_4}, we examine the convergence of eigenvalues and
eigenvectors for NRBF with $\mathbf{\hat{P}}$ in the case of
unknown manifold. For NRBF with $\mathbf{\hat{P}}$, since the eigenvalues
are complex value, Fig. \ref{fig_gentor_4}(a) displays all the eigenvalues on a complex plane.
Here are four observations from our numerical results:
\begin{enumerate}
\item  When $N$ increases, more eigenvalues with
large magnitudes flow to the ``tail'' as a cluster packet.
\item  The magnitude of imaginary parts decays as $N$ increases.
\item  For the leading modes with small magnitudes, NRBF eigenvalues
converge fast to the real axis and converge to the true spectra at the same
time. 
\item It appears that all of the eigenvalues lie in the right half plane with positive real parts for this 2D manifold as long as $N$ is large enough ($N>1000$). Notice that this result is consistent with the previous result reported in \cite{fuselier2013high}. In that paper, the authors considered the negative definite Laplace-Beltrami operator and numerically observed that all eigenvalues are in the left half plane with negative real parts for many complicated 2D manifolds for large enough data. 
\end{enumerate}
In Fig. \ref{fig_gentor_4}(b)-(d), we would like to verify that the convergence rate of the NRBF is dominated by the error rate in the estimation of $\mathbf{P}$. In all numerical experiments in these panels, we solve eigenvalue problems of discrete approximation with a fixed 2500 data points as in previous examples. Here, we verify the error rate in terms of the number of points used to construct $\hat{\mathbf{P}}$, which we denote as $N_p$. For the 2D manifolds, we found that the convergence rate (panel (b)) for the leading 12 modes decay with the rate $N_p^{-1}$, which is consistent with the theoretical rate deduced in Theorem~\ref{main P Theorem} and the discussion in Remark~\ref{rem5}. This rate is faster than the Monte-Carlo rate even for randomly distributed data. In panels (c)-(d), we report the detailed errors in the eigenvalue and eigenvector estimation for each mode. This result suggests that if $N$ is large enough (as we point out in bullet point 4 above), one can attain accurate estimation by improving the accuracy of the estimation of $\hat{\mathbf{P}}$ by increasing the sample size, $N_p$.



\begin{figure*}[tbp]
{\scriptsize \centering
\begin{tabular}{cc}
{\small (a) Conv. of Eigenvalues} & {\small (b) Conv. of Eigenfunctions} \\
\includegraphics[width=3.0
in, height=2.2 in]{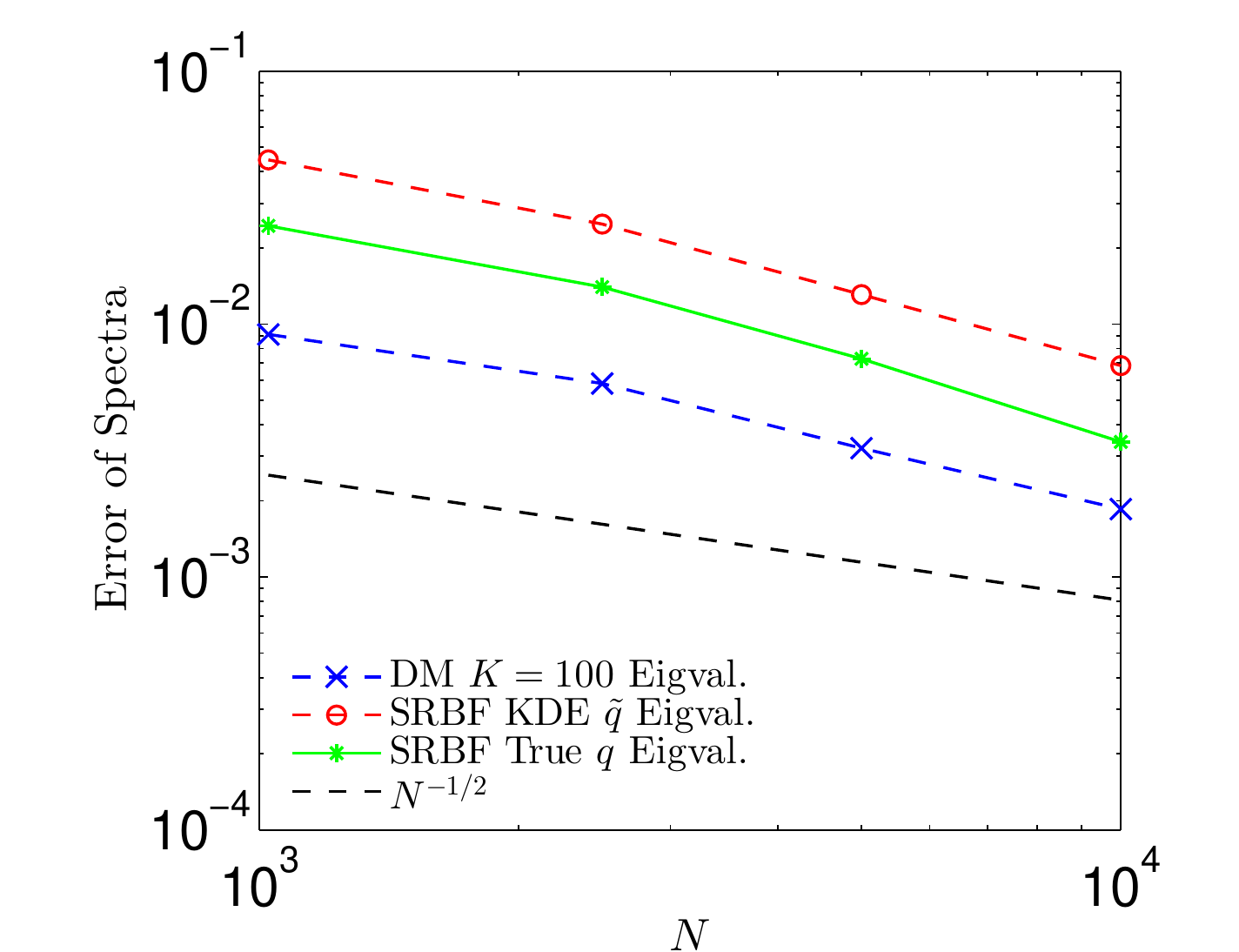}
&
\includegraphics[width=3.0
in, height=2.2 in]{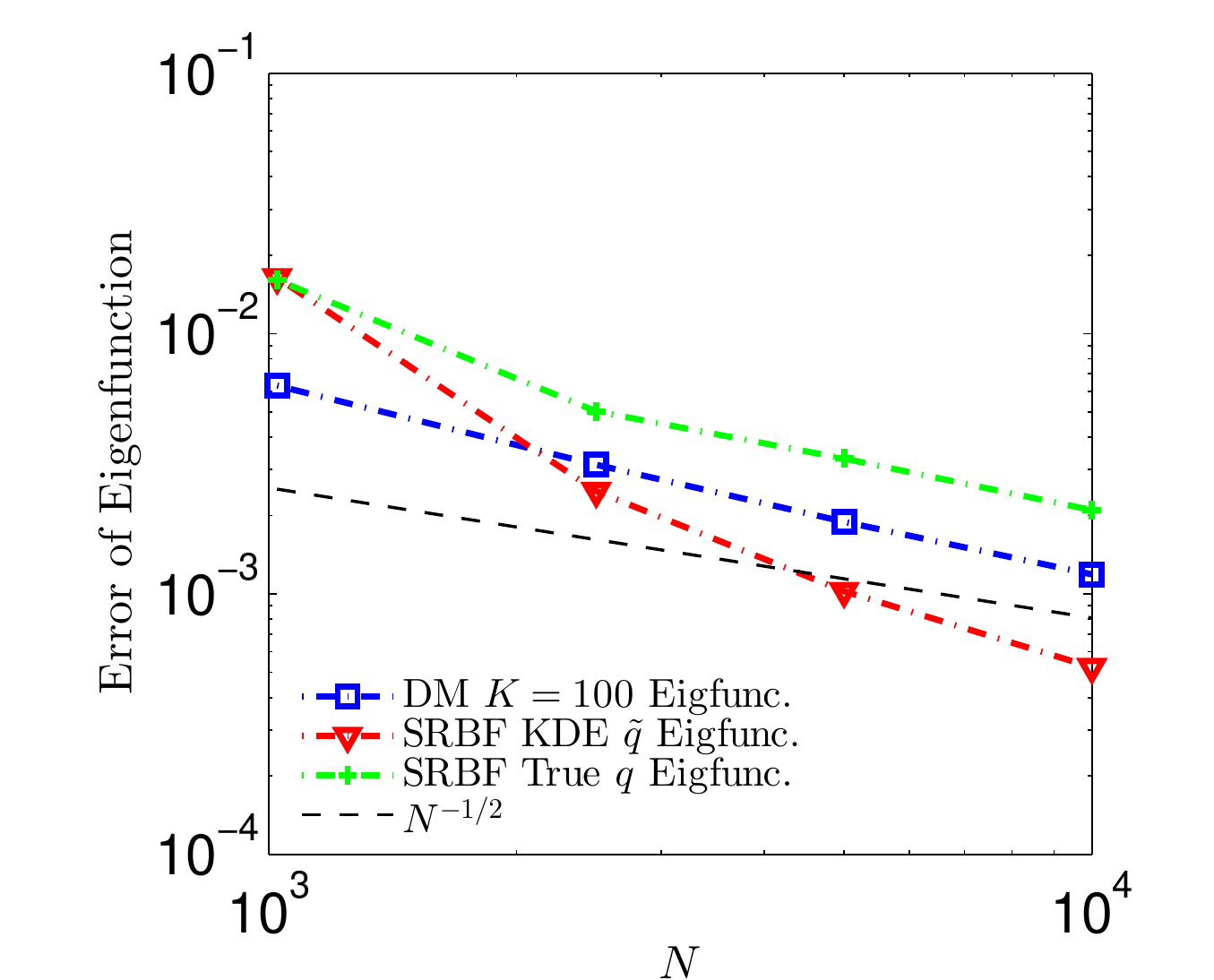}
\\
&
\end{tabular}
\newline
\begin{tabular}{ccc}
{\small (c1) DM Eigenvalues} & {\small (d1) SRBF $\mathbf{\hat{P}}$, KDE $%
\tilde{q}$, Eigenvalues} & {\small (e1) SRBF $\mathbf{\hat{P}}$, True ${q}$,
Eigenvalues } \\
\includegraphics[width=1.92
in, height=1.5 in]{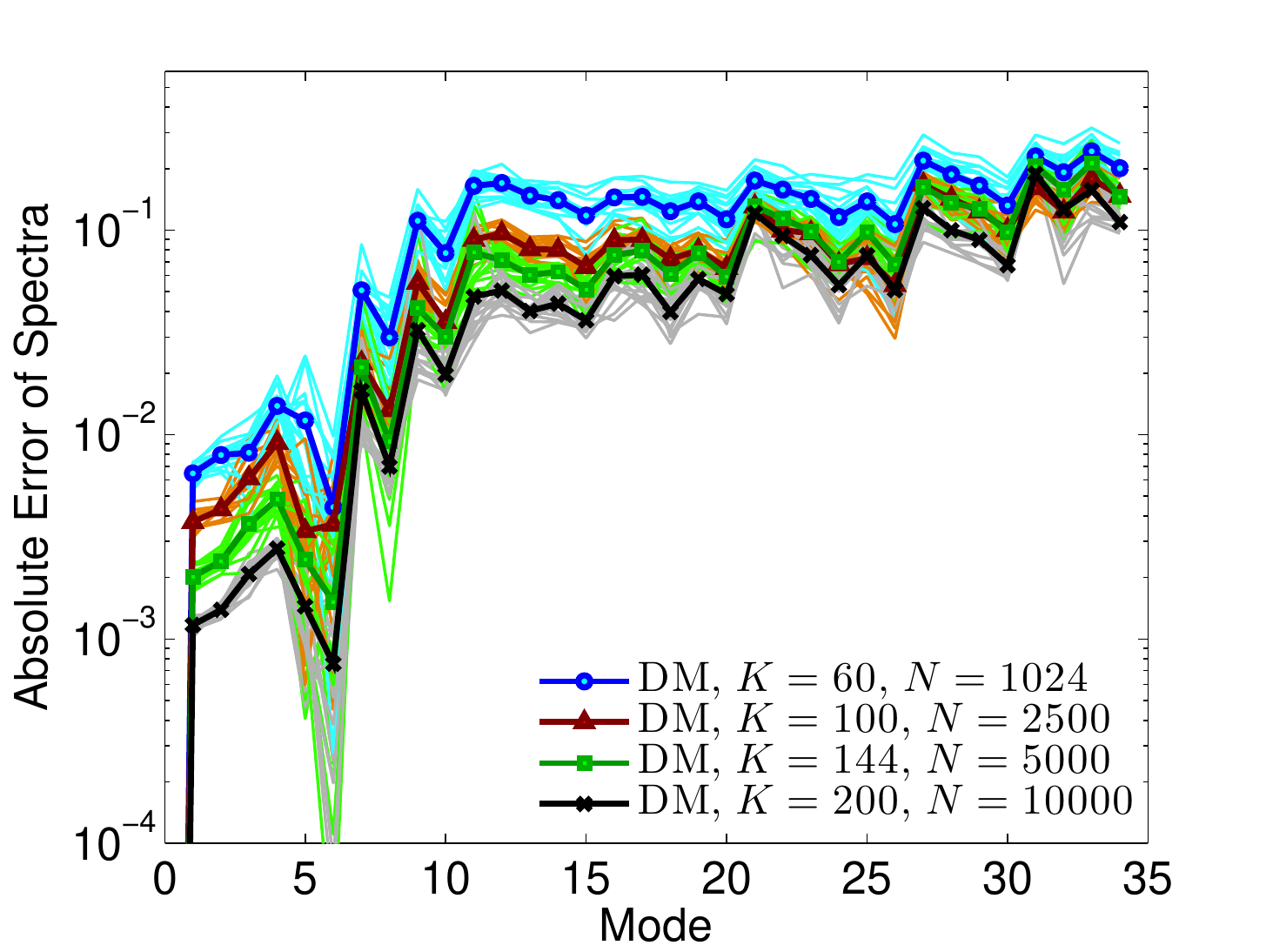}
&
\includegraphics[width=1.92
in, height=1.5 in]{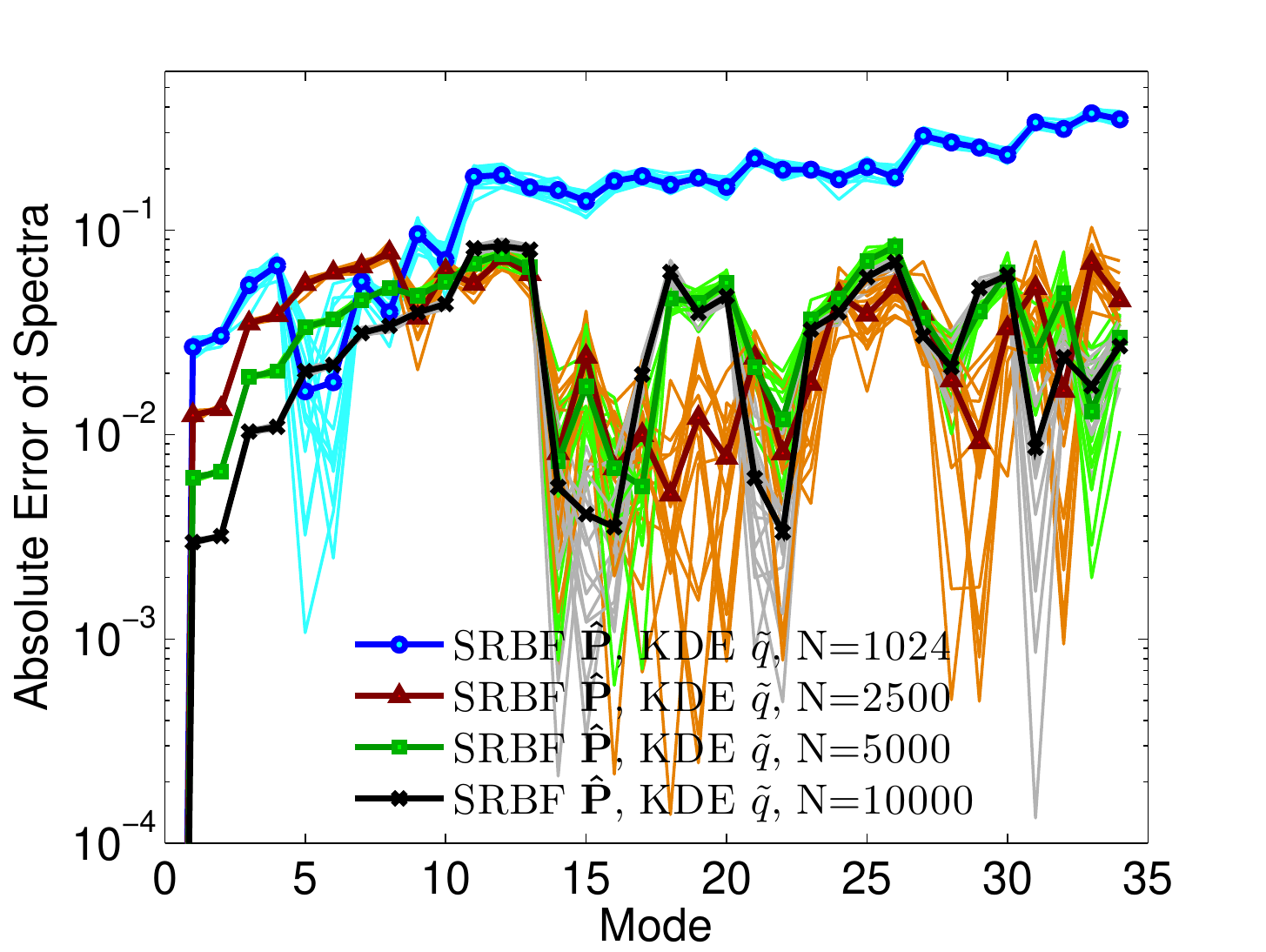}
&
\includegraphics[width=1.92
in, height=1.5 in]{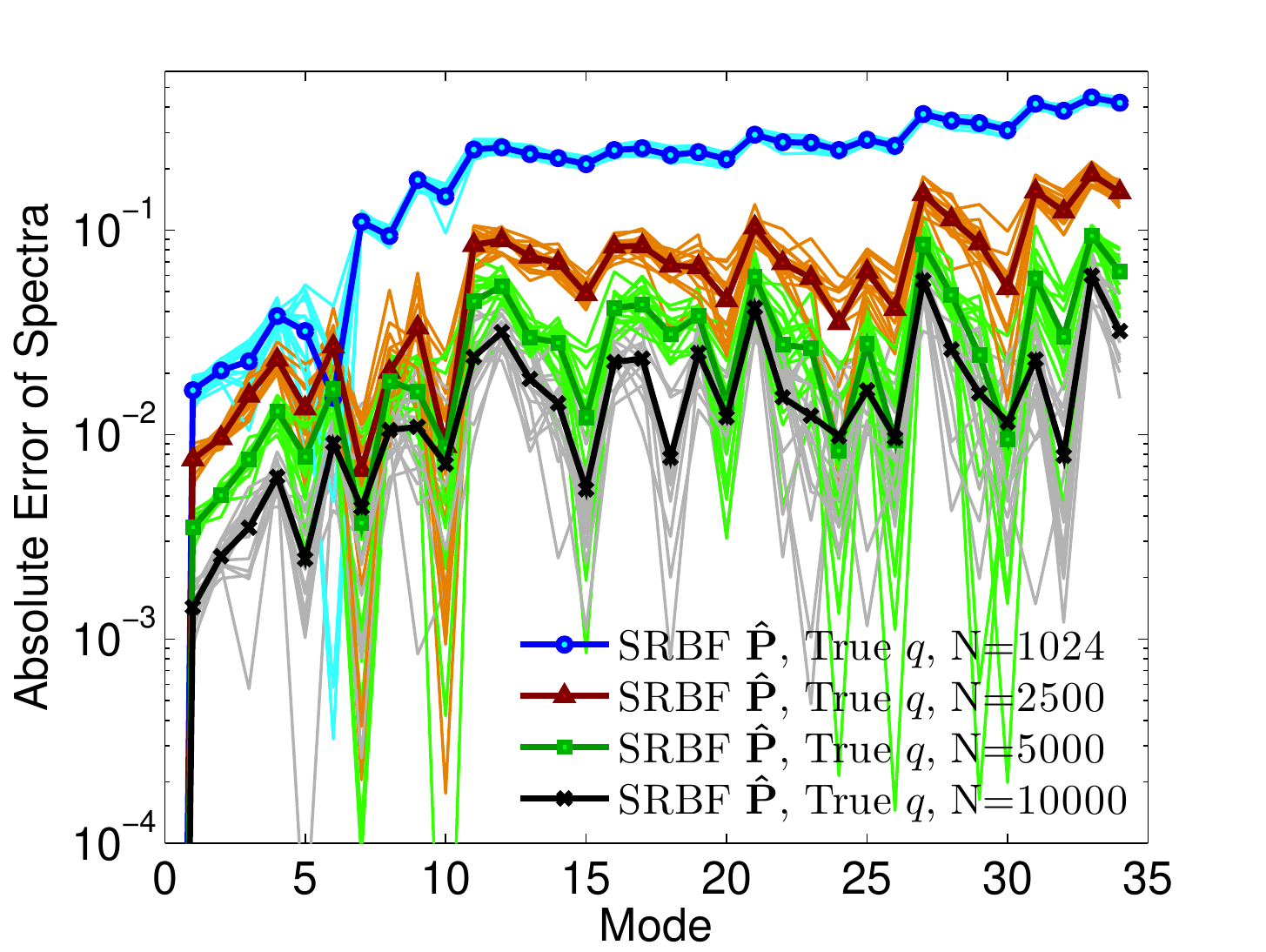}
\\
{\small (c2) DM Eigenfuncs.} & {\small (d2) SRBF $\mathbf{\hat{P}}$, KDE $%
\tilde{q}$, Eigenfuncs.} & {\small (e2) SRBF $\mathbf{\hat{P}}$, True ${q}$,
Eigenfuncs.} \\
\includegraphics[width=2.0
in, height=1.5 in]{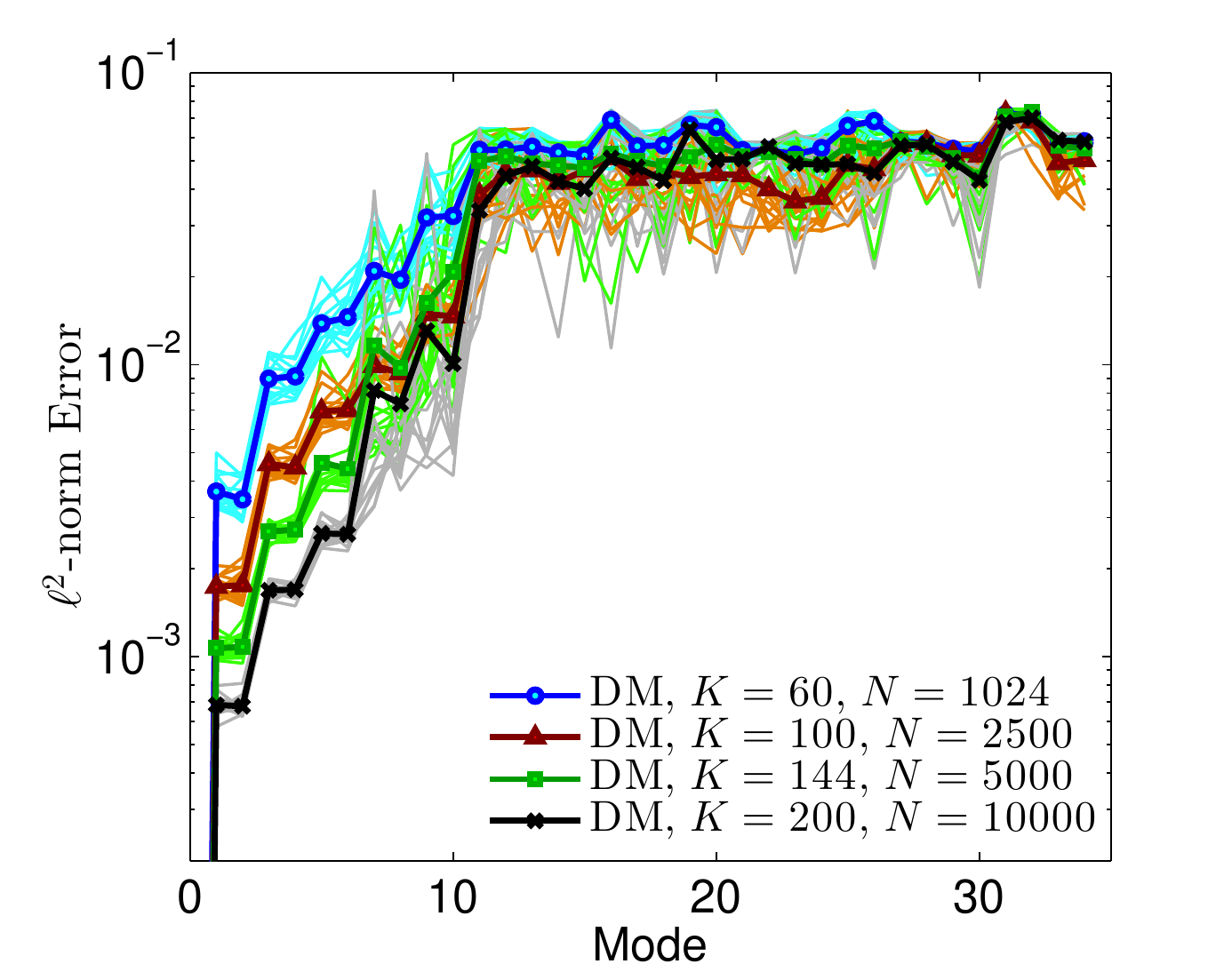}
&
\includegraphics[width=2.0
in, height=1.5 in]{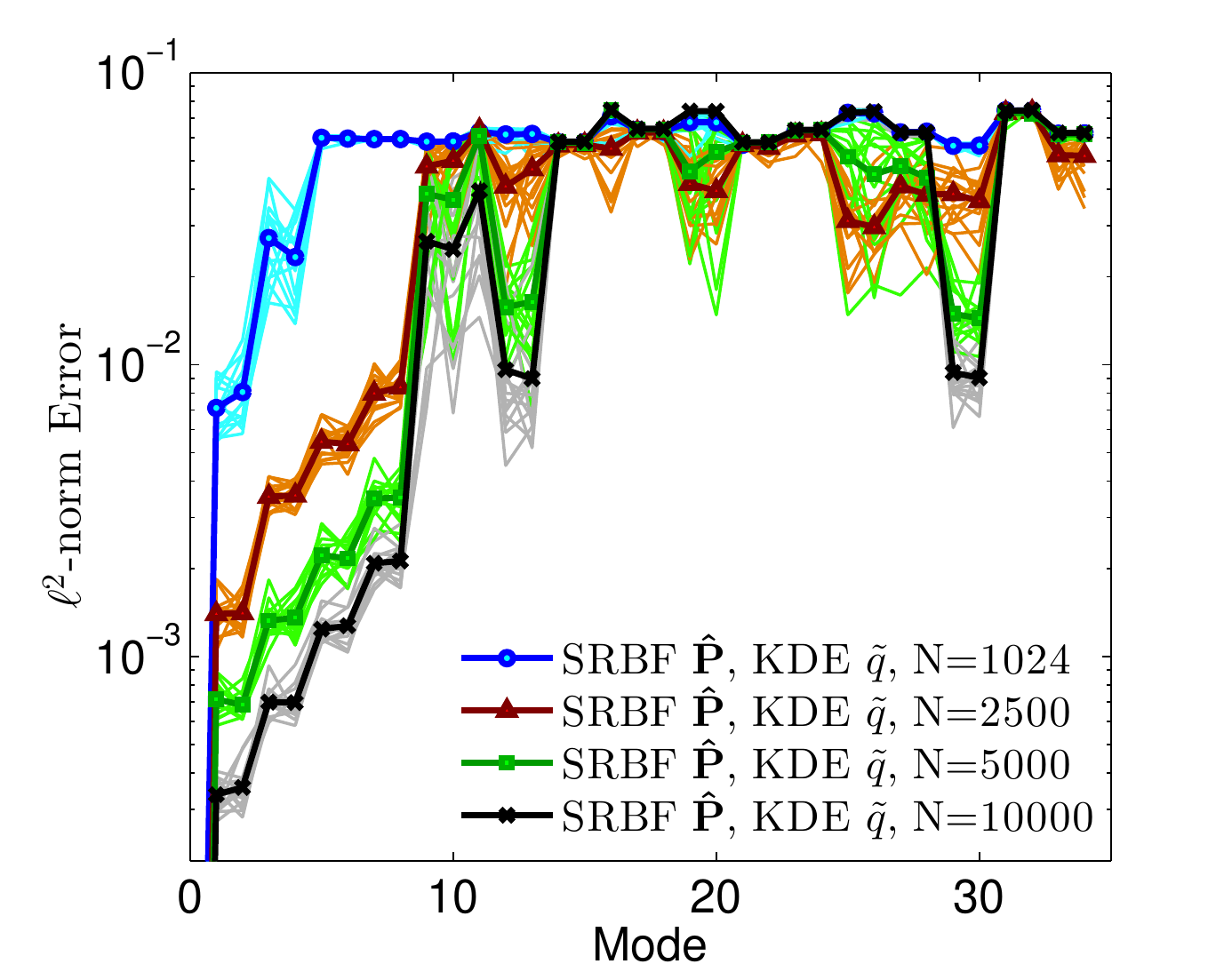}
&
\includegraphics[width=2.0
in, height=1.5 in]{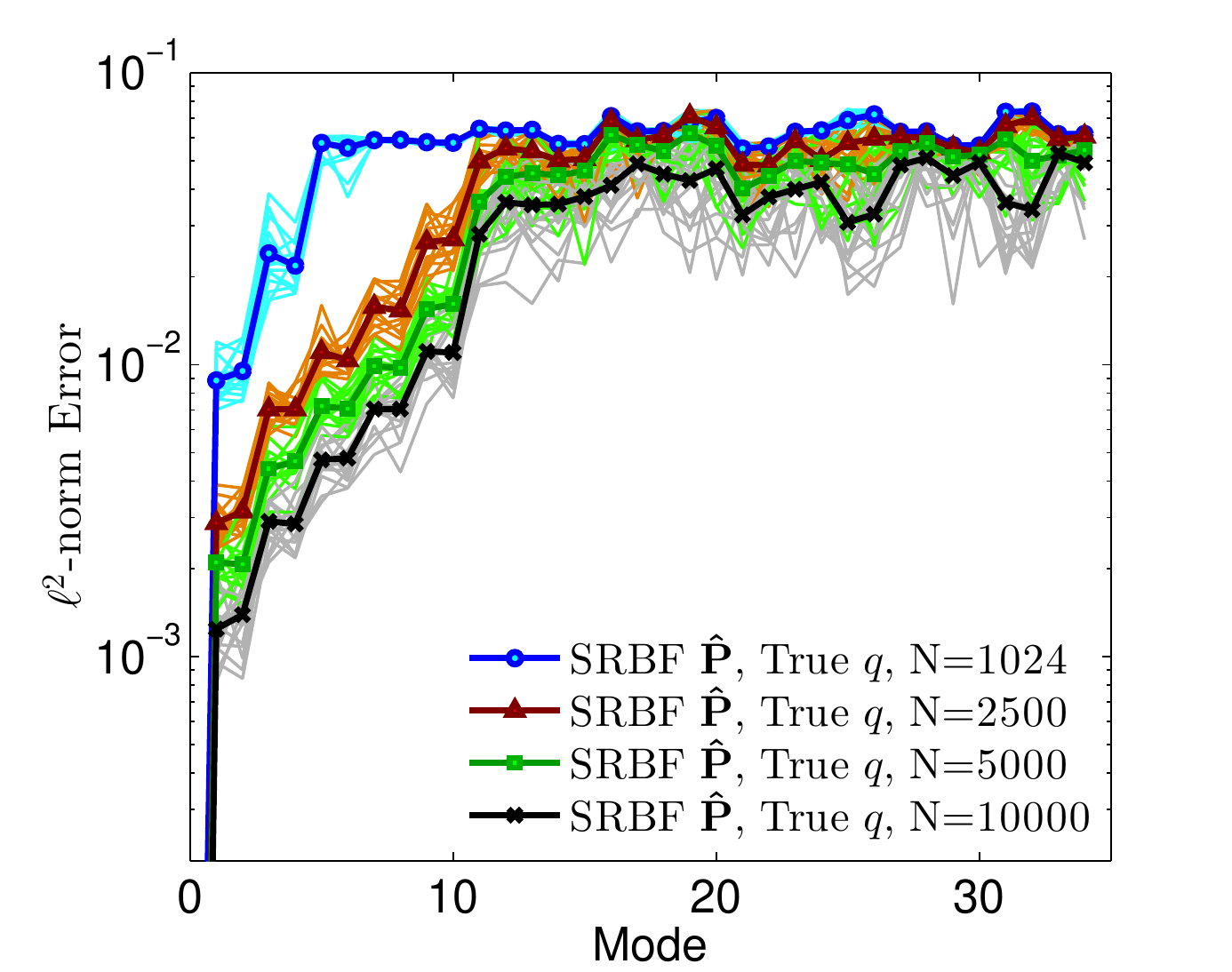}%
\end{tabular}
}
\caption{{\bf 2D general torus in $\mathbb{R}^{21}$.}
Convergence of (a) eigenvalues and
(b) eigenfunctions for DM and SRBF using $\mathbf{\hat{P}}$. For each $N$, the average
error of eigenvalues or eigenvectors over the leading four modes (2nd-5th modes) are plotted.
For SRBF, IQ kernel with $s=0.1$ was fixed for each $N$. Comparison of errors of eigenvalues for (c1) DM, (d1) SRBF
using estimated sampling density $\tilde{q}$, and (e1) SRBF using the true
sampling density $q$ are shown. Plotted in (c2)-(e2) are the corresponding comparison
of errors of eigenvectors. For each $N$, $16$ independent trials are run and
depicted by light color. For each $N$, the average of
all $16$ trials are depicted by dark color. For each trial, randomly distributed data points on the manifold are used for computation.}
\label{fig_gentor_3}
\end{figure*}

In Fig. \ref{fig_gentor_3}, we examine the convergence of eigenvalues and eigenvectors for DM and SRBF with $\mathbf{\hat{P}}$ in the case of the unknown manifold. Previously in Figs.~\ref{fig_gentor_1}-\ref{fig_gentor_5}, we showed result with $N=2500$ fixed, now we examine the convergence rate as $N$ increases. For DM and SRBF with $\mathbf{\hat{P}}$, the Laplacian matrix is always symmetric positive definite, so that their eigenvalues and eigenvectors must be real-valued and their eigenvalues must be positive. Figures \ref{fig_gentor_3}(c)-(e) display the errors of eigenvalues and eigenvectors for DM and SRBF for $N=1024,2500,5000,10000$. For robustness, we report estimates from 16 experiments, where each estimate corresponds to independent randomly drawn data (see thin lines). The thicker lines in each panel correspond to the average of these experiments. One can observe from Figs.~\ref{fig_gentor_3}(a) and (b) that the errors of the leading four modes for both methods are comparable and decay on the order of $N^{-1/2}$. This rate is consistent with Theorems~\ref{eigvalconv} and \ref{conveigvec} with smooth kernels for the SRBF. On the other hand, this rate is faster than the theoretical convergence rate predicted in \cite{calder2019improved}, $N^{-\frac{1}{d+4}}$. One can also see that SRBF using KDE $\tilde{q}$ (red dash-dotted curve) provides a slightly faster convergence rate for the error of eigenfunction compare to those of SRBF with analytic $q$, which is counterintuitive. In the next example, we will show the opposite result, which is more intuitive. Lastly, more detailed errors of the eigenvalues and eigenvectors estimates for each leading mode are reported in Figs. \ref{fig_gentor_3}(c)-(e), {from which one can  see the convergence for each leading mode for both SRBF and DM. One can also see the slight advantage of SRBF over DM in the estimation of non-leading eigenvalues while the slight advantage of DM over SRBF in the estimation of leading eigenvalues (consistent to the result with a fixed $N=2500$ shown in Figs.~\ref{fig_gentor_2} and \ref{fig_gentor_5}).}

\subsection{4D Flat Torus}\label{4dtorus}

We consider a $d-$dimensional manifold embedded in $\mathbb{R}^{2md}$ with
the following parameterization,
\begin{equation*}
{x}=\frac{1}{\sqrt{1+\dots +m^{2}}}\left(
\begin{array}{ccccc}
\cos (t_{1}), & \sin (t_{1}), & \cdots & \cos (mt_{1}), & \sin (mt_{1}), \\
\vdots & \cdots & \cdots & \cdots & \vdots \\
\cos (t_{d}), & \sin (t_{d}), & \cdots & \cos (mt_{d}), & \sin (mt_{d})%
\end{array}%
\right) ,
\end{equation*}%
with $0\leq t_{1}\leq 2\pi ,\cdots ,0\leq t_{d}\leq 2\pi $. The Riemannian
metric is given by a $d\times d$ identity matrix $\mathbf{I}_{d}$. The
Laplace-Beltrami operator can be computed as $\Delta _{g}u=-\sum_{i=1}^{d}%
\frac{\partial ^{2}u}{\partial t_{i}^{2}}$. For each dimension $t_{i}$, the
eigenvalues of operator $-\frac{\partial ^{2}}{\partial t_{i}^{2}}$\ are $%
\{0,1,1,4,4,\ldots ,k^{2},k^{2},\ldots \}$. The exact spectrum and multiplicities of the Laplace-Beltrami operator on the general flat torus depends on the intrinsic dimension $d$\ of the manifold. In this section, we
study the eigenmodes of Laplace-Beltrami operator for a flat torus with
dimension $d=4$ and ambient space $\mathbb{R}^{2md}=\mathbb{R}^{16}$. In this
case, the spectra of the flat torus can be calculated as

\begin{equation}
\begin{array}{cccccc}
\text{spectra} & 0 & 1 & 2 & 3 & 4 \\
\text{mode }k & 1 & 2\sim 9 & 10\sim 33 & 34\sim 65 & 66\sim 89%
\end{array}%
,  \label{eqn:flatlead}
\end{equation}%
where the eigenvalues $0,1,2,3,4$ have multiplicities of $1,8,24,32,24$,
respectively. Our motivation here is to investigate if RBF methods suffer from
curse of dimensionality when solving eigenvalue problems.


\begin{figure*}[htbp]
{\scriptsize \centering
\begin{tabular}{ccc}
{\normalsize (a) Eigenvalues} & {\normalsize (b) Error of eigenvalues} &
{\normalsize (c) Error of eigenvectors } \\
\includegraphics[width=1.9
in, height=1.4 in]{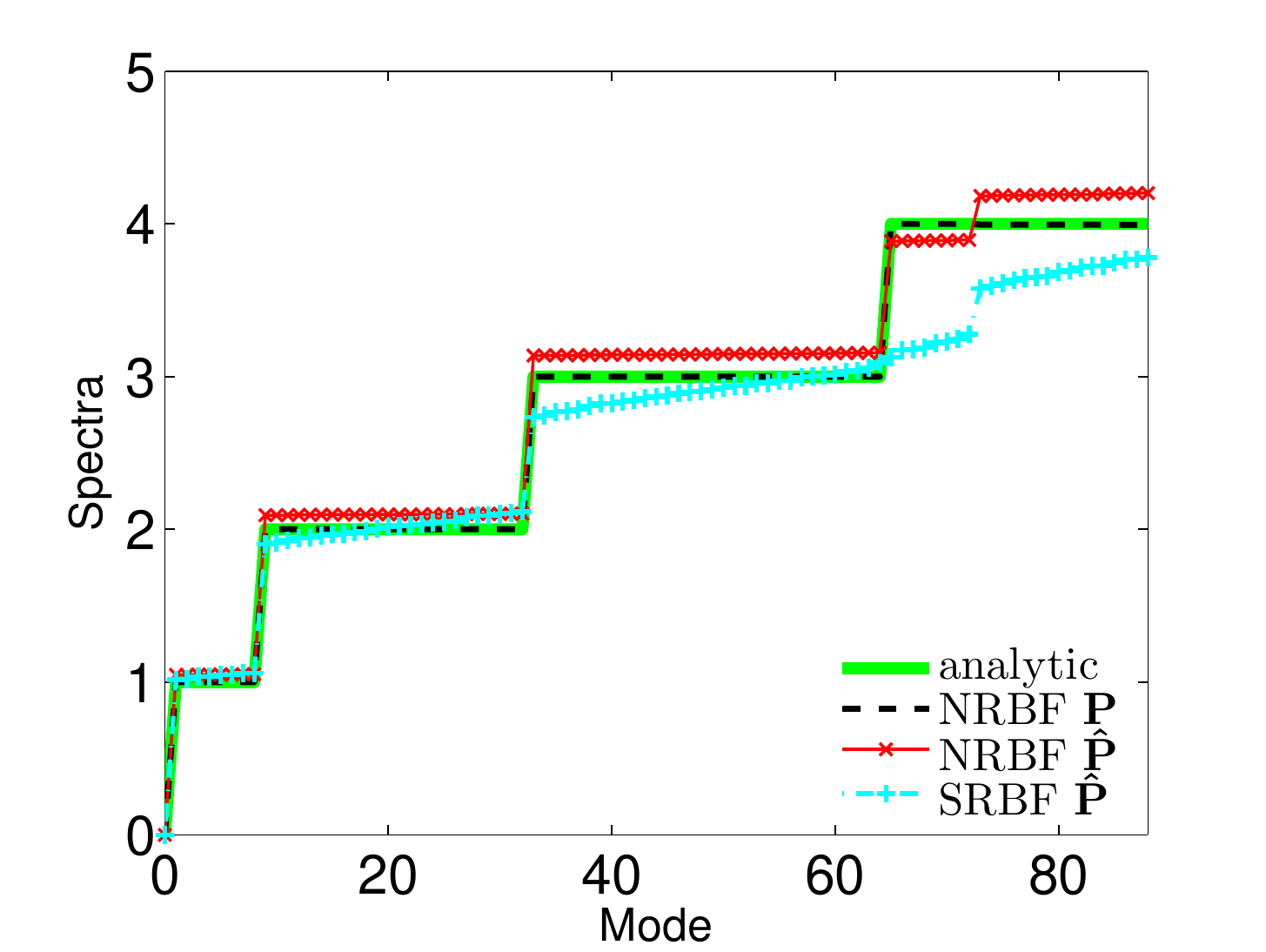}
&
\includegraphics[width=1.9
in, height=1.4 in]{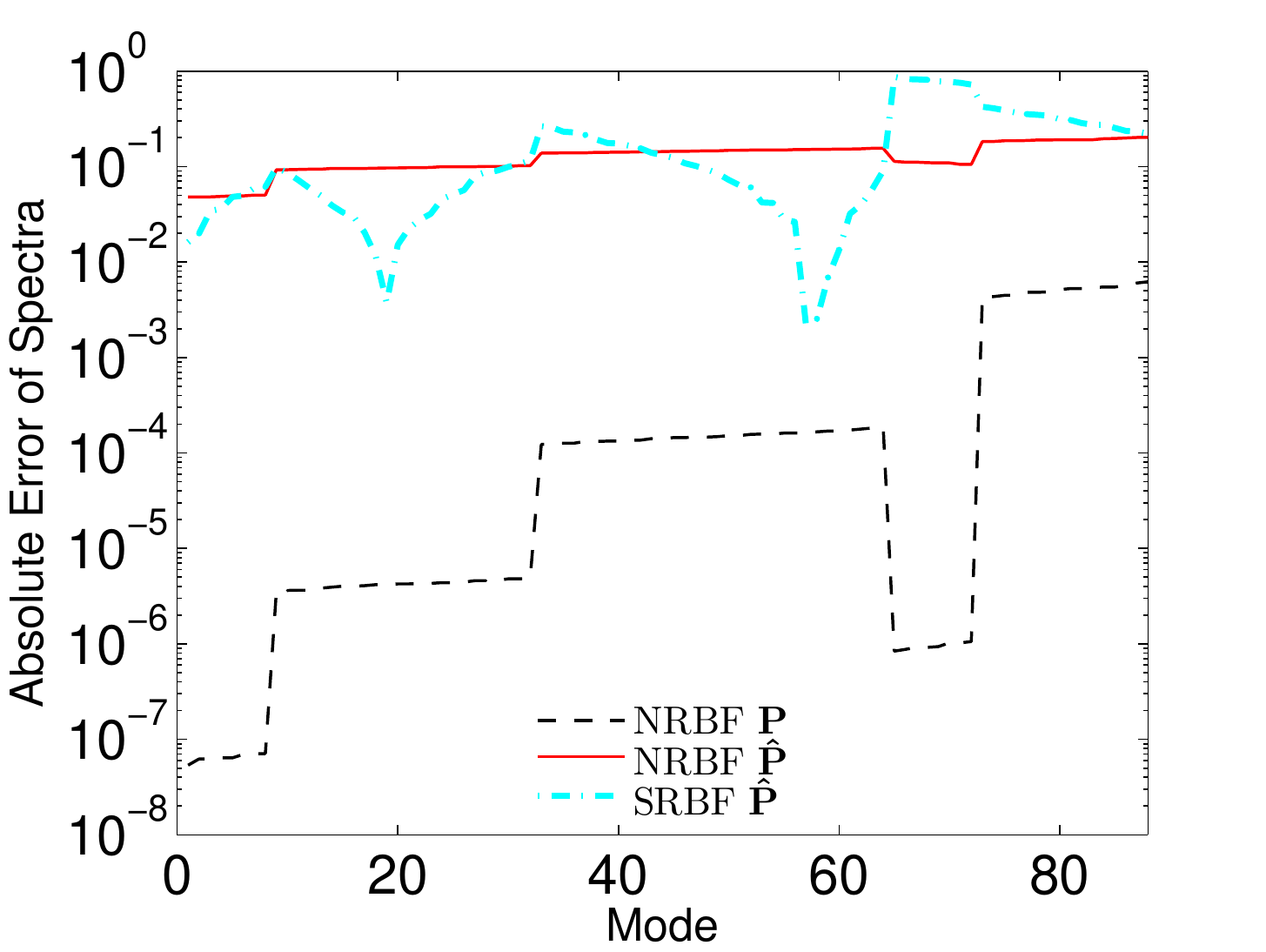}
&
\includegraphics[width=1.9
in, height=1.4 in]{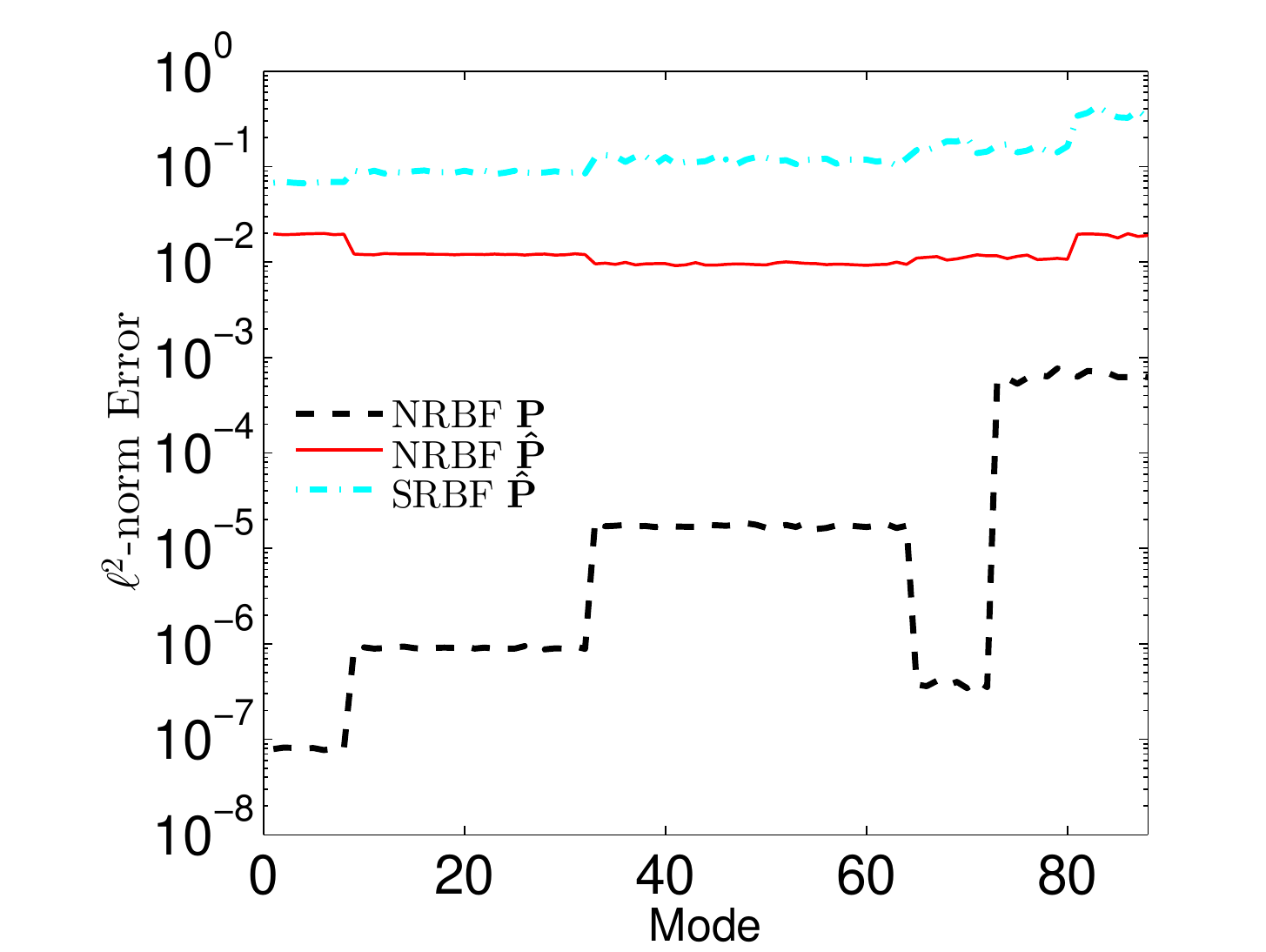}%
\end{tabular}
}
\caption{{\bf 4D flat torus in $\mathbb{R}^{16}$.} Comparison of
(a) eigenvalues, (b) error of eigenvalues, and (c) error of eigenvectors,
among NRBF and SRBF. For NRBF, GA kernel with $s=0.5$ is used, and for
SRBF, IQ kernel with $s=0.3$ is used. The $N=30,000$ data points are
randomly distributed on the flat torus.}
\label{fig_erreigflattori_1}
\end{figure*}

Numerically, data points are randomly distributed on the flat torus with uniform distribution.
To apply NRBF, we use GA kernel with $s=0.5$. To apply SRBF, we use IQ kernel with $s=0.3$. Figure \ref{fig_erreigflattori_1} shows the results of eigenvalues and eigenfunctions for NRBF with $\mathbf{P}$ and $\mathbf{\hat{P}}$, and SRBF with $\mathbf{\hat{P}}$. One can see from Figs. \ref{fig_erreigflattori_1}(b) and (c) that when $N=30000$ is large enough, NRBF with $\mathbf{P}$ (black dashed curve) performs much better than the other methods. One can also see that NRBF with $\mathbf{\hat{P}}$ (red curve) and SRBF with $\mathbf{\hat{P}}$ (cyan curve) are comparable when the manifold is assumed to be unknown.

\begin{figure*}[tbp]
{\scriptsize \centering
\begin{tabular}{cc}
{\normalsize (a) Conv. of NRBF Eigenvals.} & {\small (b) Conv. of NRBF wrt $\mathbf{\hat{P}}$} \\
\includegraphics[width=3.0
in, height=2.2 in]{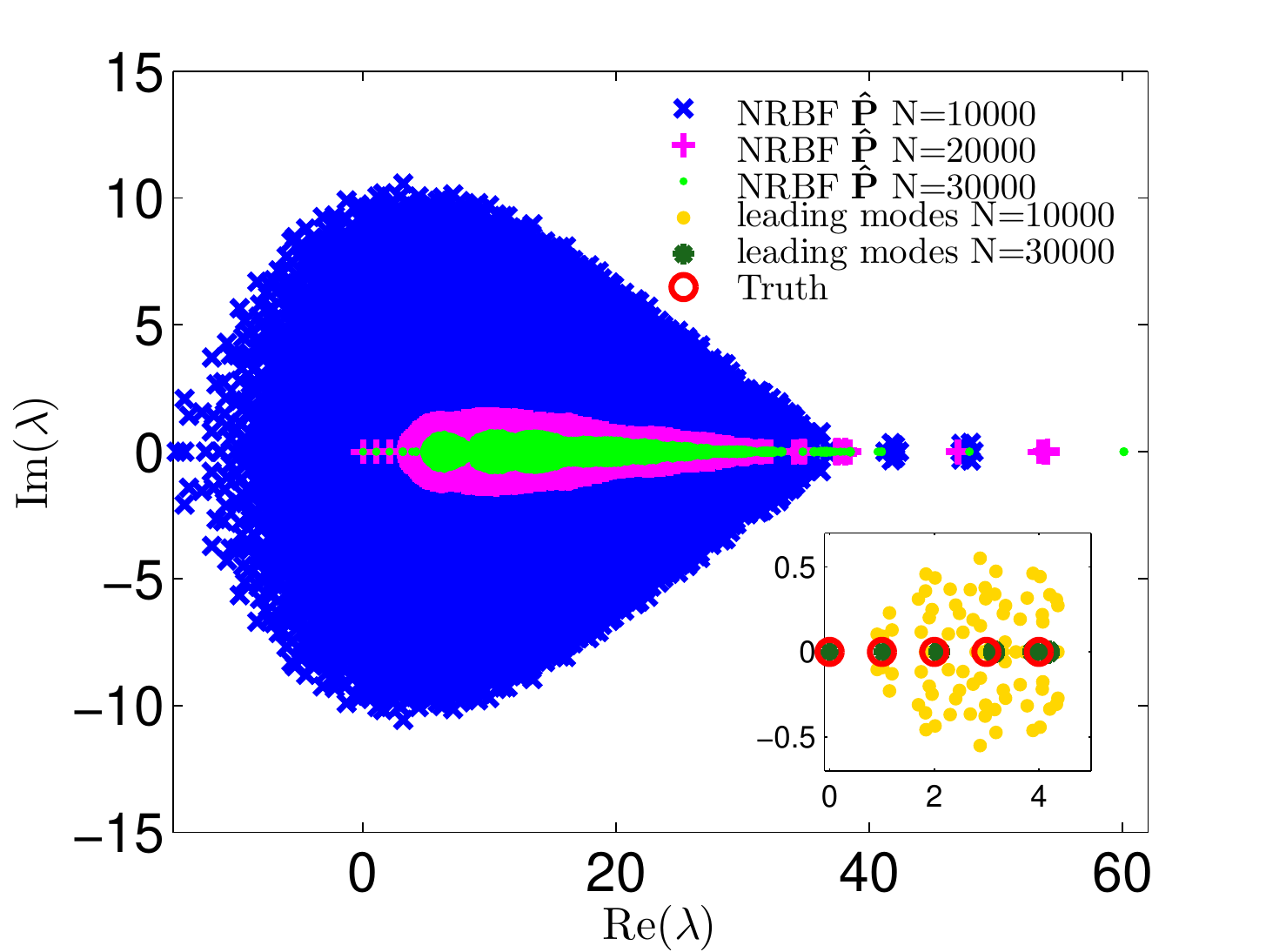}
&
\includegraphics[width=3.0
in, height=2.2 in]{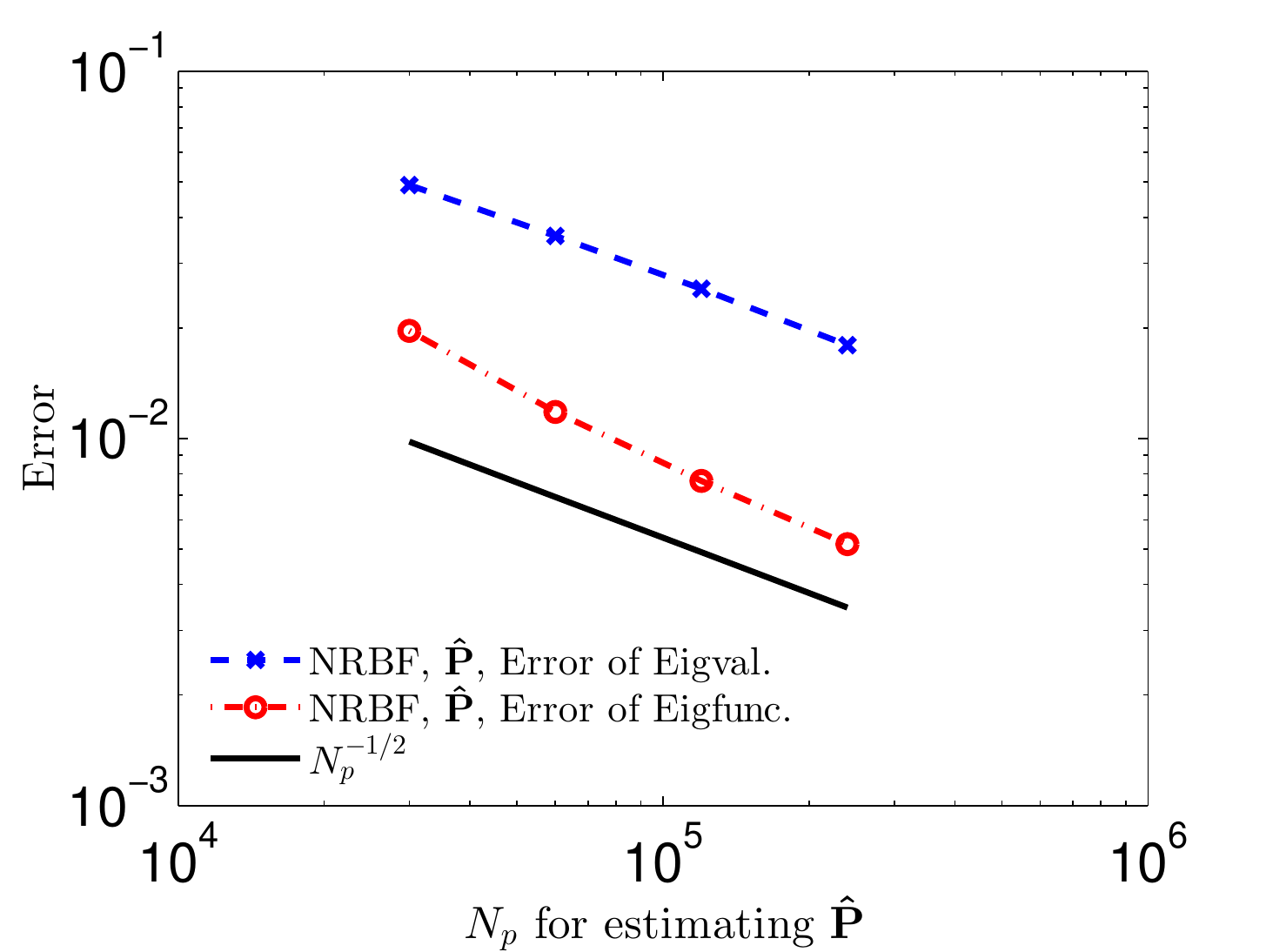}\\
{\normalsize (c) Conv. of
Spectra wrt $\mathbf{\hat{P}}$} & {\normalsize (d) Conv. of Eigenfuncs. wrt $%
\mathbf{\hat{P}}$}\\
\includegraphics[width=3.0
in, height=2.2 in]{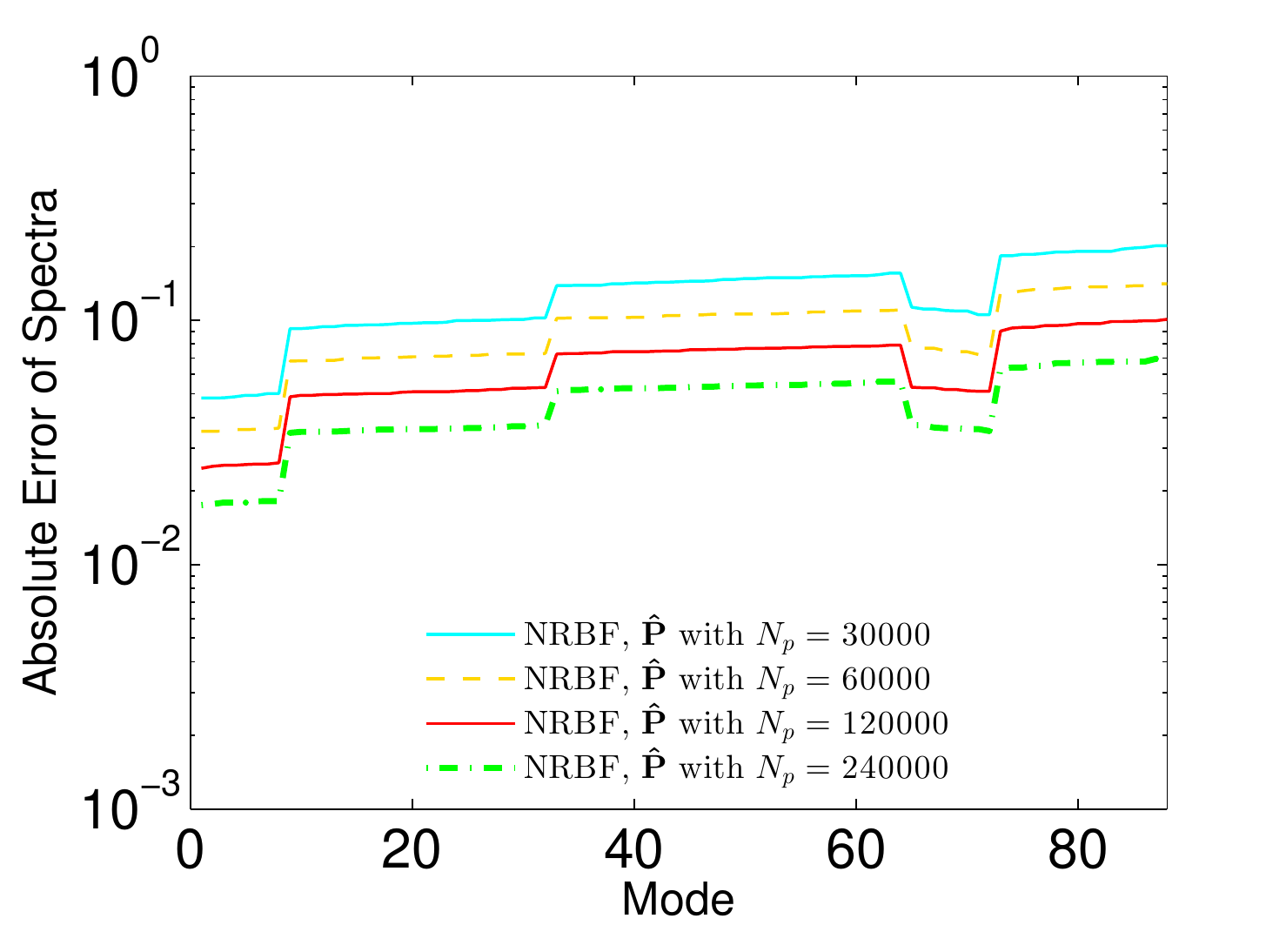}
&
\includegraphics[width=3.0
in, height=2.2 in]{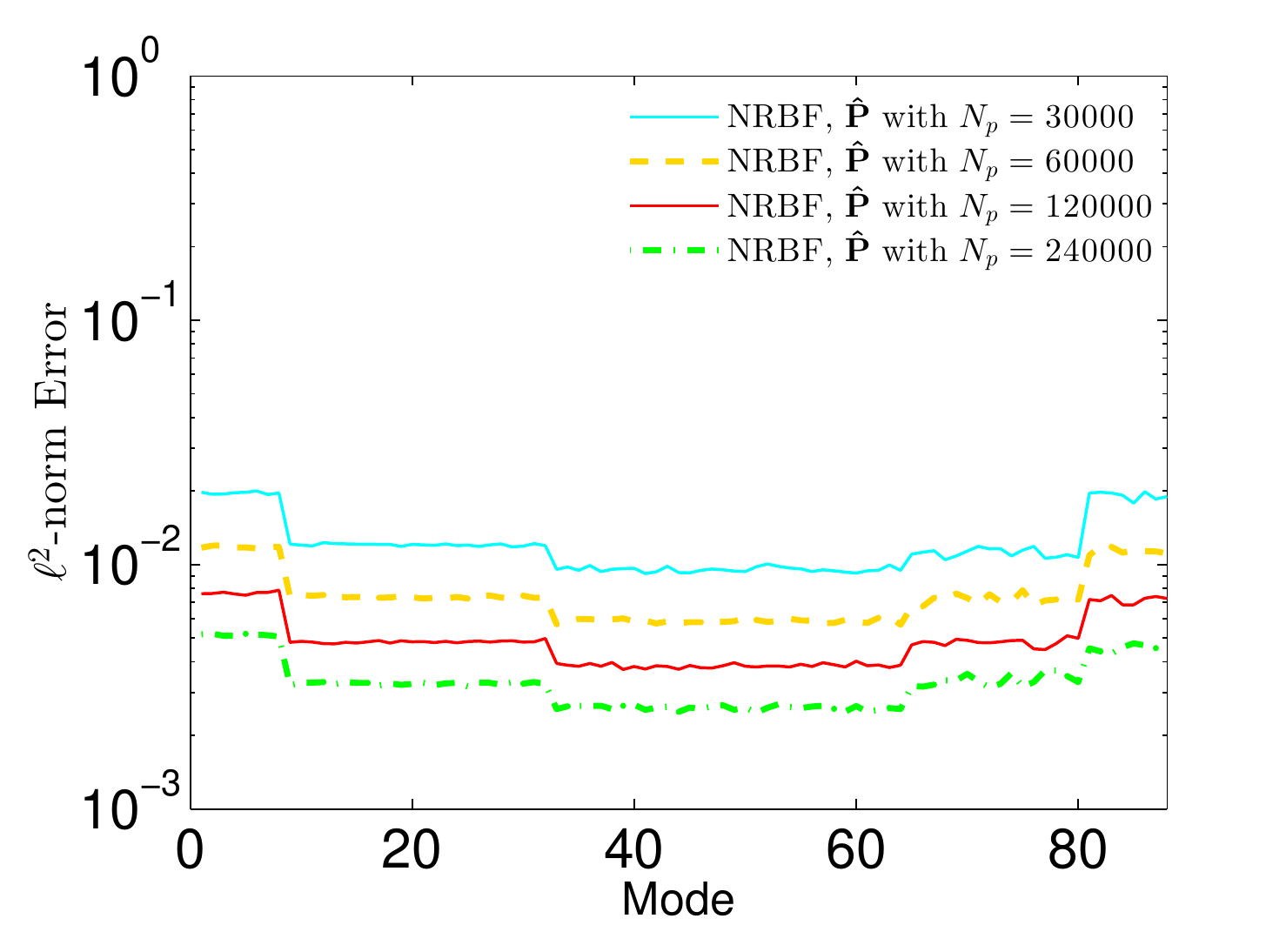}%
\end{tabular}
}
\caption{{\bf 4D flat torus in $\mathbb{R}^{16}$.} (a) Convergence
of eigenvalues with respect to $N$ for NRBF using $\mathbf{\hat{P}}$. For
panel (a), $N$ data points are used for both approximating $\mathbf{\hat{P}}$
and evaluating NRBF matrices. The leading modes in legend are
referred to as the $89$ numerical NRBF eigenvalues closest to the truth
listed in equation (\protect\ref{eqn:flatlead}). Convergence of (c) NRBF
eigenvalues and (d) NRBF eigenfunctions with respect to $\mathbf{\hat{P}}$
estimated from different $N_p$ number of points. For panels (c) and (d), the same
$N=30,000$ data points are used for evaluating NRBF matrices but different $N_p$
data points are used for approximating $\mathbf{\hat{P}}$ at those $30,000$
data points. {In panel (b), plotted is the convergence rate of $O(N_p^{-1/2})$
for the leading 8 modes (2nd-9th modes). } GA kernel with $s=0.5$ was fixed for all $N$. The data points
are randomly distributed on the flat torus.}
\label{fig_erreigflattori_1p5}
\end{figure*}



\begin{figure}[htbp]
{\centering
\begin{tabular}{ccc}
{\small (a) Conv. of Eigenvalues} & {\small (c) KDE $\tilde{q}$, Eigenvalues} & {\small (e) KDE $\tilde{q}$, Eigenfuncs.}  \\
\includegraphics[width=.32\textwidth,height=0.25\textwidth]{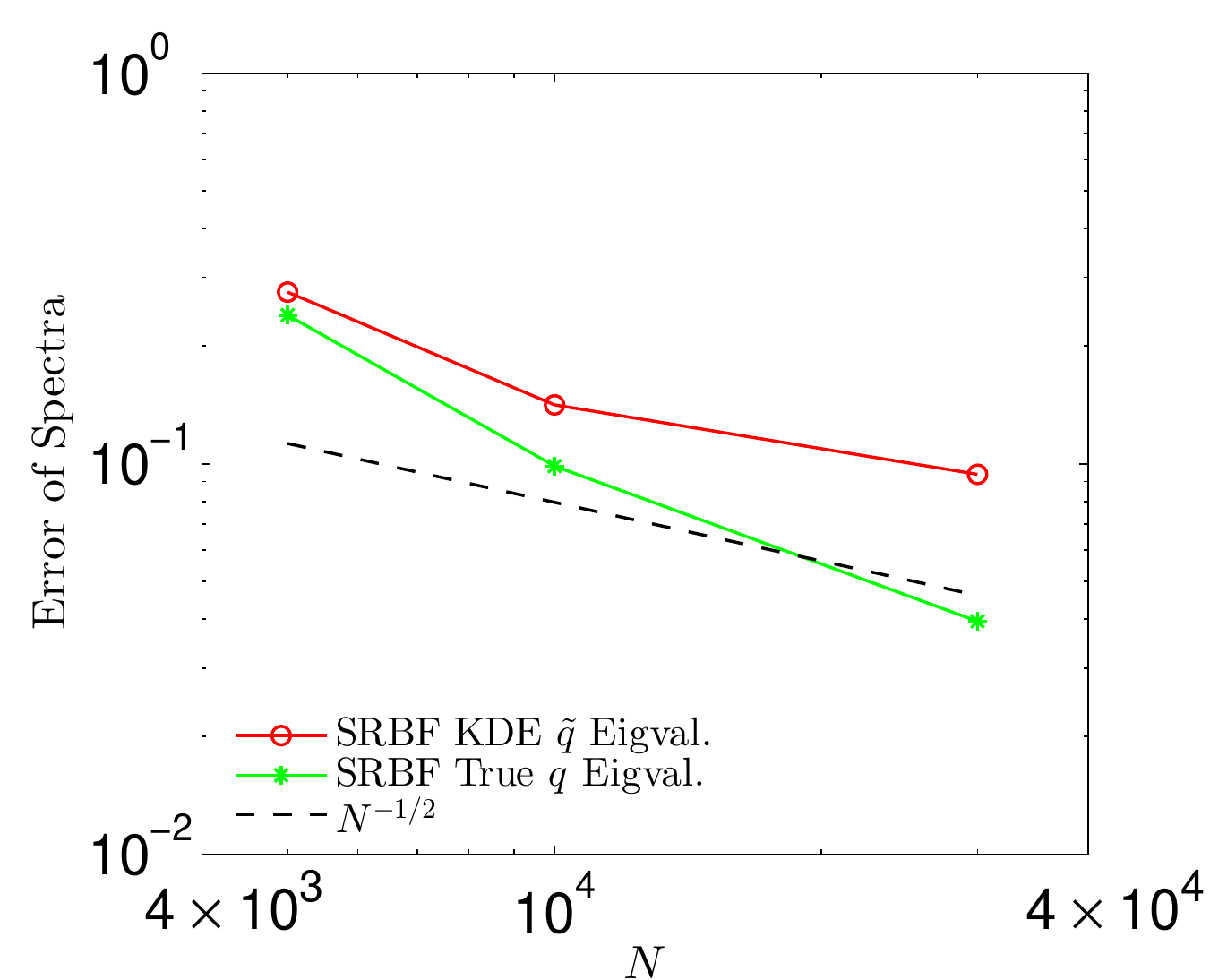}
&
\includegraphics[width=.32\textwidth,height=0.25\textwidth]{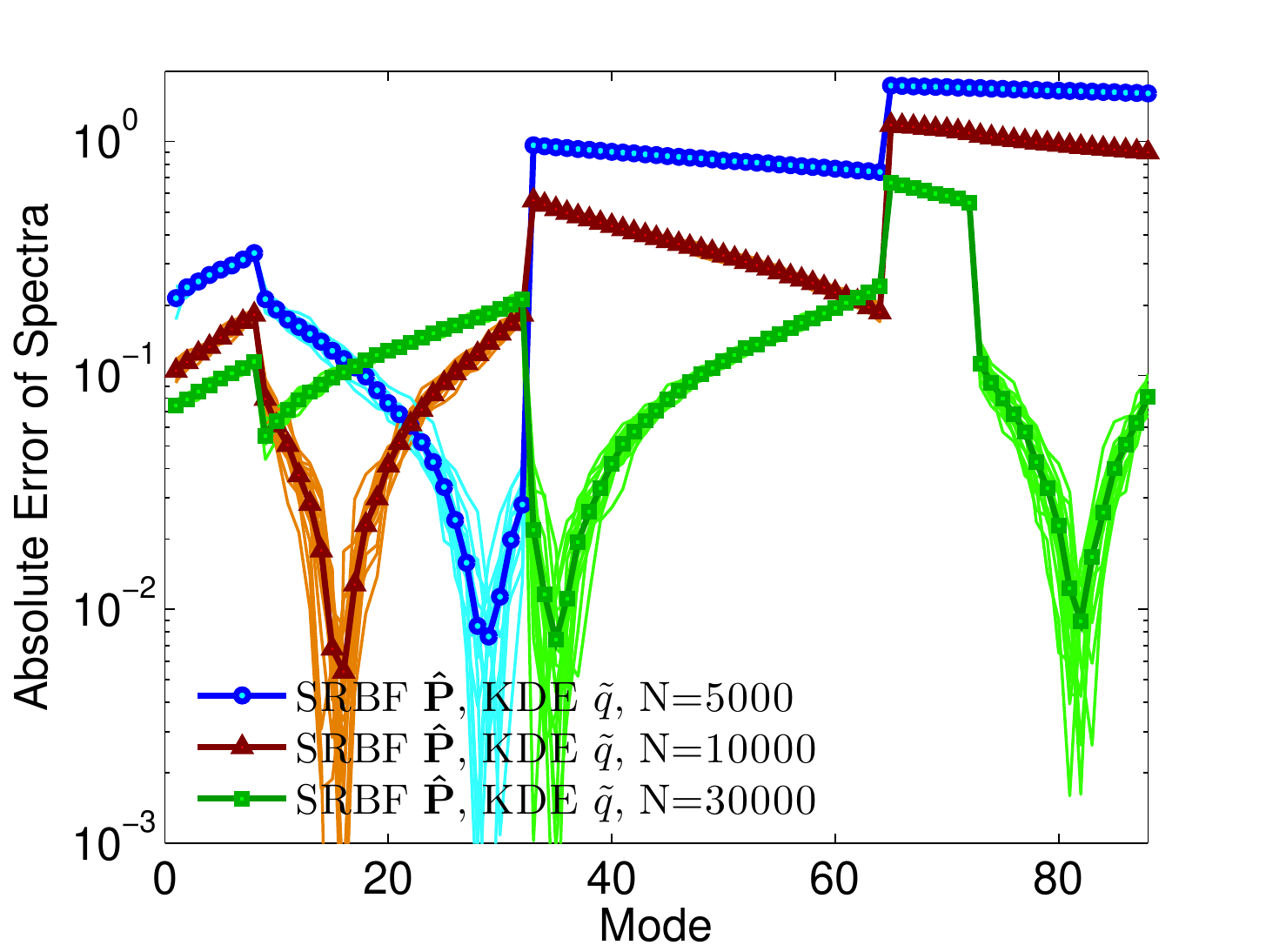}
&
\includegraphics[width=.32\textwidth,height=0.25\textwidth]{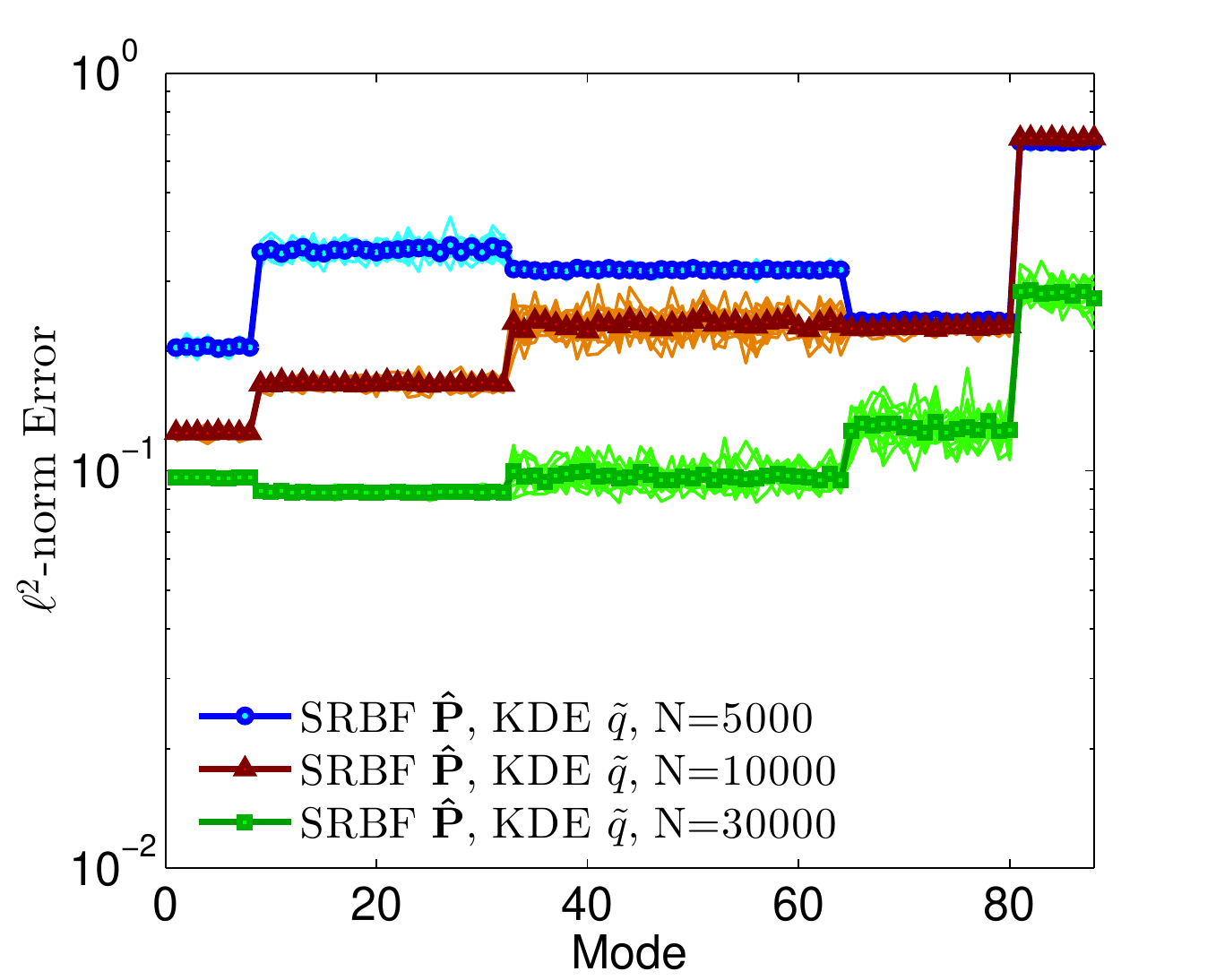} \\
{\small (b) Conv. of Eigenfunctions} & {\small (d) True ${q}$, Eigenvalues } & {\small (f) True ${q}$, Eigenfuncs.} \\
\includegraphics[width=.33\textwidth,height=0.25\textwidth]{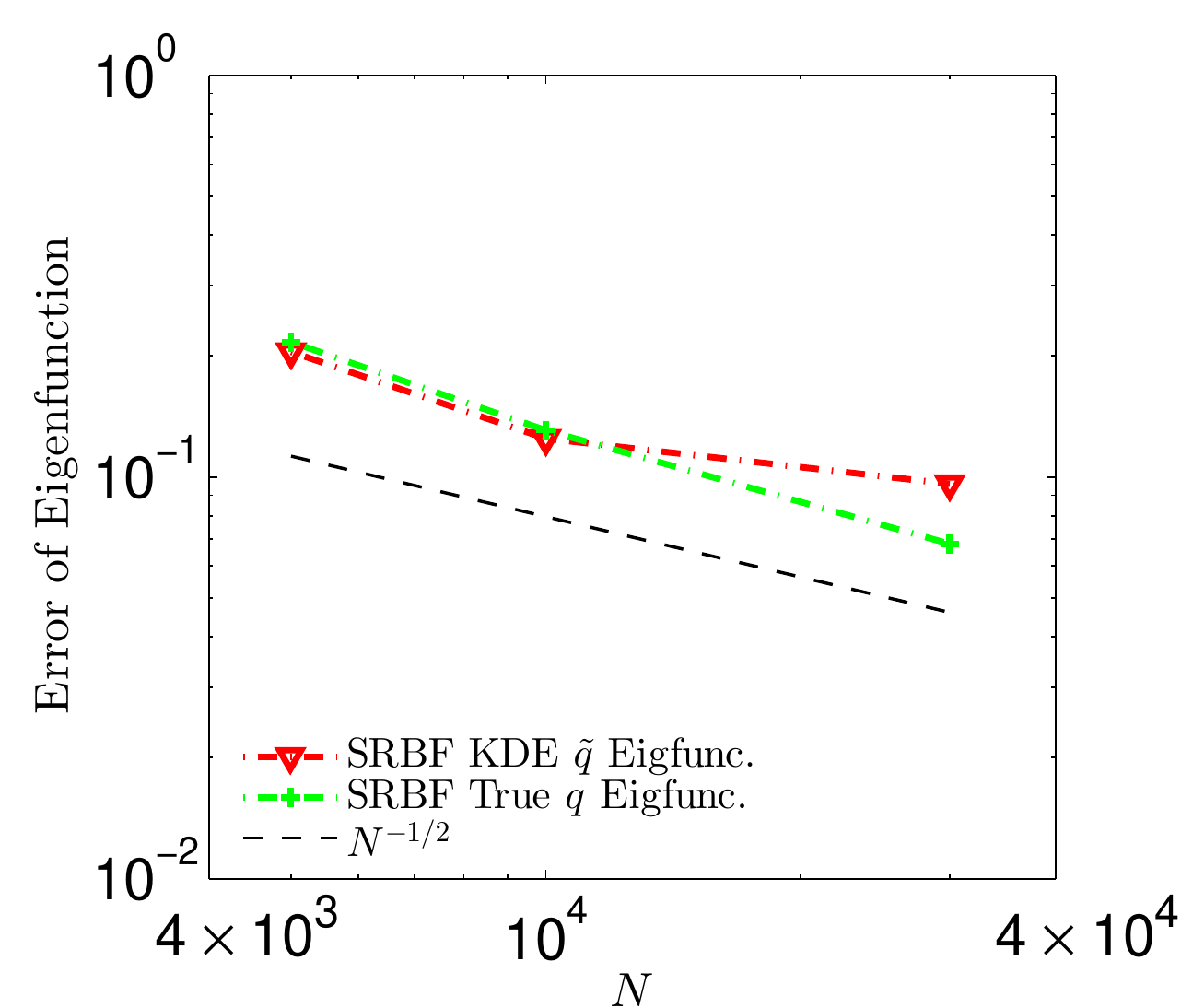}
&
\includegraphics[width=.32\textwidth,height=0.25\textwidth]{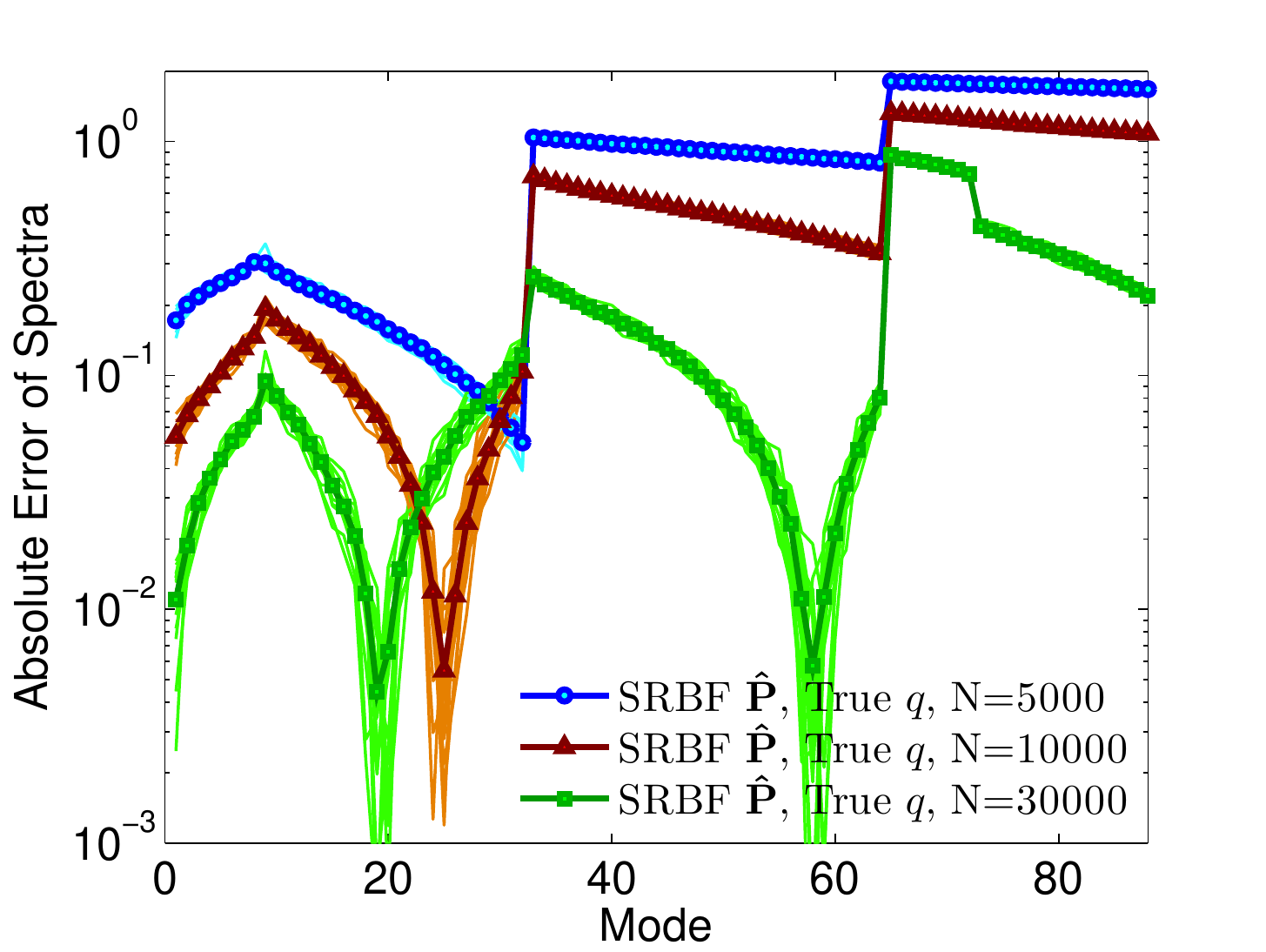}
&
\includegraphics[width=.32\textwidth,height=0.25\textwidth]{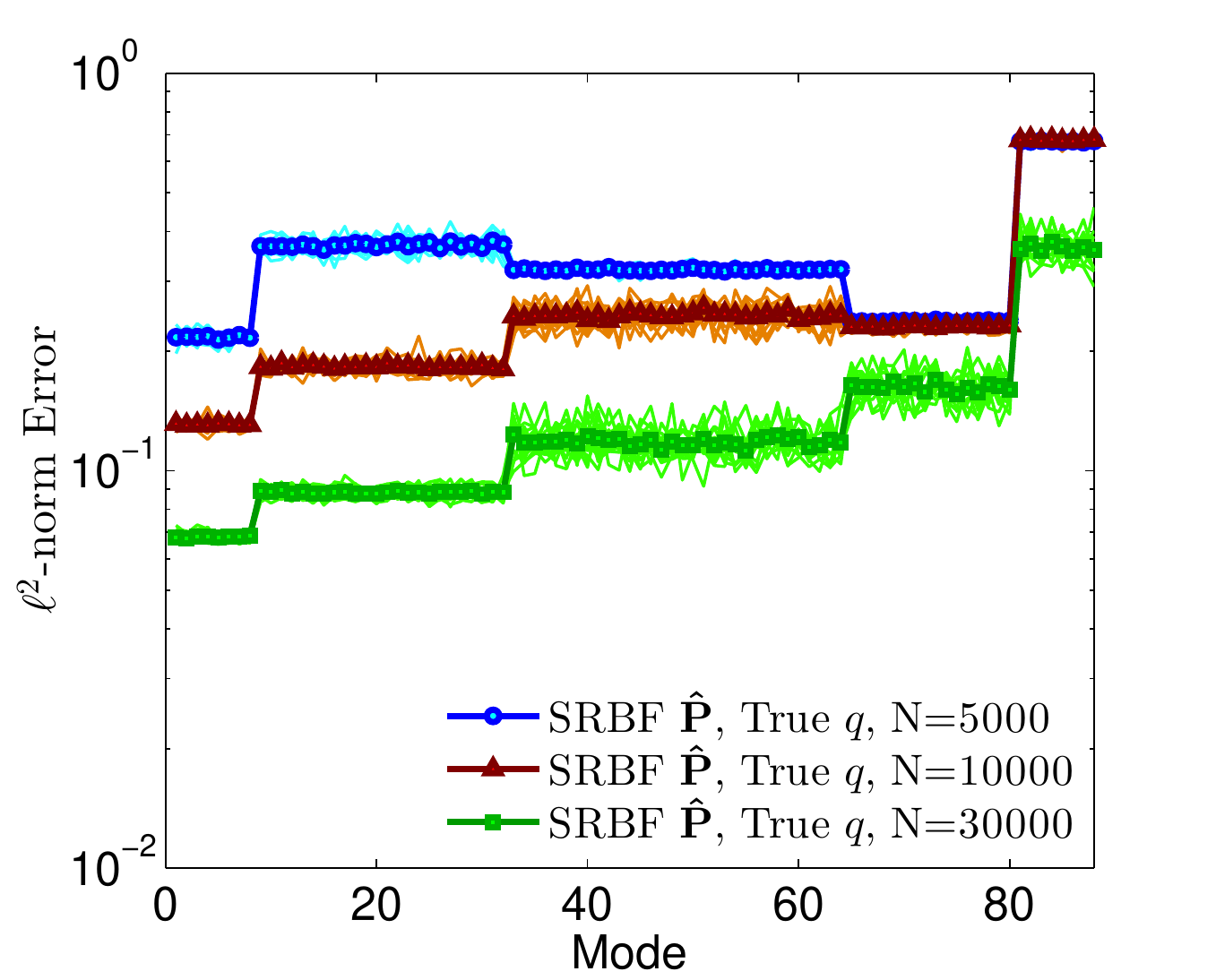}%
\end{tabular}
}
\caption{{\bf 4D flat torus in $\mathbb{R}^{16}$.} Convergence of
(a) eigenvalues and (b) eigenvectors for SRBF using $\mathbf{\hat{P}}$ averaged over the first eight modes (2nd-9th modes). In (c), and (d)
we plot the error of eigenvalues for each mode, while in (e), and (f) we plot the error of
eigenvectors. For each panel, IQ kernel with $s=0.3$ was used. Panels (c) and (d) show the errors of eigenvalues using
the estimated sampling density $\tilde{q}$ and true sampling density $q$, respectively.
Panels (e) and (f) show the corresponding errors of eigenvectors. In each of these panels, $16$
independent trials are run and depicted by light color. The
average of all $16$ trials are depicted by dark color. The data points used for
computation are randomly distributed on the manifold. }
\label{fig_erreigflattori_2}
\end{figure}


For the NRBF method using $\mathbf{\hat{P}}$, all the four observations in Example \ref{sec:gentorus}\ persist. However, we should point out that for the last observation (for all numerical eigenvalues in the right half plane with positive real parts) to hold, the number of points $N$ has to be around $20000$ as can be seen from Fig. \ref{fig_erreigflattori_1p5}(a). When $N=10000$ is not large enough, there are many irrelevant eigenvalues with negative real parts (blue crosses), which is a sign of spectral pollution. Moreover, the leading eigenvalues (yellow dots) are not close to the truth (red circles) [the inset of Fig. \ref{fig_erreigflattori_1p5}(a)]. When $N$ increases to $20000$ or $30000$, the irrelevant eigenvalues do not completely disappear (they appear near eigenvalues with larger real components) even though NRBF eigenvalues (magenta and green dots) lie in the right half plane. Here, the leading NRBF eigenvalues (dark green dots) approximate the truth (red circles) more accurately [the inset of Fig. \ref{fig_erreigflattori_1p5}(a)].


For NRBF, we also repeat the experiment with $N=10000$ data points using the analytic $\mathbf{P}$. We found that the profile of blue crosses in Fig. \ref{fig_erreigflattori_1p5}(a) still persists even if the analytic $\mathbf{P}$ was used to replace the approximated $\mathbf{\hat{P}}$\ [not shown]. The only slight difference was that the errors of the leading modes became smaller if $\mathbf{\hat{P}}$\ was replaced with $\mathbf{P}$ [red and black curves in Fig. \ref{fig_erreigflattori_1}(b)]. This numerical result suggests that these irrelevant eigenvalues (blue crosses in Fig. \ref{fig_erreigflattori_1p5}(a)) are due to not enough data points rather than the inaccuracy of $\mathbf{\hat{P}}$.

In contrast, when ($N=30000$) data points are used, we already observed in Fig.~\ref{fig_erreigflattori_1}(b) and (c) that NRBF with $\mathbf{P}$ (black dashed curve) performs much better than NRBF with $\mathbf{\hat{P}}$ (red solid curve). One can expect the errors of the NRBF with $\mathbf{\hat{P}}$\ (red curve in Fig. \ref{fig_erreigflattori_1}(b)) to decay to the errors of the NRBF with $\mathbf{P}$ (black curve in Fig. \ref{fig_erreigflattori_1}(b)) as the data points used to learn the tangential projection matrices $\mathbf{\hat{P}}$ is increased (beyond $30,000$ points) while the same fixed $N=30000$ data points are used to construct the Laplacian matrix for the eigenvalue problem. This scenario is practically useful since the approximation of $\mathbf{\hat{P}}$ is computationally cheap even with a large number of data, whereas solving the eigenvalue problem of a dense, non-symmetric RBF matrix is very expensive when $N$ is large. In Figs. \ref{fig_erreigflattori_1p5}(b) and (c) for NRBF, we demonstrate the improvement with this estimation scenario, especially with respect to the number of points $N_p$ used to construct $\mathbf{\hat{P}}$. We found that the rate is $N_p^{-1/2}$ which is consistent with our expectation as noted in Remark~\ref{rem5} for 4-dimensional examples.

%

We also inspect the convergence of NRBF when the dimension of the manifold varies. In particular, we compare the numerical result with a 3D flat torus example in $\mathbb{R}^{12}$ as well as a 5D flat torus example in $\mathbb{R}^{20}$. For the 3D flat torus, we found that irrelevant eigenvalues appear in the left half plane when $N=3,000$, and those eigenvalues disappear when $N $ increases to $10,000$ [not shown]. For the 5D flat torus, we found that irrelevant eigenvalues always exist in the left half plane even when $N$ increases to $30,000$ [not shown]. Based on these experiments, we believe that more data points are needed for accurate estimations of the leading order spectra of NRBF in higher dimensional cases. This implies that NRBF methods suffer from curse of dimensionality.


We now analyze the results of SRBF. Figure \ref{fig_erreigflattori_2} displays the errors of eigenmodes for SRBF for $N=5000$, $10000$, and $30000$. One can see from Figs. \ref{fig_erreigflattori_2}(a) and (b) that the errors of eigenvalues and eigenvectors of the leading eight modes
decrease on order of $N^{-1/2}$ for SRBF with $\mathbf{\hat{P}}$\ and true $q$. The convergence can also be examined for each leading mode for SRBF using $\mathbf{\hat{P}}$ and true $q$ in Figs. \ref{fig_erreigflattori_2}(d) and (f). However, the convergence is not clear for SRBF using $\mathbf{\hat{P}}$ and KDE $\tilde{q}$ as shown in Figs. \ref%
{fig_erreigflattori_2}(a)-(c)(e). This implies that $\mathbf{\hat{P}}$ is accurate enough, whereas the sampling density $\tilde{q}$\ is not. We suspect that the use of KDE for the density estimation in 4D may not be optimal. This leaves room for future investigations with more accurate density estimation methods.

\subsection{2D Sphere}\label{sphereexample}

In this section, we study the eigenvector field problem for the Hodge, Bochner and Lichnerowicz Laplacians, on a 2D
unit sphere. The parameterization of the unit sphere is given by
\begin{equation}
\boldsymbol{x}=\left[
\begin{array}{c}
x \\
y \\
z%
\end{array}%
\right] =\left[
\begin{array}{c}
\sin \theta \cos \phi \\
\sin \theta \sin \phi \\
\cos \theta%
\end{array}%
\right] ,\text{ \ \ for }%
\begin{array}{c}
\theta \in \lbrack 0,\pi ] \\
\phi \in \lbrack 0,2\pi )%
\end{array}%
,  \label{eqn:spher}
\end{equation}%
with Riemannian metric given as,
\begin{equation}
g=\left[
\begin{array}{cc}
1 & 0 \\
0 & \sin ^{2}\theta%
\end{array}%
\right] .  \label{eqn:gsphe}
\end{equation}%
The analytical solution of the eigenvalue problem for the Laplace-Beltrami operator
on the unit sphere consists of the set of Laplace's spherical harmonics with
corresponding eigenvalue $\lambda =l(l+1)$ for $l\in \mathbb{N}^{+}$. In
particular, the leading spherical harmonics can be written in Cartesian
coordinate as%
\begin{equation}
\begin{array}{ll}
\psi =x_{i}, & \text{for }\lambda =2, \\
\psi =3x_{i}x_{k}-\delta _{ik}r^{2}, & \text{for }\lambda =6, \\
\psi =15x_{i}x_{k}x_{n}-3\delta _{ik}r^{2}x_{n}-3\delta
_{kn}r^{2}x_{i}-3\delta _{ni}r^{2}x_{k}, & \text{for }\lambda =12,%
\end{array}
\label{eqn:eigf}
\end{equation}%
for indices $i,k,n=1,2,3$, where $(x_{1},x_{2},x_{3})=(x,y,z)$ and $%
r^{2}=x^{2}+y^{2}+z^{2}$.

We start with the analysis of spectra and corresponding eigenvector fields
of Hodge Laplacian on a unit sphere. The Hodge Laplacian defined on a $k$%
-form is

\begin{equation*}
\Delta _{H}=\mathrm{d}_{k-1}\mathrm{d}_{k-1}^{\ast }+\mathrm{d}^{\ast }_k\mathrm{d}_k,
\end{equation*}%
which is self-adjoint and positive definite. Here, $\mathrm{d}$\ is the
exterior derivative and $\mathrm{d}^{\ast }$ is the adjoint of $\mathrm{d}$
given by%
\begin{equation*}
\mathrm{d}^{\ast }_{k-1}=(-1)^{d(k-1)+1}\ast  \mathrm{d}_{d-k} \ast :\Omega
^{k}(M)\rightarrow \Omega ^{k-1}(M),
\end{equation*}%
where $d$ denotes the dimension of manifold $M$ and $\ast: \Omega
^{k}(M)\rightarrow \Omega ^{d-k}(M)$\ is the standard Hodge
star operator. Obviously, one has $\mathrm{dd}=0\ $and $\mathrm{d}^{\ast }%
\mathrm{d}^{\ast }=0$ and the following diagram:%
\begin{equation*}
\Omega ^{0}(M){{\autorightleftharpoons{$\mathrm{d}_0$}{$\mathrm{d}^\ast_0$}}}%
\Omega ^{1}(M)\autorightleftharpoons{$\mathrm{d}_1$}{$\mathrm{d}^\ast_1$}%
\Omega ^{2}(M)\autorightleftharpoons{$\mathrm{d}_2$}{$\mathrm{d}^\ast_2$}%
\cdots ,
\end{equation*}%
with $\Omega ^{0}(M)=C^{\infty }(M)$. We note that Hodge Laplacian $\Delta
_{H}$ reduces to Laplace-Beltrami $\Delta _{M}$ when acting on functions. To
obtain the solution of the eigenvalue problem for Hodge of $1$-form, we can
apply the results from \cite{Folland1989Harmonic} or we can provide one
derivation based on the following proposition:

\begin{prop}
\label{hodgeex1} Let $M$ be a $d-$dimensional manifold and $\Delta_H$ be the
Hodge Laplacian on k-form. Then,

\noindent (1) $\Omega ^{k}(M)=\mathrm{im}\,\mathrm{d}_{k-1}\oplus \mathrm{im}\,\mathrm{d}_{k}^{\ast }\oplus \ker \Delta _{H}$.

\noindent (2) $\lambda \left( \Delta _{H}\right) \backslash \{0\}=\lambda
\left( \mathrm{d}_{k-1}\mathrm{d}_{k-1}^{\ast }\right) \backslash \{0\}\cup
\lambda \left( \mathrm{d}_{k}^{\ast }\mathrm{d}_{k}\right) \backslash \{0\}$.

\noindent (3) $\lambda \left( \mathrm{d}_{k}\mathrm{d}_{k}^{\ast }\right)
\backslash \{0\}=\lambda \left( \mathrm{d}_{k}^{\ast}\mathrm{d}_{k}\right)
\backslash \{0\}$ for $k=0,1,\ldots ,d$.

\noindent (4) $\lambda \left( \mathrm{d}_{k}\mathrm{d}_{k}^{\ast }\right)
=\lambda \left( \mathrm{d}_{d-k-1}^{\ast }\mathrm{d}_{d-k-1}\right) $ for $%
k=0,1,\ldots ,d-1$.
\end{prop}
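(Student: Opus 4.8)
The plan is to establish the four claims in order, with the classical Hodge decomposition theorem as the main engine and then elementary operator-theoretic and Hodge-star arguments for the rest.

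For part (1), I would invoke the Hodge decomposition theorem for a closed Riemannian manifold (see e.g. Warner, \emph{Foundations of Differentiable Manifolds and Lie Groups}, Ch.~6): the Laplacian $\Delta_H$ on $\Omega^k(M)$ is elliptic and self-adjoint, hence has closed range and finite-dimensional kernel, and $\Omega^k(M) = \ker\Delta_H \oplus \mathrm{im}\,\Delta_H$ orthogonally. The two points I would make explicit are: (a) $\mathrm{im}\,\mathrm{d}_{k-1} \perp \mathrm{im}\,\mathrm{d}_k^{\ast}$, since $\langle \mathrm{d}\alpha,\mathrm{d}^\ast\beta\rangle = \langle \mathrm{d}\mathrm{d}\alpha,\beta\rangle = 0$ because $\mathrm{d}\circ\mathrm{d}=0$; and (b) $\ker\Delta_H = \ker\mathrm{d}_k \cap \ker\mathrm{d}_{k-1}^{\ast}$, which follows from $\langle\Delta_H\omega,\omega\rangle = \|\mathrm{d}_k\omega\|^2 + \|\mathrm{d}_{k-1}^\ast\omega\|^2$. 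Since $\mathrm{im}\,\Delta_H \subseteq \mathrm{im}\,\mathrm{d}_{k-1} + \mathrm{im}\,\mathrm{d}_k^\ast$ (apply $\Delta_H$ and use $\mathrm{d}\mathrm{d}=0$, $\mathrm{d}^\ast\mathrm{d}^\ast=0$) and these two images are mutually orthogonal subspaces of $(\ker\Delta_H)^\perp$, the three-way orthogonal decomposition follows.

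For part (2), the key is that $\Delta_H$ preserves the Hodge decomposition: an exact form $\eta=\mathrm{d}\alpha$ has $\mathrm{d}_k\eta=0$, so $\Delta_H\eta = \mathrm{d}_{k-1}\mathrm{d}_{k-1}^\ast\eta$ stays exact; dually a coexact $\eta=\mathrm{d}^\ast\beta$ has $\Delta_H\eta=\mathrm{d}_k^\ast\mathrm{d}_k\eta$ coexact; harmonic forms are annihilated. Writing an eigenform $\omega$ of $\Delta_H$ with eigenvalue $\lambda\neq0$ as $\omega = \mathrm{d}\alpha + \mathrm{d}^\ast\beta + h$ and using uniqueness forces $h=0$ and shows that $\mathrm{d}\alpha$ (resp.\ $\mathrm{d}^\ast\beta$), when nonzero, is itself an eigenform of $\mathrm{d}_{k-1}\mathrm{d}_{k-1}^\ast$ (resp.\ $\mathrm{d}_k^\ast\mathrm{d}_k$) with eigenvalue $\lambda$; since $\omega\neq0$, at least one is nonzero. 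Conversely, if $\mathrm{d}_{k-1}\mathrm{d}_{k-1}^\ast\omega=\lambda\omega$ with $\lambda\neq0$, applying $\mathrm{d}_k$ and using $\mathrm{d}_k\mathrm{d}_{k-1}=0$ gives $\mathrm{d}_k\omega=0$, hence $\Delta_H\omega=\lambda\omega$; the argument for $\mathrm{d}_k^\ast\mathrm{d}_k$ is symmetric. This gives the claimed set equality. Part (3) is the standard fact that $A^\ast A$ and $AA^\ast$ share nonzero spectra, applied to $A=\mathrm{d}_k$: if $\mathrm{d}_k^\ast\mathrm{d}_k\omega=\lambda\omega$ with $\lambda\neq0$ then $\|\mathrm{d}_k\omega\|^2=\lambda\|\omega\|^2\neq0$ and $\mathrm{d}_k\mathrm{d}_k^\ast(\mathrm{d}_k\omega)=\lambda(\mathrm{d}_k\omega)$, and symmetrically.

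For part (4), I would use the Hodge star $\ast:\Omega^j\to\Omega^{d-j}$, an isometry up to sign with $\ast\ast=(-1)^{j(d-j)}\mathrm{Id}$, together with the intertwining identities $\ast\,\mathrm{d}_k = \pm\,\mathrm{d}_{d-k-1}^\ast\,\ast$ and $\ast\,\mathrm{d}_k^\ast = \pm\,\mathrm{d}_{d-k-1}\,\ast$ on the relevant spaces (both derived from $\mathrm{d}^\ast = \pm\ast\mathrm{d}\ast$). Composing these gives $\ast\,(\mathrm{d}_k\mathrm{d}_k^\ast) = \pm\,(\mathrm{d}_{d-k-1}^\ast\mathrm{d}_{d-k-1})\,\ast$ on $\Omega^{k+1}$, so $\mathrm{d}_{d-k-1}^\ast\mathrm{d}_{d-k-1}$ is conjugate via $\ast$ to $\pm\,\mathrm{d}_k\mathrm{d}_k^\ast$; since both operators are non-negative self-adjoint, the sign must be $+$ unless both vanish, and in either case $\lambda(\mathrm{d}_k\mathrm{d}_k^\ast)=\lambda(\mathrm{d}_{d-k-1}^\ast\mathrm{d}_{d-k-1})$ with multiplicities. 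The main obstacle is not conceptual: part (1) rests on the full strength of elliptic regularity, so I would cite it rather than reprove it, and the genuine care needed is in tracking the Hodge-star degree/sign conventions in part (4) and confirming (as above) that the residual ``$\pm$'' does not spoil the spectral identity.
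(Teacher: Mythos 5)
Your proposal is correct and follows essentially the same route as the paper's proof: part (1) is the Hodge decomposition theorem, parts (2) and (3) are the standard spectral bookkeeping (which the paper simply cites from Jost, p.~138, while you spell out the same argument), and part (4) is an intertwining argument via the Hodge star. The only real stylistic difference is in (4): the paper applies $\ast$ directly to the eigenvalue equation $\mathrm{d}_k\mathrm{d}_k^{\ast}\alpha = \lambda\alpha$ and explicitly computes the sign $(-1)^{dk+1}$ (which, using $(-1)^{d(d+1)}=1$, cancels exactly so that $\ast\alpha$ is an eigenform of $\mathrm{d}_{d-k-1}^{\ast}\mathrm{d}_{d-k-1}$ with the same eigenvalue $\lambda$), whereas you leave the sign abstract as $\pm$ and rule out the $-$ case by non-negativity of both operators. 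Your positivity escape hatch is a valid and arguably more convention-proof way to close the gap, but it is not needed: the sign does work out to $+$ if tracked carefully, as the paper does.
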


\begin{proof}
(1) is the Hodge Decomposition Theorem. The proof of (2) and (3) can be
referred to as \textit{pp} 138 of \cite{jost2008riemannian}. For (4), we
first show that $\lambda \left( \mathrm{d}_{k}\mathrm{d}_{k}^{\ast }\right)
\subseteq \lambda \left( \mathrm{d}_{d-k-1}^{\ast }\mathrm{d}_{d-k-1}\right)
$. Assume that $\mathrm{d}_{k}\mathrm{d}_{k}^{\ast }\alpha =\lambda \alpha $
for $\alpha \in \Omega ^{k+1}(M)$. Taking Hodge star $\ast $ at both sides,
one arrives at the result, $(-1)^{dk+1}\ast \mathrm{d}_k\ast \mathrm{d}_{d-k-1}%
\ast \alpha =\mathrm{d}^{\ast }_{d-k-1}\mathrm{d}_{d-k-1}(\ast \alpha )=\lambda \ast \alpha $%
. Vice versa one can prove $\lambda \left( \mathrm{d}_{k}\mathrm{d}%
_{k}^{\ast }\right) \supseteq \lambda \left( \mathrm{d}_{d-k-1}^{\ast }%
\mathrm{d}_{d-k-1}\right) $.
\end{proof}

\begin{coro}\label{cor:Hoddoub}
Let $M$ be a 2D surface embedded in $\mathbb{R}^{3}$\ and $\Delta
_{H}=\sharp \left( \mathrm{dd}^{\ast }+\mathrm{d}^{\ast }\mathrm{d}\right)
\flat $ be the Hodge Laplacian on vector fields $\mathfrak{X}(M)$. Then, the non-trivial eigenvalues of Hodge Laplacian, $\lambda \left( \Delta _{H}\right)$, are identical with the non-trivial eigenvalues of Laplace-Beltrami operator, $\lambda \left( \Delta_{M}\right) \backslash \{0\}$, where the number of eigenvalues of the Hodge Laplacian doubles those of the Laplace-Beltrami operator.
%
Specifically, the corresponding eigenvector fields to nonzero eigenvalues are $%
\mathbf{P}\nabla _{\mathbb{R}^{3}}\psi $ and $\boldsymbol{n}\times \nabla _{%
\mathbb{R}^{3}}\psi $, where $\psi $ is the eigenfunction of
Laplace-Beltrami $\Delta_{M}$ and $\boldsymbol{n}$ is the normal vector to
surface $M$.
\end{coro}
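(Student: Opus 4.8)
The plan is to obtain the corollary as a direct specialization of Proposition~\ref{hodgeex1} to $d=2$ and $k=1$, and then to translate the resulting statements about $1$-forms into statements about vector fields by sharpening and invoking the ambient-space gradient identity from Section~\ref{sec2}. The two ingredients are thus purely Hodge-theoretic (the spectra and their multiplicities) and purely computational (the explicit form of the eigenvector fields).

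First I would assemble the relevant instances of Proposition~\ref{hodgeex1}. On functions, the Hodge Laplacian is $\Delta_H^{(0)}=\mathrm{d}_0^{\ast}\mathrm{d}_0=\Delta_M$. Part~(2) with $k=1$ gives
\[
\lambda(\Delta_H)\setminus\{0\}=\big(\lambda(\mathrm{d}_0\mathrm{d}_0^{\ast})\setminus\{0\}\big)\cup\big(\lambda(\mathrm{d}_1^{\ast}\mathrm{d}_1)\setminus\{0\}\big).
\]
Part~(3) with $k=0$ gives $\lambda(\mathrm{d}_0\mathrm{d}_0^{\ast})\setminus\{0\}=\lambda(\mathrm{d}_0^{\ast}\mathrm{d}_0)\setminus\{0\}=\lambda(\Delta_M)\setminus\{0\}$. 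For the second summand, part~(4) with $k=1$ (so that $d-k-1=0$) gives $\lambda(\mathrm{d}_1\mathrm{d}_1^{\ast})=\lambda(\mathrm{d}_0^{\ast}\mathrm{d}_0)$, while part~(3) with $k=1$ gives $\lambda(\mathrm{d}_1^{\ast}\mathrm{d}_1)\setminus\{0\}=\lambda(\mathrm{d}_1\mathrm{d}_1^{\ast})\setminus\{0\}$; chaining these yields $\lambda(\mathrm{d}_1^{\ast}\mathrm{d}_1)\setminus\{0\}=\lambda(\Delta_M)\setminus\{0\}$. Hence $\lambda(\Delta_H)\setminus\{0\}=\lambda(\Delta_M)\setminus\{0\}$ as sets.

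The main obstacle is upgrading this set equality to the precise multiplicity-doubling claim. For this I would use the Hodge decomposition of part~(1), $\Omega^1(M)=\mathrm{im}\,\mathrm{d}_0\oplus\mathrm{im}\,\mathrm{d}_1^{\ast}\oplus\ker\Delta_H$, with each summand $\Delta_H$-invariant. On $\mathrm{im}\,\mathrm{d}_0$ one has $\Delta_H=\mathrm{d}_0\mathrm{d}_0^{\ast}$, and since $\Delta_H\,\mathrm{d}\psi=\mathrm{d}\,\Delta_M\psi$, the map $\psi\mapsto\mathrm{d}\psi$ sends the $\lambda$-eigenspace of $\Delta_M$ (with $\lambda\neq0$) isomorphically onto the corresponding $\Delta_H$-eigenspace inside $\mathrm{im}\,\mathrm{d}_0$; injectivity uses $\lambda\neq0$. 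On $\mathrm{im}\,\mathrm{d}_1^{\ast}$ one has $\Delta_H=\mathrm{d}_1^{\ast}\mathrm{d}_1$, and the Hodge star $\ast:\Omega^0\to\Omega^2$ composed with $\mathrm{d}_1^{\ast}$ (equivalently $\psi\mapsto\ast\,\mathrm{d}\psi$ up to sign) furnishes the analogous isomorphism from the $\lambda$-eigenspace of $\Delta_M$; a closed surface embedded in $\mathbb{R}^3$ is orientable, so $\ast$ and the unit normal are globally defined. Harmonic $1$-forms contribute only to the zero eigenvalue, so they are irrelevant here. This establishes that each nonzero eigenvalue of $\Delta_M$ appears in $\Delta_H$ with exactly twice its multiplicity, keeping careful track of the Hodge-star sign conventions so that the two eigenspaces are correctly identified.

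Finally, for the explicit eigenvector fields I would sharpen the two families of eigenforms produced above. The exact representative $\mathrm{d}\psi$ sharpens to $\mathrm{grad}_g\psi$, which by the identity $\mathrm{grad}_g\psi=\mathbf{P}\,\overline{\mathrm{grad}}_{\mathbb{R}^3}\psi$ of Section~\ref{sec2} equals $\mathbf{P}\nabla_{\mathbb{R}^3}\psi$. The coexact representative $\ast\,\mathrm{d}\psi$ sharpens to the vector field obtained by rotating $\mathrm{grad}_g\psi$ by a right angle within each tangent plane; in the ambient frame this rotation is precisely the cross product with the unit normal, so it equals $\boldsymbol{n}\times\mathbf{P}\nabla_{\mathbb{R}^3}\psi=\boldsymbol{n}\times\nabla_{\mathbb{R}^3}\psi$, the last equality holding because $\boldsymbol{n}\times$ annihilates the normal component of $\nabla_{\mathbb{R}^3}\psi$. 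Both are $\lambda$-eigenvector fields of $\Delta_H$, and being respectively exact and coexact they are $L^2$-orthogonal, which completes the identification claimed in the corollary.
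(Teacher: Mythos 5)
Your proposal is essentially the same as the paper's proof: you use Proposition~\ref{hodgeex1}(2)--(4) to identify the nonzero spectrum of $\Delta_H$ with that of $\Delta_M$, invoke the Hodge decomposition of part~(1) together with the eigenforms $\mathrm{d}\psi$ and $\ast\,\mathrm{d}\psi$ to account for the doubling, and then translate to vector fields via $\sharp\,\mathrm{d}\psi=\mathrm{grad}_g\psi=\mathbf{P}\nabla_{\mathbb{R}^3}\psi$ and $\sharp\,\ast\mathrm{d}\psi=\boldsymbol{n}\times\nabla_{\mathbb{R}^3}\psi$. The only cosmetic differences are the particular chaining of parts~(3) and (4) (you go through $\lambda(\mathrm{d}_1\mathrm{d}_1^*)=\lambda(\mathrm{d}_0^*\mathrm{d}_0)$ via $k=1$, the paper via $k=0$), and you spell out the multiplicity-doubling more explicitly as a pair of isomorphisms $\psi\mapsto\mathrm{d}\psi$ and $\psi\mapsto\ast\mathrm{d}\psi$ onto the exact and coexact summands, whereas the paper states completeness of $\{\mathrm{d}\psi,\ast\mathrm{d}\psi\}$ more tersely; you also replace the paper's index computation of $\sharp\,\ast\mathrm{d}\psi$ with the geometric ``rotation by the normal'' argument. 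These are equivalent, and neither route introduces or avoids any genuine difficulty.
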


\begin{proof}
First, we have $\lambda \left( \mathrm{d}_{0}\mathrm{d}_{0}^{\ast }\right)
\backslash \{0\}=\lambda \left( \mathrm{d}_{0}^{\ast }\mathrm{d}_{0}\right)
\backslash \{0\}=\lambda \left( \Delta_{ M}\right) \backslash \{0\}$ using
Proposition \ref{hodgeex1} (3). Second, we have $\lambda \left( \mathrm{d}%
_{1}^{\ast }\mathrm{d}_{1}\right) \backslash \{0\}=\lambda \left( \mathrm{d}%
_{0}\mathrm{d}_{0}^{\ast }\right) \backslash \{0\}=\lambda \left( \Delta_{ M}\right) \backslash \{0\}$ using Proposition \ref{hodgeex1} (3) and (4).
Using Proposition \ref{hodgeex1} (2), we obtain the first part in Corollary~\ref%
{cor:Hoddoub}. That is, the eigenvalues of Hodge Laplacian for 1-forms or
vector fields are exactly of eigenvalues of Laplace-Beltrami, and the number of each eigenvalue of Hodge Laplacian doubles the multiplicity of the corresponding eigenspace of the Laplace-Beltrami operator.

To simplify the notation, we use $\mathrm{d}$ to denote $\mathrm{d}_k$ for arbitrary $k$, which is implicitly identified by whichever $k-$form it acts on.  We now examine the eigenforms. Assume that $\psi $ is an eigenfunction for the Laplace-Beltrami operator $\Delta_{ M}$ associated with the eigenvalue $%
\lambda $, that is, $\Delta_{ M}\psi =\mathrm{d}_{0}^{\ast }\mathrm{d}%
_{0}\psi =\lambda \psi $. Then, one can show that $\mathrm{d}\psi \in \Omega
^{1}(M)$ is an eigenform of $\Delta _{H}$,
\begin{equation}
\Delta _{H}\mathrm{d}\psi =\left( \mathrm{dd}^{\ast }+\mathrm{d}^{\ast }%
\mathrm{d}\right) \mathrm{d}\psi =\mathrm{dd}^{\ast }\mathrm{d}\psi =\mathrm{%
d}\lambda \psi =\lambda \mathrm{d}\psi .  \label{eqn:lmbdf}
\end{equation}%
One can also show that $\ast \mathrm{d}\psi \in \Omega ^{1}(M)$ is an
eigenform,%
\begin{eqnarray}
\Delta _{H}\ast \mathrm{d}\psi  &=&\left( \mathrm{dd}^{\ast }+\mathrm{d}%
^{\ast }\mathrm{d}\right) \ast \mathrm{d}\psi =\mathrm{d}^{\ast }\mathrm{d}%
\ast \mathrm{d}\psi   \notag \\
&=&-\ast \mathrm{d}\ast \mathrm{d}\ast \mathrm{d}\psi =\ast \mathrm{dd}%
^{\ast }\mathrm{d}\psi =\ast \mathrm{d}\lambda \psi =\lambda \ast \mathrm{d}%
\psi ,  \label{eqn:stdf}
\end{eqnarray}%
where we have used $\mathrm{d}_{1}^{\ast }=(-1)^{2(1)+1}\ast \mathrm{d}_0%
\ast $ and $\mathrm{d}_{0}^{\ast }=(-1)^{1}\ast \mathrm{d}_1\ast $.
Thus, based on Hodge Decomposition Theorem \ref{hodgeex1} (1), harmonic
forms and above eigenforms $\mathrm{d}\psi $\ and $\ast \mathrm{d}\psi $\
form a complete space of $\Omega ^{1}(M)$.

Last, we compute the corresponding eigenvector fields. The corresponding
eigenvector field for $\mathrm{d}\psi $\ is
\begin{equation}
\sharp \mathrm{d}\psi =g^{ij}\psi _{i}\frac{\partial }{\partial \theta ^{j}}=%
\mathbf{P}\nabla _{\mathbb{R}^{3}}\psi .  \label{eqn:fflat}
\end{equation}%
Assume that $\left\{ \theta ^{1},\theta ^{2}\right\} $\ is the local normal
coordinate so that the Riemannian metric is locally identity $g_{ij}=\delta
_{ij}$ at point $\boldsymbol{x}$. The corresponding eigenvector field for $%
\ast \mathrm{d}\psi $\ is
\begin{eqnarray}
\sharp \ast \mathrm{d}\psi  &=&\sharp \ast \psi _{i}\mathrm{d}\theta ^{i}=\sharp \delta
_{ij}^{12}\psi ^{i}\mathrm{d}\theta ^{j}=\sharp \left( \psi ^{1}\mathrm{d}\theta ^{2}-\psi
^{2}\mathrm{d}\theta ^{1}\right)   \notag \\
&=&-\frac{\partial \psi }{\partial \theta ^{2}}\frac{\partial }{\partial
\theta ^{1}}+\frac{\partial \psi }{\partial \theta ^{1}}\frac{\partial }{%
\partial \theta ^{2}}=-(\nabla _{\mathbb{R}^{3}}\psi \cdot \frac{\partial
\boldsymbol{x}}{\partial \theta ^{2}})\frac{\partial }{\partial \theta ^{1}}%
+(\nabla _{\mathbb{R}^{3}}\psi \cdot \frac{\partial \boldsymbol{x}}{\partial
\theta ^{1}})\frac{\partial }{\partial \theta ^{2}}  \notag \\
&=&\nabla _{\mathbb{R}^{3}}\psi \times (\frac{\partial \boldsymbol{x}}{%
\partial \theta ^{2}}\times \frac{\partial \boldsymbol{x}}{\partial \theta
^{1}})=\boldsymbol{n}\times \nabla _{\mathbb{R}^{3}}\psi ,  \label{eqn:fcul}
\end{eqnarray}%
where the equality in last line holds true for 2D surface in $\mathbb{R}^{3}$%
.
\end{proof}

Based on (\ref{eqn:fflat}) and (\ref{eqn:fcul}), one can immediately
obtain the leading eigenvalues and corresponding eigenvector fields for
Hodge Laplacian. When $\lambda ^{H}=2$, the $6$ corresponding Hodge eigen
vector fields are
\begin{small}
\begin{equation*}
U_{1,2,3}=\boldsymbol{n}\times \nabla _{\mathbb{R}^{3}}\psi =\left[
\begin{array}{c|c|c}
y & -z & 0 \\
-x & 0 & z \\
0 & x & -y%
\end{array}%
\right] ,U_{4,5,6}=\mathbf{P}\nabla _{\mathbb{R}^{3}}\psi =\left[
\begin{array}{c|c|c}
xz & xy & -y^{2}-z^{2} \\
yz & -x^{2}-z^{2} & xy \\
-x^{2}-y^{2} & zy & xz%
\end{array}%
\right] ,
\end{equation*}%
\end{small}
where $U_{1,2,3}$ are computed from the curl formula (\ref{eqn:fcul})\ and $%
U_{4,5,6}$ are computed from the projection formula (\ref{eqn:fflat}) by
taking $\psi =x_{i}$ $(i=1,2,3)$ in (\ref{eqn:eigf}). When $\lambda ^{H}=6$,
the $10$ Hodge eigenvector fields are%
\begin{eqnarray*}
U_{7\sim 11} &=&\boldsymbol{n}\times \nabla _{\mathbb{R}^{3}}\psi =\left[
\begin{array}{c|c|c|c|c}
-xz & y^{2}-z^{2} & xy & 0 & -yz \\
yz & -xy & z^{2}-x^{2} & xz & 0 \\
x^{2}-y^{2} & xz & -yz & -xy & xy%
\end{array}%
\right] , \\
U_{12\sim 16} &=&\mathbf{P}\nabla _{\mathbb{R}^{3}}\psi =\left[
\begin{array}{c|c|c|c|c}
y-2x^{2}y & z-2x^{2}z & -2xyz & x-x^{3} & -xy^{2} \\
x-2xy^{2} & -2xyz & z-2y^{2}z & -x^{2}y & y-y^{3} \\
-2xyz & x-2xz^{2} & y-2yz^{2} & -x^{2}z & -y^{2}z%
\end{array}%
\right] ,
\end{eqnarray*}%
where $U_{7\sim 11}$ are computed from the curl (\ref{eqn:fcul}) and $%
U_{12\sim 16}$ are computed from the projection (\ref{eqn:fflat}) by taking $%
\psi =3x_{i}x_{k}-\delta _{ik}r^{2}$ in (\ref{eqn:eigf}).

For the Bochner Laplacian, we notice that it is different from the Hodge Laplacian by a Ricci tensor and Ricci curvature is constant on the sphere. Therefore, the Bochner and Hodge Laplacians share the same eigenvector fields but have different eigenvalues. We examined that $U_{1\sim 6}$ are eigenvector fields for $\lambda ^{B}=1$\ and $U_{7\sim 16}$\ are eigenvector fields for $\lambda ^{B}=5$. In general, the spectrum are $\lambda ^{B}=l(l+1)-1$ for the Bochner Laplacian and $%
\lambda ^{H}=l(l+1)$ for the Hodge Laplacian for $l\in \mathbb{N}^{+}$.

For the Lichnerowicz Laplacian, we can verify that $U_{1,2,3}$ are in the null space $\ker \Delta _{L}$, which is often referred to as the Killing field.
We can further verify that $U_{4,5,6}$ correspond to $\lambda ^{L}=2$, $U_{7\sim 11}$ correspond to $\lambda ^{L}=4$, and $U_{12\sim 16}$ correspond to $\lambda ^{L}=10$. Moreover, we verify that $U_{17\sim 23}$ corresponds to $\lambda ^{L}=10$,$\ $where $U_{17\sim 23}$ are computed from the curl equation (\ref{eqn:fcul}) by taking $\psi =15x_{i}x_{k}x_{n}-3\delta_{ik}r^{2}x_{n}-3\delta _{kn}r^{2}x_{i}-3\delta _{ni}r^{2}x_{k}$. We refer to the eigenvalues $\lambda $ associated with the eigenvector fields $U$ obtained above as the analytic solutions to the eigenvalue problem of vector Laplacians.

\begin{figure*}[htbp]
{\scriptsize \centering
\begin{tabular}{c}
\includegraphics[width=6.3
in, height=1.1 in]{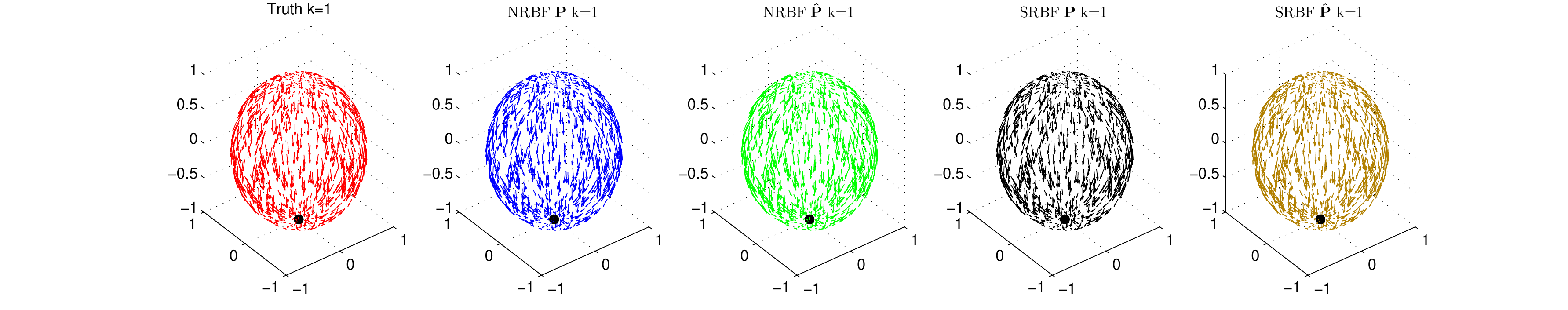}
\\
\includegraphics[width=6.3
in, height=1.1 in]{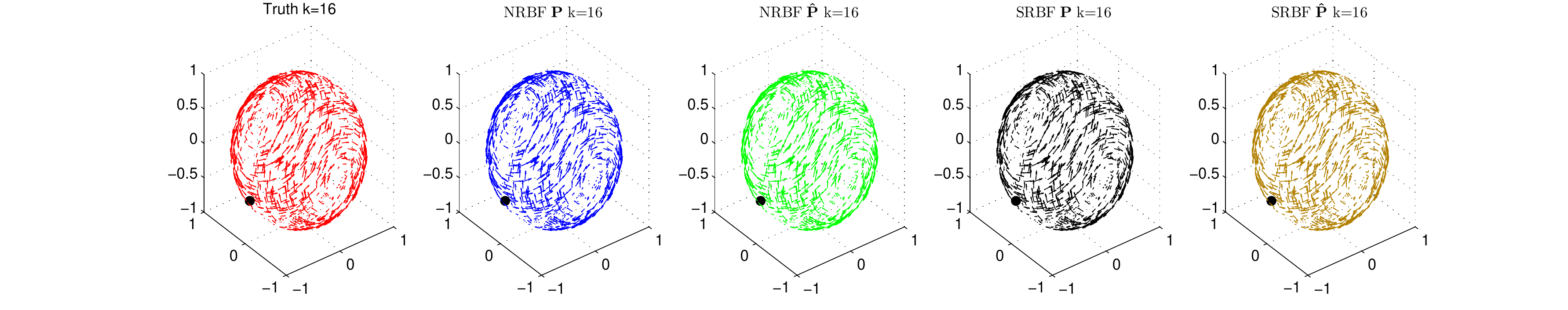}%
\end{tabular}
}
\caption{{\bf 2D Sphere in $\mathbb{R}^{3}$.} Comparison of
eigen-vector fields of Bochner Laplacian for $k=1, 16$ among NRBF using $%
\mathbf{P}$ and $\mathbf{\hat{P}}$, and SRBF using $\mathbf{P}$ and $\mathbf{%
\hat{P}}$. Black dots correspond the poles where the vector fields vanish.
For NRBF, GA kernel with $s=1.0$ is used, and for SRBF, IQ kernel with $%
s=0.5 $ is used. The $N=1024$ data points are randomly distributed on the
manifold.}
\label{fig_vectorsphere_1}
\end{figure*}

In the following, we compute eigenvalues and associated eigenvector fields of the Hodge, Bochner, and Lichnerowicz Laplacians on a unit sphere. We use uniformly random sample data points on the sphere for one trial comparison in most of the following figures, except for Fig. \ref{fig_vectorsphere_4} in which we use well-sampled data points for verifying the convergence of the SRBF method. Figure~\ref{fig_vectorsphere_1} displays the comparison of eigenvector fields of Bochner Laplacian for $k=1$ and $16$. When $\mathbf{P}$ is used, we have examined that these vector fields at each point are orthogonal to the normal directions $\boldsymbol{n=x}=(x,y,z)$ of the sphere. When $\mathbf{\hat{P}}$ is used (i.e. in an unknown manifold scenario), the vector fields lie in the approximated tangent space which is orthogonal to the normal $\boldsymbol{x}$\ within numerical accuracy. Based on Hairy ball theorem, a vector field vanishes at one or more points on the sphere. Here, we plot the poles where the vector field vanishes with black dots.

\begin{figure*}[htbp]
{\scriptsize \centering
\begin{tabular}{ccc}
{\small (a) NRBF, Eigenvalues} & {\small (b) NRBF, Error of Eigenvalues} &
{\small (c) NRBF, Error of Eigenvectors } \\
\includegraphics[width=2.0
in, height=1.5 in]{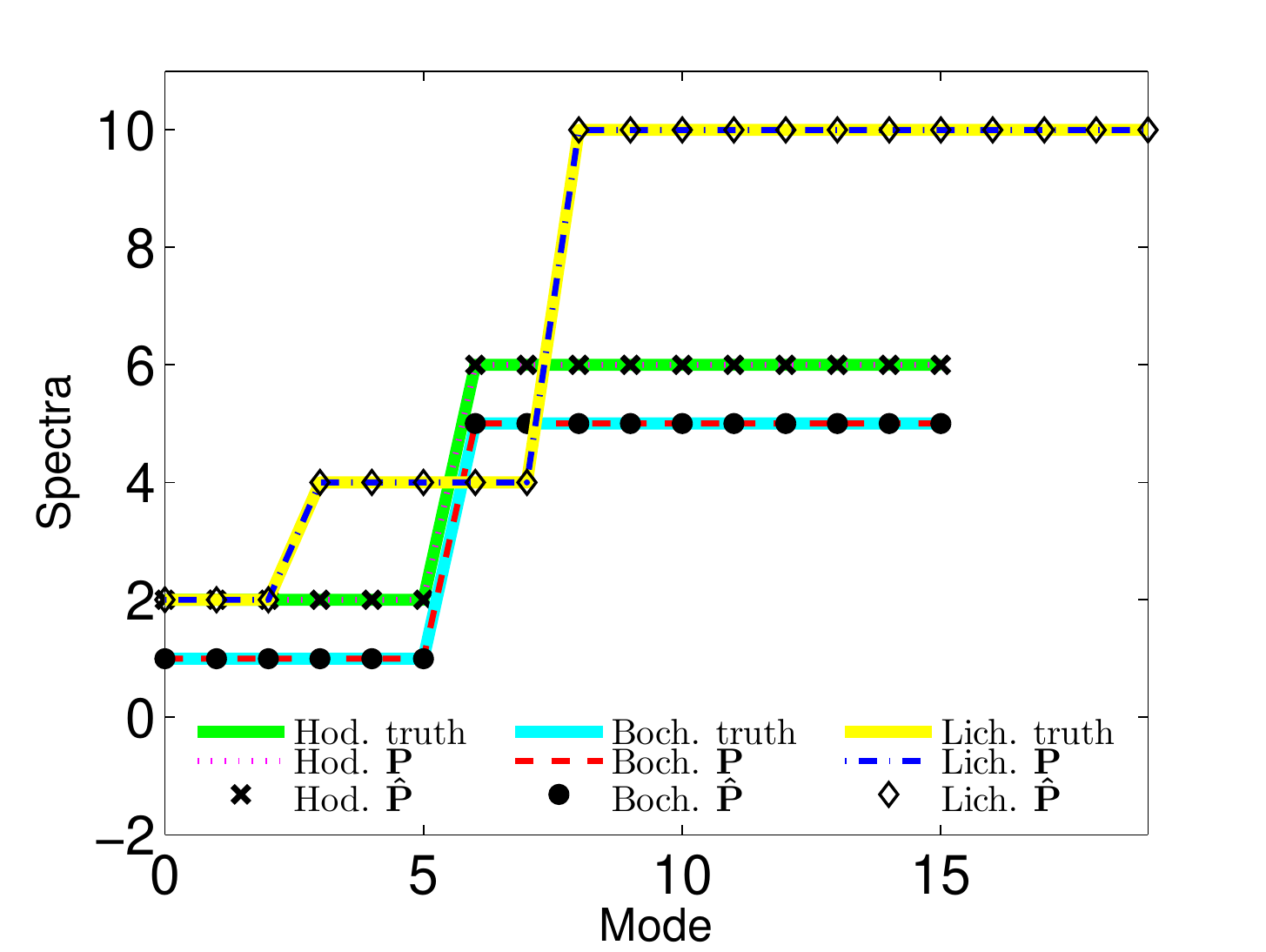}
&
\includegraphics[width=2.0
in, height=1.5 in]{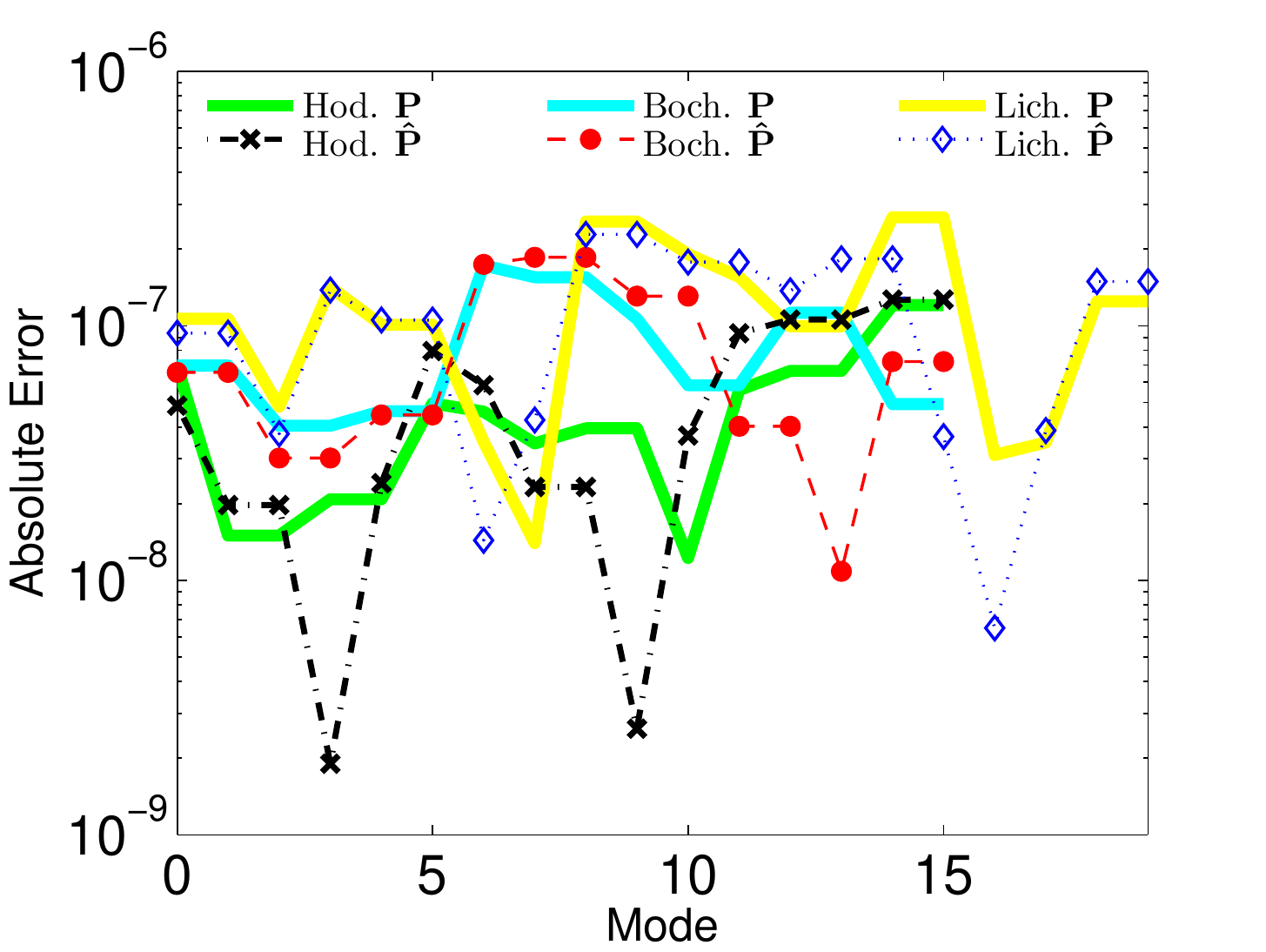}
&
\includegraphics[width=2.0
in, height=1.5 in]{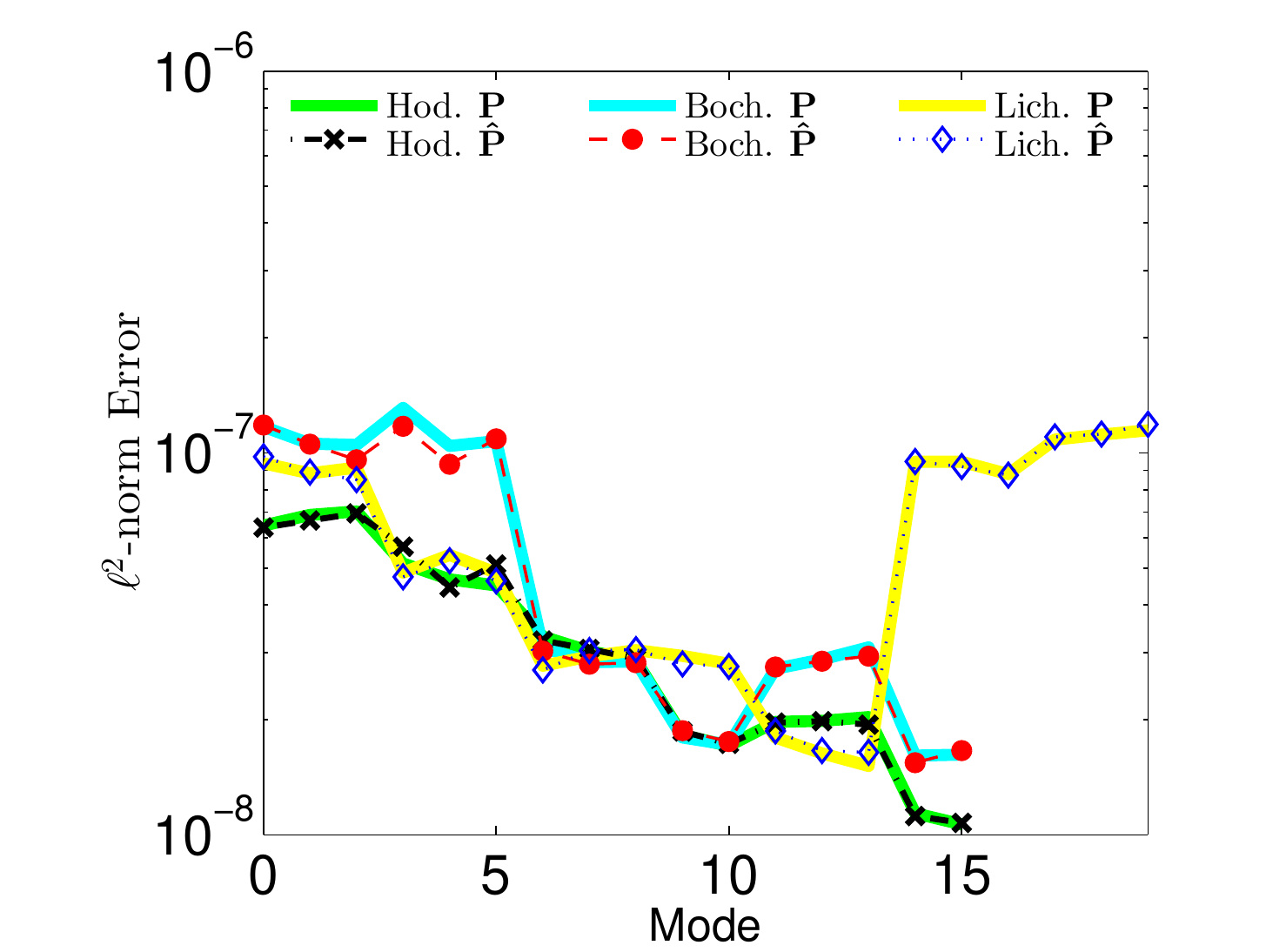}
\\
{\small (d) SRBF, Eigenvalues} & {\small (e) SRBF, Error of Eigenvalues} &
{\small (f) SRBF, Error of Eigenvectors } \\
\includegraphics[width=2.0
in, height=1.5 in]{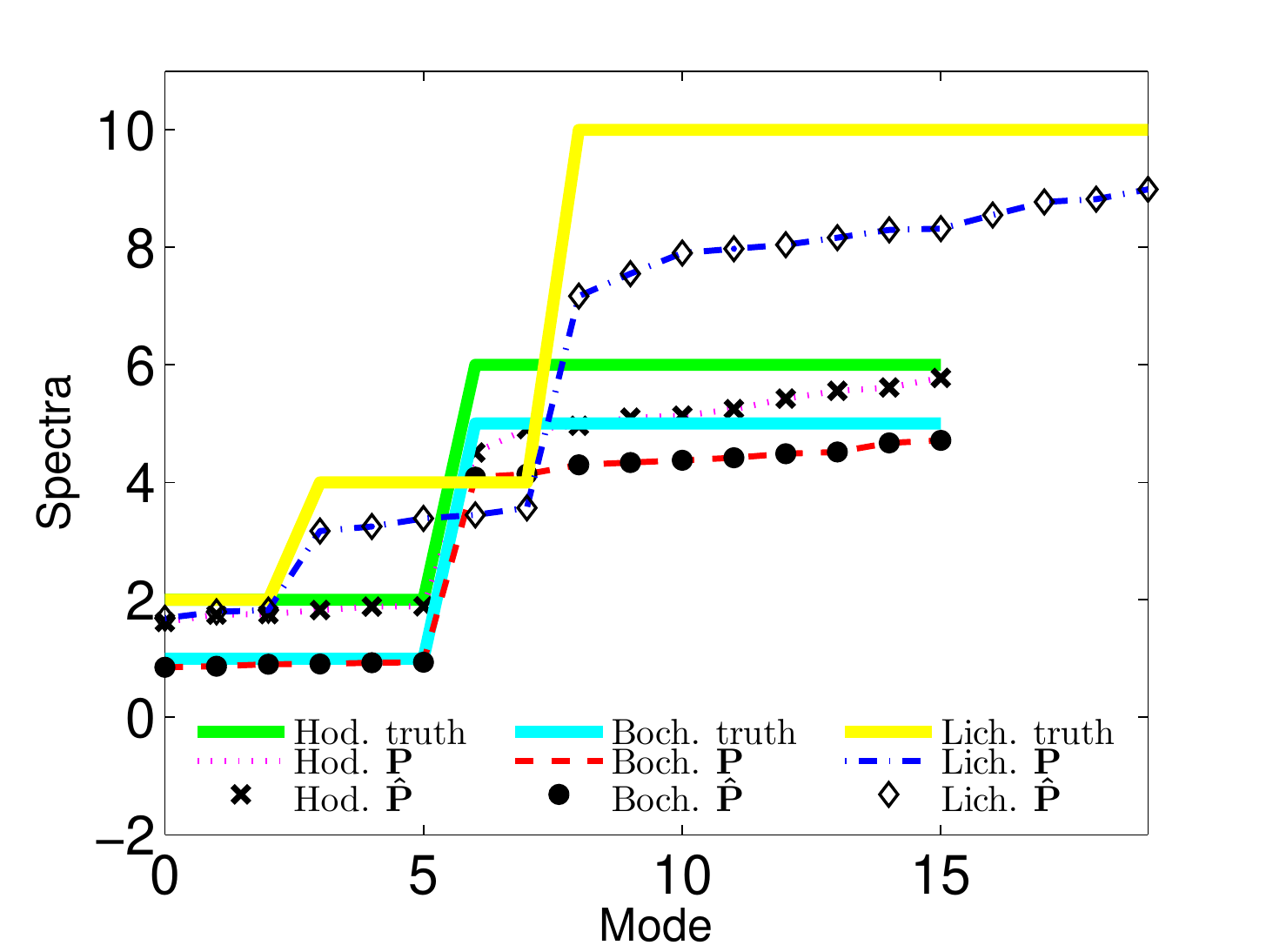}
&
\includegraphics[width=2.0
in, height=1.5 in]{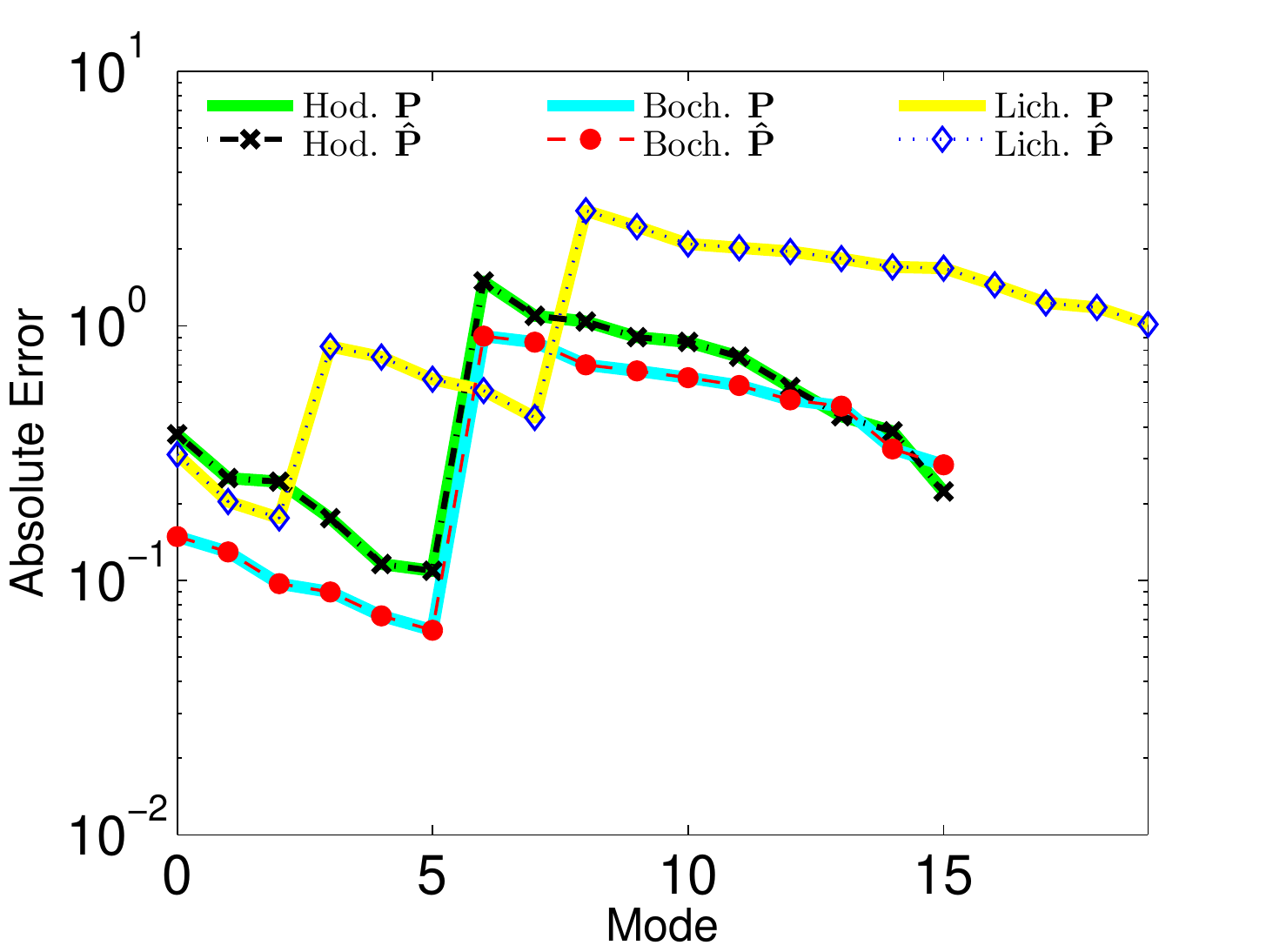}
&
\includegraphics[width=2.0
in, height=1.5 in]{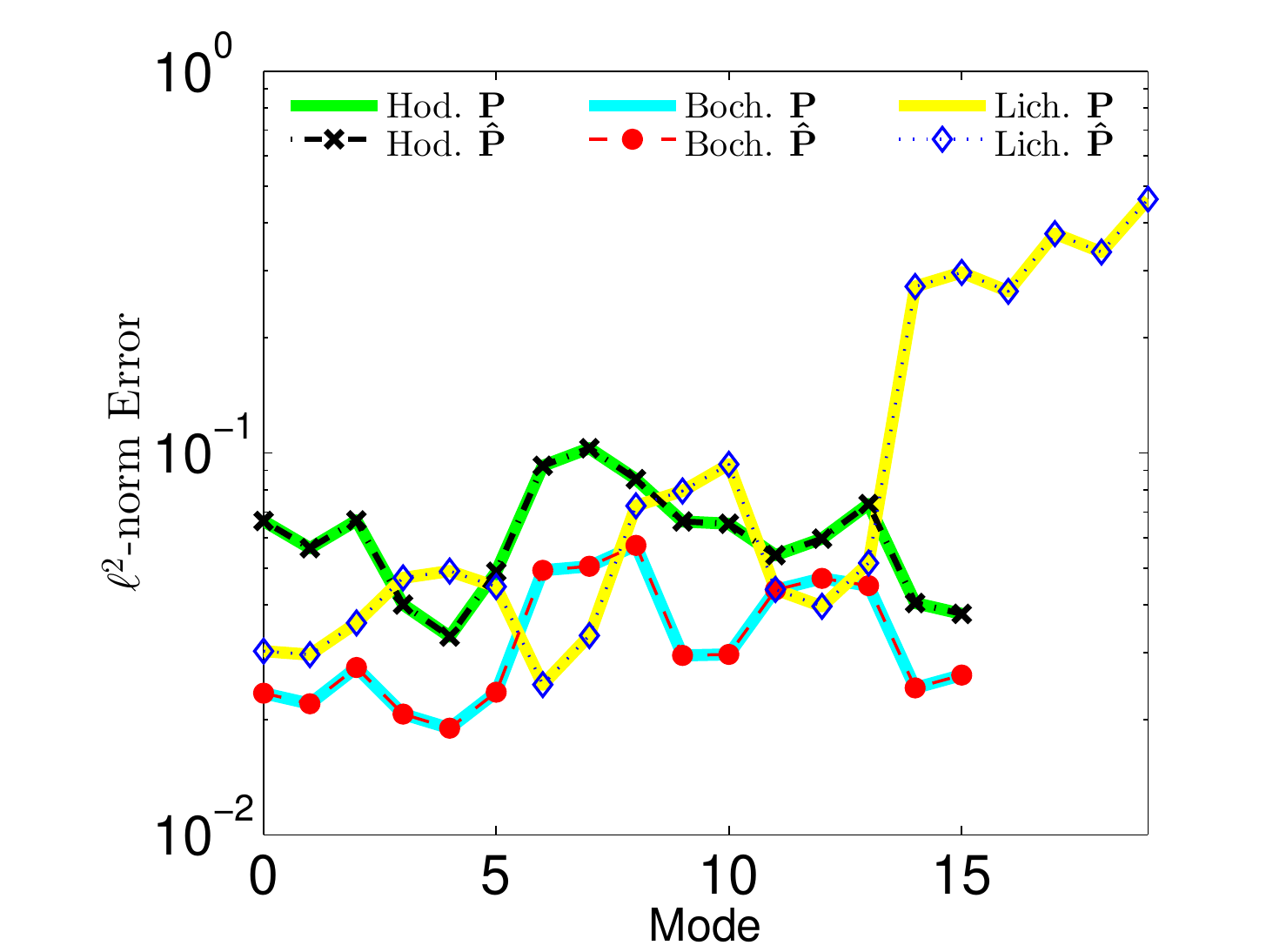}%
\end{tabular}
}
\caption{(Color online) 2D Sphere in $\mathbb{R}^{3}$. Comparison of (a)(d)
eigenvalues, (b)(e) error of eigenvalues, and (c)(f) error of
eigenvector-fields for Hodge, Bochner, Lichnerowicz Laplacians. (a)(b)(c)
correspond to NRBF and (d)(e)(f) correspond to SRBF results. For NRBF, GA
kernel with $s=1.0$ is used, and for SRBF, IQ kernel with $s=0.5$ is used.
The data points are randomly distributed with $N=1024$.}
\label{fig_vectorsphere_2}
\end{figure*}

Figure \ref{fig_vectorsphere_2} displays the comparison of the leading
eigenvalues and eigenvector fields for the Hodge, Bochner, and Lichnerowicz Laplacians. For NRBF, one can see from Fig. \ref{fig_vectorsphere_2}(a)-(c) that both eigenvalues and eigenvector fields can be approximated very accurately using either the analytic $\mathbf{P}$ or estimated $\mathbf{\hat{P}}$. This
small error result can be expected using analytic $\mathbf{P}$\ for the known manifold. It is a little unexpected that the estimation produces such a small error result using estimated $\mathbf{\hat{P}}$\ when the manifold is unknown. After further inspection, we found that our 2nd order SVD provides a super-convergence for $\mathbf{\hat{P}}$ on this particular 2D sphere example. For SRBF, one can see from Fig. \ref{fig_vectorsphere_2}(d)-(f) that eigenvalues and eigenvector fields can be approximated within certain errors for both $\mathbf{P}$
and $\mathbf{\hat{P}}$. This means that the Monte-Carlo error dominates the error for $\mathbf{\hat{P}}$\ in this example.





\begin{table}[tbp]
\caption{2D sphere in $\mathbb{R}^3$. Comparison of eigenvalues of Bochner,
Hodge, and Lichnerowicz Laplacians from NRBF and SRBF using approximated $%
\mathbf{\hat{P}}$. The $N=1024$ data points are randomly distributed on the
sphere. The eigenvalues of NRBF are shown with their  absolute
values when they are complex. For NRBF, GA kernel with $s=1.0$ is used, and
for SRBF, IQ kernel with $s=0.5 $ is used.}
\label{tab:eigformsphere_rand}
\par
\begin{center}
\scalebox{0.6}[0.6]{
\begin{tabular}{c|ccc|ccc|ccc||c|ccc|ccc|ccc}
\hline\hline
\rule{0pt}{13pt} $k$ & \text{Boch} & \text{Boch} & \text{Boch} & \text{Hodg} & \text{Hodg} & \text{Hodg} & \text{Lich} & \text{Lich} & \text{Lich} & $k$ & \text{Boch} & \text{Boch} & \text{Boch}  & \text{Hodg} & \text{Hodg} & \text{Hodg} & \text{Lich} & \text{Lich} & \text{Lich} \\
& \text{True} & \text{\footnotesize NRBF} & \text{\footnotesize SRBF} & \text{True} & \text{\footnotesize NRBF} & \text{\footnotesize SRBF} & \text{True} & \text{\footnotesize NRBF} & \text{\footnotesize SRBF} &  & \text{True} & \text{\footnotesize NRBF} & \text{\footnotesize SRBF}  & \text{True} & \text{\footnotesize NRBF} & \text{\footnotesize SRBF} & \text{True} & \text{\footnotesize NRBF} & \text{\footnotesize SRBF} \\ \hline
1&  1&  1.0& 0.85&  2&  2.0& 1.63&     0&     --&     --& 41& 19& 19.0& 15.7& 20& 20.0& 15.3& 28& 28.0& 19.0\\
2&  1&  1.0& 0.87&  2&  2.0& 1.75&     0&     --&     --& 42& 19& 19.0& 15.8& 20& 20.0& 15.5& 28& 28.0& 19.8\\
3&  1&  1.0& 0.90&  2&  2.0& 1.76&     0&     --&     --& 43& 19& 19.0& 16.1& 20& 20.0& 15.8& 28& 28.0& 20.4\\
4&  1&  1.0&  0.91&  2&  2.0& 1.82&  2&  2.0& 1.69& 44& 19& 19.0& 16.1& 20& 20.0& 15.8& 28& 28.0& 20.7\\
5&  1&  1.0& 0.93&  2&  2.0& 1.88&  2&  2.0&  1.80& 45& 19& 19.0& 16.4& 20& 20.0& 16.2& 28& 28.0& 21.4\\
6&  1&  1.0& 0.94&  2&  2.0& 1.89&  2&  2.0& 1.82& 46& 19& 19.0& 16.6& 20& 20.0& 16.4& 28& 28.0& 21.6\\
7&  5&  5.0&  4.09&  6&  6.0& 4.51&  4&  4.0& 3.17& 47& 19& 19.0& 17.0& 20& 20.0& 16.5& 28& 28.0& 22.1\\
8&  5&  5.0&  4.14&  6&  6.0& 4.91&  4&  4.0& 3.25& 48& 19& 19.0& 17.1& 20& 20.0& 16.7& 28& 28.0& 22.7\\
9&  5&  5.0&   4.30&  6&  6.0& 4.96&  4&  4.0& 3.38& 49& 29& 19.0& 18.9& 30& 30.0& 16.8& 28& 28.0& 23.3\\
10&  5&  5.0&  4.34&  6&  6.0&  5.10&  4&  4.0& 3.44& 50& 29& 19.0& 19.2& 30& 30.0& 17.3& 28& 28.0& 23.9\\
11&  5&  5.0&  4.38&  6&  6.0& 5.14&  4&  4.0& 3.56& 51& 29& 19.0& 19.7& 30& 30.0& 17.4& 38& 33.1& 24.2\\
12&  5&  5.0&  4.42&  6&  6.0& 5.24& 10& 10.0& 7.17& 52& 29& 19.0& 20.1& 30& 30.0& 17.6& 38& 33.4& 24.9\\
13&  5&  5.0&  4.49&  6&  6.0& 5.43& 10& 10.0& 7.55& 53& 29& 19.0& 20.9& 30& 30.0& 17.7& 38& 33.9& 25.6\\
14&  5&  5.0&  4.52&  6&  6.0& 5.56& 10& 10.0&  7.90& 54& 29& 19.0& 21.1& 30& 30.0& 17.8& 38& 34.5& 26.2\\
15&  5&  5.0&  4.67&  6&  6.0& 5.62& 10& 10.0& 7.97& 55& 29& 19.0& 21.4& 30& 30.0& 17.9& 38& 35.0& 26.4\\
16&  5&  5.0&  4.72&  6&  6.0& 5.78& 10& 10.0& 8.05& 56& 29& 19.0& 21.5& 30& 30.0& 18.2& 38& 35.2& 26.8\\
17& 11& 11.0&  8.27& 12& 12.0& 7.92& 10& 10.0& 8.17& 57& 29& 19.0& 22.1& 30& 30.0& 18.4& 38& 36.5& 27.0\\
18& 11& 11.0&  8.46& 12& 12.0& 8.38& 10& 10.0&  8.30& 58& 29& 19.0& 22.3& 30& 30.0& 18.6& 38& 36.9& 27.3\\
19& 11& 11.0&  8.65& 12& 12.0& 8.64& 10& 10.0& 8.32& 59& 29& 19.1& 22.6& 30& 30.0& 18.7& 38& 36.9& 27.8\\
20& 11& 11.0&   8.80& 12& 12.0& 9.26& 10& 10.0& 8.55& 60& 29& 19.1& 22.9& 30& 30.0& 18.8& 40& 37.7& 28.0\\
21& 11& 11.0&   8.90& 12& 12.0& 9.44& 10& 10.0& 8.77& 61& 29& 19.5& 23.6& 30& 30.0& 19.0& 40& 37.9& 28.3\\
22& 11& 11.0&  9.03& 12& 12.0& 9.61& 10& 10.0& 8.82& 62& 29& 20.1& 23.9& 30& 30.0& 19.1& 40& 37.9& 28.7\\
23& 11& 11.0&  9.13& 12& 12.0& 9.74& 10& 10.0& 8.99& 63& 29& 20.1& 24.2& 30& 30.0& 19.2& 40& 38.0& 29.3\\
24& 11& 11.0&  9.27& 12& 12.0& 9.99& 18& 18.0& 12.2& 64& 29& 20.6& 24.4& 30& 30.0& 19.5& 40& 38.0& 29.4\\
25& 11& 11.0&  9.47& 12& 12.0& 10.2& 18& 18.0& 12.7& 65& 29& 20.9& 24.8& 30& 30.0& 19.6& 40& 38.0& 30.2\\
26& 11& 11.0&  9.56& 12& 12.0& 10.5& 18& 18.0& 13.2& 66& 29& 21.1& 25.2& 30& 30.0& 19.8& 40& 38.0& 30.7\\
27& 11& 11.0&  9.87& 12& 12.0& 10.6& 18& 18.0& 13.4& 67& 29& 21.6& 25.7& 30& 30.0& 20.0& 40& 38.0& 31.3\\
28& 11& 11.0&  9.98& 12& 12.0& 10.9& 18& 18.0& 13.6& 68& 29& 21.6& 25.7& 30& 30.0& 20.1& 40& 38.0& 31.5\\
29& 11& 11.0&  10.3& 12& 12.0& 11.4& 18& 18.0& 13.8& 69& 29& 21.8& 26.1& 30& 30.0& 20.5& 40& 38.0& 32.4\\
30& 11& 11.0&  10.5& 12& 12.0& 11.6& 18& 18.0& 14.1& 70& 29& 22.0& 26.2& 30& 30.0& 20.7& 40& 38.0& 32.6\\
31& 19& 16.7&  13.3& 20& 20.0& 11.7& 18& 18.0& 14.7& 71& 41& 22.4& 26.9& 42& 42.0& 20.7& 40& 38.0& 33.7\\
32& 19& 16.8&  13.5& 20& 20.0& 12.6& 18& 18.0& 15.6& 72& 41& 22.8& 27.0& 42& 42.0& 21.0& 40& 38.5& 33.7\\
33& 19& 17.1&  13.7& 20& 20.0& 12.8& 22& 22.0& 15.9& 73& 41& 23.3& 27.4& 42& 42.0& 21.1& 54& 40.0& 34.8\\
34& 19& 17.4&  13.8& 20& 20.0& 13.1& 22& 22.0& 16.3& 74& 41& 23.7& 27.6& 42& 42.0& 21.2& 54& 40.0& 34.9\\
35& 19& 17.7&  14.1& 20& 20.0& 13.8& 22& 22.0& 16.6& 75& 41& 24.3& 28.0& 42& 42.0& 21.6& 54& 40.0& 35.7\\
36& 19& 18.0&  14.4& 20& 20.0& 13.9& 22& 22.0& 17.2& 76& 41& 25.4& 28.2& 42& 42.0& 21.7& 54& 40.0& 35.9\\
37& 19& 18.5&  14.6& 20& 20.0& 14.2& 22& 22.0& 17.3& 77& 41& 25.8& 28.6& 42& 42.0& 21.9& 54& 40.0& 36.4\\
38& 19& 18.6&  14.8& 20& 20.0& 14.6& 22& 22.0& 17.7& 78& 41& 29.0& 28.8& 42& 42.0& 22.0& 54& 40.0& 36.8\\
39& 19& 18.6&  15.2& 20& 20.0& 14.7& 22& 22.0& 18.3& 79& 41& 29.0& 29.2& 42& 42.0& 22.3& 54& 40.0& 37.2\\
40& 19& 18.9&  15.3& 20& 20.0& 15.1& 28& 28.0& 18.7& 80& 41& 29.0& 29.5& 42& 42.0& 22.5& 54& 40.0& 38.7\\
 \hline\hline
\end{tabular}
}
\end{center}
\end{table}

To inspect more of these spectra, we compare the leading eigenvalues of the truth, NRBF, and SRBF with $\mathbf{\hat{P}}$ up to $k=80$ in Table \ref{tab:eigformsphere_rand}. For NRBF, one can see from Table \ref{tab:eigformsphere_rand} that the leading NRBF eigenvalues are often in excellent agreement with the truth. Then, irrelevant eigenvalues may appear but are followed by the true eigenvalues again. For instance, the first $30$ Bochner eigenvalues are accurately approximated, the $31^{\mathrm{st}}\sim 40^{\mathrm{th}}$ eigenvalues are irrelevant (spectral pollution), and $41^{\mathrm{st}}\sim 58^{\mathrm{th}}$ eigenvalues become very accurate again. To further understand the irrelevant spectra phenomenon, we also solve the problem with different kernels and/or tune different shape parameters. Interestingly, we found that the leading $\sqrt{N} \sim 30$ true eigenvalues can always be accurately approximated even under different kernels or shape parameters. Unfortunately, the irrelevant eigenvalues also appear with different values starting from at least $k\geq\sqrt{N}$ for different kernels or shape parameters [not shown here]. In these experiments, the data points are always fixed. 
In contrast, SRBFs do not exhibit such an issue despite the fact that the eigenvalue estimates are not as accurate as NRBFs. Several advantages of SRBF are that their eigenvalues are real-valued, well-ordered, and do not produce irrelevant estimates.

\begin{figure*}[tbp]
{\scriptsize \centering
\begin{tabular}{ccc}
{\small (a) NRBF, Boch. Eigenvalues} & {\small (b) NRBF, Hodge Eigenvalues}
& {\small (c) NRBF, Lich. Eigenvalues} \\
\includegraphics[width=2.0
in, height=1.5 in]{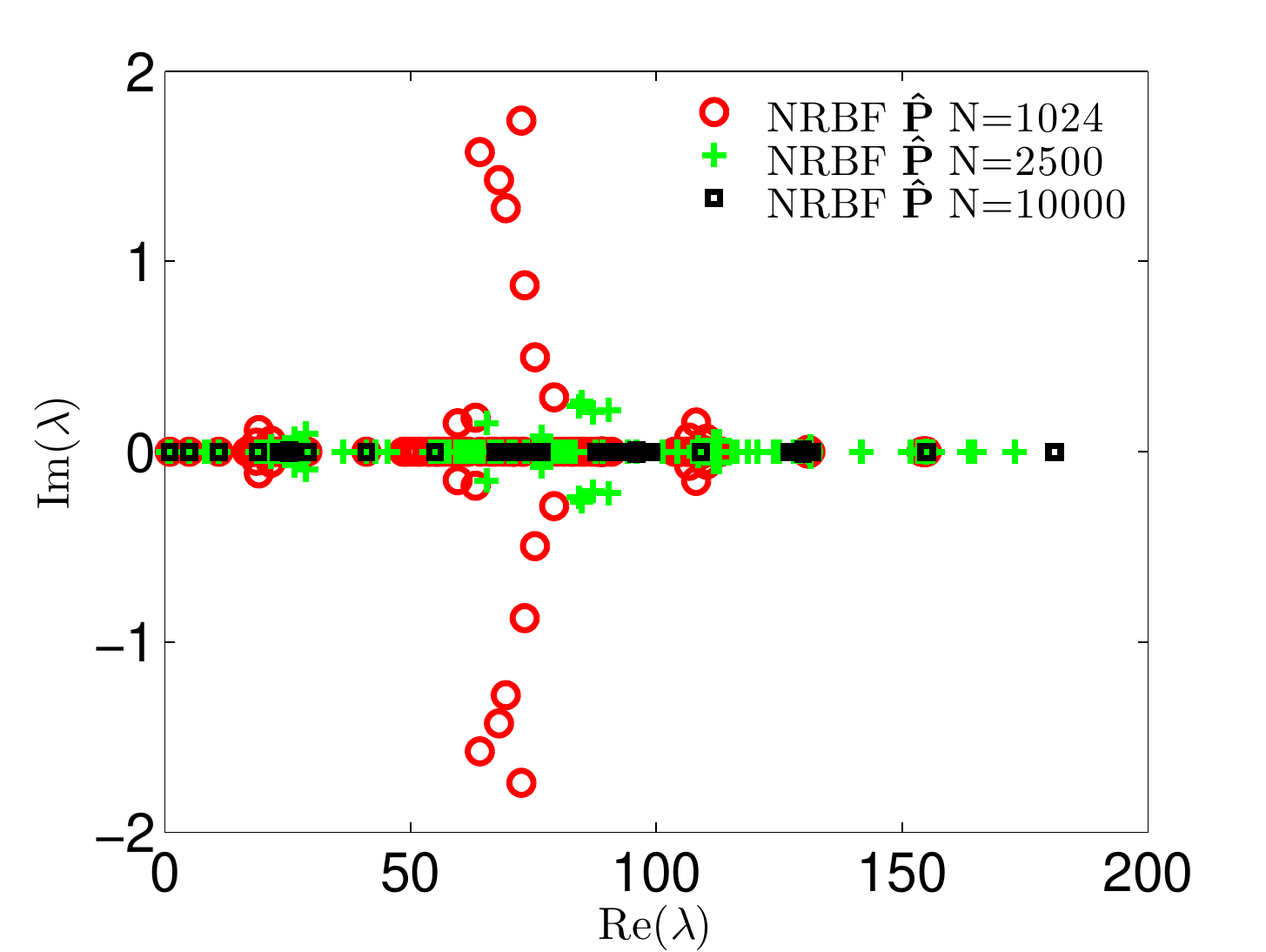}
&
\includegraphics[width=2.0
in, height=1.5 in]{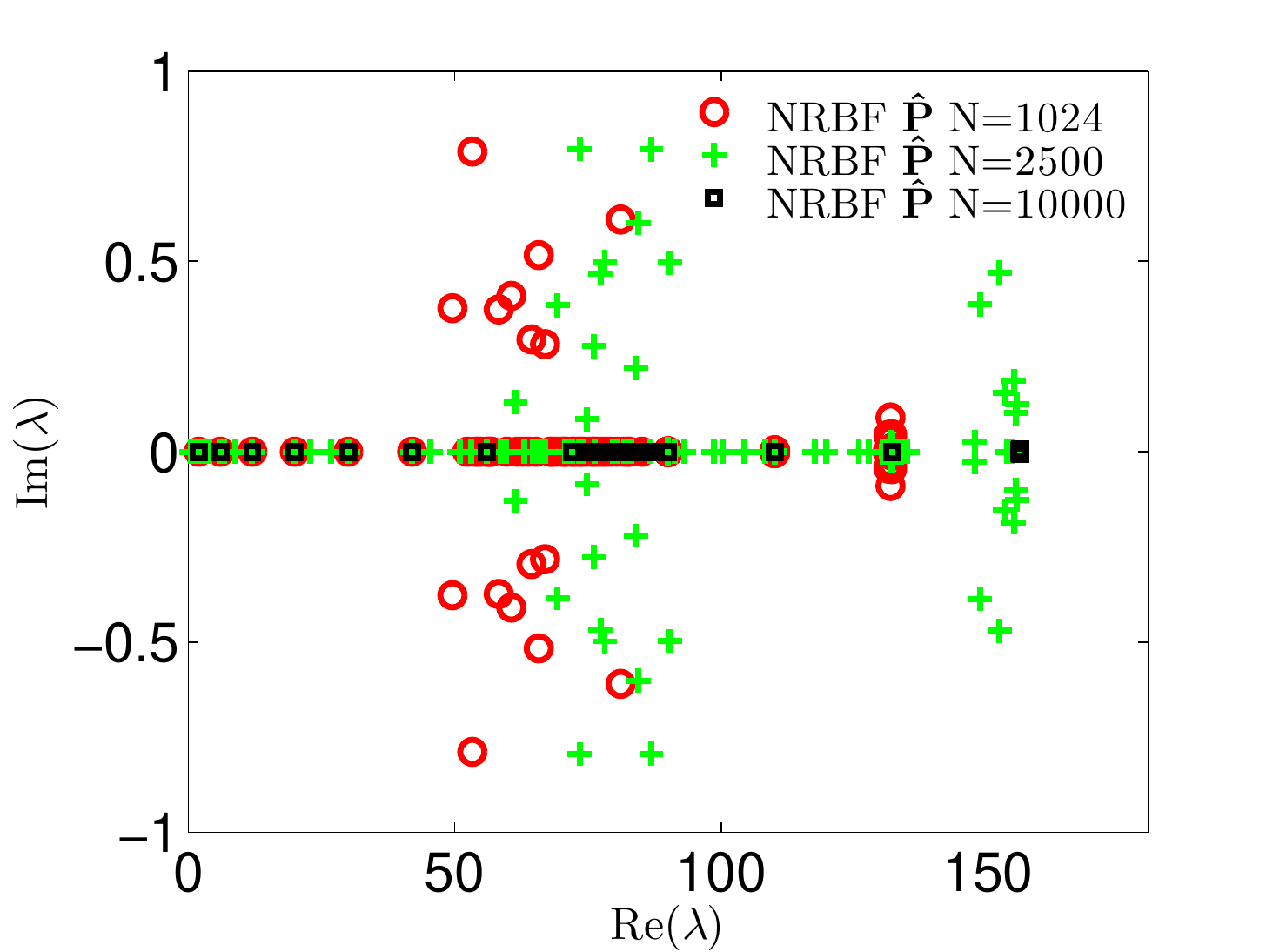}
&
\includegraphics[width=2.0
in, height=1.5 in]{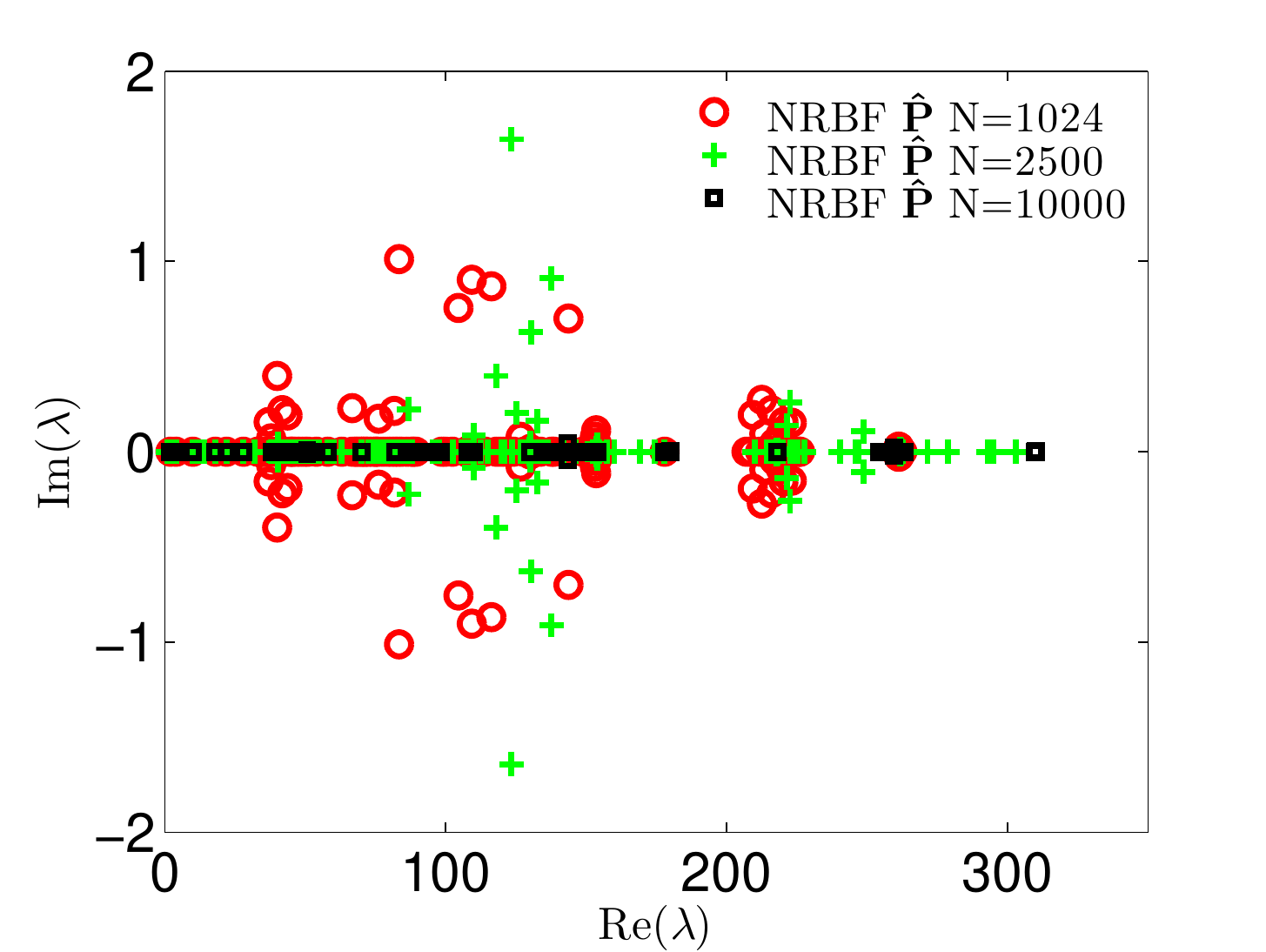}%
\end{tabular}
}
\caption{{\bf 2D Sphere in $\mathbb{R}^{3}$.} Convergence of
eigenvalues for NRBF using $\mathbf{\hat{P}}$ for (a) Bochner, (b) Hodge,
and (c) Lichnerowicz Laplacians. GA kernel with $s=1.0$ was fixed for
different $N$. The data points are randomly distributed.}
\label{fig_vectorsphere_3}
\end{figure*}

\begin{figure*}[htbp]
{\scriptsize \centering
\begin{tabular}{cc}
{\small (a) SRBF, Conv. of Eigenvalues } & {\small (b) SRBF, Conv. of Eigenvectors} \\
\includegraphics[width=3.0
in]{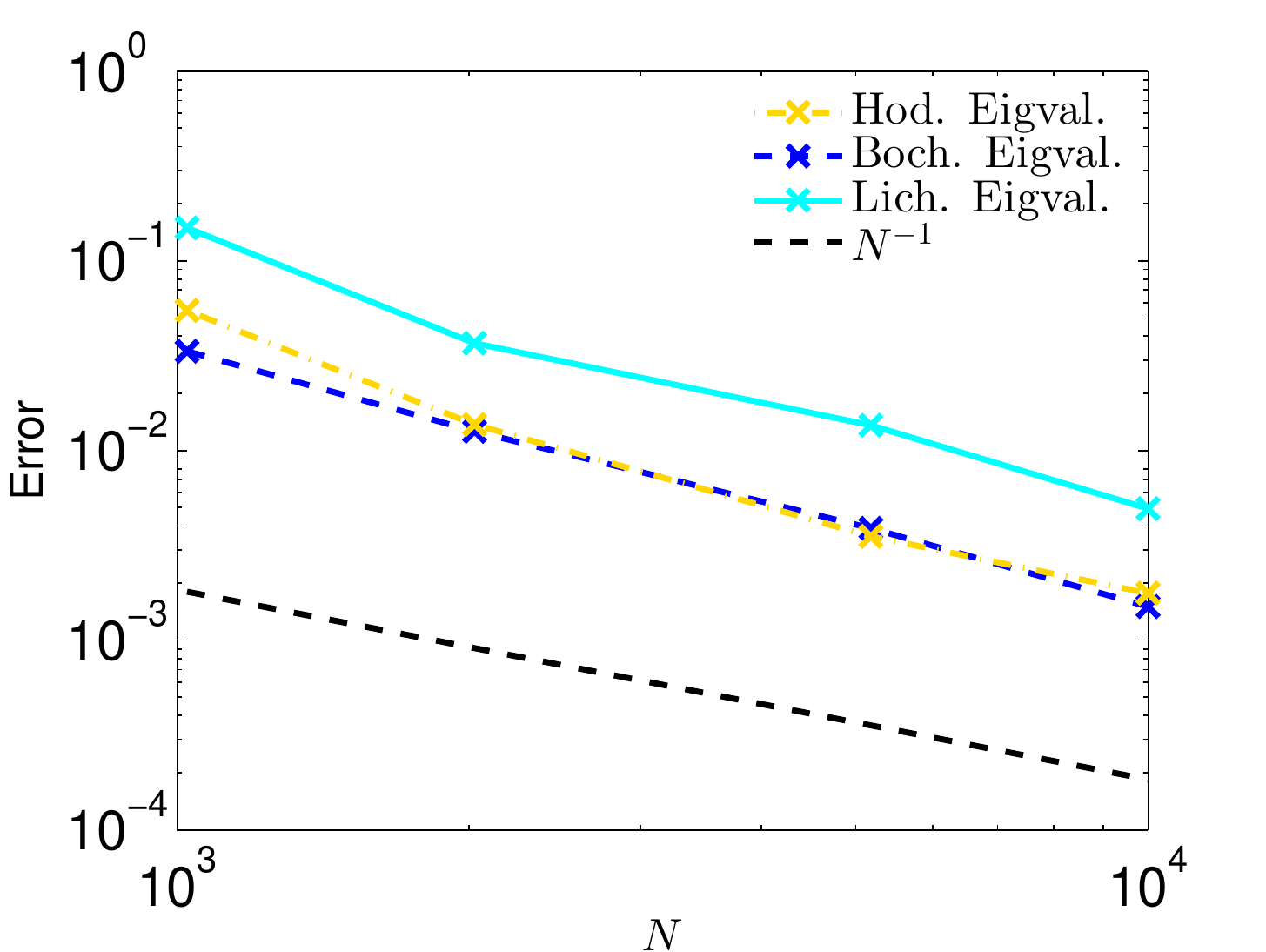} &
\includegraphics[width=3.0
in]{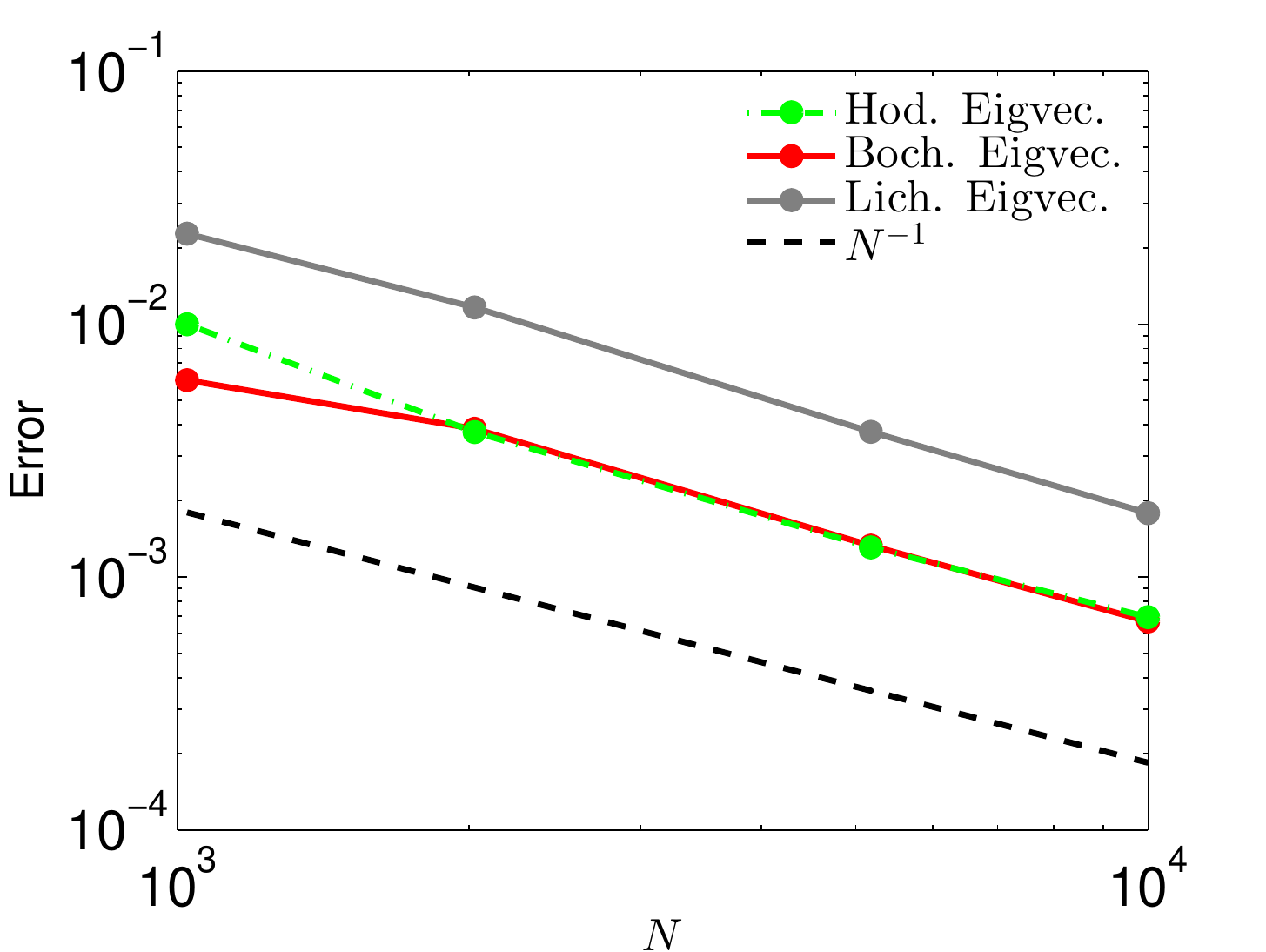} \\
\end{tabular}
\newline
\begin{tabular}{ccc}
{\small (c1) Hodge, Error of Eigenvalues} & {\small (d1) Bochner, Error of Eigenvalues} & {\small (e1) Lich., Error of Eigenvalues} \\
\includegraphics[width=2.0
in]{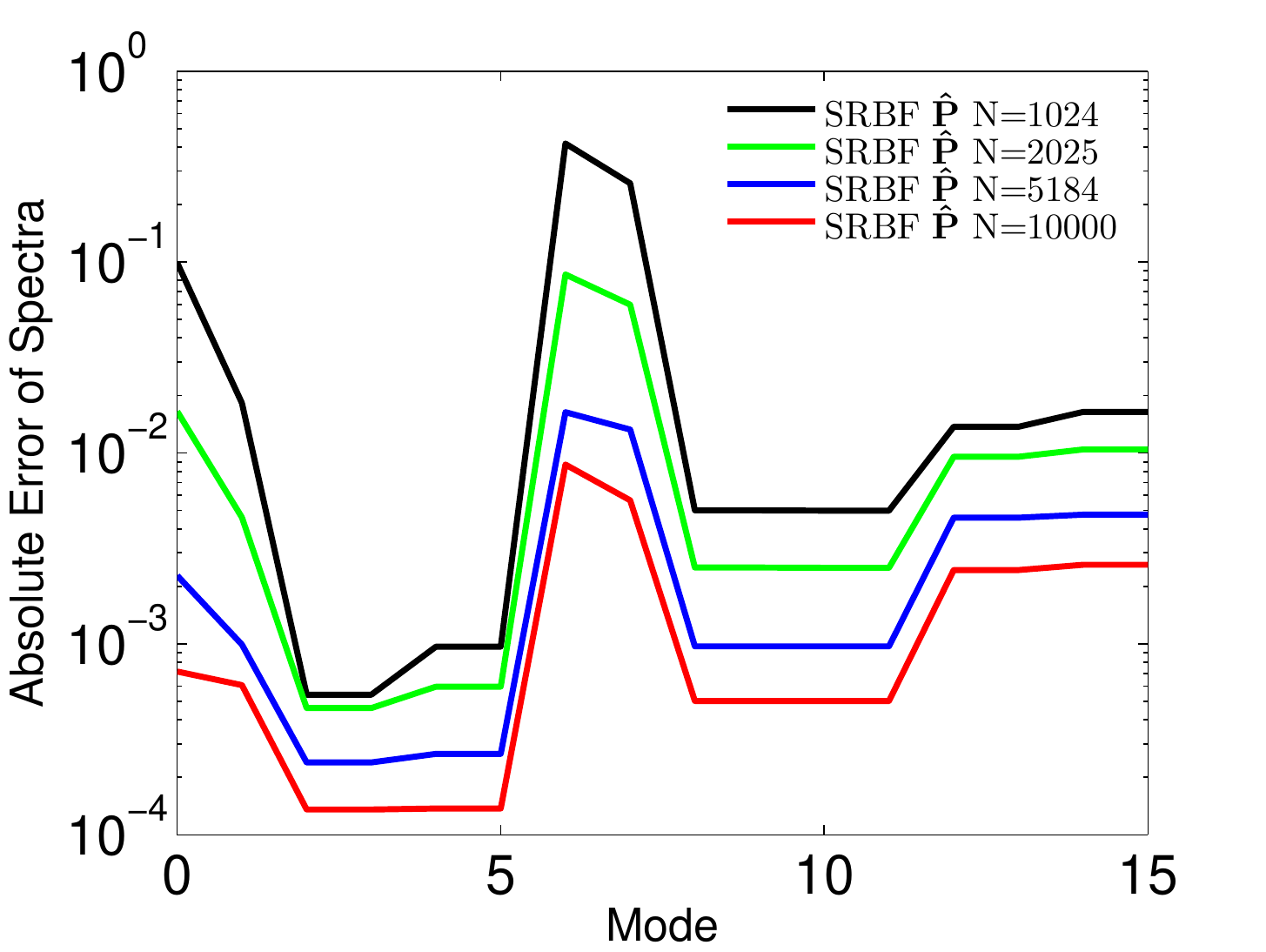} &
\includegraphics[width=2.0
in]{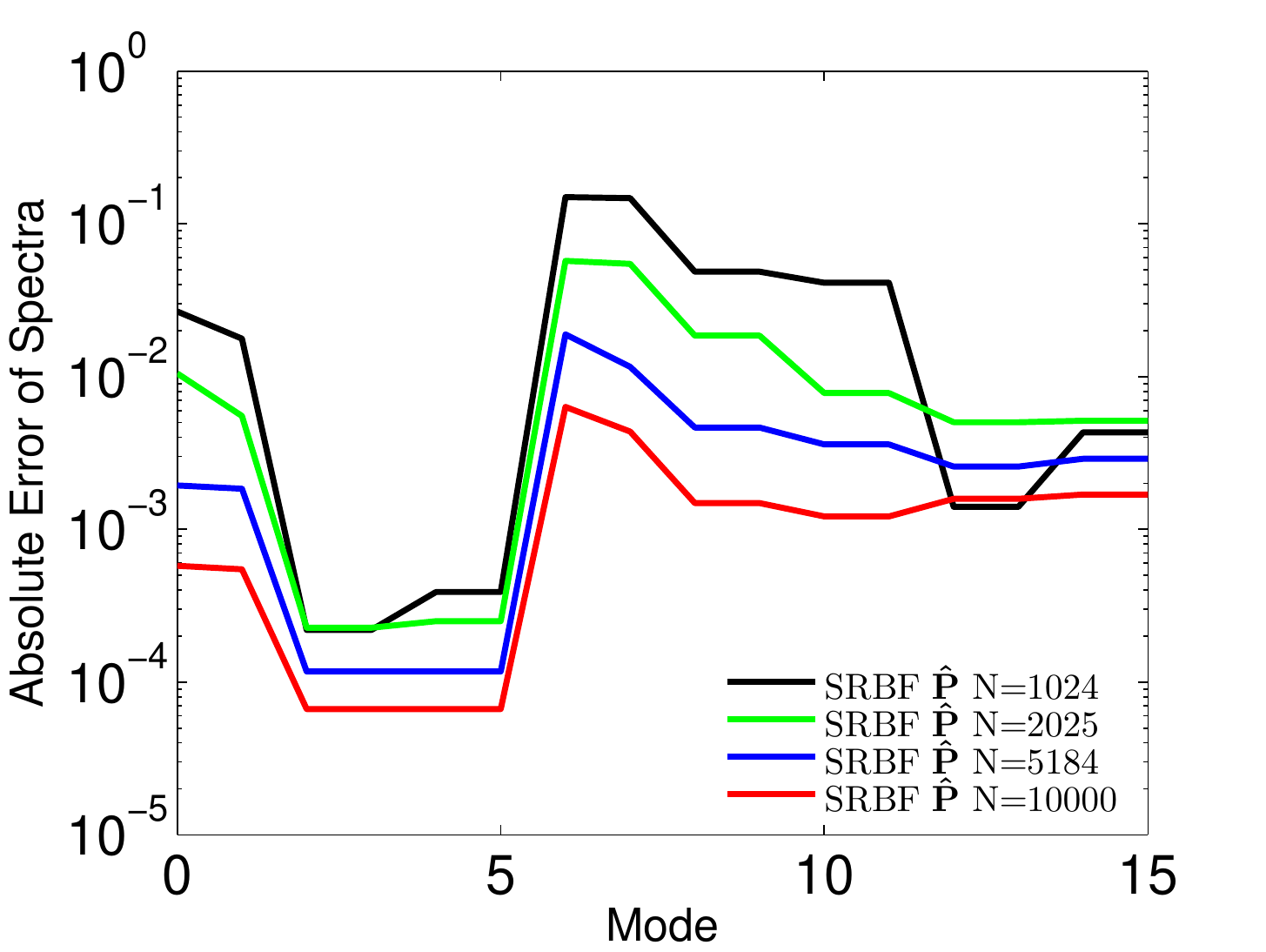} &
\includegraphics[width=2.0
in]{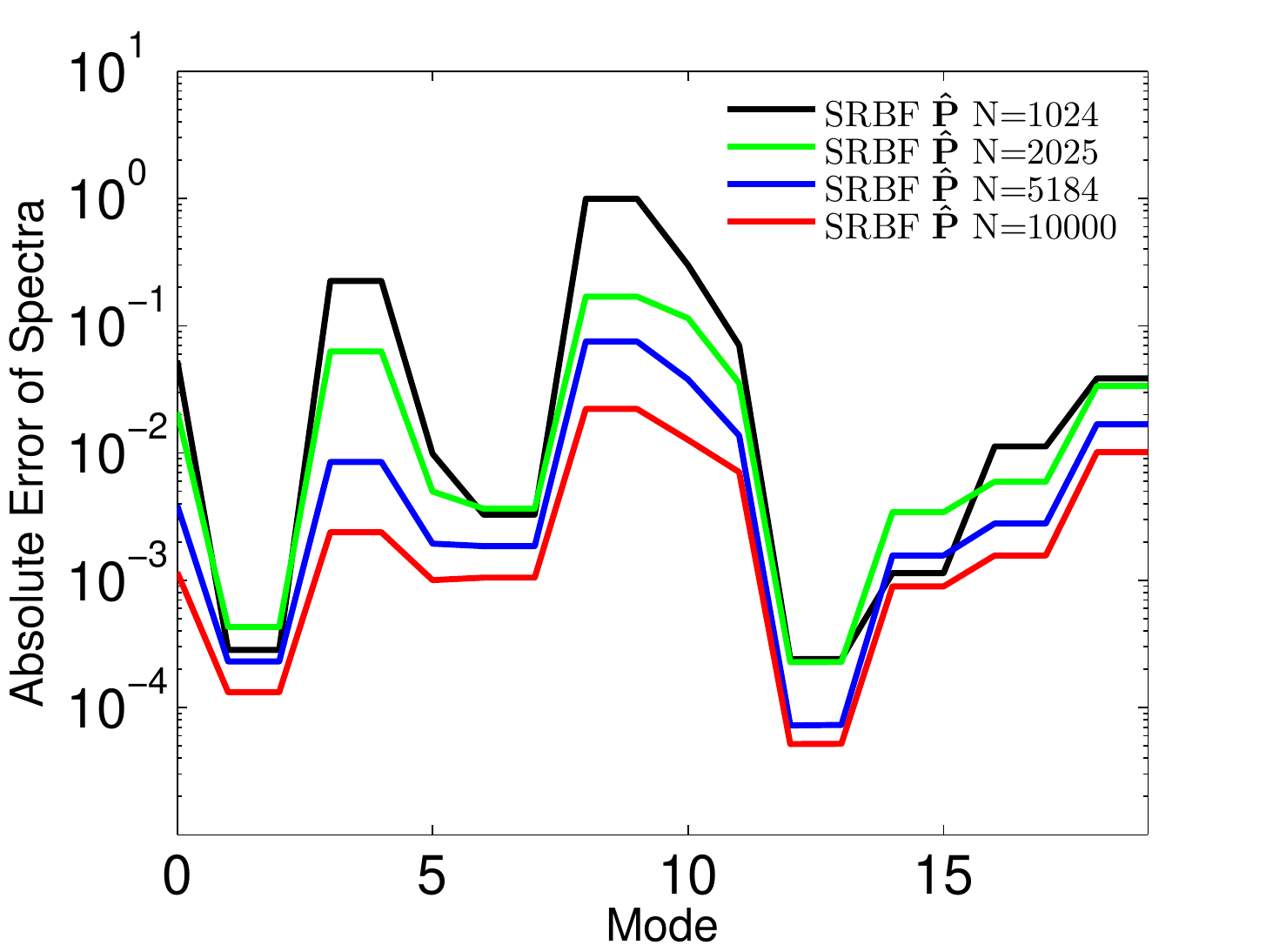}\\
{\small (c2) Hodge, Error of Eigenvecs.} & {\small (d2) Bochner,
Error of Eigenvecs.} & {\small (e2) Lich., Error of Eigenvecs.} \\
\includegraphics[width=2.08
in]{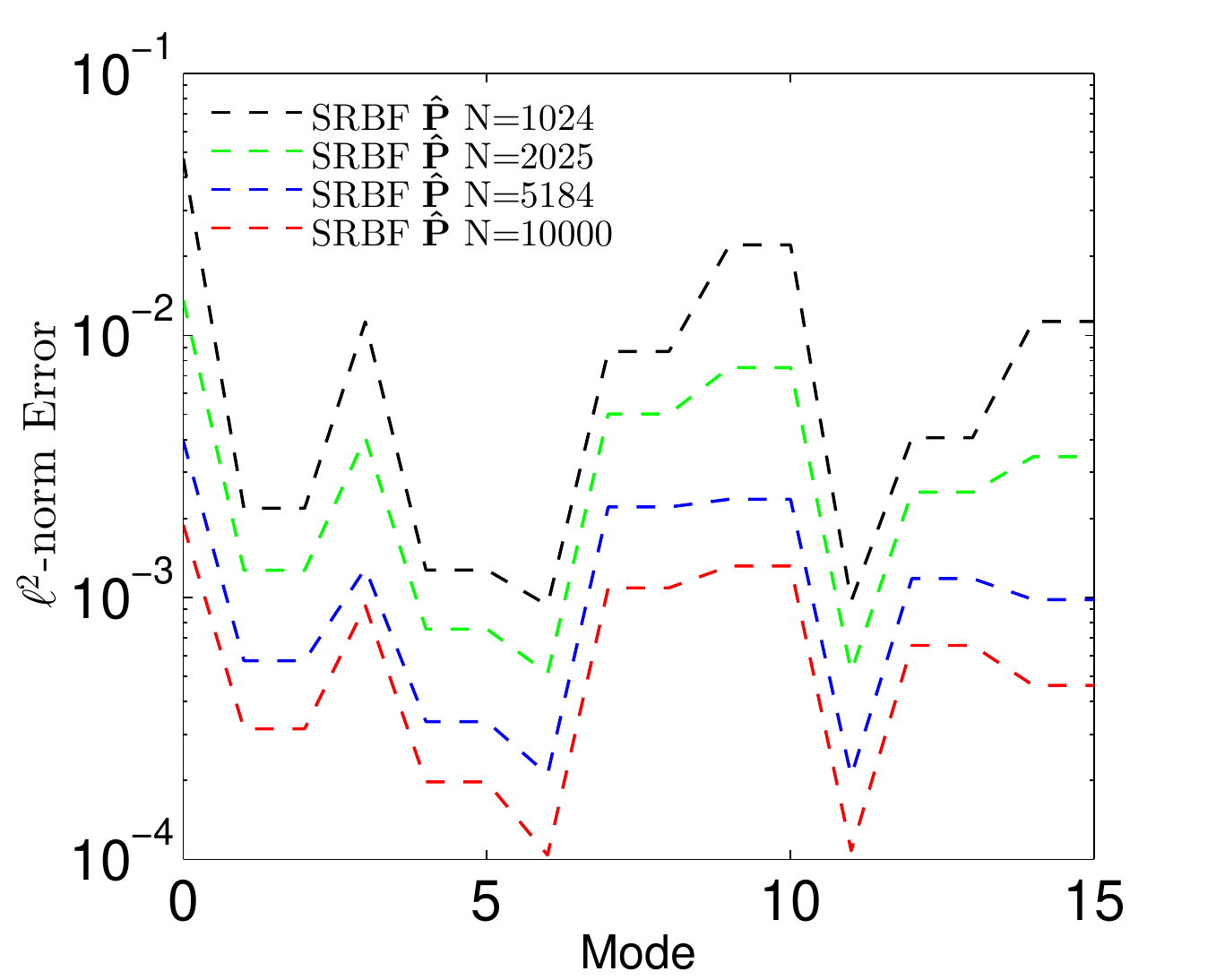} &
\includegraphics[width=2.08
in]{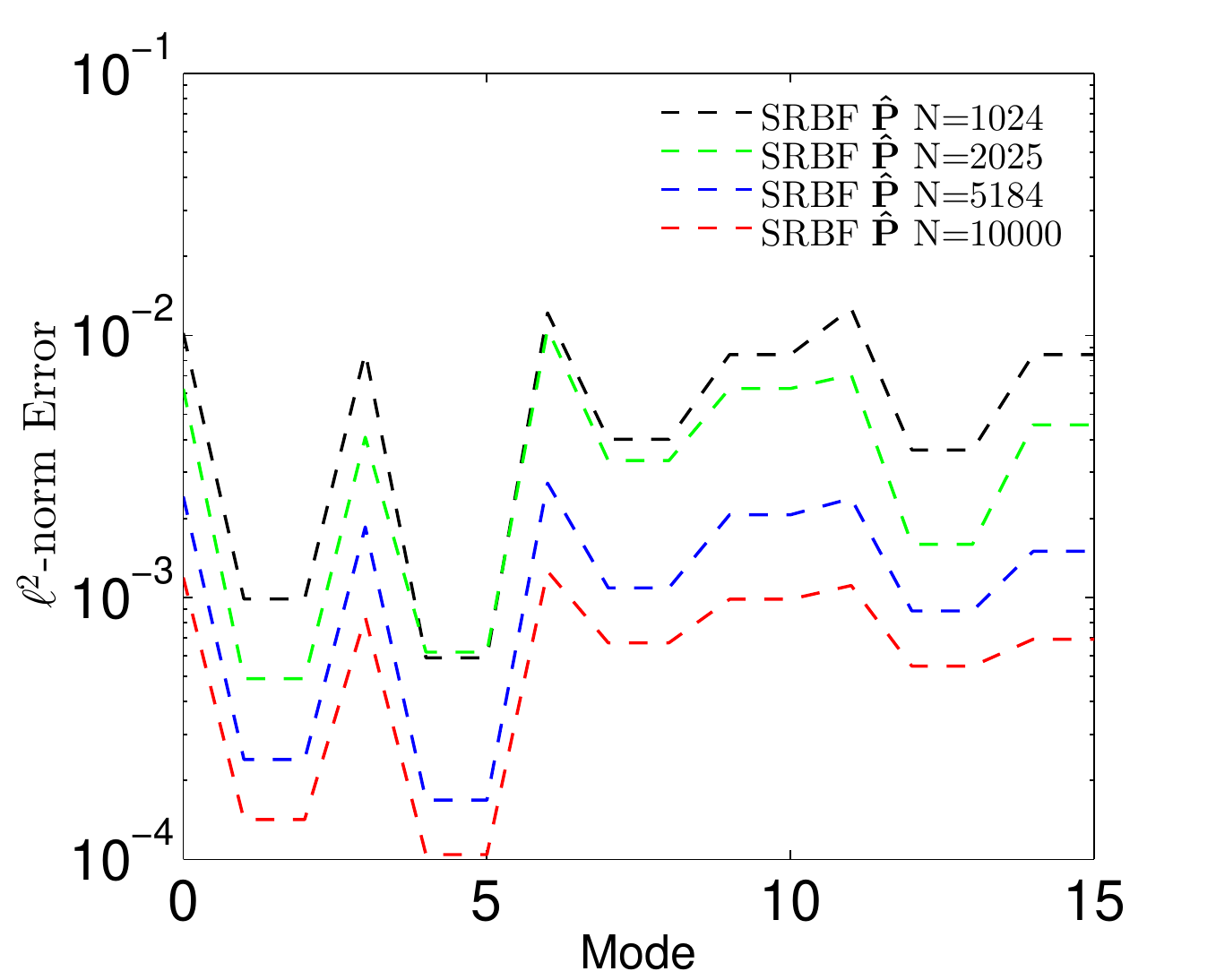} &
\includegraphics[width=2.08
in]{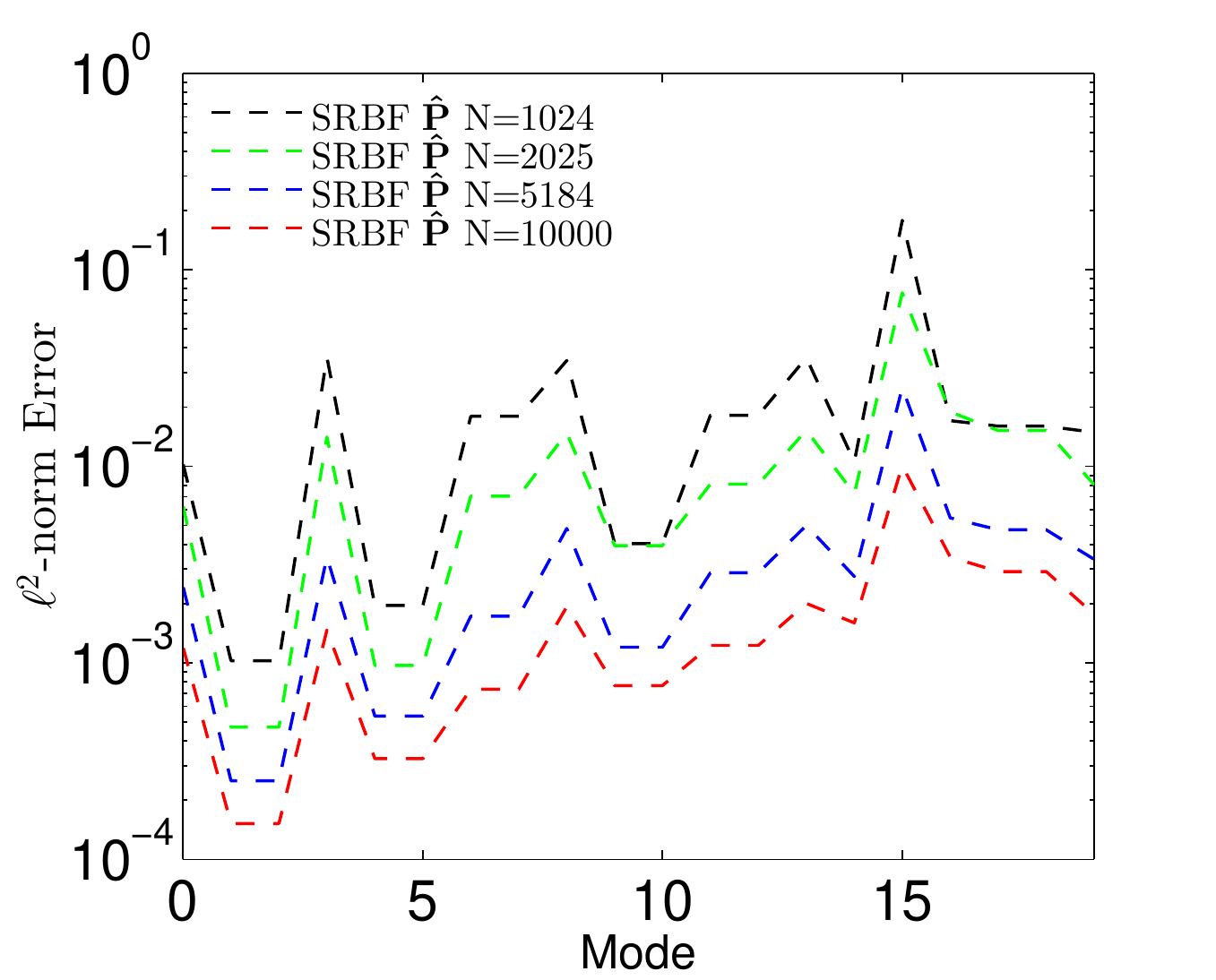}\\
\end{tabular}
}
\caption{{\bf 2D Sphere in $\mathbb{R}^{3}$.} The data points are
uniformly well-sampled on the manifold and the true sampling density $q$ was
used.
Convergence of (a) eigenvalues and (b) eigenvectors for the average of the
leading 16 modes for Bochner and Hodge Laplacians, and the average of the leading 20
modes for the Lichnerowicz Laplacian for SRBF using $\mathbf{\hat{P}}$ and true $q$ are shown.
Plotted in the second row are the errors of eigenvalues for (c1) Hodge, (d1) Bochner, and (e1) Lichnerowicz Laplacians for each leading mode for SRBF.  Plotted in (c2)-(e2) are the corresponding errors of eigenvectors. IQ kernel with $s=0.5$ was fixed for SRBF with different $N$.}
\label{fig_vectorsphere_4}
\end{figure*}

Figure \ref{fig_vectorsphere_3} shows the convergence of eigenvalues of NRBF methods for vector Laplacians. The data in this figure are randomly distributed. One can see that the four observations in Example \ref{sec:gentorus} are still valid for the behavior of NRBF
eigenvalues. Figure \ref{fig_vectorsphere_4} shows the convergence of
eigenvalues and eigenvectors of the SRBF method for vector Laplacians. For an efficient computational labor, we show results with well-sampled data points $\left\{ \theta _{i},\phi _{j}\right\}$, defined as equally spaced points in both direction of the intrinsic coordinates $[0,\pi ]\times \lbrack 0,2\pi )$ such that the total number of points are $N=32^2,  45^2, 71^2, 100^2$. We use the approximated $\mathbf{\hat{P}}$\ and the true sampling density $q$ to construct the Laplacians. One can see from Figs. \ref{fig_vectorsphere_4}(a) and (b) that the errors of eigenvalues and eigenvectors for all vector Laplacians decay on order of $N^{-1}$. More details can be found in Figs. \ref{fig_vectorsphere_4}(c)-(e) for the leading eigenmodes. Notice that for the well-sampled data, the errors here are relatively small compared to those in the case of random data shown in Figs. \ref{fig_vectorsphere_2}(d)-(f). Also, notice that the error rates here decay on order of $N^{-1}$ for well-sampled data, which is faster than Monte Carlo error rates of $N^{-1/2}$ for random data.

\section{Summary and Discussion}\label{sec7}

In this paper, we studied the Radial Basis Function (RBF) approximation to differential operators on smooth, closed Riemannian submanifolds of Euclidean space, identified by randomly sampled point cloud data. We summarize the findings in this paper and conclude with some open issues that arise from this study:
\begin{itemize}
\item The classical pointwise RBF formulation is an effective and accurate method for approximating general differential operators on closed manifolds. For unknown manifolds, i.e., identified only by point clouds, the approximation error is dominated by the accuracy of the estimation of local tangent space. With sufficient regularity on the functions and manifolds, we improve the local SVD technique to achieve a second-order accuracy, by appropriately accounting for the curvatures of the manifolds.
\item The pointwise non-symmetric RBF formulation can produce very accurate estimations of the leading eigensolutions of Laplacians when the size of data used to construct the RBF matrix $N$ is large enough and the projection operator $\mathbf{P}$ is accurately estimated. {We empirically found that the accuracy is significantly better than those produced by the graph-based method, which is supported by the results in Figure~\ref{fig_gentor_2}.
    In Figure~\ref{fig_gentor_4}, for sufficiently large $N$, one can further improve the estimates by increasing $N_p \gg N$, the number of points used to estimate $\mathbf{P}$.} While the second-order local SVD method is numerically efficient even for large $N_p$, the ultimate computational cost depends on the size of $N$ as we are solving the eigenvalue problem of dense, non-symmetric matrices of size $N\times N$ for Laplace-Beltrami or $nN\times nN$ for vector Laplacians. When both $N$ and $N_p$ are not large enough, this approximation produces eigensolutions that are not consistent with the spectral properties of the Laplacians in the sense that the eigenvalue estimates can be complex-valued and do not correspond to any of the underlying real-valued spectra. Since the spectral accuracy of NRBF relies on a very accurate estimation of the projection operator $\mathbf{P}$, we believe that this method may not be robust when the data is corrupted by noise, and thus, may not be reliable for manifold learning.
\item The proposed symmetric RBF approximation for Laplacians overcomes the issues in the pointwise formulation. The only caveat with the symmetric formulations is that this approximation may not be as accurate as the pointwise approximation whenever the latter works. Based on our analysis, the error of SRBF is dominated by the error of discretizing integrals in the weak formulation of appropriate Hilbert spaces, provided sufficiently smooth kernels are used. See Theorems~\ref{eigvalconv} and \ref{conveigvec} for the approximation of eigenvalues and eigenfunctions of the Laplace-Beltrami operator, respectively. See Theorems~\ref{eigvalconv Bochner} and \ref{conveigvec Bochner} for the approximation of eigenvalues and eigenvector fields of Bochner Laplacian, respectively. {Empirically, we found that SRBF consistently produces more accurate estimations of the non-leading eigenvalues and less accurate estimations of the leading eigenvalues compared to those obtained from diffusion maps algorithm for Laplacians acting on functions. These encouraging results suggest that the symmetric RBF discrete Laplacian approximation is potentially useful to complement graph-based methods when $d\ll n \ll N$, which arises in many PDE models on unknown sub-manifolds of moderately high dimensions, e.g., $n=O(10)$. Unlike graph-based approaches, which are restricted to approximating Laplacians, the general formulation in this paper supports PDE applications involving other differential operators. In a companion paper \cite{yan2023spectral}, we have introduced a spectral Galerkin framework to solve elliptic PDEs based on SRBF. }
{ \item The main limitation of this formulation is that it is not scalable for manifold learning problems where $n=O(N)$. One of the key computational issues with this formulation is that it requires $n$ number of dense $N\times N$ matrices $\mathbf{G}_i$ to be stored, which is memory intensive when $n=O(N)$. Further memory issues are encountered in the approximation of operators on vector fields and higher-order tensor fields. For instance, in our formulation, a Laplacian on vector fields is numerically resolved with an $nN\times nN$ matrix, which again requires significant memory whenever $n=O(N)$. An alternative approach to avoid this significant memory issue is to resort to an intrinsic approach \cite{liang2013solving}, where the authors use approximate tangent vectors to directly construct the metric tensor with a local polynomial regression and subsequently resolve the derivatives on manifolds using local tangent coordinates.
While they have demonstrated fast convergence of their method for solving PDEs on relatively low dimensional manifolds with low co-dimensions and their approach has been implemented to approximate differential operators on vector fields on 2-dimensional surfaces embedded in $\mathbb{R}^3$ \cite{gross2020meshfree}, in all of their examples the manifolds are well-sampled in the sense that the data points are visually regularly spaced. {It remains unclear how accurate and stable this method is on randomly sampled data as in our examples. In addition, the intrinsic formulation may be difficult to implement since the local representation of a specific differential operator requires a separate hardcode that depends on the inverse and/or derivatives of the metric tensor components that are being approximated.}
}
\item Parameter tuning: There are two key parameters to be tuned whenever RBF is used. First is the tolerance parameter in the pseudo-inverse, which determines the rank of the discrete approximation. A smaller tolerance parameter value increases the condition number, while too large of a tolerance parameter value reduces the number of non-trivial eigensolutions that can be estimated. Second, is the shape parameter of the RBF kernel. More accurate approximations seem to occur with smaller values of shape parameters, which yields a denser matrix such that the corresponding eigenvalue problem becomes expensive, especially for large data. This is the opposite of the graph-based approach which tends to be more accurate with smaller bandwidth parameters (inversely proportional to shape parameters) which induce sparser matrices. {In a forthcoming paper, we will avoid parameter tuning by replacing the radial basis interpolant with a local polynomial interpolant defined on the estimated tangent space.}
\item For the discrete approximation corresponding to an unweighted $L^2$-space, one needs to de-bias the sampling distribution induced by the random data with appropriate Monte-Carlo sampling weights (or density function). When the sampling density of the data is unknown, one needs to approximate it as in the graph-based approximation. Hence, the accuracy of the estimation depends on the accuracy of the density estimation, which we did not analyze in this paper as we only implement the classical kernel density estimation technique. In the case when the operator to be estimated is defined weakly over an $L^2$-space that is weighted with respect to the sampling distribution, then one does not need to estimate the sampling density function. This is in contrast to the graph Laplacian-based approach. Particularly, graph Laplacian approximation based on the diffusion maps asymptotic expansion imposes a ``right normalization'' over the square root of the approximate density.
{\item For the non-symmetric approximation of the Laplacians, the spectral consistency of the approximation in the limit of large data still remains open. While this approximation is subject to spectral pollution, it is worthwhile to consider or develop methods to identify spectra without pollution, following ideas in \cite{colbrook2021rigorous}.}
\item While we believe the symmetric formulation should be robust to {small amplitude noises in the ambient space as numerically studied in our companion paper \cite{yan2023spectral},} this claim remains to be theoretically justified. Particularly, further investigation is needed to extend Theorem~\ref{local svd result} for noisy data.

\end{itemize}

\comment{
While the second bullet point above clearly identifies the advantages and weaknesses of NRBF, the last bullet point raises an important question. Namely, what are the advantages (and disadvantages) of the symmetric RBF approximation over the graph-based approximation for manifold learning? {Based on the numerical comparison with DM in the spectral estimation of the Laplace-Beltrami operator, we found that the SRBF is more accurate than DM in the estimation of non-leading spectra and comparable to DM in the estimations of the leading spectra and all eigenvectors. Overall, this method to be slightly more accurate than DM on our numerical test examples where $d \ll n \ll N$. }
\\
{\bf Advantages:}
\begin{itemize}
\item The symmetric RBF formulation is rather general. While we have demonstrated the symmetric discrete approximation for Laplacians on functions and vector fields, in principle, one can approximate $k$-Laplacians. In contrast, graph-based formulation requires specific choices of kernels to estimate Laplacians on general tensor fields. As far as our knowledge, to estimate the Laplace-Beltrami operator that acts on functions, one can use Gaussian kernels with normalization induced by the diffusion maps asymptotic expansion \cite{coifman2006diffusion}. To estimate Rough Laplacian (which is equivalent to Bochner Laplacian), one can use the same asymptotic expansion with the heat kernel for vector fields, which designed required careful approximation to a parallel transport operator \cite{singer2017spectral}. The point we would like to emphasize is that one may have to design appropriate kernels for other Laplacian operators. An alternative to a somewhat graph-based approach that has guaranteed spectral properties is the Spectral Exterior Calculus \cite{berry2020spectral}. {Since this approach represents differential forms (and vector fields) with a Galerkin expansion of eigensolutions of the Laplace-Beltrami operator, the accuracy of the SEC depends on the method used in the estimation of the zero Laplacian.}
\item Based on the numerical comparison with DM in the spectral estimation of the Laplace-Beltrami operator, we found that the SRBF is more accurate than DM in the estimation of non-leading spectra and comparable to DM in the estimations of the leading spectra and all eigenvectors. Overall, this estimator proves to be slightly more accurate than DM on our numerical test examples.
\item Parameter tuning: There are two key parameters to be tuned whenever RBF is used. First, the tolerance parameter in the pseudo-inverse, which determines the rank of the discrete approximation. Smaller tolerance parameter value increases the condition number, while too large of tolerance parameter value reduces the number of the non-trivial eigensolutions that can be estimated. Second, more accurate approximations seem to occur with smaller value of shape parameters, which yields a denser matrix such that the corresponding eigenvalue problem becomes expensive, especially for large data. This is the opposite of the graph-based approach which tends to be more accurate with smaller bandwidth parameters (inversely proportional to shape parameter) which induce sparser matrices.
\item For the discrete approximation corresponding to an unweighted $L^2$-space, one needs to de-bias the sampling distribution induced by the random data with appropriate Monte-Carlo sampling weights (or density function). When the sampling density of the data is unknown, one needs to approximate it as in the graph-based approximation. Hence, the accuracy of the estimation depends on the accuracy of the density estimation, which we did not analyze in this paper as we only implement the classical kernel density estimation technique. In the case when the operator to be estimated is defined weakly over an $L^2$-space that is weighted with respect to the sampling distribution, then one does not need to estimate the sampling density function whereas the graph Laplacian-based approach still needs it. Particularly, graph Laplacian approximation that based on the diffusion maps asymptotic expansion imposes a ``right normalization'' over the square root of the approximate density.
\end{itemize}

We conclude this paper with some open issues that arise from this study:
\begin{itemize}

\item While we believe the symmetric formulation should be robust to noise, this claim remains to be theoretically justified. Particularly, further investigation to extend Theorem~\ref{local svd result} for noisy data is needed. 
\end{itemize}
}



\section*{Acknowledgment}
The research of J.H. was partially supported by the NSF grants DMS-1854299, DMS-2207328, and the ONR grant N00014-22-1-2193. S. J. was supported by the NSFC Grant No. 12101408 and the HPC Platform of ShanghaiTech University. S. J. also thanks Chengjian Yao for useful discussion.


\newpage

\appendix
\section{More Operators on Manifolds}\label{app:A}

\subsection{Proof for Proposition \ref{prop2p2}}

We first provide the proof for equation (\ref{ambient cristoffel}) in Proposition \ref{prop2p2}.
\begin{proof}
From a direct calculation, we have
\BEA
\sum_r g^{ij} \frac{\partial X^r}{\partial \theta^j} \frac{\partial U^r }{\partial \theta^k} =  \sum_r g^{ij} \frac{\partial X^r}{\partial \theta^j} \frac{\partial  }{\partial \theta^k} \left( u^p \frac{\partial X^r}{\partial \theta^p} \right) =  \sum_r g^{ij} \frac{\partial X^r}{\partial \theta^j} \left( \frac{\partial u^p}{\partial \theta^k} \frac{\partial X^r}{\partial \theta^p} + u^p\frac{\partial^2 X^r}{ \partial \theta^k \partial \theta^p} \right),   \notag
\EEA
where we have used \eqref{sec2.2:eq1} in the first equality above. Using the fact that $\sum_r \frac{\partial X^r}{\partial \theta^i} \frac{\partial X^r}{\partial \theta^k}= g_{ik}$,
\begin{small}
\BEA
 \sum_r g^{ij} \frac{\partial X^r}{\partial \theta^j} \frac{\partial U^r }{\partial \theta^k} = g^{ij} g_{jp} \frac{\partial u^p}{\partial \theta^k} + \sum_{r} g^{ij} u^p \left( \frac{\partial X^r}{\partial \theta^j}\frac{\partial^2 X^r}{ \partial \theta^k \partial \theta^p} \right) = \frac{\partial u^i}{\partial \theta^k} + \sum_{r} g^{ij} u^p \left( \frac{\partial X^r}{\partial \theta^j}\frac{\partial^2 X^r}{ \partial \theta^k \partial \theta^p} \right). \label{prop2.2:eq1}
\EEA
\end{small}
It remains to observe that
\begin{footnotesize}
\BEA
\sum_{r}\left( \frac{\partial X^r}{\partial \theta^j}\frac{\partial^2 X^r}{ \partial \theta^k \partial \theta^p} \right) = \left\langle \frac{\partial \mathbf{X}}{\partial \theta^j}, \frac{\partial^2 \mathbf{X}}{ \partial \theta^k \partial \theta^p}  \right\rangle = \left\langle \frac{\partial \mathbf{X}}{\partial \theta^j},   \sum_r \Gamma^r_{pk} \frac{\partial \mathbf{X}}{ \partial \theta^r}  + \mathbf{n} \right\rangle  = \sum_r \Gamma^r_{pk}  \left\langle \frac{\partial \mathbf{X}}{\partial \theta^j}, \frac{\partial \mathbf{X}}{\partial \theta^k}\right\rangle = \sum_r g_{jr} \Gamma^r_{pk},  \notag
\EEA
\end{footnotesize}
where we have used the fact that $\frac{\partial^2 \mathbf{X}}{ \partial \theta^k \partial \theta^p}$ is a linear combination of partials of $\mathbf{X}$ with Christoffel symbols as coefficients, plus a vector $\mathbf{n}$ orthogonal to the tangent space. Plugging this to \eqref{prop2.2:eq1} yields Equation \eqref{ambient cristoffel}.
\end{proof}

In the remainder of this appendix, we now show explicitly how other differential operators
can be approximated using the tangential projection formulation. In particular, we derive formulas for approximations of  the Lichnerowicz and Hodge Laplacians, as well as the covariant derivative. Before deriving such results, we explicitly compute the matrix form of divergence of a $(2,0)$ tensor field, which will be needed in the following subsections. Throughout the following calculations, we will use the following shorthand notation for simplification:
 $$
 v_{,s}^{jk}:=v^{mk}\Gamma _{sm}^{j}+%
\frac{\partial v^{jk}}{\partial \theta ^{s}}+v^{jm}\Gamma _{sm}^{k},
$$
where $v = v^{jk} \frac{\partial}{\partial \theta^j} \otimes \frac{\partial}{\partial \theta^k}$ is a $(2,0)$ tensor field.
While we do not prove convergence in the probabilistic sense, the authors suspect that techniques similar to those used in Section \ref{section4} can be used to obtain the convergence results for the other differential operators.   In Section \ref{sec6}, convergence is demonstrated numerically for the approximations derived in this appendix.
\subsection{Derivation of Divergence of a (2,0) Tensor Field}
Let $v$ be a $(2,0)$ tensor field of $v=v^{jk}%
\frac{\partial }{\partial \theta ^{j}}\otimes \frac{\partial }{\partial
\theta ^{k}}$. The divergence of $v$ is defined as
\BEA
\mathrm{div}_{1}^{r}\left( v\right)
=C_{1}^{r}(\nabla v) \notag
\EEA
for $r=1$ or $r=2$, where $C^r_1$ denotes the contraction. More explicitly, the divergence $%
\mathrm{div}_{1}^{r}\left( v\right) $ can be calculated as,
\begin{equation}
\mathrm{div}_{1}^{r}\left( v\right) =C_{1}^{r}\left(
v_{,s}^{jk}\frac{\partial }{\partial \theta ^{j}}\otimes \frac{\partial }{%
\partial \theta ^{k}}\otimes \mathrm{d}\theta ^{s}\right) =\left\{
\begin{array}{cc}
v_{,j}^{jk}\frac{\partial }{\partial \theta ^{k}}, & r=1 \\
v_{,k}^{jk}\frac{\partial }{\partial \theta ^{j}} & r=2%
\end{array}%
\right. ,  \label{eqn:divgv}
\end{equation}%
where  $v_{,s}^{jk}=v^{mk}\Gamma _{sm}^{j}+%
\frac{\partial v^{jk}}{\partial \theta ^{s}}+v^{jm}\Gamma _{sm}^{k}$. For any $p\in M$, we can extend $v$ to $V$, defined on a neighborhood of $p$ with ambient space representation
\begin{equation*}
V=\frac{\partial X^{s}}{\partial \theta ^{j}}v^{jk}\frac{\partial X^{t}}{%
\partial \theta ^{k}}\frac{\partial }{\partial X^{s}}\otimes \frac{\partial
}{\partial X^{t}}\equiv V^{st}\frac{\partial }{\partial X^{s}}\otimes \frac{%
\partial }{\partial X^{t}},
\end{equation*}%
so that $V|_{p}=v|_{p}$ agrees at $p\in M$. We show the
following formula for the divergence for a $(2,0)$ tensor field:%
\begin{equation}
\mathrm{div}_{1}^{r}\left( v\right) = {\mathcal{P}}C_{1}^{r}\left( {\mathcal{P}_3}\bar{\nabla}_{\mathbb{R%
}^{n}}V\right) ,  \label{eqn:divrv}
\end{equation}%
where the right hand side is defined as%
\begin{eqnarray*}
{\mathcal{P}}C_{1}^{r}\left( {\mathcal{P}_3}\bar{\nabla}_{\mathbb{R}^{n}}V\right)
&\equiv &{\mathcal{P}}C_{1}^{r}\left( \left[ \delta _{cq}\frac{\partial X^{c}}{%
\partial \theta ^{a}}g^{ab}\frac{\partial X^{d}}{\partial \theta ^{b}}%
\mathrm{d}X^{q}\otimes \frac{\partial }{\partial X^{d}}\right] \left[ \frac{%
\partial V^{st}}{\partial X^{m}}\frac{\partial }{\partial X^{s}}\otimes
\frac{\partial }{\partial X^{t}}\otimes \mathrm{d}X^{m}\right] \right) \\
&=&{\mathcal{P}}C_{1}^{r}\left( \delta _{cq}\frac{\partial X^{c}}{\partial
\theta ^{a}}g^{ab}\frac{\partial X^{m}}{\partial \theta ^{b}}\frac{\partial
V^{st}}{\partial X^{m}}\frac{\partial }{\partial X^{s}}\otimes \frac{%
\partial }{\partial X^{t}}\otimes \mathrm{d}X^{q}\right).
\end{eqnarray*}
Here, $\mathcal{P}_3$ is as defined in \eqref{eqn:Pmatr} and $\mathcal{P}_3$ acts on the last 1-form component $\mathrm{d}X^m$. For $r=1$, RHS of (\ref{eqn:divrv}) can be calculated as,
\begin{eqnarray*}
&&{\color{black}\mathcal{P}}C_{1}^{r}\left( {\color{black}\mathcal{P}_3}\bar{\nabla}_{\mathbb{R}^{n}}V\right)\\
&=& {\color{black}\mathcal{P}}\left( \delta _{cs}\frac{\partial X^{c}}{\partial \theta ^{a}}%
g^{ab}\frac{\partial X^{m}}{\partial \theta ^{b}}\frac{\partial V^{st}}{%
\partial X^{m}}\frac{\partial }{\partial X^{t}}\right) = {\color{black}\mathcal{P}}\left(
\sum_{s=1}^{n}\frac{\partial X^{s}}{\partial \theta ^{a}}g^{ab}\frac{%
\partial }{\partial \theta ^{b}}\left( \frac{\partial X^{s}}{\partial \theta
^{j}}v^{jk}\frac{\partial X^{t}}{\partial \theta ^{k}}\right) \frac{\partial
}{\partial X^{t}}\right) \\
&=& {\color{black}\mathcal{P}}\left( \sum_{s=1}^{n}\frac{\partial X^{s}}{\partial \theta ^{a}%
}g^{ab}\left( \frac{\partial ^{2}X^{s}}{\partial \theta ^{b}\partial \theta
^{j}}v^{jk}\frac{\partial X^{t}}{\partial \theta ^{k}}+\frac{\partial X^{s}}{%
\partial \theta ^{j}}\frac{\partial v^{jk}}{\partial \theta ^{b}}\frac{%
\partial X^{t}}{\partial \theta ^{k}}+\frac{\partial X^{s}}{\partial \theta
^{j}}v^{jk}\frac{\partial ^{2}X^{t}}{\partial \theta ^{b}\partial \theta ^{k}%
}\right) \frac{\partial }{\partial X^{t}}\right) ,
\end{eqnarray*}%
where the second line follows from Leibniz rule. Unraveling definitions, one obtains%
\begin{eqnarray*}
{\mathcal{P}}C_{1}^{r}\left( {\color{black}\mathcal{P}_3}\bar{\nabla}_{\mathbb{R}^{n}}V\right)
&=& {\color{black}\mathcal{P}}\left( \left( g^{ab}v^{jk}\frac{\partial X^{t}}{\partial
\theta ^{k}}g_{am}\Gamma _{bj}^{m}+g_{aj}g^{ab}\frac{\partial v^{jk}}{%
\partial \theta ^{b}}\frac{\partial X^{t}}{\partial \theta ^{k}}%
+g_{aj}g^{ab}v^{jk}\frac{\partial ^{2}X^{t}}{\partial \theta ^{b}\partial
\theta ^{k}}\right) \frac{\partial }{\partial X^{t}}\right) \\
&=&\left( v^{jk}\frac{\partial X^{t}}{\partial \theta ^{k}}\Gamma _{mj}^{m}+%
\frac{\partial v^{jk}}{\partial \theta ^{j}}\frac{\partial X^{t}}{\partial
\theta ^{k}}\right) \frac{\partial }{\partial X^{t}}+ {\color{black}\mathcal{P}}\left( v^{jk}%
\frac{\partial ^{2}X^{t}}{\partial \theta ^{j}\partial \theta ^{k}}\frac{%
\partial }{\partial X^{t}}\right) ,
\end{eqnarray*}%
where we have used that ${\color{black}\mathcal{P}}v=v$ for $v\in
\mathfrak{X}(M)$, which holds for the first and second terms above. Using the definition of projection tensor and the ambient space formulation of the connection for the third term above, one obtains
\begin{small}
\begin{eqnarray*}
&& {\color{black}\mathcal{P}} C_{1}^{r}\left( {\color{black}\mathcal{P}_3} \bar{\nabla}_{\mathbb{R}^{n}}V\right) \\
&=&\left( v^{jk}\frac{\partial X^{t}}{\partial \theta ^{k}}\Gamma _{mj}^{m}+%
\frac{\partial v^{jk}}{\partial \theta ^{j}}\frac{\partial X^{t}}{\partial
\theta ^{k}}\right) \frac{\partial }{\partial X^{t}}+\left( \delta _{il}%
\frac{\partial X^{i}}{\partial \theta ^{r}}g^{rp}\frac{\partial X^{h}}{%
\partial \theta ^{p}}\mathrm{d}X^{l}\otimes \frac{\partial }{\partial X^{h}}%
\right) \left( v^{jk}\frac{\partial ^{2}X^{t}}{\partial \theta ^{j}\partial
\theta ^{k}}\frac{\partial }{\partial X^{t}}\right) \\
&=&\left( v^{jk}\frac{\partial X^{t}}{\partial \theta ^{k}}\Gamma _{mj}^{m}+%
\frac{\partial v^{jk}}{\partial \theta ^{j}}\frac{\partial X^{t}}{\partial
\theta ^{k}}\right) \frac{\partial }{\partial X^{t}}+\sum_{t=1}^{n}g^{rp}%
\frac{\partial X^{h}}{\partial \theta ^{p}}v^{jk}\frac{\partial ^{2}X^{t}}{%
\partial \theta ^{j}\partial \theta ^{k}}\frac{\partial X^{t}}{\partial
\theta ^{r}}\frac{\partial }{\partial X^{h}} \\
&=&\left( v^{jk}\frac{\partial X^{t}}{\partial \theta ^{k}}\Gamma _{mj}^{m}+%
\frac{\partial v^{jk}}{\partial \theta ^{j}}\frac{\partial X^{t}}{\partial
\theta ^{k}}\right) \frac{\partial }{\partial X^{t}}+g^{rp}v^{jk}g_{rm}%
\Gamma _{jk}^{m}\frac{\partial X^{h}}{\partial \theta ^{p}}\frac{\partial }{%
\partial X^{h}} \\
&=&\left( v^{jk}\Gamma _{mj}^{m}+\frac{\partial v^{jk}}{\partial \theta ^{j}}%
+v^{jm}\Gamma _{jm}^{k}\right) \frac{\partial X^{t}}{\partial \theta ^{k}}%
\frac{\partial }{\partial X^{t}}=v_{,j}^{jk}\frac{\partial X^{t}}{\partial
\theta ^{k}}\frac{\partial }{\partial X^{t}}=v_{,j}^{jk}\frac{\partial }{%
\partial \theta ^{k}}=\mathrm{div}_{1}^{1}\left( v\right) .
\end{eqnarray*}%
\end{small}
Following the same steps, we can obtain $\mathrm{div}%
_{1}^{r}\left( v\right) ={\color{black}\mathcal{P}} C_{1}^{r}\left( {\color{black}\mathcal{P}_3}\bar{\nabla}_{%
\mathbb{R}^{n}}V\right) $ for $r=2$. In matrix form, the divergence of a $%
(2,0)$ tensor field can be written as
\begin{equation*}
\mathrm{div}_{1}^{r}\left( v\right) =\mathbf{P}\mathrm{tr}_{1}^{r}\left(
\mathbf{P}\bar{\nabla}_{\mathbb{R}^{n}}\left( V\right) \right).  \notag
\end{equation*}
The above formula is needed when deriving approximations for the Lichnerowicz and Hodge Laplacian.
\subsection{RBF Approximation for Lichnerowicz Laplacian}
The Lichnerowicz Laplacian can be computed as%
\begin{eqnarray}
-\Delta _{L}u &\equiv &2\mathrm{div}_{1}^{1}\left( Su\right)
=2C_{1}^{1}\left( \nabla Su\right) =C_{1}^{1}\left[ \left(
g^{ij}u_{,is}^{k}+g^{ki}u_{,is}^{j}\right) \frac{\partial }{\partial \theta
^{j}}\otimes \frac{\partial }{\partial \theta ^{k}}\otimes \mathrm{d}\theta
^{s}\right]  \notag \\
&=&\left( g^{ij}u_{,ij}^{k}+g^{ki}u_{,ij}^{j}\right) \frac{\partial }{%
\partial \theta ^{k}}=\mathrm{div}_{1}^{1}({\mathrm{grad}}_{g}{u)}+\mathrm{%
div}_{1}^{2}({\mathrm{grad}}_{g}{u)},  \label{eqn:deL}
\end{eqnarray}%
where $S$ denotes the symmetric tensor, defined as
\begin{equation*}
Su=\frac{1}{2}\left( g^{ki}u_{,i}^{j}+g^{ij}u_{,i}^{k}\right) \frac{\partial
}{\partial \theta ^{j}}\otimes \frac{\partial }{\partial \theta ^{k}}.
\end{equation*}%
In matrix form, (\ref{eqn:deL}) can be written as,
\begin{footnotesize}
\begin{equation}
-\Delta _{L}u=\mathrm{div}_{1}^{1}({\mathrm{grad}}_{g}{u)}+\mathrm{div}%
_{1}^{2}({\mathrm{grad}}_{g}{u)}=\mathbf{P}\mathrm{tr}_{1}^{1}\left( \mathbf{%
P}\bar{\nabla}_{\mathbb{R}^{n}} \left(  \mathbf{P}\overline{{
\mathrm{grad}}}_{\mathbb{R}^{n}}U  \mathbf{P}\right)\right) +\mathbf{P}\mathrm{tr}%
_{1}^{2}\left( \mathbf{P}\bar{\nabla} _{\mathbb{R}^{n}}\left( \mathbf{P}\overline{{%
\mathrm{grad}}}_{\mathbb{R}^{n}}U\mathbf{P}\right) \right),
\label{eqn:Lbochu}
\end{equation}%
\end{footnotesize}
at each $x \in M$. One also has the following formula for the Lichnerowicz Laplacian:
\begin{equation*}
\Delta _{L}u=-\mathrm{div}_{1}^{1}\left({{\mathrm{grad}}_{g}u+(%
{\mathrm{grad}}_{g}u)}^{\top }\right)
\end{equation*}%
for a vector field $u\in \mathfrak{X}(M)$. Hence, after extension to Euclidean
space, the Lichnerowicz Laplacian can be written in a matrix form in the following way:%
\begin{eqnarray}
\bar{\Delta}_{L}U &=& -\mathbf{P}\mathrm{tr}_{1}^{1}\left( \mathbf{P}\bar{\nabla} _{%
\mathbb{R}^{n}}(\mathbf{P}\overline{{
\mathrm{grad}}}_{\mathbb{R}^{n}}U%
\mathbf{P}+(\mathbf{P}\overline{{
\mathrm{grad}}}_{\mathbb{R}^{n}}U\mathbf{P%
})^{\top })\right)  \notag \\
&=&-\mathbf{P}\mathrm{tr}_{1}^{1}\left( \mathbf{P}\bar{\nabla} _{\mathbb{R}%
^{n}}(\mathrm{Sym}(\mathbf{P}\overline{{
\mathrm{grad}}}_{\mathbb{R}^{n}}U%
\mathbf{P})\mathbf{)}\right) ,  \label{eqn:LlapSy}
\end{eqnarray}%
where $\mathrm{Sym}(\mathbf{P}\overline{{
\mathrm{grad}}}_{\mathbb{R}^{n}}U%
\mathbf{P})=\mathbf{P}\overline{{
\mathrm{grad}}}_{\mathbb{R}^{n}}U\mathbf{%
P}+(\mathbf{P}\overline{{
\mathrm{grad}}}_{\mathbb{R}^{n}}U\mathbf{P}%
)^{\top }$\ is a $(2,0)$ tensor field.

We now provide the non-symmetric approximation to
the Lichnerowicz Laplacian acting on a vector field $U$.  Following the notation and methodology used for the Bochner Laplacian, one can approximate Lichnerowicz Laplacian as,
\begin{equation*}
\mathbf{U} \mapsto -\left[
\begin{array}{c}
\mathbf{H}_{1} \\
\vdots \\
\mathbf{H}_{n}%
\end{array}%
\right] \cdot \mathrm{Sym}\left[
\begin{array}{c}
\mathbf{H}_{1}\mathbf{U} \\
\vdots \\
\mathbf{H}_{n}\mathbf{U}%
\end{array}%
\right] := -\left[
\begin{array}{c}
\mathbf{H}_{1} \\
\vdots \\
\mathbf{H}_{n}%
\end{array}%
\right] \cdot \mathrm{Sym}\left[
\begin{array}{c}
\mathbf{\bar{V}}_{1} \\
\vdots \\
\mathbf{\bar{V}}_{n}%
\end{array}%
\right] \mathbf{,}
\end{equation*}%
where $\mathbf{\bar{V}}_{i} := \mathbf{H}_i \mathbf{U} = \left[ \mathbf{V}_{i1},\ldots ,\mathbf{V}_{in}%
\right] ^{^{\top }}$ is the $i$th row of the tensor $\mathbf{V}=\left[
\mathbf{V}_{ij}\right] _{i,j=1}^{n}$. The symmetric operator can be written
in detail as%
\begin{small}
\begin{equation*}
\mathrm{Sym}\left[
\begin{array}{c}
\mathbf{\bar{V}}_{1} \\
\vdots \\
\mathbf{\bar{V}}_{n}%
\end{array}%
\right] =\mathrm{Sym}\left[
\begin{array}{cccc}
\mathbf{V}_{11} & \mathbf{V}_{12} & \cdots & \mathbf{V}_{1n} \\
\mathbf{V}_{21} & \mathbf{V}_{22} & \cdots & \vdots \\
\vdots & \vdots & \ddots & \vdots \\
\mathbf{V}_{n1} & \cdots & \cdots & \mathbf{V}_{nn}%
\end{array}%
\right] =\left[
\begin{array}{cccc}
2\mathbf{V}_{11} & \mathbf{V}_{12}+\mathbf{V}_{21} & \cdots & \mathbf{V}%
_{1n}+\mathbf{V}_{n1} \\
\mathbf{V}_{21}+\mathbf{V}_{12} & 2\mathbf{V}_{22} & \cdots & \vdots \\
\vdots & \vdots & \ddots & \vdots \\
\mathbf{V}_{n1}+\mathbf{V}_{1n} & \cdots & \cdots & 2\mathbf{V}_{nn}%
\end{array}%
\right] .
\end{equation*}%
\end{small}
Then Lichnerowicz Laplacian can be written as%
\begin{equation}
\mathbf{U} \mapsto -[\mathbf{H}_{1}\left( \mathbf{\bar{V}}_{1}+\mathbf{%
V|}_{1}\right) +\cdots +\mathbf{H}_{n}\left( \mathbf{\bar{V}}_{n}+\mathbf{V|}%
_{n}\right)] ,  \label{eqn:Llaprpc}
\end{equation}%
where $\mathbf{V|}_{i}=\left[ \mathbf{V}_{1i},\ldots ,\mathbf{V}_{ni}\right]
^{^{\top }}$ is the $i$th column of the tensor $\mathbf{V}=\left[ \mathbf{V}%
_{ij}\right] _{i,j=1}^{n}$.

We now write the approximate Lichnerowicz Laplacian in (\ref{eqn:Llaprpc}) in terms of $\mathbf{U}$.
Since $\mathbf{\bar{V}}_{i} := \mathbf{H}_i \mathbf{U}$, we only need to focus on $\mathbf{{V}}|_{i}$
for $i=1,\ldots,n$. Since the $(2,0)$ tensor $V_{ij}=[{\mathrm{grad}}_{g}{u]}_{ij}$, we see that the $i$th
column of $\left[ V_{ij}\right] _{i,j=1}^{n}$ can be calculated as,%
\begin{eqnarray}
V\mathbf{|}_{i} &=&\left[
\begin{array}{c}
V_{1i} \\
V_{2i} \\
\vdots \\
V_{ni}%
\end{array}%
\right] _{n\times 1}=\left[
\begin{array}{cccc}
\mathcal{G}_{1}U^{1} & \mathcal{G}_{1}U^{2} & \cdots & \mathcal{G}_{1}U^{n}
\\
\mathcal{G}_{2}U^{1} & \mathcal{G}_{2}U^{2} & \ddots & \vdots \\
\vdots & \ddots & \ddots & \vdots \\
\mathcal{G}_{n}U^{1} & \cdots & \cdots & \mathcal{G}_{n}U^{n}%
\end{array}%
\right] _{n\times n}\left[
\begin{array}{c}
{\color{black}P}_{1i} \\
{\color{black}P}_{2i} \\
\vdots \\
{\color{black}P}_{ni}%
\end{array}%
\right] _{n\times 1}  \notag \\
&=&\left[
\begin{array}{c}
{\color{black}P}_{1i}\mathcal{G}_{1}U^{1}+\cdots +{\color{black}P}_{ni}\mathcal{G}_{1}U^{n} \\
{\color{black}P}_{1i}\mathcal{G}_{2}U^{1}+\cdots +{\color{black}P}_{ni}\mathcal{G}_{2}U^{n} \\
\vdots \\
{\color{black}P}_{1i}\mathcal{G}_{n}U^{1}+\cdots +{\color{black}P}_{ni}\mathcal{G}_{n}U^{n}%
\end{array}%
\right] _{n\times 1} \notag \\
&=&\left[
\begin{array}{cccc}
{\color{black}P}_{1i}\mathcal{G}_{1} & {\color{black}P}_{2i}\mathcal{G}_{1} & \cdots & {\color{black}P}_{ni}\mathcal{G}%
_{1} \\
{\color{black}P}_{1i}\mathcal{G}_{2} & {\color{black}P}_{2i}\mathcal{G}_{2} & \ddots & \vdots \\
\vdots & \ddots & \ddots & \vdots \\
{\color{black}P}_{1i}\mathcal{G}_{n} & \cdots & \cdots & {\color{black}P}_{ni}\mathcal{G}_{n}%
\end{array}%
\right] _{n\times n}\left[
\begin{array}{c}
U^{1} \\
U^{2} \\
\vdots \\
U^{n}%
\end{array}%
\right] _{n\times 1} := \mathcal{S}_i U.  \label{eqn:Vcol}
\end{eqnarray}%
After discretization using RBF approximation, above (\ref{eqn:Vcol}) can be
approximated by%
\begin{equation*}
\mathbf{V|}_{i}=\left[
\begin{array}{cccc}
\mathrm{diag}\left( \mathbf{p}_{1i}\right) \mathbf{G}_{1} & \mathrm{diag}%
\left( \mathbf{p}_{2i}\right) \mathbf{G}_{1} & \cdots & \mathrm{diag}\left(
\mathbf{p}_{ni}\right) \mathbf{G}_{1} \\
\mathrm{diag}\left( \mathbf{p}_{1i}\right) \mathbf{G}_{2} & \mathrm{diag}%
\left( \mathbf{p}_{2i}\right) \mathbf{G}_{2} & \ddots & \vdots \\
\vdots & \ddots & \ddots & \color{black}\vdots \\
\mathrm{diag}\left( \mathbf{p}_{1i}\right) \mathbf{G}_{n} & \cdots & \cdots
& \mathrm{diag}\left( \mathbf{p}_{ni}\right) \mathbf{G}_{n}%
\end{array}%
\right] _{Nn\times Nn}\left[
\begin{array}{c}
\mathbf{U}^{1} \\
\mathbf{U}^{2} \\
\vdots \\
\mathbf{U}^{n}%
\end{array}%
\right] _{Nn\times 1} := \mathbf{S}_{i}\mathbf{U},
\end{equation*}%
where $\mathbf{p}_{ij}=\left( {\color{black}P}_{ij}(\boldsymbol{x}_{1}),\ldots ,{\color{black}P}_{ij}(%
\boldsymbol{x}_{N})\right) \in \mathbb{R}^{N\times 1}$. Then, Lichnerowicz
Laplacian in (\ref{eqn:Llaprpc}) can be derived as,%
\begin{eqnarray*}
\mathbf{U} &\mapsto&-[\mathbf{H}_{1}\left( \mathbf{\bar{V}}_{1}+%
\mathbf{V|}_{1}\right) +\cdots +\mathbf{H}_{n}\left( \mathbf{\bar{V}}_{n}+%
\mathbf{V|}_{n}\right)] \\
&=&-\left[ \mathbf{H}_{1}\left( \mathbf{H}_{1}+\mathbf{S}_{1}\right) +\cdots +%
\mathbf{H}_{n}\left( \mathbf{H}_{n}+\mathbf{S}_{n}\right) \right] \mathbf{U.}
\end{eqnarray*}%
Thus, the symmetric operator can be expressed as
\begin{equation}
\mathrm{Sym}\left[
\begin{array}{c}
\mathbf{H}_{1}\mathbf{U} \\
\vdots \\
\mathbf{H}_{n}\mathbf{U}%
\end{array}%
\right] =\left[
\begin{array}{c}
\left( \mathbf{H}_{1}+\mathbf{S}_{1}\right) \mathbf{U} \\
\vdots \\
\left( \mathbf{H}_{n}+\mathbf{S}_{n}\right) \mathbf{U}%
\end{array}%
\right] ,  \label{eqn:symL}
\end{equation}%
and Lichnerowicz Laplacian matrix given by  $-\mathbf{H}_{1}\left(
\mathbf{H}_{1}+\mathbf{S}_{1}\right) -\cdots -\mathbf{H}_{n}\left( \mathbf{H}%
_{n}+\mathbf{S}_{n}\right) $ can be used to study the spectral properties of the Lichnerowicz Laplacian numerically.

\subsection{RBF Approximation for Hodge Laplacian}
The Hodge Laplacian is initially defined on $k$ forms as

\begin{equation*}
\Delta _{H}=\left( \mathrm{dd}^{\ast }+\mathrm{d}^{\ast }\mathrm{d}\right) ,
\end{equation*}%
where the Hodge Laplacian is positive definite. Here, $\mathrm{d}$\ is
the exterior derivative and $\mathrm{d}^{\ast }$ is the adjoint of $\mathrm{d%
}$ given by%
\begin{equation*}
\mathrm{d}^{\ast }=(-1)^{dr+d+1}\ast \circ \ \mathrm{d}\circ \ast
:{\Omega}^{r}(M)\rightarrow {\Omega}^{r-1}(M),
\end{equation*}%
where $\ast $\ is Hodge star operator. See \cite{chow2007ricci} for details. The Hodge Laplacian acting on a vector field $u=u^{k}%
\frac{\partial }{\partial \theta ^{k}}$ is defined as%
\begin{equation*}
\Delta _{H}u=\sharp \left( \mathrm{dd}^{\ast }+\mathrm{d}^{\ast }\mathrm{d}%
\right) \flat u,
\end{equation*}%
where $\sharp $ and $\flat $ are the standard musical isomorphisms. First, we can compute
\begin{equation}
\sharp \mathrm{dd}^{\ast }\flat u=\sharp \mathrm{d}%
(-g^{jk}u_{j,k})=-g^{jk}u_{,ij}^{i}\frac{\partial }{\partial \theta ^{k}}=-{%
\mathrm{grad}}_{g}\left( \mathrm{div}_{g}\left( {u}\right) \right) ,
\label{eqn:dds}
\end{equation}%
where $\flat u=u_{i}\mathrm{d}\theta ^{i}=g_{ij}u^{j}\mathrm{d}\theta ^{i}$.
Next, we can compute
\begin{eqnarray}
\sharp \mathrm{d}^{\ast }\mathrm{d}\flat u &=&\sharp \mathrm{d}^{\ast }%
\mathrm{d}\left( u_{i}\mathrm{d}\theta ^{i}\right) =\sharp \mathrm{d}^{\ast
}(\frac{1}{2!}\left( u_{j,i}-u_{i,j}\right) \mathrm{d}\theta ^{i}\wedge
\mathrm{d}\theta ^{j})=\sharp \left( -g^{ij}\left( u_{k,ij}-u_{i,kj}\right)
\mathrm{d}\theta ^{k}\right)  \notag \\
&=&-g^{ij}u_{,ij}^{k}\frac{\partial }{\partial \theta ^{k}}+g^{kj}u_{,ji}^{i}%
\frac{\partial }{\partial \theta ^{k}}=-\mathrm{div}_{1}^{1}({\mathrm{grad}}%
_{g}{u)}+\mathrm{div}_{1}^{2}({\mathrm{grad}}_{g}{u)},  \label{eqn:dsd}
\end{eqnarray}%
where in last equality we have used definitions of gradient and divergence. Lastly, from (\ref%
{eqn:dds}) and (\ref{eqn:dsd}), we can compute the Hodge Laplacian as%
\begin{eqnarray}
\Delta _{H}u &=&\sharp \left( \mathrm{d}^{\ast }\mathrm{d}+\mathrm{dd}%
^{\ast }\right) \flat u=-\left(
g^{ij}u_{,ij}^{k}-g^{kj}u_{,ji}^{i}+g^{jk}u_{,ij}^{i}\right) \frac{\partial
}{\partial \theta ^{k}}  \notag \\
&=&\Delta _{B}u+g^{jk}\left( u_{,ji}^{i}-u_{,ij}^{i}\right) \frac{\partial }{%
\partial \theta ^{k}}=\Delta _{B}u+g^{jk}u^{m}R_{mj}\frac{\partial }{%
\partial \theta ^{k}}\equiv \Delta _{B}u+{\mathrm{Ri}}(u),  \label{eqn:LBHR}
\end{eqnarray}%
where the Ricci identity was used in the second line. One can see from (\ref%
{eqn:LBHR}) that the Hodge Laplacian $\Delta _{H}$ is different from the
Bochner Laplacian $\Delta _{B}$ by a Ricci tensor ${\mathrm{Ri}}(u)\equiv
g^{jk}u^{m}R_{mj}\frac{\partial }{\partial \theta ^{k}}$\ (see \textit{pp}
478 of \cite{chow2007ricci} for $k$-form for example).

Now we present the Hodge Laplacian written in matrix form. First, using previous formulas for the divergence and gradient, we have
\begin{equation*}
\sharp \mathrm{dd}^{\ast }\flat u=-{\mathrm{grad}}_{g}\left( \mathrm{div}%
_{g}\left( {u}\right) \right) =-\mathbf{P}\overline{{\mathrm{grad}}}_{%
\mathbb{R}^{n}}\left( \mathrm{tr}_{1}^{1}\left[ \mathbf{P}\bar{\nabla}_{%
\mathbb{R}^{n}}\mathbf{U}\right] \right) .
\end{equation*}%
Secondly, we have
\begin{eqnarray*}
\sharp \mathrm{d}^{\ast }\mathrm{d}\flat u &=&-g^{ij}u_{,ij}^{k}\frac{%
\partial }{\partial \theta ^{k}}+g^{kj}u_{,ji}^{i}\frac{\partial }{\partial
\theta ^{k}}=-\mathrm{div}_{1}^{1}({\mathrm{grad}}_{g}{u)}+\mathrm{div}%
_{1}^{2}({\mathrm{grad}}_{g}{u)} \\
&=&-\mathbf{P}\mathrm{tr}_{1}^{1}\left( \mathbf{P}\bar{\nabla}_{\mathbb{R}%
^{n}}(\mathbf{P}\overline{{
\mathrm{grad}}}_{\mathbb{R}^{n}}U\mathbf{P)}%
\right) +\mathbf{P}\mathrm{tr}_{1}^{2}\left( \mathbf{P}\bar{\nabla}_{\mathbb{%
R}^{n}}(\mathbf{P}\overline{{
\mathrm{grad}}}_{\mathbb{R}^{n}}U\mathbf{P)}%
\right)
\end{eqnarray*}%
Then, we can calculate Hodge Laplacian as
\begin{eqnarray}
\Delta _{H}u &=& \sharp \left( \mathrm{d}^{\ast }\mathrm{d}+\mathrm{dd}%
^{\ast }\right) \flat u= -\left(
g^{ij}u_{,ij}^{k}-(g^{kj}u_{,ji}^{i}-g^{jk}u_{,ij}^{i})\right) \frac{%
\partial }{\partial \theta ^{k}}  \notag \\
&=&-\mathrm{div}_{1}^{1}({\mathrm{grad}}_{g}u)+\left[ \mathrm{div}%
_{1}^{2}\left( {\mathrm{grad}}_{g}u\right) -{\mathrm{grad}}_{g}\left(
\mathrm{div}_g\left( {u}\right) \right) \right]  \notag \\
&=&-\mathbf{P}\mathrm{tr}_{1}^{1}\left( \mathbf{P}\bar{\nabla}_{\mathbb{R}%
^{n}}(\mathbf{P}\overline{{
\mathrm{grad}}}_{\mathbb{R}^{n}}U\mathbf{P}%
)\right) \notag \\
&+&\left[ \mathbf{P}\mathrm{tr}_{1}^{2}\left( \mathbf{P}\bar{\nabla}_{%
\mathbb{R}^{n}}(\mathbf{P}\overline{{
\mathrm{grad}}}_{\mathbb{R}^{n}}U%
\mathbf{P})\right) -\mathbf{P}\overline{{\mathrm{grad}}}_{\mathbb{R}^{n}}\left( \mathrm{%
tr}_{1}^{1}\left[ \mathbf{P}\bar{\nabla}_{\mathbb{R}^{n}}U\right]
\right) \right] , \label{eqn:Hod2}
\end{eqnarray}%
where the last line holds at each $x \in M$. The first term of (\ref{eqn:Hod2}) indeed is the Bochner Laplacian $%
\Delta _{B}u$. Hence, it only remains to compute the second and third terms. At each $x\in M$, we can write the second term as
\begin{eqnarray}
\mathbf{P}\mathrm{tr}_{1}^{2}\left( \mathbf{P}\bar{\nabla} _{\mathbb{%
R}^{n}}\mathbf{V}\right) &=& \left[
\begin{array}{ccc}
{P}_{11} & \cdots & {P}_{1n} \\
\vdots & \ddots & \vdots \\
{P}_{n1} & \cdots & {P}_{nn}%
\end{array}%
\right] _{n\times n}\left[
\begin{array}{c}
\sum_{k=1}^{n}\mathcal{G}_{k}V_{1k} \\
\vdots \\
\sum_{k=1}^{n}\mathcal{G}_{k}V_{nk}%
\end{array}%
\right] _{n\times 1}  \notag \\
&=&\left[
\begin{array}{ccc}
{P}_{11} & \cdots & {P}_{1n} \\
\vdots & \ddots & \vdots \\
{P}_{n1} & \cdots & {P}_{nn}%
\end{array}%
\right] _{n\times n}\left[
\begin{array}{c}
\mathcal{G}_{1}V_{11}+\mathcal{G}_{2}V_{12}+\cdots +\mathcal{G}_{n}V_{1n} \\
\vdots \\
\mathcal{G}_{1}V_{n1}+\mathcal{G}_{2}V_{n2}+\cdots +\mathcal{G}_{n}V_{nn}%
\end{array}%
\right] _{n\times 1},  \label{eqn:Pt21}
\end{eqnarray}%
and write the third term as%
\begin{equation}
\mathbf{P}\overline{{\mathrm{grad}}}_{\mathbb{R}^{n}}\left( \mathrm{tr}_{1}^{1}\left[
\mathbf{P}\bar{\nabla}_{\mathbb{R}^{n}}U\right] \right) =\mathbf{P}{%
\overline{\mathrm{grad}}}_{\mathbb{R}^{n}}\left( \sum_{k=1}^{n}\mathcal{G}%
_{k}U_{k}\right) =\left[
\begin{array}{c}
\mathcal{G}_{1} \\
\vdots \\
\mathcal{G}_{n}%
\end{array}%
\right] \left( \sum_{k=1}^{n}\mathcal{G}_{k}U_{k}\right) .  \label{eqn:Pd11}
\end{equation}%
Substituting \eqref{eqn:Pt21},\eqref{eqn:Pd11}, as well as the formula for the Bochner Laplacian, into \eqref{eqn:Hod2}, we obtain the formula for Hodge Laplacian.

Another way to write the Hodge Laplacian is given by
\BEA
-\Delta _{H}u&=&\mathrm{div}_{1}^{1}({\mathrm{grad}}_{g}{u)}-\mathrm{div}%
_{1}^{2}({\mathrm{grad}}_{g}{u)+\mathrm{grad}}_{g}\left( \mathrm{div}%
_{g}\left( {u}\right) \right) \notag \\
&=&\mathrm{div}_{1}^{1}({{\mathrm{grad}}_{g}u-({%
\mathrm{grad}}_{g}u)}^{\top })+{\mathrm{grad}}_{g}\left( \mathrm{div}%
_{g}\left( {u}\right) \right).  \label{eqn:Lhog}
\EEA
The first term, involving the anti-symmetric part of ${{\mathrm{grad}}%
_{g}u}$, after pre-composing with the interpolating operator, can be approximated by%
\begin{equation*}
\mathrm{Ant}\left[
\begin{array}{c}
\mathbf{H}_{1}\mathbf{U} \\
\vdots \\
\mathbf{H}_{n}\mathbf{U}%
\end{array}%
\right] =\left[
\begin{array}{c}
\left( \mathbf{H}_{1}-\mathbf{S}_{1}\right) \mathbf{U} \\
\vdots \\
\left( \mathbf{H}_{n}-\mathbf{S}_{n}\right) \mathbf{U}%
\end{array}%
\right] .
\end{equation*}%
Thus, we only need to consider the RBF approximation for the last term ${{%
\mathrm{grad}}_{g}\left( \mathrm{div}_{g}\left( {u}\right) \right) }$ in (%
\ref{eqn:Lhog}). Using the formula for gradient of a function and divergence of a vector field in \eqref{eqn:Pd11} directly, we can obtain that
\begin{eqnarray*}
&& {{\mathrm{grad}}_{g}\left( \mathrm{div}_{g}\left( {u}\right) \right) } \\
&=&%
\left[
\begin{array}{c}
\mathcal{G}_{1} \\
\vdots \\
\mathcal{G}_{n}%
\end{array}%
\right] \sum_{i=1}^{n}\mathcal{G}_{i}U^{i}=\left[
\begin{array}{c}
\mathcal{G}_{1}\sum_{i=1}^{n}\mathcal{G}_{i}U^{i} \\
\vdots \\
\mathcal{G}_{n}\sum_{i=1}^{n}\mathcal{G}_{i}U^{i}%
\end{array}%
\right]
= \left[
\begin{array}{c}
\mathcal{G}_{1} \\
\vdots \\
\mathcal{G}_{n}%
\end{array}%
\right] \big[\mathcal{G}_{1} \cdots \mathcal{G}_{n}\big] \left[
\begin{array}{c}
U^{1} \\
\vdots \\
U^{n}%
\end{array}%
\right]  \\
&=&
\left[
\begin{array}{cccc}
\mathcal{G}_{1}\mathcal{G}_{1} & \mathcal{G}_{1}\mathcal{G}_{2} & \cdots & \mathcal{G}_{1}\mathcal{G}_{n} \\
\mathcal{G}_{2}\mathcal{G}_{1} & \mathcal{G}_{2}\mathcal{G}_{2} & \ddots & \vdots \\
\vdots & \ddots & \ddots & \vdots \\
\mathcal{G}_{n}\mathcal{G}_{1} & \cdots & \cdots & \mathcal{G}_{n}\mathcal{G}_{n}%
\end{array}%
\right] _{n\times n}\left[
\begin{array}{c}
U^{1} \\
U^{2} \\
\vdots \\
U^{n}%
\end{array}%
\right] _{n\times 1}.
\end{eqnarray*}%
After discretization, we obtain that this term can be approximated by%
\begin{equation*}
\mathbf{U} \mapsto
\left[
\begin{array}{cccc}
\mathbf{G}_{1}\mathbf{G}_{1} & \mathbf{G}_{1}\mathbf{G}_{2} & \cdots & \mathbf{G}_{1}\mathbf{G}_{n} \\
\mathbf{G}_{2}\mathbf{G}_{1} & \mathbf{G}_{2}\mathbf{G}_{2} & \ddots & \vdots \\
\vdots & \ddots & \ddots & \vdots \\
\mathbf{G}_{n}\mathbf{G}_{1} & \cdots & \cdots & \mathbf{G}_{n}\mathbf{G}_{n}%
\end{array}%
\right] _{Nn\times Nn}\left[
\begin{array}{c}
\mathbf{U}^{1} \\
\mathbf{U}^{2} \\
\vdots \\
\mathbf{U}^{n}%
\end{array}%
\right] _{Nn\times 1} := \mathbf{TU.}
\end{equation*}%
It follows that the Hodge Laplacian matrix can be approximated by
\begin{equation}
\bar{\Delta}_{H}=- \left(\mathbf{H}_{1}\left( \mathbf{H}_{1}-\mathbf{S}_{1}\right)
+\cdots +\mathbf{H}_{n}\left( \mathbf{H}_{n}-\mathbf{S}_{n}\right) \right) - \mathbf{%
T.}  \label{eqn:HoLap}
\end{equation}

\subsection{RBF Approximation for Covariant Derivative}
In the following, we examine $\nabla _{u}y= \mathcal{P}\bar{\nabla}_{U}Y$ via direct calculation, where the RHS will be
defined below. Let $u=u^{i}\frac{\partial }{\partial \theta ^{i}}\in
\mathfrak{X}(M)$ and $y=y^{i}\frac{\partial }{\partial \theta ^{i}}\in
\mathfrak{X}(M)$ be two vector fields, where $u^{i},y^{i}\in C^{\infty }(M)$
are smooth functions. Then, the covariant derivative is defined as%
\begin{equation*}
\nabla _{u}y=u^{k}y_{,k}^{i}\frac{\partial }{\partial \theta ^{i}}%
=u^{k}\left( \frac{\partial y^{i}}{\partial \theta ^{k}}+y^{j}\Gamma
_{jk}^{i}\right) \frac{\partial }{\partial \theta ^{i}},
\end{equation*}%
where the covariant derivative operator $\nabla :\mathfrak{X}(M)\times
\mathfrak{X}(M)\rightarrow \mathfrak{X}(M)$ takes vector fields $u$ and $y$
in $\mathfrak{X}(M)$ to a vector field $\nabla _{u}y$ in $\mathfrak{X}(M)$.
We can rewrite covariant derivative as
\begin{eqnarray}
\nabla _{u}y &=&u^{k}\left( \delta _{rs}g^{ij}\frac{\partial X^{r}}{\partial
\theta ^{j}}\frac{\partial Y^{s}}{\partial \theta ^{k}}\right) \frac{%
\partial }{\partial \theta ^{i}}=u^{k}\delta _{rs}g^{ij}\frac{\partial X^{r}%
}{\partial \theta ^{j}}\left( \frac{\partial Y^{s}}{\partial X^{m}}\frac{%
\partial X^{m}}{\partial \theta ^{k}}\right) \left( \frac{\partial X^{p}}{%
\partial \theta ^{i}}\frac{\partial }{\partial X^{p}}\right)  \notag \\
&=&\delta _{rs}\frac{\partial X^{p}}{\partial \theta ^{i}}g^{ij}\frac{%
\partial X^{r}}{\partial \theta ^{j}}\frac{\partial Y^{s}}{\partial X^{m}}%
U^{m}\frac{\partial }{\partial X^{p}}  \notag \\
&=&\left( \delta _{rs}\frac{\partial X^{p}}{\partial \theta ^{i}}g^{ij}\frac{%
\partial X^{r}}{\partial \theta ^{j}}\mathrm{d}X^{s}\otimes \frac{\partial }{%
\partial X^{p}}\right) \left( U^{m}\frac{\partial Y^{k}}{\partial X^{m}}%
\frac{\partial }{\partial X^{k}}\right) := {\color{black}\mathcal{P}}\bar{\nabla}_{U}Y,
\label{eqn:duyP}
\end{eqnarray}%
where the first line follows from the chain rule, the second line follows
from $U^{m}=u^{k}\frac{\partial X^{m}}{\partial \theta ^{k}}$, and the last
line defines $\bar{\nabla}_{U}Y\equiv U^{m}\frac{\partial Y^{k}}{\partial
X^{m}}\frac{\partial }{\partial X^{k}}$ for the covariant derivative in
Euclidean space. In matrix-vector form, \eqref{eqn:duyP} is
straightforward to write as
\begin{equation}
\nabla _{u}y=\mathbf{P}\bar{\nabla}_{U}Y=\left[
\begin{array}{ccc}
{\color{black}P}_{11} & \cdots & {\color{black}P}_{1n} \\
\vdots & \ddots & \vdots \\
{\color{black}P}_{n1} & \cdots & {\color{black}P}_{nn}%
\end{array}%
\right] \left[
\begin{array}{ccc}
\frac{\partial Y^{1}}{\partial X^{1}} & \cdots & \frac{\partial Y^{1}}{%
\partial X^{n}} \\
\vdots & \ddots & \vdots \\
\frac{\partial Y^{n}}{\partial X^{1}} & \cdots & \frac{\partial Y^{n}}{%
\partial X^{n}}%
\end{array}%
\right] \left[
\begin{array}{c}
U^{1} \\
\vdots \\
U^{n}%
\end{array}%
\right] .  \label{eqn:covd}
\end{equation}

The RBF approximation for covariant derivative is slightly different from above linear Laplacian operators
since it is nonlinear involving two vector fields $U$ and $Y$ as in (\ref%
{eqn:covd}). We first compute the covariant derivative $\bar{\nabla}_{U}Y$
in Euclidean space. For each $Y^{r}$, its RBF interpolant we can use the RBF interpolant to evaluate the $r$th row of $\bar{%
\nabla}_{U}Y$ in (\ref{eqn:covd}) at all node locations. This yields%
\begin{eqnarray*}
\bar{\nabla}_{\mathbf{U}}\mathbf{Y}^{r} &:= &\sum_{k=1}^{n}\frac{%
\partial I_{\phi_s }Y^{r}}{\partial X^{k}}\left.U^{k}\right|_X \in \mathbb{R}^N.
\end{eqnarray*}%
Concatenating all the $\bar{\nabla}_{\mathbf{U}}\mathbf{Y}^{r}$ for $r=1,\ldots,n$ to form an augmented vector $\bar{\nabla}_{%
\mathbf{U}}\mathbf{Y}=(\bar{\nabla}_{\mathbf{U}}\mathbf{Y}^{1},\ldots ,\bar{%
\nabla}_{\mathbf{U}}\mathbf{Y}^{n})\in \mathbb{R}^{Nn\times 1}$, we can
obtain the RBF formula for covariant derivative as
\begin{equation}
\mathbf{P}\bar{\nabla}_{U}I_{\phi_s }Y=\mathbf{P}^{\otimes }\bar{%
\nabla}_{\mathbf{U}}\mathbf{Y}.  \label{eqn:cPa}
\end{equation}

\section{Interpolation Error}\label{app:B}
In this appendix, we develop results regarding interpolation error in the probabilistic setting. Namely, we formally state and prove interpolation results for functions, vector fields, and $(1,1)$ tensor fields. The setting for this section is self-contained, and can be summarized as follows. Let $X = \{x_1 , \dots , x_N\}$ be finitely many uniformly sampled data points of a closed, smooth Riemannian manifold $M$ of dimension $d$, embedded in a higher dimensional Euclidean space $M \subseteq \mathbb{R}^n$. We assume additionally that the injectivity radius $\iota(M)$ is bounded away from zero from below by a constant $r>0$.
\subsection{Probabilistic Mesh Size Result }
In this setting, we have the following result from \cite{croke1980some}:
\begin{lem}
(Proposition 14 in \cite{croke1980some}) Let $B_\delta(x)$ denote a geodesic ball of radius $\delta$ around a point $x \in M$. For $\delta < \iota(M)/2$, we have
$$
\textrm{Vol}(B_\delta(x)) \geq C(d) \delta^d,
$$
where $C(d)$ is a constant depending only on the dimension $d$ of the manifold.
\end{lem}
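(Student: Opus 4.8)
The plan is to reduce the statement to a one–dimensional Jacobi-field comparison and then invoke Croke's argument for the dimensional constant. First I would use that, since $\delta<\iota(M)/2\le\iota(M)$, the exponential map $\exp_x$ is a diffeomorphism from the Euclidean ball of radius $\delta$ in $T_xM$ onto $B_\delta(x)$; by the Gauss lemma one may then pass to geodesic polar coordinates, so that, writing $\gamma_\theta(t)=\exp_x(t\theta)$ for $\theta\in S^{d-1}\subset T_xM$,
\begin{equation*}
\textrm{Vol}(B_\delta(x))=\int_{S^{d-1}}\int_0^{\delta}\mathcal{A}_x(t,\theta)\,dt\,d\theta ,
\end{equation*}
where $\mathcal{A}_x(t,\theta)\,dt\,d\theta$ is the Riemannian volume element in these coordinates and $\mathcal{A}_x(t,\theta)/t^{d-1}\to1$ as $t\to0^{+}$. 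It then suffices to establish a lower bound $\int_0^{\delta}\mathcal{A}_x(t,\theta)\,dt\ge c(d)\,\delta^{d}$ uniform in $\theta$ (and $x$), since integrating over the sphere gives the claim with $C(d)=c(d)\,\textrm{Vol}(S^{d-1})$.

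Next I would recall that $\mathcal{A}_x(t,\theta)=\det Y(t)$, where $Y$ is the $(d-1)\times(d-1)$ matrix solution of the Jacobi equation $Y''+R(t)Y=0$ along $\gamma_\theta$ with $Y(0)=0$, $Y'(0)=I$, and $R(t)$ the symmetric matrix formed from the curvature along $\gamma_\theta$. The only structural fact I need is that $\delta<\iota(M)/2$ forces $\gamma_\theta$ to have no conjugate points on $[0,2\delta]$, since conjugate points along geodesics issuing from $x$ occur only at parameters $\ge\iota(M)>2\delta$; hence $\mathcal{A}_x(\cdot,\theta)>0$ on $(0,2\delta)$. The crux is then the comparison step: among all Jacobi densities that remain positive on $(0,2\delta)$ with the prescribed initial behavior, $\int_0^{\delta}\mathcal{A}\,dt$ is minimized by the constant-curvature model whose first conjugate point occurs exactly at $2\delta$, i.e.\ the round sphere of radius $2\delta/\pi$; this is a Sturm–Liouville type statement for $\mathcal{A}^{1/(d-1)}$ (or for an associated scalar Jacobi field whose first zero is pushed to $2\delta$), and it would give $\int_0^{\delta}\mathcal{A}_x(t,\theta)\,dt\ge\int_0^{\delta}\big(\tfrac{2\delta}{\pi}\sin\tfrac{\pi t}{2\delta}\big)^{d-1}dt=c(d)\,\delta^{d}$ with an explicit dimensional $c(d)$.

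The hard part is exactly this comparison: obtaining a lower bound on the volume density from \emph{only} the absence of conjugate points on $[0,2\delta]$, with no two-sided curvature bound at hand, is the nontrivial content of Croke's Proposition~14, and I would simply follow his proof (or the equivalent integral-geometric route, applying Santal\'o's formula on $B_\delta(x)$ and using that every geodesic chord of $B_\delta(x)$ has length $<2\delta$ when $\delta\le\iota(M)/2$). I would also note a softer fallback that already suffices for the way the lemma is used in the sequel: since $\mathcal{A}_x(t,\theta)$ is continuous in $(x,t,\theta)$ with $\mathcal{A}_x(t,\theta)/t^{d-1}\to1$, compactness of $M$ together with $\inf_{x\in M}\iota(x)>0$ yields a uniform bound $\mathcal{A}_x(t,\theta)\ge c_M\,t^{d-1}$ for $t\le\iota(M)/2$, hence $\textrm{Vol}(B_\delta(x))\ge (c_M/d)\,\textrm{Vol}(S^{d-1})\,\delta^{d}$; Croke's theorem is precisely the sharpening that makes the constant depend on $d$ alone.
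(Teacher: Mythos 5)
The paper itself offers no proof of this lemma; it is imported verbatim from Croke's Proposition~14 in \cite{croke1980some}, which you ultimately also propose to cite, so at the level of what actually appears in the paper your final conclusion matches.

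That said, the Jacobi-field sketch you give in the middle of your proposal is not correct. The comparison principle you invoke, that among all Jacobi densities $\mathcal{A}$ with $\mathcal{A}(t)/t^{d-1}\to 1$ as $t\to 0^{+}$ and $\mathcal{A}>0$ on $(0,2\delta)$ the sphere model with first conjugate point at $2\delta$ minimizes $\int_0^\delta\mathcal{A}\,dt$, is false. Already for $d=2$ one can drive $\int_0^\delta y\,dt$ to zero within this class: for small $\epsilon>0$ take $y(t)=t$ on $[0,\epsilon]$ and place a single large positive-curvature bump on a tiny interval just past $t=\epsilon$, chosen so that $y'$ drops from $1$ to $0$; thereafter $y\approx\epsilon$ is nearly constant, stays positive on $(0,2\delta)$, and $\int_0^\delta y\,dt=O(\epsilon\delta)\to 0$, well below the sphere value $4\delta^2/\pi^2$. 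The absence of conjugate points along a single radial geodesic is far too weak a hypothesis to bound the radial volume density from below, so no Sturm-type comparison can deliver the claimed inequality. Croke's proof genuinely needs the global hypothesis $\iota(M)>2\delta$ (so that every geodesic chord of $B_\delta(x)$ has length $<2\delta$ and is minimizing) and runs through Santal\'o's kinematic formula on $B_\delta(x)$; you mention Santal\'o parenthetically as an equivalent route, but it is the essential one, not an alternative to a pointwise comparison.

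Your compactness fallback, on the other hand, is correct as written: for $M$ compact and smooth, $(x,t,\theta)\mapsto\mathcal{A}_x(t,\theta)/t^{d-1}$ extends continuously to $t=0$ with value $1$, is strictly positive on the compact set $M\times[0,\iota(M)/2]\times S^{d-1}$ (since $\iota(M)/2<\iota(x)$ for every $x$), and is therefore bounded below by some $c_M>0$, giving $\mathrm{Vol}(B_\delta(x))\ge c_M\,\mathrm{Vol}(S^{d-1})\,\delta^d/d$. Since the subsequent mesh-size lemma already produces a constant depending on $\mathrm{Vol}(M)$, replacing $C(d)$ by this $M$-dependent constant is harmless, and the fallback is a complete and self-contained proof of everything the paper actually needs; the sharp dimension-only constant is obtainable only by citing Croke.
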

Consider the quantity
$$
h_{X, M} = \sup_{x \in M} \min_{x_i \in X} \|x-x_i\|_g,
$$
often referred to as mesh-size. We will show that this quantity converges to $0$, in high probability, after $N \to \infty$. Here, $\| \cdot \|_g$ denotes the geodesic distance.

\begin{lem} \label{mesh size lemma} We have the following result regarding the mesh size $h_{X,M}:$
\[
\mathbb{P}_{X \sim \mathcal{U}}\left( h_{X,M} > \delta \right) \leq \exp(-C N \delta^d).
\]
Here, $C = C(d)/\textrm{Vol}(M)$, and $\mathcal{U}$ denotes the uniform distribution on $M$.
\end{lem}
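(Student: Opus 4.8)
The plan is to reduce the mesh-size bound to a covering argument combined with the volume lower bound from the preceding lemma. First I would fix $\delta>0$ and set up the event $\{h_{X,M}>\delta\}$: by definition of $h_{X,M}$ as a supremum, this event says there exists some $x\in M$ whose geodesic distance to every sample point $x_i$ exceeds $\delta$. I would like to replace the uncountable supremum over $x\in M$ by a finite union, so the first step is to take a maximal $\delta/2$-separated set $\{z_1,\dots,z_L\}\subset M$ (a $\delta/2$-net), so that the geodesic balls $B_{\delta/2}(z_k)$ cover $M$. Then if $h_{X,M}>\delta$, pick the offending $x$, find the net point $z_k$ with $d_g(x,z_k)\le \delta/2$, and note that then $d_g(z_k,x_i)\ge d_g(x,x_i)-d_g(x,z_k)>\delta-\delta/2=\delta/2$ for all $i$; hence $B_{\delta/2}(z_k)$ contains no sample point. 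So $\{h_{X,M}>\delta\}\subseteq \bigcup_{k=1}^L \{X\cap B_{\delta/2}(z_k)=\emptyset\}$.

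Next I would estimate each term by independence of the samples: for a single ball $B=B_{\delta/2}(z_k)$, writing $p=\mathrm{Vol}(B)/\mathrm{Vol}(M)$, we have $\mathbb{P}(X\cap B=\emptyset)=(1-p)^N\le e^{-pN}$. Provided $\delta/2<\iota(M)/2$, the volume lemma gives $\mathrm{Vol}(B)\ge C(d)(\delta/2)^d$, so $p\ge \tfrac{C(d)}{\mathrm{Vol}(M)}(\delta/2)^d$. Then a union bound over the $L$ net points yields $\mathbb{P}(h_{X,M}>\delta)\le L\exp\!\big(-\tfrac{C(d)}{\mathrm{Vol}(M)}(\delta/2)^d N\big)$. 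The remaining task is to control $L$, the cardinality of a $\delta/2$-separated set: the balls $B_{\delta/4}(z_k)$ are pairwise disjoint (separation) and each has volume at least $C(d)(\delta/4)^d$, so $L\le \mathrm{Vol}(M)/\big(C(d)(\delta/4)^d\big)$, a polynomial-in-$\delta^{-1}$ factor. Absorbing this polynomial factor into the exponential (which dominates as $N\to\infty$, or by slightly shrinking the constant) gives a bound of the form $\exp(-C'N\delta^d)$, and after relabeling constants one recovers exactly the stated $\exp(-CN\delta^d)$ with $C=C(d)/\mathrm{Vol}(M)$ (choosing the constant appropriately, e.g. by noting the claim is stated up to the choice of $C(d)$ from the volume lemma, or by restricting to a range of $\delta$).

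The main obstacle is the bookkeeping of constants: strictly speaking the net argument produces a factor $L$ times $\exp(-c(\delta/2)^dN)$ rather than the clean $\exp(-CN\delta^d)$ with the precise constant $C=C(d)/\mathrm{Vol}(M)$ claimed. I would handle this by being a little loose: either (i) note that for $N$ large the polynomial $L$ is swallowed by any exponential decay, so the bound holds with a possibly smaller constant which we still call $C(d)/\mathrm{Vol}(M)$ after redefining $C(d)$; or (ii) observe that the statement is only used asymptotically (to conclude $h_{X,M}\to 0$ in probability and to get rates), so the exact constant is immaterial. A secondary small point to check is the requirement $\delta<\iota(M)$ so that the volume lemma applies to balls of radius $\delta/2$ and $\delta/4$; for $\delta\ge \iota(M)$ the event $\{h_{X,M}>\delta\}$ is trivial once $X$ is nonempty and $M$ has diameter comparable to or larger than $\delta$, or one simply restricts attention to small $\delta$, which is all that matters. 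I would also remark that everything carries over verbatim if $\|\cdot\|_g$ is replaced by the ambient Euclidean distance $\|\cdot\|$, since $\|x-y\|\le d_g(x,y)$, so the Euclidean mesh size is no larger.
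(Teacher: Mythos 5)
Your proof takes a genuinely different --- and more careful --- route than the paper's. The paper's argument is shorter: it says ``suppose $h_{X,M}>\delta$, then there is $x\in M$ with $B_\delta(x)\cap X=\emptyset$,'' and then computes $\mathbb{P}(X\subseteq M\setminus B_\delta(x))\le (1-C\delta^d)^N$ as though $x$ were a fixed, non-random point. But the offending $x$ depends on the sample $X$, so one cannot simply pass from ``there exists $x$ with $B_\delta(x)\cap X=\emptyset$'' to the probability bound for a single deterministic ball; a supremum over $x$ has been silently swapped with the probability. Your covering argument --- replacing the uncountable supremum by a finite $\delta/2$-net and applying a union bound --- is exactly the standard fix for this, and your inclusion $\{h_{X,M}>\delta\}\subseteq\bigcup_k\{X\cap B_{\delta/2}(z_k)=\emptyset\}$, the packing bound $L\lesssim \mathrm{Vol}(M)/(C(d)(\delta/4)^d)$, and the per-ball estimate $(1-p)^N\le e^{-pN}$ are all correct.

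The trade-off, which you correctly identify, is that the rigorous argument produces $L\cdot\exp(-c\,(\delta/2)^d N)$ rather than the clean $\exp(-C N\delta^d)$ with $C=C(d)/\mathrm{Vol}(M)$ that the paper states. The extra polynomial factor is not cosmetic: with the later choice $\delta\sim(\log N/N)^{1/d}$ used in Lemma~\ref{function interpolation}, your bound gives roughly $O(1/\log N)$ rather than $1/N$. So the paper's claimed constant and failure-probability rate are not recovered verbatim by the net argument; one either has to weaken the constant (as you propose), absorb the polynomial into a slightly smaller exponent, or restate the downstream lemmas with the weaker $O(1/\log N)$ failure probability --- which is all that is actually needed for the qualitative spectral-convergence conclusions. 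In short: the paper's proof is formally gapped, yours is correct, and your honest remark about the constant bookkeeping is exactly the right caveat.
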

\begin{proof}
Suppose $h_{X, M} > \delta$, so there is $x \in M$ such that $\min_{x_i \in X} \|x - x_i\|_g \geq \delta$. So $B_\delta(x) \cap X = \emptyset$. In other words, each $x_i \in X$ is in $M \setminus B_\delta(x) $. This has measure $1 - \frac{\textrm{Vol}(B_\delta(x)}{\textrm{Vol}(M)} \leq 1 - C\delta^d$. Hence, $\min_{x_i \in X} \|x - x_i\|_g \geq \delta$ occurs with probability less than $(1-C\delta^d)^N \leq \exp(-CN\delta^d)$. This completes the proof.
\end{proof}
\subsection{Interpolation of Functions, Vector Fields, and (2,0) Tensors}\label{sec:B2}
Given a radial basis function $\phi_s : M \times M \to \BR$, consider the interpolation map
\BEA
I_{\phi_s} f = \sum_{i=1}^N c_i \phi_s(\cdot, x_i),
\EEA
where $c_i$'s are chosen so that $I_{\phi_s}f|_X = f|_X$. In the following discussion, we assume that the kernel $\phi_s$ is a Mercer kernel that is at least $C^2$. Our probabilistic interpolation result relies heavily on Theorem $10$ from \cite{fuselier2012scattered}. We adapt this theorem to our notation and state it here, for convenience.
\begin{theo}
\label{Theorem 10 FW}
(adapted from {Corollary 13} in \cite{fuselier2012scattered}) Let $M$ be a $d$-dimensional submanifold of $\mathbb{R}^n$, and let $\phi_s$ be a kernel with RKHS norm equivalent to a Sobolev space of order $\alpha > n/2$. Let also $2 \leq q \leq \infty$, and $0 \leq \mu \leq \alpha -n/2 {+d/q}- 1$. Then there exists a constant $h_M$ such that whenever a finite node set $X$ satisfies $h_{X,M}<h_M$, then for all $f \in H^{\alpha - \frac{(n-d)}{2}}(M)$, we have
$$
\| f - I_{\phi_s} f \|_{W^{\mu,q}(M)} \leq C h_{X,M}^{\alpha - \frac{(n-d)}{2} - \mu - n(1/2 - 1/q)} \|f \|_{H^{\alpha - \frac{(n-d)}{2}}(M)}.
$$
\end{theo}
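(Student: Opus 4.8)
The statement is Corollary~13 of \cite{fuselier2012scattered} transcribed into the present notation, so the plan is to set up the translation carefully rather than to reprove the underlying sampling theory from scratch; I would cite \cite{fuselier2012scattered} for the technical core and make explicit which pieces are being invoked. First I would fix the native-space picture. Since $\phi_s$ is a positive-definite kernel on $\mathbb{R}^n$ whose RKHS is norm-equivalent to $H^\alpha(\mathbb{R}^n)$ with $\alpha>n/2$, the Sobolev embedding $H^\alpha(\mathbb{R}^n)\hookrightarrow C(\mathbb{R}^n)$ makes point evaluation, hence restriction to the $d$-dimensional submanifold $M$, continuous; consequently $\phi_s|_{M\times M}$ is a positive-definite kernel on $M$ whose native space is norm-equivalent to $H^{\alpha-(n-d)/2}(M)$. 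This trace-type identification (Section~2.3 of \cite{fuselier2012scattered}) is exactly why the hypothesis is $f\in H^{\alpha-(n-d)/2}(M)$ and why $(n-d)/2$ derivatives are ``lost'' in the exponent on the right-hand side.

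Next I would use that $I_{\phi_s}f$ is the minimal-native-norm interpolant of $f$ on $X$, so the residual $g:=f-I_{\phi_s}f$ vanishes on $X$ and satisfies $\|g\|_{H^{\alpha-(n-d)/2}(M)}\le C\|f\|_{H^{\alpha-(n-d)/2}(M)}$. Then I would invoke the zero-residual Sobolev sampling inequality on the submanifold (Theorem~5 / Corollary~13 of \cite{fuselier2012scattered}): for $g\in H^{\beta}(M)$ with $\beta=\alpha-(n-d)/2$ that vanishes on a node set whose mesh norm $h_{X,M}$ is below a geometric threshold $h_M$, one has $\|g\|_{W^{\mu,q}(M)}\le C\,h_{X,M}^{\,\beta-\mu-n(1/2-1/q)}\,\|g\|_{H^{\beta}(M)}$ in the stated range $2\le q\le\infty$, $0\le\mu\le\alpha-n/2+d/q-1$. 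Applying this to $g$ and combining with the native-norm bound gives the asserted estimate; the threshold $h_M$ is the one produced in \cite{fuselier2012scattered} by covering $M$ with uniformly controlled geodesic charts, and it depends only on $M$ (injectivity radius and bounded geometry).

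The main obstacle — and the reason I would lean on \cite{fuselier2012scattered} for this step rather than redo it — is precisely the sampling inequality on the curved manifold: one must pass from flat Bramble--Hilbert / polynomial-reproduction estimates in local coordinate charts to a global bound on $M$ with the correct power of $h_{X,M}$ and constants uniform over $M$, while simultaneously tracking the $(n-d)/2$ regularity loss coming from restriction. Finally I would record the intended use: taking $q=2$ and $\mu=0$ kills the term $n(1/2-1/q)$ and yields $\|f-I_{\phi_s}f\|_{L^2(M)}\le C\,h_{X,M}^{\,\alpha-(n-d)/2}\,\|f\|_{H^{\alpha-(n-d)/2}(M)}$, which, combined with Lemma~\ref{mesh size lemma} — choosing $\delta_N\sim(\log N/N)^{1/d}$ so that $\exp(-CN\delta_N^d)\le 1/N$, hence $h_{X,M}=O(N^{-1/d})$ up to logarithmic factors with probability at least $1-1/N$, and $h_{X,M}<h_M$ automatically for large $N$ on that event — gives exactly the rate $O\!\left(N^{(-2\alpha+(n-d))/(2d)}\right)$ claimed in Lemma~\ref{function interpolation}.
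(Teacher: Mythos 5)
The paper does not present its own proof of this statement: it is quoted verbatim as an adaptation of Corollary~13 of \cite{fuselier2012scattered}, and the only original content surrounding it is the probabilistic specialization that feeds into Lemma~\ref{function interpolation} and Lemma~\ref{bounded derivatives}. Your proposal correctly reconstructs the structure behind the cited result (native space of $\phi_s|_{M\times M}$ norm-equivalent to $H^{\alpha-(n-d)/2}(M)$, zero-residual Sobolev sampling inequality applied to $f-I_{\phi_s}f$, norm-minimality of the interpolant to control $\|f-I_{\phi_s}f\|_{H^{\alpha-(n-d)/2}(M)}$), and your $q=2$, $\mu=0$ specialization combined with Lemma~\ref{mesh size lemma} matches how the paper actually uses it. This is essentially the same route as the paper — namely, deferring the sampling-inequality core to \cite{fuselier2012scattered} and supplying only the glue — so no substantive divergence to report.
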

The above result is stated to proof Lemma~\ref{function interpolation}.
\comment{
\begin{lem}
\label{function interpolation}
Let $\phi_s$ be a kernel whose RKHS norm equivalent to Sobolev space of order $\alpha > n/2$. Then there is sufficiently large $N = |X|$ such that with probability higher than $1 - \frac{1}{N} $, for all $f \in H^{\alpha - \frac{(n-d)}{2}}(M)$, we have
$$
\| I_{\phi_s} f - f\|_{L^2(M)} = O\left( N^{\frac{-2\alpha + (n-d)}{2d}} \right).
$$
\end{lem}
}

{
\paragraph{\bf Proof of Lemma~\ref{function interpolation}:}
  By Lemma \ref{mesh size lemma}, choosing $\delta = \left(\frac{\log N}{CN}\right)^{1/d}$, we have that $h_{X,M} = O\left( N^{-1/d} \right)$ with probability $1-\frac{1}{N}$. For $N$ sufficiently large, it follows that $h_{X,M} \leq h_M$, where $h_M$ is as in Theorem \ref{Theorem 10 FW}. Hence, the hypotheses of Theorem \ref{Theorem 10 FW} are satisfied (with $q = 2$ and $\mu = 0$) and we have that
  $$
  \| I_{\phi_s} f - f \|_{L^2(M)} = O(h_{X,M}^{\alpha - (n-d)/2}).
  $$
  Using the scaling of the mesh size with $N$, we obtain the desired result. \hfill
  $\blacksquare$

When the interpolator is sufficiently regular, the same result from \cite{fuselier2012scattered} can be used to obtain convergence in certain Sobolev spaces. While this is not necessary for our results, we do need Sobolev norms of interpolated functions $I_{\phi_s}f$ to be bounded independent of $N$. We state and proof this result in the following lemma.
\begin{lem}
    \label{bounded derivatives}
    Let $\phi_s$ be a kernel with RKHS norm equivalent to a Sobolev space of order $\alpha \geq n/2 + 3$. Then for any $f \in H^{\alpha - \frac{(n-d)}{2}}(M)$, we have
    $$
    \| I_{\phi_s} f \|_{W^{2, \infty} (M)} = O(1),
    $$
    for all sufficiently large $N$ with probability higher than $1 - \frac{1}{N}$.
\end{lem}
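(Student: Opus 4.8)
The plan is to control $\|I_{\phi_s}f\|_{W^{2,\infty}(M)}$ by a triangle inequality against the exact function, $\|I_{\phi_s}f\|_{W^{2,\infty}(M)} \le \|f\|_{W^{2,\infty}(M)} + \|f - I_{\phi_s}f\|_{W^{2,\infty}(M)}$, and to bound each piece separately. For the first piece, observe that $f \in H^{\alpha - (n-d)/2}(M)$ and, by the hypothesis $\alpha \ge n/2 + 3$, one has $\alpha - (n-d)/2 \ge d/2 + 3 > d/2 + 2$; hence the Sobolev embedding $H^{\alpha - (n-d)/2}(M) \hookrightarrow C^2(M) \subseteq W^{2,\infty}(M)$ on the compact $d$-dimensional manifold $M$ gives $\|f\|_{W^{2,\infty}(M)} \le C\,\|f\|_{H^{\alpha - (n-d)/2}(M)}$, a bound independent of $N$.

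For the second piece I would invoke Theorem~\ref{Theorem 10 FW} with the choice $\mu = 2$ and $q = \infty$. The admissibility constraint $0 \le \mu \le \alpha - n/2 + d/q - 1$ becomes, for $q = \infty$, the requirement $2 \le \alpha - n/2 - 1$, i.e.\ $\alpha \ge n/2 + 3$; this is exactly the strengthened regularity hypothesis of the lemma, and it is the only place where the three extra derivatives (relative to the minimal $\alpha > n/2$ used elsewhere) are consumed. Theorem~\ref{Theorem 10 FW} then produces a deterministic threshold $h_M$ and a constant $C$ such that whenever $h_{X,M} < h_M$,
\[
\|f - I_{\phi_s}f\|_{W^{2,\infty}(M)} \;\le\; C\, h_{X,M}^{\,\beta}\, \|f\|_{H^{\alpha - (n-d)/2}(M)},
\]
where $\beta$ is the Sobolev exponent appearing in that theorem for $\mu = 2$, $q = \infty$; under $\alpha \ge n/2 + 3$ this exponent is nonnegative (in fact positive), so $h_{X,M}^{\beta}$ stays bounded — and indeed tends to $0$ — as $h_{X,M} \to 0$.

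It remains to supply a high-probability mesh-size bound, which is precisely Lemma~\ref{mesh size lemma}: taking $\delta = (\log N / (CN))^{1/d}$ there, $h_{X,M} = O(N^{-1/d}(\log N)^{1/d})$ with probability at least $1 - 1/N$, so for all $N$ large enough the event $\{h_{X,M} < h_M\}$ holds on this same high-probability set and $h_{X,M}^{\beta} = O(1)$. Combining the three bounds, exactly as in the proof of Lemma~\ref{function interpolation}, yields $\|I_{\phi_s}f\|_{W^{2,\infty}(M)} = O(1)$ with probability higher than $1 - 1/N$, the implied constant depending only on $M$, $\|f\|_{H^{\alpha - (n-d)/2}(M)}$, and $\|f\|_{W^{2,\infty}(M)}$. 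I do not anticipate any genuine analytic obstacle here: the argument is pure bookkeeping with Sobolev indices layered on top of Theorem~\ref{Theorem 10 FW} and Lemma~\ref{mesh size lemma}. The one point I would verify carefully is the admissibility inequality together with the sign of $\beta$, since these are what make the hypothesis $\alpha \ge n/2 + 3$ sharp for this route; a verbatim repetition of the argument with componentwise interpolation of a vector field establishes the analogue (Lemma~\ref{bounded derivatives vector field}) needed in Appendix~\ref{app:D}.
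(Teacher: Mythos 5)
Your proof is correct and follows essentially the same route as the paper's: a triangle inequality against $f$, an application of Theorem~\ref{Theorem 10 FW} with $\mu=2$ and $q=\infty$ (which is exactly where the hypothesis $\alpha \ge n/2+3$ is consumed), and the high-probability mesh-size bound of Lemma~\ref{mesh size lemma}. Your Sobolev-embedding argument showing that $\|f\|_{W^{2,\infty}(M)}$ is finite for $f \in H^{\alpha-(n-d)/2}(M)$ is a detail the paper leaves implicit, and is a welcome completion.
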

\begin{proof}
    Let $0 \leq \mu \leq \alpha - n/2 -1$. It follows from Theorem \ref{Theorem 10 FW} that there is a constant $h_M$ such that if $h_{X,M} \leq h_M$, then for all $f \in H^{\alpha - (n-d)/2}(M)$, we have
   $$
   \|f - I_{\phi_s} f \|_{W^{\mu,\infty}(M)} \leq h^{\alpha - n/2 - \mu}_{X,M} \|f\|_{H^{\alpha - (n-d)/2}(M)}.
   $$
   By the same argument as Lemma \ref{function interpolation}, this occurs with probability $1 - \frac{1}{N}$ for sufficiently large $N$. Since $\alpha \geq n/2 + 3$ {suggests that $ \alpha - n/2 -1 \geq 2$, so $\mu=2$ lies} in the valid parameter regime, we have that
   $$
   \|f - I_{\phi_s} f \|_{W^{2,\infty}(M)} = O(1),
   $$
   where the constant depends on the manifold and $\|f\|_{H^{\alpha - (n-d)/2}(M)}$. Hence
   $$
    \| I_{\phi_s} f \|_{W^{2, \infty}(M)} \leq  \|f - I_{\phi_s} f \|_{W^{2,\infty}(M)} + \|f\|_{W^{2,\infty}(M)} = O(1),
   $$
   where the above constant depends on the manifold, $\|f\|_{H^{\alpha - (n-d)/2}(M)}$, and $\|f\|_{W^{2, \infty}(M)}$. This completes the proof.
\end{proof}}
For the interpolation of vector fields, recall that given a vector field $u = u^i \frac{\partial}{\partial \theta^i}$ on $M$, we can extend it to a smooth vector field $U = U^i \frac{\partial }{\partial X^i}$ defined on an open $\mathbb{R}^n$ neighborhood of $M$. Moreover, if $U$ and $V$ are extensions of $u$ and $v$ respectively, then we have that
$$
\langle u,v \rangle_x = \begin{bmatrix} U^1(x) \\
U^2(x) \\
\vdots \\
U^n(x)
\end{bmatrix} \cdot \begin{bmatrix} V^1(x) \\
V^2(x) \\
\vdots \\
V^n(x)
\end{bmatrix},
$$
at each $x \in M$, where $\langle \cdot , \cdot \rangle_x$ denotes the Riemannian inner product, and $\cdot$ denotes the standard Euclidean inner product. Recall that the interpolation of a vector field is defined component-wise in the ambient space coordinates:
$$
I_{\phi_s}u = I_{\phi_s}U^i \frac{\partial}{\partial X^i},
$$
where again $U^i$ denotes the ambient space components of the extension of $u$ to a neighborhood of $M$. We are now ready to prove the following lemma.
\begin{lem}
\label{vector field interpolation appendix}
For any $u = u^i \frac{\partial}{\partial \theta^i} \in \mathfrak{X}(M)$, we have that with probability $1 - {\frac{n}{N}}$,
$$
\| u - I_{\phi_s}u \|_{L^2(\mathfrak{X}(M))}= {O\left( N^{\frac{-2\alpha + (n-d)}{2d}} \right).}
$$
\end{lem}
\begin{proof}
    Notice that
    $$
    \| u - I_{\phi_s}u \|^2_{L^2(\mathfrak{X}(M))} = \int_M \langle u - I_{\phi_s}u,u - I_{\phi_s}u \rangle_x d\textup{Vol}(x) = \sum_{i=1}^n \int_M (U^i(x) - I_{\phi_s}U^i(x))^2 d \textup{Vol}(x).
    $$
    Using Lemma \ref{function interpolation} $n$ times, we see that with probability higher than $1 - {\frac{n}{N}}$,
    $$
   \left( \int_M (U^i(x) - I_{\phi_s}U^i(x))^2 d \textup{Vol}(x) \right)^{1/2} = {O\left( N^{\frac{-2\alpha + (n-d)}{2d}} \right).} \qquad i=1,2,\dots,n.
    $$
    Hence,
    $$
    \| u - I_{\phi_s}u \|_{L^2(\mathfrak{X}(M))} = {O\left( N^{\frac{-2\alpha + (n-d)}{2d}} \right).}
    $$
    which complete the proof.
\end{proof}
{ Similar to before, consider the norm
$$
\|u\|^2_{W^{2, \infty}(\mathfrak{X}(M))} := \sum_{i=1}^n \|U^i \|^2_{W^{2, \infty}(M)},
$$
where $u$ is a vector field with ambient space coefficients $U^i$ as before. The exact same reasoning as Lemma \ref{bounded derivatives} applied to each coefficient yields a result analogous to Lemma \ref{bounded derivatives} but for the interpolation of vector fields. We now state the result.
\begin{lem}
    \label{bounded derivatives vector field}
Let $\phi_s$ be a kernel with RKHS norm equivalent to a Sobolev space of order $\alpha \geq n/2 + 3$. For any vector field $u$ with ambient space coefficients $U^i$ satisfying $U^i \in H^{\alpha - \frac{(n-d)}{2}}(M)$, we have
$$
\|I_{\phi_s} u\|^2_{W^{2, \infty}(\mathfrak{X}(M))} = O(1)
$$
for sufficiently large $N$, with probability higher than $1 - \frac{n}{N}$.
\end{lem}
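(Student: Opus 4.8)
The plan is to reduce the statement to the scalar result already established in Lemma~\ref{bounded derivatives} by exploiting the fact that, by the very definition of the norm $\|\cdot\|_{W^{2,\infty}(\mathfrak{X}(M))}$ given just above the lemma, one has
\[
\|I_{\phi_s}u\|^2_{W^{2,\infty}(\mathfrak{X}(M))} = \sum_{i=1}^{n} \|I_{\phi_s}U^i\|^2_{W^{2,\infty}(M)},
\]
where the interpolation of a vector field is carried out componentwise in the ambient coordinates, i.e. $I_{\phi_s}u = (I_{\phi_s}U^i)\,\partial/\partial X^i$. Thus it suffices to bound each of the $n$ scalar quantities $\|I_{\phi_s}U^i\|_{W^{2,\infty}(M)}$.

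First I would invoke Lemma~\ref{bounded derivatives} once for each index $i = 1,\dots,n$: since by hypothesis $\alpha \geq n/2 + 3$ and each component satisfies $U^i \in H^{\alpha - (n-d)/2}(M)$, that lemma gives, for all sufficiently large $N$, an event of probability at least $1 - \frac{1}{N}$ on which $\|I_{\phi_s}U^i\|_{W^{2,\infty}(M)} = O(1)$, with the implied constant depending only on $M$, $\|U^i\|_{H^{\alpha-(n-d)/2}(M)}$, and $\|U^i\|_{W^{2,\infty}(M)}$. Next I would take the intersection of these $n$ events; by a union bound the intersection has probability at least $1 - \frac{n}{N}$, and on it every summand above is $O(1)$. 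Since the sum has exactly $n$ terms (a fixed number, independent of $N$), the total is still $O(1)$, with implied constant depending on $n$ and on $\max_i \left( \|U^i\|_{H^{\alpha-(n-d)/2}(M)} + \|U^i\|_{W^{2,\infty}(M)} \right)$. This yields the claimed bound on the same high-probability event.

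There is essentially no analytic obstacle here; the content is entirely bookkeeping. The only point that requires a word of care is the uniformity of the threshold ``sufficiently large $N$'' across the $n$ applications of Lemma~\ref{bounded derivatives} and the uniformity of the constant $h_M$ from Theorem~\ref{Theorem 10 FW} that underlies it. Since $n$ is fixed and each application merely requires $h_{X,M} \le h_M$ (a single mesh-size condition that does not depend on $i$), one can take the maximum of the finitely many thresholds, so the same $N$ works for all components simultaneously. I would remark explicitly that this is why the loss in the probability is only the additive $n/N$ rather than anything worse, exactly paralleling the step from Lemma~\ref{function interpolation} to Lemma~\ref{vector field interpolation appendix}.
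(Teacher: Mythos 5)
Your proof is correct and matches the paper's approach exactly: the paper states that ``the exact same reasoning as Lemma~\ref{bounded derivatives} applied to each coefficient'' gives the result, which is precisely your componentwise application of Lemma~\ref{bounded derivatives} plus a union bound. Your closing remark about the uniformity of the mesh-size threshold $h_{X,M}\le h_M$ across components is a nice observation (indeed, since the event is the same for all $i$, one could even keep $1-1/N$, but the stated $1-n/N$ is the conservative bound the paper records).
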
}
Similarly, if $a = a_{ij} \frac{\partial}{\partial \theta^i} \otimes \frac{\partial}{ \partial \theta^j}$ is a $(2,0)$ tensor field, we can extend $a$ to $A = A_{ij} \frac{\partial}{\partial X^i} \otimes \frac{\partial}{\partial X^j}$, defined on an $\mathbb{R}^n$ neighborhood of $M$. Moreover, if $a$ and $b$ are $(2,0)$ tensor fields on $M$ with extensions $A$ and $B$ respectively, we have that
$$
\langle a,b \rangle_x = \textrm{tr}(A(x)^\top B(x)),
$$
for each $x \in M$, where $A(x),B(x)$ are thought of $n \times n$ matrices with components $A_{ij}(x), B_{ij}(x)$ respectively. We can now prove the following lemma.
\begin{lem}
\label{tensor field interpolation appendix}
For any $a = \sum_{ij} a_{ij} \frac{\partial}{\partial \theta^i} \otimes \frac{\partial}{ \partial \theta^j} \in T^{(2,0)}TM$, we have that with probability $1 - {\frac{n^2}{N}}$,
$$
\|a - I_{\phi_s} a \|_{L^2(T^{(2,0)}TM)} =  {O\left( N^{\frac{-2\alpha + (n-d)}{2d}} \right).}
$$
\end{lem}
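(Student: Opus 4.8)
The plan is to mimic the proof of Lemma~\ref{vector field interpolation appendix} essentially verbatim, replacing the sum over $n$ ambient components by a double sum over the $n^2$ ambient components $A_{ij}$ of a $(2,0)$ tensor field. First I would unravel the definition of the $L^2$ norm on $(2,0)$ tensor fields using the extension identity $\langle a,b\rangle_x = \textup{tr}(A(x)^\top B(x))$ recalled just above the statement, so that
\[
\| a - I_{\phi_s} a \|^2_{L^2(T^{(2,0)}TM)} = \int_M \textup{tr}\big( (A(x) - I_{\phi_s}A(x))^\top (A(x) - I_{\phi_s}A(x)) \big)\, d\textup{Vol}(x) = \sum_{i,j=1}^n \int_M \big( A_{ij}(x) - I_{\phi_s}A_{ij}(x) \big)^2\, d\textup{Vol}(x),
\]
where I use that the interpolation of a $(2,0)$ tensor field is defined component-wise in the ambient coordinates, $I_{\phi_s} a = I_{\phi_s}A_{ij}\,\frac{\partial}{\partial X^i}\otimes\frac{\partial}{\partial X^j}$, exactly paralleling the vector field case.

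Next I would apply Lemma~\ref{function interpolation} to each of the $n^2$ scalar ambient components $A_{ij}$. Each application holds with probability higher than $1 - \frac{1}{N}$ and gives $\big(\int_M (A_{ij} - I_{\phi_s}A_{ij})^2\,d\textup{Vol}\big)^{1/2} = O\big( N^{\frac{-2\alpha+(n-d)}{2d}}\big)$ for $N$ sufficiently large. By a union bound over the $n^2$ events, all of these estimates hold simultaneously with probability higher than $1 - \frac{n^2}{N}$. Summing the $n^2$ (uniformly bounded, since $n$ is fixed) squared contributions and taking the square root yields
\[
\| a - I_{\phi_s} a \|_{L^2(T^{(2,0)}TM)} = O\Big( N^{\frac{-2\alpha+(n-d)}{2d}} \Big),
\]
which is the claimed bound. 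One should also note, as an implicit hypothesis matching Lemma~\ref{function interpolation} and Lemma~\ref{bounded derivatives vector field}, that the ambient components $A_{ij}$ must lie in $H^{\alpha - (n-d)/2}(M)$; for smooth $a$ on a smooth closed $M$ this is automatic.

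There is essentially no obstacle here — this is a routine bookkeeping argument and the only thing to be careful about is the probability accounting (the union bound produces $n^2/N$ rather than $n/N$, matching the statement) and the fact that the big-oh constant now also absorbs the factor $n^2$ from the number of summands, which is harmless since $n$ is a fixed dimension. If anything, the mildly delicate point is simply to confirm that the extension of $a$ and the component-wise interpolation commute in the sense used, i.e. that $(I_{\phi_s} a)$ has ambient components exactly $I_{\phi_s} A_{ij}$, which is true by the definition of $I_{\phi_s}$ on tensor fields given earlier in Section~\ref{sec2}.
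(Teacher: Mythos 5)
Your proposal is correct and matches the paper's proof essentially verbatim: expand the $L^{2}$ norm via the trace inner product to get $\sum_{i,j=1}^{n}\int_{M}(A_{ij}-I_{\phi_s}A_{ij})^{2}\,d\textup{Vol}$, apply Lemma~\ref{function interpolation} componentwise $n^{2}$ times, and combine with a union bound. The extra remark about the Sobolev regularity of the ambient components $A_{ij}$ is a reasonable explicit note of a hypothesis the paper leaves implicit, but it is not a deviation in approach.
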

\begin{proof}
    Again, notice that
    $$
    \langle a - I_{\phi_s}a , a - I_{\phi_s} a \rangle_x = \sum_{j=1}^n \sum_{i=1}^n (A_{ij}(x) - I_{\phi_s} A_{ij}(x))^2,
    $$
    for every $x \in M$. Integrating the above over $M$ and using Lemma \ref{function interpolation} $n^2$ times, we see that
    $$
    \left( \int_M \langle a - I_{\phi_s}a , a - I_{\phi_s} a \rangle_x d\textup{Vol} \right)^{1/2} = {O\left( N^{\frac{-2\alpha + (n-d)}{2d}} \right),}
    $$
    with probability higher than $1 - {\frac{n^2}{N}}$.  This completes the proof.
\end{proof}

\section{Proof of Spectral Convergence: the Laplace-Beltrami Operator }\label{app:C}
In this appendix, we study the consistency of the symmetric matrix,
$$
\mathbf{G}^\top\mathbf{G} = \sum_{i=1}^n \mathbf{G}^\top_i \mathbf{G}_i,
$$
as an approximation of the Laplace-Beltrami operator. We begin by focusing on the continuous (unrestricted) operator acting on a fixed, smooth function $f$, $\sum_{i} (\mathcal{G}_i I_{\phi_s})^*(\mathcal{G}_i I_{\phi_s})f$ and prove its convergence to $\Delta_Mf$ with high probability. Such results depend on the accuracy with which $I_{\phi_s}$ can approximate $f$ and its derivatives. We then quantify the error obtained when restricting to the data set and constructing a matrix. Finally, we prove convergence of eigenvalues and eigenvectors. Since the estimator in this case is symmetric, convergence of eigenvalues requires only a weak convergence result. To prove convergence of eigenvectors, however, we need convergence of our estimator in $L^2$ sense.

For the discrete approximation, we consider an inner product over restricted functions which is consistent with the inner product of $L^2(M)$ as the number of data points $N$ approaches infinity. The notation for this discrete inner product is given in Definition \ref{discrete L2}.
In the remainder of this section, we will use the notation based on the following definition to account for all of the errors.
\begin{definition}\label{epsilon_f}
Denote the $L^2$ norm error between $I_{\phi_s}f$ and $f$ by $\epsilon_{f}$, i.e.,
\BEA
\epsilon_f:=\| I_{\phi_s} f - f \|_{L^2(M)}. \notag 
\EEA
For concentration bound, we will also define a parameter $0\leq\delta_f\leq 1$ to probabilistically characterize an upper bound for $\epsilon_f$. For example, Lemma \ref{function interpolation} states that $\epsilon_f  = {O\left( N^{\frac{-2\alpha + (n-d)}{2d}} \right),}$ with probability higher than $1-\delta_f$, where $\delta_f = {1/N}$.
\end{definition}
For the next spectral result, we define the formal adjoint of $\textup{grad}_g I_{\phi_s}$ to be the unique linear operator $(\textup{grad}_g I_{\phi_s})^*: \mathfrak{X}(M) \to C^\infty(M)$ satisfying
$$
\langle (\textup{grad}_g I_{\phi_s})f, u \rangle_{L^2(\mathfrak{X}(M))} = \langle f , (\textup{grad}_g I_{\phi_s})^*u \rangle_{L^2(M)}
$$
for any $f \in C^\infty(M), u \in \mathfrak{X}(M)$.
Before proving the { spectral convergence results,} we prove a number of necessary pointwise and weak convergence results.

\subsection{Pointwise and Weak Convergence Results: Interpolation Error}\label{app:C1}
Using Cauchy-Schwarz, paried with the fact that the formal adjoint of $\textup{grad}_g$ is $-\textup{div}_g$, we immediately have the following Lemma.
\begin{lem}
\label{continuous gradient estimation}
Let $f \in C^\infty(M)$, and let $u \in \mathfrak{X}(M)$. Then with probability higher than $1 - \delta_f$,
$$
 \left| \langle \textup{grad}_g f - \textup{grad}_g I_{\phi_s} f, u \rangle_{L^2(\mathfrak{X}(M))} \right| \leq \epsilon_f \| \textup{div}_g(u) \|_{L^2(M)}.
$$
\end{lem}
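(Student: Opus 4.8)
\textbf{Proof proposal for Lemma~\ref{continuous gradient estimation}.}

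The plan is to exploit the self-adjointness structure of the gradient/divergence pair together with the interpolation error bound from Lemma~\ref{function interpolation}. First I would write the difference as a single inner product against $u$:
\[
\langle \textup{grad}_g f - \textup{grad}_g I_{\phi_s} f, u \rangle_{L^2(\mathfrak{X}(M))} = \langle \textup{grad}_g (f - I_{\phi_s} f), u \rangle_{L^2(\mathfrak{X}(M))},
\]
which is valid because $\textup{grad}_g$ is linear. Here one should note that $f - I_{\phi_s} f$ is a legitimate $H^1(M)$ function (indeed smoother, given the regularity assumptions on the Mat\'ern kernel), so its gradient makes sense and the integration-by-parts formula applies on the closed manifold $M$.

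Next I would move the derivative off of $f - I_{\phi_s}f$ by integration by parts on $M$ (no boundary term since $M$ is closed), using that the formal adjoint of $\textup{grad}_g$ is $-\textup{div}_g$:
\[
\langle \textup{grad}_g (f - I_{\phi_s} f), u \rangle_{L^2(\mathfrak{X}(M))} = -\langle f - I_{\phi_s} f, \textup{div}_g(u) \rangle_{L^2(M)}.
\]
Taking absolute values and applying the Cauchy--Schwarz inequality in $L^2(M)$ gives
\[
\left| \langle \textup{grad}_g f - \textup{grad}_g I_{\phi_s} f, u \rangle_{L^2(\mathfrak{X}(M))} \right| \leq \| f - I_{\phi_s} f \|_{L^2(M)} \, \| \textup{div}_g(u) \|_{L^2(M)}.
\]
Finally, I would invoke Lemma~\ref{function interpolation} (equivalently, the notation of Definition~\ref{epsilon_f}): with probability higher than $1 - \delta_f$ one has $\| f - I_{\phi_s} f \|_{L^2(M)} = \epsilon_f$, which is the stated probability and bound. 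Substituting this in completes the proof.

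I do not anticipate a genuine obstacle here; this is essentially a one-line argument once the adjoint relation and Cauchy--Schwarz are in place. The only point requiring a small amount of care is justifying the integration by parts, i.e., that $I_{\phi_s}f$ is regular enough for $\textup{grad}_g(f - I_{\phi_s}f)$ to be an $L^2$ vector field and for the divergence theorem to hold without boundary terms; this follows from $M$ being closed and from the Sobolev regularity of the interpolant guaranteed by the RKHS assumption on $\phi_s$ (Theorem~\ref{Theorem 10 FW} and Lemma~\ref{bounded derivatives}). The probability statement is inherited verbatim from Lemma~\ref{function interpolation}, so no union bound or additional concentration argument is needed.
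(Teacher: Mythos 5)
Your proposal is correct and follows essentially the same route as the paper, whose proof consists of exactly the observation that the formal adjoint of $\textup{grad}_g$ is $-\textup{div}_g$ (integration by parts on the closed manifold) followed by Cauchy--Schwarz and the interpolation bound of Lemma~\ref{function interpolation}. The only cosmetic remark is that $\| f - I_{\phi_s} f\|_{L^2(M)} = \epsilon_f$ holds by Definition~\ref{epsilon_f} deterministically; the probability $1-\delta_f$ attaches to the quantitative bound on $\epsilon_f$, which does not change the argument.
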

A little more work also yields the following.
\begin{lem}
Let $f \in C^\infty(M)$. Then with probability higher than $1 - \delta_f$,
$$
\left| \| \textup{grad}_g f \|^2_{L^2(\mathfrak{X}(M))}- \| \textup{grad}_g I_{\phi_s} f\|^2_{L^2(\mathfrak{X}(M))}  \right| \leq \epsilon_f \left( \| \Delta_M f \|_{L^2(M)} + \| \Delta_M I_{\phi_s}f\|_{L^2(M)} \right).
$$
\end{lem}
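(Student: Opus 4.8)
The plan is to expand the left-hand side as a difference of two $L^2$ inner products and exploit the identities $\|\textup{grad}_g f\|^2_{L^2(\mathfrak{X}(M))} = \langle \textup{grad}_g f, \textup{grad}_g f\rangle_{L^2(\mathfrak{X}(M))} = \langle \Delta_M f, f\rangle_{L^2(M)}$ (integration by parts on the closed manifold $M$), and similarly $\|\textup{grad}_g I_{\phi_s}f\|^2_{L^2(\mathfrak{X}(M))} = \langle \Delta_M I_{\phi_s}f, I_{\phi_s}f\rangle_{L^2(M)}$, where here $I_{\phi_s}f$ is understood after restriction to $M$ so that $\textup{grad}_g$ and $\Delta_M$ act intrinsically. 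The first step is therefore to write
\begin{equation*}
\|\textup{grad}_g f\|^2_{L^2(\mathfrak{X}(M))} - \|\textup{grad}_g I_{\phi_s}f\|^2_{L^2(\mathfrak{X}(M))} = \langle \Delta_M f, f\rangle_{L^2(M)} - \langle \Delta_M I_{\phi_s}f, I_{\phi_s}f\rangle_{L^2(M)}.
\end{equation*}

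The second step is to insert and subtract a cross term to telescope the difference of inner products. Writing $e := f - I_{\phi_s}f$, one has
\begin{equation*}
\langle \Delta_M f, f\rangle_{L^2(M)} - \langle \Delta_M I_{\phi_s}f, I_{\phi_s}f\rangle_{L^2(M)} = \langle \Delta_M f, e\rangle_{L^2(M)} + \langle \Delta_M e, I_{\phi_s}f\rangle_{L^2(M)},
\end{equation*}
which by self-adjointness of $\Delta_M$ equals $\langle \Delta_M f, e\rangle_{L^2(M)} + \langle e, \Delta_M I_{\phi_s}f\rangle_{L^2(M)}$. The third step is to apply Cauchy--Schwarz to each term: $|\langle \Delta_M f, e\rangle_{L^2(M)}| \le \|\Delta_M f\|_{L^2(M)}\|e\|_{L^2(M)}$ and $|\langle e, \Delta_M I_{\phi_s}f\rangle_{L^2(M)}| \le \|e\|_{L^2(M)}\|\Delta_M I_{\phi_s}f\|_{L^2(M)}$. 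Since $\|e\|_{L^2(M)} = \epsilon_f$ by Definition~\ref{epsilon_f}, summing the two bounds gives exactly
\begin{equation*}
\left| \|\textup{grad}_g f\|^2_{L^2(\mathfrak{X}(M))} - \|\textup{grad}_g I_{\phi_s}f\|^2_{L^2(\mathfrak{X}(M))} \right| \le \epsilon_f\left( \|\Delta_M f\|_{L^2(M)} + \|\Delta_M I_{\phi_s}f\|_{L^2(M)} \right).
\end{equation*}
Finally, the probability qualifier $1 - \delta_f$ is inherited directly from the event on which $\|f - I_{\phi_s}f\|_{L^2(M)} = \epsilon_f$ holds with the stated bound (Lemma~\ref{function interpolation}), so no additional union bound is needed.

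The only point requiring care — and the mild obstacle here — is the interpretation of $\textup{grad}_g I_{\phi_s}f$ and $\Delta_M I_{\phi_s}f$: the interpolant $I_{\phi_s}f$ is a priori an ambient $C^\alpha(\mathbb{R}^n)$ function, so one must first restrict it to $M$ (incurring the usual loss of regularity discussed after the statement of Lemma~\ref{function interpolation}), and then the integration-by-parts identity $\|\textup{grad}_g(\cdot)\|^2_{L^2} = \langle \Delta_M(\cdot), \cdot\rangle_{L^2}$ is valid because $M$ is closed (no boundary term). Under Assumption~\ref{weaklyunstable} the restricted interpolant lies in $H^{\alpha - (n-d)/2}(M)$ with $\alpha - (n-d)/2$ large enough that $\Delta_M I_{\phi_s}f \in L^2(M)$, so all the inner products above are well-defined; I would note this regularity point explicitly but not belabor it.
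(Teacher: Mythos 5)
Your proof is correct and, after unwinding Lemma~\ref{continuous gradient estimation}, is essentially identical to the paper's: the paper adds and subtracts the mixed gradient inner product $\langle \textup{grad}_g f, \textup{grad}_g I_{\phi_s} f\rangle$ and then cites the previous lemma, which itself integrates by parts and applies Cauchy--Schwarz — exactly the steps you carry out directly after first converting the squared gradient norms into $\langle \Delta_M(\cdot), \cdot\rangle$ pairings. The telescoping identity with $e = f - I_{\phi_s}f$ reproduces the same two terms $\langle e, \Delta_M f\rangle$ and $\langle e, \Delta_M I_{\phi_s}f\rangle$, so the argument and the final bound coincide; your explicit remark about restricting $I_{\phi_s}f$ to $M$ and using the closedness of $M$ for the boundary-free integration by parts is a sensible clarification rather than a deviation.
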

\begin{proof}
Without ambiguity, we use the notation $\langle \cdot,\cdot\rangle$ for both inner products with respect to $L^2(M)$ and $L^2(\mathfrak{X}(M))$ in the derivations below to simplify the notation.  We begin by adding and subtracting the mixed term $\langle \textup{grad}_g f, \textup{grad}_g I_{\phi_s} f \rangle:$
\BEA
&&\left| \| \textup{grad}_g f \|^2 - \langle \textup{grad}_g f, \textup{grad}_g I_{\phi_s} f \rangle + \langle \textup{grad}_g f, \textup{grad}_g I_{\phi_s} f \rangle - \| \textup{grad}_g I_{\phi_s} f\|^2  \right|   \notag \\
&\leq& \left| \| \textup{grad}_g f \|^2 - \langle \textup{grad}_g f, \textup{grad}_g I_{\phi_s} f \rangle \right| + \left| \langle \textup{grad}_g f, \textup{grad}_g I_{\phi_s} f \rangle - \| \textup{grad}_g I_{\phi_s} f\|^2  \right|.  \notag \EEA
By the previous lemma, the first term is $\epsilon_f \| \Delta_M f \|_{L^2(M)}$, while the second term is \newline
$\epsilon_f \|\Delta_M I_{\phi_s} f \|_{L^2(M)}$.
\end{proof}

We also need the following result for studying the convergence of eigenvectors.
\begin{coro}
Let $f \in C^\infty(M)$. Then with probability higher than $1 - \delta_f$,
$$
\| \textup{grad}_g f - \textup{grad}_g I_{\phi_s} f \|^2_{L^2(\mathfrak{X}(M))} \leq \epsilon_f \left( \| \Delta_M f \|_{L^2(M)} + \| \Delta_M I_{\phi_s}f\|_{L^2(M)} \right).
$$
\end{coro}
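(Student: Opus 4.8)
The plan is to reduce the claim to the immediately preceding lemma via the polarization-type identity
\[
\|\textup{grad}_g f - \textup{grad}_g I_{\phi_s} f\|^2_{L^2(\mathfrak{X}(M))} = \|\textup{grad}_g f\|^2_{L^2(\mathfrak{X}(M))} - 2\langle \textup{grad}_g f, \textup{grad}_g I_{\phi_s} f\rangle_{L^2(\mathfrak{X}(M))} + \|\textup{grad}_g I_{\phi_s} f\|^2_{L^2(\mathfrak{X}(M))}.
\]
First I would rewrite the right-hand side as the sum of two differences that have already been controlled: namely $\bigl(\|\textup{grad}_g f\|^2 - \langle \textup{grad}_g f, \textup{grad}_g I_{\phi_s} f\rangle\bigr)$ and $\bigl(\|\textup{grad}_g I_{\phi_s} f\|^2 - \langle \textup{grad}_g f, \textup{grad}_g I_{\phi_s} f\rangle\bigr)$. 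Each of these is an instance of Lemma~\ref{continuous gradient estimation}: the first with $u = \textup{grad}_g I_{\phi_s} f$, using that the formal adjoint of $\textup{grad}_g$ is $-\textup{div}_g$ and $-\textup{div}_g \textup{grad}_g I_{\phi_s} f = \Delta_M I_{\phi_s} f$, so the bound is $\epsilon_f \|\Delta_M I_{\phi_s} f\|_{L^2(M)}$; the second with $u = \textup{grad}_g I_{\phi_s} f$ again but applied after integrating by parts on the $f$ slot, giving $\epsilon_f \|\Delta_M f\|_{L^2(M)}$ — this is exactly the two-term decomposition carried out in the proof of the lemma just above the corollary.

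Then I would add the two estimates, noting that both hold on the same probability-$(1-\delta_f)$ event (the event that $\|I_{\phi_s} f - f\|_{L^2(M)} = \epsilon_f$, coming from Lemma~\ref{function interpolation}), so no union bound or loss in the probability is incurred. This yields
\[
\|\textup{grad}_g f - \textup{grad}_g I_{\phi_s} f\|^2_{L^2(\mathfrak{X}(M))} \leq \epsilon_f\bigl(\|\Delta_M f\|_{L^2(M)} + \|\Delta_M I_{\phi_s} f\|_{L^2(M)}\bigr),
\]
which is the claim. The only subtlety worth stating explicitly is the integration-by-parts step justifying $\langle \textup{grad}_g f, \textup{grad}_g I_{\phi_s} f\rangle_{L^2(\mathfrak{X}(M))} = \langle \Delta_M f, I_{\phi_s} f\rangle_{L^2(M)}$ (valid since $M$ is closed), and the symmetric identity $\langle \textup{grad}_g f, \textup{grad}_g I_{\phi_s} f\rangle = \langle f, \Delta_M I_{\phi_s} f\rangle$; both follow from the self-adjointness of $\Delta_M$ on the closed manifold and the smoothness of $f$ and $I_{\phi_s} f$ (the latter being smooth enough by Assumption~\ref{weaklyunstable}, or at worst $W^{2,\infty}$ by Lemma~\ref{bounded derivatives}, which suffices for these integrations by parts).

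There is essentially no obstacle here: this corollary is a one-line algebraic consequence of the preceding lemma. The only thing one must be careful about is bookkeeping — ensuring the same high-probability event is used throughout so that the stated probability $1-\delta_f$ is not degraded, and ensuring that $\|\Delta_M I_{\phi_s} f\|_{L^2(M)} = O(1)$ is available (which it is, by Assumption~\ref{weaklyunstable} via Lemma~\ref{bounded derivatives}) so that when this corollary is later fed into the eigenvector convergence argument in Appendix~\ref{app:C3} the right-hand side is genuinely $O(\epsilon_f)$.
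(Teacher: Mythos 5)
Your proposal is correct and matches the paper's own proof: expand the square, group the four terms into two differences of the form $\langle \textup{grad}_g f - \textup{grad}_g I_{\phi_s} f, u\rangle$ with $u = \textup{grad}_g f$ and $u = \textup{grad}_g I_{\phi_s} f$, and apply Lemma~\ref{continuous gradient estimation} to each on the single event of probability $1-\delta_f$. (Your labeling of which $u$ pairs with which grouping is momentarily swapped, but since both bounds are summed the result is unaffected.)
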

\begin{proof}
Expanding yields
\begin{eqnarray*}
\| \textup{grad}_g f - \textup{grad}_g I_{\phi_s} f \|^2_{L^2(\mathfrak{X}(M))} &=& \langle \textup{grad}_g f, \textup{grad}_g f \rangle - \langle \textup{grad}_g f, \textup{grad}_g I_{\phi_s} f \rangle \\
&-& \langle \textup{grad}_g I_{\phi_s} f, \textup{grad}_g  f \rangle  + \langle \textup{grad}_g I_{\phi_s} f, \textup{grad}_g I_{\phi_s} f \rangle.   \notag
\end{eqnarray*}
Grouping the first two and last two terms, the desired result is immediate.
\end{proof}
{ We now prove Lemma~\ref{weak G convergence continuous} which states that $\Delta_M f $ can be weakly approximated.

\comment{\begin{lem}
\label{weak G convergence continuous}
Let $f,h \in C^\infty(M)$. Then with probability higher than $1 - \delta_f - \delta_h,$
$$
\left| \langle \Delta_M f , h \rangle_{L^2(M)} -\langle \textup{grad}_g I_{\phi_s} f , \textup{grad}_g I_{\phi_s} h \rangle_{L^2(M)}  \right| \leq \epsilon_h \|\Delta_M f\|_{L^2(M)} + \epsilon_f \| \Delta_M I_{\phi_s} h \|_{L^2(M)}.
$$
Here $0<\delta_h<1$ and $\epsilon_h>0$ are defined exactly as in Definition~\ref{epsilon_f} with $f$ replaced by $h$.
\end{lem}
}

\paragraph{\bf Proof of Lemma~\ref{weak G convergence continuous}}
We again add and subtract a mixed term.
\BEA
&&\langle \Delta_M f, h \rangle - \left\langle \textup{grad}_g I_{\phi_s}f, \textup{grad}_g I_{\phi_s} h \right\rangle \notag\\
&=& \langle \Delta_M f, h \rangle - \langle \textup{grad}_g f, \textup{grad}_g I_{\phi_s} h \rangle \notag
+  \langle \textup{grad}_g f, \textup{grad}_g I_{\phi_s} h \rangle - \left\langle \textup{grad}_g I_{\phi_s} f, \textup{grad}_g I_{\phi_s} h \right\rangle. \notag
\EEA
The first two terms are bounded by $\epsilon_h \|\Delta_M f\|_{L^2(M)}$ while the second two terms are bounded by $\epsilon_f \| \Delta_M I_{\phi_s} h \|_{L^2(M)}$ with a total probability higher than $1 - \delta_f - \delta_h.$ Two repeated uses of Lemma \ref{function interpolation} yield the final result. \hfill $\blacksquare$

}

Using similar arguments as above, we now have a convergence in $L^2$ norm result.
\begin{lem}\label{norm G convergence continuous}
Let $f \in C^\infty(M)$, and denote $(\textup{grad}_g I_{\phi_s})^*(\textup{grad}_g I_{\phi_s}) f$ by $h$. With probability higher than $1 - \delta_f - \delta_{\Delta_M f} - \delta_h,$
\BEA
\| \Delta_M f - h \|^2_{L^2(M)} &\leq& {
\epsilon_f \|\Delta^2_M f \|_{L^2(M)} + \epsilon_{\Delta_M f} \|\Delta_M f\|_{L^2(M)}\notag }\\
&+& {\epsilon_{h} \| \Delta_M f \|_{L^2(M)} + \epsilon_f \| \Delta_M I_{\phi_s} h  \|_{L^2(M)} }.\notag
\EEA
\end{lem}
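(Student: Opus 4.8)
The plan is to expand the squared $L^2(M)$ norm into two pairings and, in each pairing, trade the operator $h=(\textup{grad}_g I_{\phi_s})^*(\textup{grad}_g I_{\phi_s})f$ for ``weak'' quantities of the form $\langle \textup{grad}_g I_{\phi_s}(\cdot),\textup{grad}_g I_{\phi_s}(\cdot)\rangle_{L^2(\mathfrak{X}(M))}$, using the self-adjointness of $\Delta_M$ on the closed manifold $M$ together with the defining identity of the formal adjoint, $\langle \textup{grad}_g I_{\phi_s} g,u\rangle_{L^2(\mathfrak{X}(M))}=\langle g,(\textup{grad}_g I_{\phi_s})^*u\rangle_{L^2(M)}$; I would then invoke Lemma~\ref{weak G convergence continuous} twice. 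First I would write
\[
\|\Delta_M f-h\|_{L^2(M)}^2=\langle \Delta_M f-h,\Delta_M f\rangle_{L^2(M)}-\langle \Delta_M f-h,h\rangle_{L^2(M)}.
\]

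For the first term, self-adjointness gives $\langle \Delta_M f,\Delta_M f\rangle_{L^2(M)}=\langle \Delta_M(\Delta_M f),f\rangle_{L^2(M)}$, while the adjoint identity with $u=\textup{grad}_g I_{\phi_s}f$ gives $\langle h,\Delta_M f\rangle_{L^2(M)}=\langle \textup{grad}_g I_{\phi_s}f,\textup{grad}_g I_{\phi_s}\Delta_M f\rangle_{L^2(\mathfrak{X}(M))}$; hence the first term is precisely the quantity bounded by Lemma~\ref{weak G convergence continuous} applied with the pair $(\Delta_M f,f)$ in place of $(f,h)$, yielding a bound $\epsilon_f\|\Delta_M^2 f\|_{L^2(M)}+\epsilon_{\Delta_M f}\|\Delta_M I_{\phi_s}f\|_{L^2(M)}$ (and $\|\Delta_M I_{\phi_s}f\|_{L^2(M)}=\|\Delta_M f\|_{L^2(M)}+o(1)$ by Lemmas~\ref{function interpolation} and \ref{bounded derivatives}, matching the stated form). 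For the second term, the adjoint identity with $u=\textup{grad}_g I_{\phi_s}f$ and argument $h$ gives $\langle h,h\rangle_{L^2(M)}=\langle \textup{grad}_g I_{\phi_s}f,\textup{grad}_g I_{\phi_s}h\rangle_{L^2(\mathfrak{X}(M))}$, so $-\langle \Delta_M f-h,h\rangle_{L^2(M)}=-\bigl(\langle \Delta_M f,h\rangle_{L^2(M)}-\langle \textup{grad}_g I_{\phi_s}f,\textup{grad}_g I_{\phi_s}h\rangle_{L^2(\mathfrak{X}(M))}\bigr)$, which is exactly the quantity bounded by Lemma~\ref{weak G convergence continuous} applied with the pair $(f,h)$, yielding $\epsilon_h\|\Delta_M f\|_{L^2(M)}+\epsilon_f\|\Delta_M I_{\phi_s}h\|_{L^2(M)}$. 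Summing the two estimates gives the claimed inequality, and the probability $1-\delta_f-\delta_{\Delta_M f}-\delta_h$ follows from a union bound over the three distinct interpolation events, noting that the event controlling $\epsilon_f$ is shared by the two applications of Lemma~\ref{weak G convergence continuous}.

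\textbf{Main obstacle.} The delicate point is the regularity of $h=(\textup{grad}_g I_{\phi_s})^*(\textup{grad}_g I_{\phi_s})f$: in order to apply Lemma~\ref{weak G convergence continuous} to the pair $(f,h)$ and to make $\epsilon_h=\|I_{\phi_s}h-h\|_{L^2(M)}$ meaningful in the sense of Definition~\ref{epsilon_f}, one needs $h\in H^{\alpha-(n-d)/2}(M)$ (ideally $h\in C^\infty(M)$). This is exactly where Assumption~\ref{weaklyunstable} is used: with RKHS norm equivalent to $H^\alpha(\mathbb{R}^n)$ and $\alpha\ge n/2+3$, the map $I_{\phi_s}$ restricts to a smoothing operator into $H^{\alpha-(n-d)/2}(M)$, and composing with $\textup{grad}_g$ and its $L^2(M)$-adjoint (which again only involves the smoothing kernel) costs merely a bounded loss of derivatives, so $h$ retains enough Sobolev regularity for the integrations by parts above and for Lemma~\ref{function interpolation} to apply to it. Once this regularity is granted, the rest is the routine expansion and the two invocations of Lemma~\ref{weak G convergence continuous} described above.
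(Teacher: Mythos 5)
Your proof is correct and takes essentially the same approach as the paper: expand $\|\Delta_M f - h\|_{L^2(M)}^2$ into four inner products, pair the first two and the last two, and apply Lemma~\ref{weak G convergence continuous} to each pair (your substitution with the pair $(\Delta_M f, f)$ in the first application is a trivial variant of the paper's pair $(f,\Delta_M f)$, and your version in fact matches the lemma's stated bound $\epsilon_f\|\Delta_M^2 f\|_{L^2(M)}+\epsilon_{\Delta_M f}\|\Delta_M f\|_{L^2(M)}$ slightly more directly than the paper's own reported error term). Your remark on the regularity of $h$ is also apt: the paper invokes Lemma~\ref{weak G convergence continuous} with $h$ as an argument even though that lemma is nominally stated for $C^\infty(M)$ functions and $h$ only inherits the finite Sobolev smoothness of the interpolant, so Assumption~\ref{weaklyunstable} is quietly doing exactly the work you describe.
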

\begin{proof}
Expanding $\| \Delta_M f - (\textup{grad}_g I_{\phi_s})^*(\textup{grad}_g I_{\phi_s}) f \|^2_{L^2(M)}$ yields
\BEA
&&\| \Delta_M f - (\textup{grad}_g I_{\phi_s})^*(\textup{grad}_g I_{\phi_s}) f  \|^2_{L^2(M)} \notag\\
&=&
\langle \Delta_M f, \Delta_M f \rangle - \langle \Delta_M f, (\textup{grad}_g I_{\phi_s})^*(\textup{grad}_g I_{\phi_s}) f \rangle
- \langle (\textup{grad}_g I_{\phi_s})^*(\textup{grad}_g I_{\phi_s}) f , \Delta_M f \rangle \notag \\
&+& \langle (\textup{grad}_g I_{\phi_s})^*(\textup{grad}_g I_{\phi_s}) f ,(\textup{grad}_g I_{\phi_s})^*(\textup{grad}_g I_{\phi_s}) f  \rangle.   \notag
\EEA
We use the previous Lemma twice, once on the first two terms (which gives an error of $\epsilon_{\Delta_M f} \| \Delta_Mf\|_{L^2(M)} + \epsilon_f \| \Delta_M I_{\phi_s} \Delta_M f \|_{L^2(M)}$)
and once on the last two terms (which gives an error of $\epsilon_{h} \| \Delta_M f \|_{L^2(M)} + \epsilon_f \| \Delta_M I_{\phi_s} h  \|_{L^2(M)}$).
This completes the proof.
\end{proof}
{
\begin{remark}
    \label{bounded interpolation remark}
Note that since each estimated function in this section lies within the RKHS space, it follows from Lemma \ref{function interpolation} that the error (denoted by $\epsilon$ with a subscript) converges with specified rate as $N \to \infty$. Moreover, Lemma \ref{bounded derivatives} ensures that no norms on the right-hand-side of the above estimates blow-up as $N \to \infty$.
\end{remark}
}

\subsection{Pointwise and Weak Convergence Results: Empirical Error}
\label{app:C2}
We now quantify the error obtained when discretizing our estimators on the data set $X$. The results of this section are primarily based on the law of large numbers. {First, we prove Lemma~\ref{weak G convergence empirical}.}

\comment{
The following Lemma is a direct consequence of a standard concentration result.
\begin{lem}
\label{weak G convergence empirical}
Let $f,h \in C^\infty(M)$. Let the sufficiently regular interpolator Assumption~\ref{weaklyunstable} be valid. Then
$$
\mathbb{P}_X\left( \left| \langle \mathbf{G}^\top \mathbf{G} \mathbf{f}, \mathbf{h} \rangle_{L^2(\mu_N)} - \langle \textup{grad}_g I_{\phi_s} f, \textup{grad}_g I_{\phi_s} h \rangle_{L^2(\mathfrak{X}(M))} \right| \geq \epsilon \right) \leq 2 \textup{exp}\left( \frac{-2\epsilon^2N}{C} \right),
$$
for some constant $C>0$.
\end{lem}
}

{
\paragraph{\bf Proof of Lemma~\ref{weak G convergence empirical}}
Using the fact that $\textup{grad}_g I_{\phi_s} f = (
\mathcal{G}_1I_{\phi_s} f,
\mathcal{G}_2I_{\phi_s} f,
 \ldots,
\mathcal{G}_n I_{\phi_s}f )^\top$ as defined in \eqref{sec2.1:gradg}, it is clear that,

\BEA
\langle \textup{grad}_g I_{\phi_s} f, \textup{grad}_g I_{\phi_s} f \rangle_{L^2(\mathfrak{X}(M))} =
\int_M \sum_{i=1}^n \left( \mathcal{G}_i I_{\phi_s} f (x) \right)\left( \mathcal{G}_i I_{\phi_s} h (x) \right) d\textup{Vol}(x), \notag
\EEA
and we can see immediately that the result follows from a concentration inequality on the random variable $\sum_{i=1}^n \left( \mathcal{G}_i I_{\phi_s} f (x) \right)\left( \mathcal{G}_i I_{\phi_s} h (x) \right)$. The range of $I_{\phi_s}$ is in $C^{\alpha - \frac{(n-d)}{2}}(M)$, { and by the Assumption~\ref{weaklyunstable} along with Lemma \ref{bounded derivatives}}, since $\mathcal{G}_i$ are simply differential operators, we see that the random variable is bounded by a constant $C$ (depending on the kernel $\phi_s$, as well as $f$). By Hoeffding's inequality, we obtain,
$$
\mathbb{P}_X\left( \left| \langle \mathbf{G}^\top \mathbf{G} \mathbf{f}, \mathbf{h} \rangle_{L^2(\mu_N)} - \langle \textup{grad}_g I_{\phi_s} f, \textup{grad}_g I_{\phi_s} h \rangle_{L^2(\mathfrak{X}(M))} \right| \geq \epsilon \right) \leq 2 \textup{exp}\left( \frac{-2\epsilon^2N}{c} \right),
$$
for some constant $c>0$. Take $N^{-1} = \textup{exp}\left( \frac{-2\epsilon^2N}{c} \right)$, solve for $\epsilon$, the proof is complete. \hfill$\blacksquare$}

Since $\mathbf{G}$ is simply the restricted version of $\textup{grad}_g I_{\phi_s}$, the same reasoning as above gives the following result, which will be needed to prove convergence of eigenvectors.
\begin{lem}
\label{norm G convergence empirical}
Let $f \in C^\infty (M)$.  Let  Assumption~\ref{weaklyunstable} be valid. Then
\begin{footnotesize}
$$
\mathbb{P}_X\left( \left| \Vert \mathbf{G}^\top \mathbf{G} \mathbf{f}  - R_N \Delta_M f \Vert^2_{L^2(\mu_N)} - \|(\textup{grad}_g I_{\phi_s})^*(\textup{grad}_g I_{\phi_s}) f - \Delta_M f \|^2_{L^2(M)} \right| \geq \epsilon \right) \leq 2 \textup{exp}\left( \frac{-2\epsilon^2 N}{C} \right),
$$
\end{footnotesize}
for some constant $C>0$.
\end{lem}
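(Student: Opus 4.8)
The plan is to establish \textbf{Lemma~\ref{norm G convergence empirical}} by recognizing that the quantity inside the probability is a difference between an empirical average over the data set $X$ and its population mean, so that a Hoeffding-type concentration bound applies once we exhibit the correct bounded random variable. First I would unravel the definitions: writing $h := (\textup{grad}_g I_{\phi_s})^*(\textup{grad}_g I_{\phi_s}) f$, the continuous object is
$$
\| h - \Delta_M f \|^2_{L^2(M)} = \int_M \big( h(x) - \Delta_M f(x) \big)^2 \, d\textup{Vol}(x),
$$
and I claim that the discrete object is exactly the Monte-Carlo average of the same integrand evaluated at the sample points. To see this, I would use the identity $\mathbf{G}^\top \mathbf{G} \mathbf{f} = R_N (\textup{grad}_g I_{\phi_s})^*(\textup{grad}_g I_{\phi_s}) f = R_N h$ — this follows because, just as $\mathbf{G} \mathbf{f} = R_N \textup{grad}_g I_{\phi_s} \mathbf{f}$ holds by \eqref{sec21:discrete_grad}, the matrix transpose $\mathbf{G}^\top$ restricted to the $L^2(\mu_N)$ inner product is the discrete adjoint, and composing it appropriately recovers $R_N$ applied to the continuous formal adjoint (this is the same ``the adjoint is the transpose'' observation used to define the symmetric estimator in Section~\ref{Laplace-Beltrami definition}). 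Granting this, $\Vert \mathbf{G}^\top \mathbf{G} \mathbf{f} - R_N \Delta_M f \Vert^2_{L^2(\mu_N)} = \frac{1}{N} \sum_{i=1}^N \big( h(x_i) - \Delta_M f(x_i) \big)^2$, which is precisely the empirical version of the integral above.

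Next I would verify the boundedness of the random variable $Z(x) := \big( h(x) - \Delta_M f(x) \big)^2$. Since $f \in C^\infty(M)$, we have $\Delta_M f \in C^\infty(M)$ and hence $\|\Delta_M f\|_{L^\infty(M)} = O(1)$. For the term $h = (\textup{grad}_g I_{\phi_s})^*(\textup{grad}_g I_{\phi_s})f$: this is a second-order differential operator applied to the interpolant $I_{\phi_s} f$ (composed with the formal adjoint, which is again a first-order differential operation after the gradient), so $\|h\|_{L^\infty(M)}$ is controlled by $\|I_{\phi_s} f\|_{W^{2,\infty}(M)}$ together with bounded geometric coefficients. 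By Assumption~\ref{weaklyunstable} (the sufficiently regular interpolator, $\alpha \geq n/2 + 3$) and Lemma~\ref{bounded derivatives}, we have $\|I_{\phi_s} f\|_{W^{2,\infty}(M)} = O(1)$ with probability higher than $1 - \frac{1}{N}$. Consequently $|Z(x)| \leq C$ for a constant $C$ depending on $M$, the kernel $\phi_s$, and appropriate Sobolev/$W^{2,\infty}$ norms of $f$, uniformly in $x \in M$, on an event of probability $1 - \frac{1}{N}$.

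With the bounded integrand in hand, I would apply Hoeffding's inequality to the i.i.d.\ sample $Z(x_1), \dots, Z(x_N)$ (the $x_i$ being uniformly sampled from $M$), whose common mean is $\mathbb{E}[Z] = \| h - \Delta_M f \|^2_{L^2(M)}$ after normalizing by $\textup{Vol}(M)$ (absorbed into constants). This gives
$$
\mathbb{P}_X\left( \left| \tfrac{1}{N}\textstyle\sum_{i=1}^N Z(x_i) - \mathbb{E}[Z] \right| \geq \epsilon \right) \leq 2 \exp\!\left( \frac{-2\epsilon^2 N}{C} \right),
$$
which is the claimed bound. The mild bookkeeping point is that the constant $C$ is only guaranteed on the high-probability event where $\|I_{\phi_s}f\|_{W^{2,\infty}(M)} = O(1)$; one either states the conclusion conditionally on that event or folds the $\frac{1}{N}$ failure probability into the statement (as is done elsewhere in this section, e.g.\ the ``$1 - \frac{2}{N}$'' style bounds, via choosing $\epsilon \sim \sqrt{\log N / N}$ so that $2\exp(-2\epsilon^2 N / C) \leq \frac{2}{N}$).

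The main obstacle — really the only non-routine step — is the identity $\mathbf{G}^\top \mathbf{G} \mathbf{f} = R_N h$, i.e.\ justifying that the matrix transpose of $\mathbf{G}$, when paired with the $L^2(\mu_N)$ inner product, discretizes the continuous formal adjoint $(\textup{grad}_g I_{\phi_s})^*$ exactly (not just consistently). This requires being careful about which inner products are placed on the domain and range of $\mathbf{G}$: with $\mathbb{R}^N$ carrying $\langle \cdot,\cdot\rangle_{L^2(\mu_N)} = \frac{1}{N}\mathbf{f}^\top\mathbf{h}$ and $\mathbb{R}^{Nn}$ carrying the analogous scaled Euclidean product, the Hilbert-space adjoint of $\mathbf{G}$ is literally $\mathbf{G}^\top$ (the $\frac{1}{N}$ factors cancel), and $\mathbf{G}^\top \mathbf{G}$ is then the Rayleigh-quotient-compatible discrete operator whose action on $R_N f$ agrees with restriction of the continuous symmetric estimator — this is exactly the reasoning already invoked (without full detail) when the symmetric estimator was introduced, so I would simply reference that discussion. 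Everything downstream is a textbook concentration argument.
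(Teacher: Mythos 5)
The central step in your argument — the claimed identity $\mathbf{G}^\top\mathbf{G}\mathbf{f} = R_N (\textup{grad}_g I_{\phi_s})^*(\textup{grad}_g I_{\phi_s})f = R_N h$ — does not hold, and this is a genuine gap rather than a bookkeeping issue. The matrix transpose $\mathbf{G}^\top$ is the adjoint of $\mathbf{G}$ with respect to the \emph{empirical} inner products $L^2(\mu_N)$ and $L^2(\mu_{N,n})$; the operator $(\textup{grad}_g I_{\phi_s})^*$ is the adjoint of $\textup{grad}_g I_{\phi_s}$ with respect to the \emph{continuous} inner products $L^2(M)$ and $L^2(\mathfrak{X}(M))$. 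These are different objects: the discrete inner products only \emph{approximate} the continuous ones (this is exactly the phrase used when the symmetric estimator is introduced in Section~\ref{Laplace-Beltrami definition}), so $\mathbf{G}^\top$ is only an approximation of, not equal to, the restriction of the continuous formal adjoint. Concretely, $\mathbf{G}_i^\top = \boldsymbol{\Phi}^{-1}\mathbf{J}_i^\top$ is a finite-sample-dependent matrix that has no reason to coincide with $R_N \mathcal{G}_i^* I_{\phi_s}$; if it did, the symmetric estimator $\mathbf{G}^\top\mathbf{G}$ would be pointwise consistent, which would erase the paper's distinction between the pointwise non-symmetric formulation and the weak symmetric one. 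So you cannot write the discrete quantity $\Vert \mathbf{G}^\top \mathbf{G} \mathbf{f} - R_N\Delta_M f\Vert^2_{L^2(\mu_N)}$ as $\frac{1}{N}\sum_j (h(x_j) - \Delta_M f(x_j))^2$ and read off the Hoeffding bound.

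What the paper actually does (it says ``same reasoning as above,'' pointing to the proof of Lemma~\ref{weak G convergence empirical}) is push $\mathbf{G}^\top$ across the \emph{discrete} pairing rather than pretending it equals a restricted continuous operator: for any $\mathbf{v} \in \mathbb{R}^N$,
$$
\langle \mathbf{G}^\top\mathbf{G}\mathbf{f}, \mathbf{v}\rangle_{L^2(\mu_N)} = \langle \mathbf{G}\mathbf{f}, \mathbf{G}\mathbf{v}\rangle_{L^2(\mu_{N,n})} = \frac{1}{N}\sum_{j=1}^N \sum_{i=1}^n \big(\mathcal{G}_i I_{\phi_s}f\big)(x_j)\big(\mathcal{G}_i I_{\phi_s}v\big)(x_j),
$$
which is an empirical average of the bounded function $\sum_i (\mathcal{G}_iI_{\phi_s}f)(\mathcal{G}_iI_{\phi_s}v)$ whose mean is the continuous inner product $\langle \textup{grad}_g I_{\phi_s}f, \textup{grad}_g I_{\phi_s}v\rangle_{L^2(\mathfrak{X}(M))}$; one then expands the squared $L^2(\mu_N)$ norm into inner-product terms and applies Hoeffding's inequality term-by-term. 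The boundedness verification via Assumption~\ref{weaklyunstable} and Lemma~\ref{bounded derivatives}, and the final Hoeffding step, that you wrote down are fine — they are exactly what is needed once the correct algebraic reduction is in place. The error is confined to the first paragraph and the closing paragraph, where the identity is asserted and then defended as ``the adjoint is the transpose''; it needs to be replaced by the pairing manipulation above.
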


\comment{
\subsection{Proof of Spectral Convergence}
Here, we prove Theorem \ref{eigvalconv}, the convergence of eigenvalues result for the Laplace-Beltrami operator.
\begin{proof}
Enumerate the eigenvalues of $\mathbf{G}^\top\mathbf{G}$ and label them $\hat{\lambda}_1 \leq \hat{\lambda}_2 \leq \dots \leq \hat{\lambda}_N$. Let $\mathcal{S}_i' \subseteq C^\infty(M)$ denote an $i$-dimensional subspace of smooth functions on which the quantity $\textup{max}_{f \in \mathcal{S}_i} \frac{\langle \mathbf{G}^\top\mathbf{G} R_Nf , R_Nf \rangle_{L^2(\mu_N)}}{\|R_N f \|_{L^2(\mu_N)}}$ achieves its minimum. Let $\tilde{f} \in \mathcal{S}_i'$ be the function on which the maximum $\textup{max}_{f \in \mathcal{S}_i'} \langle \Delta_M f , f \rangle_{L^2(M)}$ occurs. WLOG, assume that $\|\tilde{f}\|^2_{L^2(M)} = 1.$ Assume that $N$ is sufficiently large so that by Hoeffding's inequality $\left| \|R_N \tilde{f} \|^2_{L^2(\mu_N)} -  1 \right| \leq \frac{\textup{Const}}{\sqrt{N}} \leq 1/2$, with probability $1 - \frac{2}{N}$, so that $\|R_N \tilde{f} \|^2_{L^2(\mu_N)}$ is bounded away from zero. Hence, we can Taylor expand $ \frac{\langle \mathbf{G}^\top\mathbf{G} R_N \tilde{f} , R_N \tilde{f} \rangle_{L^2(\mu_N)}}{\|R_N \tilde{f} \|^2_{L^2(\mu_N)}}$ to obtain
$$
 \frac{\langle \mathbf{G}^\top\mathbf{G} R_N \tilde{f} , R_N \tilde{f} \rangle_{L^2(\mu_N)}}{ \|R_N \tilde{f}\|^2_{L^2(\mu_N)}} = \langle \mathbf{G}^\top\mathbf{G} R_N \tilde{f} , R_N \tilde{f} \rangle_{L^2(\mu_N)} -  \frac{ \textup{Const} \langle \mathbf{G}^\top\mathbf{G} R_N \tilde{f} , R_N \tilde{f} \rangle_{L^2(\mu_N)} }{\sqrt{N}}.
$$
By Lemma~ \ref{weak G convergence empirical}, with probability higher than $1 - \frac{2}{N}$, we have that
$$
\left| \langle \mathbf{G}^\top\mathbf{G} R_N \tilde{f} , R_N \tilde{f} \rangle_{L^2(\mu_N)} - \langle \textup{grad}_g I_{\phi_s} \tilde{f}, \textup{grad}_g I_{\phi_s} \tilde{f} \rangle_{L^2(\mathfrak{X}(M))} \right| = O \left( N^{-\frac{1}{2}} \right),
$$
where we have chosen $\epsilon =  \sqrt{\frac{\log(N)}{N}}$ and ignored the log factor.

{\color{red} Can you clarify the inequality below? where does $1-6/N$ comes from, my count is only $1-5/N$.}

Combining the two bounds above with Lemma \ref{weak G convergence continuous} and Lemma \ref{RKHS L2 Convergence}, we obtain that
$$
\langle \Delta_M \tilde{f} , \tilde{f} \rangle_{L^2(M)} \leq   \frac{\langle \mathbf{G}^\top\mathbf{G} R_N\tilde{f} , R_N \tilde{f} \rangle_{L^2(\mu_N)}}{\|R_N \tilde{f} \|^2_{L^2(\mu_N)}}  + O\left(  N^{-\frac{1}{2}} \right) + {O\left( N^{\frac{-2\alpha + (n-d)}{2d}} \right)},
$$
with probability higher than $1 - {\frac{6}{N}}$. Since $\tilde{f}$ is the function on which $\langle \Delta_M f, f \rangle_{L^2(M)}$ achieves its maximum over all $f \in \mathcal{S}_i'$, and since certainly
\[
\frac{\langle \mathbf{G}^\top\mathbf{G} R_N \tilde{f} , R_N \tilde{f} \rangle_{L^2(\mu_N)}}{\|R_N \tilde{f} \|^2_{L^2(\mu_N)}} \leq \textup{max}_{f \in \mathcal{S}_i'} \frac{\langle \mathbf{G}^\top\mathbf{G} R_Nf , R_Nf \rangle_{L^2(\mu_N)}}{\|R_N f \|^2_{L^2(\mu_N)}},
\]
we have the following:
$$
\textup{max}_{f \in \mathcal{S}_i'} \langle \Delta_M f , f \rangle_{L^2(M)} \leq  \textup{max}_{f \in \mathcal{S}_i'} \frac{\langle \mathbf{G}^\top\mathbf{G} R_Nf , R_Nf \rangle_{L^2(\mu_N)}}{\|R_N f \|^2_{L^2(\mu_N)}}  +  O\left( N^{-\frac{1}{2}} \right) + {O\left( N^{\frac{-2\alpha + (n-d)}{2d}} \right)}.
$$
But we assumed that $\mathcal{S}_i'$ is the exact subspace on which $\textup{max}_{f \in \mathcal{S}_i} \frac{\langle \mathbf{G}^\top\mathbf{G} R_Nf , R_Nf \rangle_{L^2(\mu_N)}}{\|R_N f \|^2_{L^2(\mu_N)}}$ achieves its minimum. Hence,
$$
\textup{max}_{f \in \mathcal{S}_i'} \langle \Delta_M f , f \rangle_{L^2(M)} \leq \hat{\lambda}_i +  O\left( N^{-\frac{1}{2}}\right) + {O\left( N^{\frac{-2\alpha + (n-d)}{2d}} \right)}.
$$
But the left-hand-side certainly bounds from above by the minimum of $\textup{max}_{f \in \mathcal{S}_i} \langle \Delta_M f , f \rangle_{L^2(M)}$ over all $i$-dimensional smooth subspaces $\mathcal{S}_i$. Hence,
$$
\lambda_i \leq \hat{\lambda}_i +  O\left( N^{-\frac{1}{2}}\right) + {O\left( N^{\frac{-2\alpha + (n-d)}{2d}} \right)}.
$$
The same argument yields that $\hat{\lambda}_i \leq \lambda_i +  O\left( N^{-\frac{1}{2}}\right) + {O\left( N^{\frac{-2\alpha + (n-d)}{2d}} \right)}$, with probability higher than $1 - \frac{6}{N}$. This completes the proof.
\end{proof}
}

\subsection{Proof of Eigenvector Convergence}\label{app:C3}
We now prove the convergence of eigenvectors. The outline of this proof follows the arguments in the convergence analysis found in  \cite{calder2019improved} and \cite{peoples2021spectral}. It is important to note that since the matrix $\mathbf{G}^\top\mathbf{G}$ is symmetric, there exists an orthonormal basis of eigenvectors of $\mathbf{G}^\top\mathbf{G}$, which is key in the following proof of Theorem \ref{conveigvec}.
\begin{proof}
Fix some $\ell \in \mathbb{N}$. For convenience, we let $\epsilon_{\lambda_\ell}$ denote the error in approximating the $\ell$-th eigenvalue, from the previous section. Similarly, we let $\delta_{\lambda_\ell}$ denote the quantity such that eigenvalue approximation occurs with probability higher than $1 - \delta_{\lambda_\ell}$.  Let $m$ be the geometric multiplicity of the eigenvalue $\lambda_\ell$, i.e., there is an $i$ such that $ \lambda_{i+1} =  \lambda_{i+2} = \dots = \lambda_\ell = \dots = \lambda_{i+m}$. Let
\begin{equation*}
    c_\ell = \frac{1}{2} \textup{ min} \left\{ |\lambda_{\ell} - \lambda_{i}|, |\lambda_{\ell} - \lambda_{i+m+1}| \right\}.
\end{equation*}
By Theorem~\ref{eigvalconv}, if $\epsilon_{\lambda_i},\epsilon_{\lambda_{i+m+1}} < c_\ell$, then with probability $1 - \delta_{\lambda_i} - \delta_{\lambda_{i+m+1}}$,
\begin{equation*}
    |\hat{\lambda}_{i} - \lambda_{i}| < c_\ell , \quad |\hat{\lambda}_{i+m+1} - \lambda_{i+m+1}| < c_\ell.
\end{equation*}
Let $\mathbf{\hat{u}}_{1}, \dots \mathbf{\hat{u}}_{N}$ be an orthonormal basis of $L^2(\mu_N)$ consisting of eigenvectors of $\mathbf{G}^\top\mathbf{G}$, where $\mathbf{\hat{u}}_j$ has eigenvalue $\hat{\lambda}_j$.  Let $S$ be the $m$ dimensional subspace of $L^2(\mu_N)$ corresponding to the span of $\{ \mathbf{\hat{u}}_{j} \}^{i+m}_{j=i+1}$, and let $P_S$ (resp. $P^\perp_S$) denote the projection onto $S$ (resp. orthogonal complement of $S$). Let $f$ be a norm $1$ eigenfunction of $\Delta_M$ corresponding to eigenvalue $\lambda_\ell$. Notice that
\begin{equation*}
    P^\perp_S R_N \Delta_M f = \lambda_\ell P^\perp_S R_N f = \lambda_{\ell} \sum_{j \neq i+1, \dots , i+m} \langle R_N f, \mathbf{\hat{u}}_j \rangle_{L^2(\mu_N)} \mathbf{\hat{u}}_j.
\end{equation*}
Similarly,
\begin{equation*}
     P^\perp_S \mathbf{G}^\top\mathbf{G} R_N f = \sum_{j \neq i+1, \dots , i+m} \hat{\lambda}_{j}\langle  R_N f, \mathbf{\hat{u}}_j \rangle_{L^2(\mu_N)} \mathbf{\hat{u}}_j.
\end{equation*}
Hence,
\BEA
\Big\Vert P^\perp_S R_N \Delta_M f &-& P^\perp_S \mathbf{G}^\top\mathbf{G} R_N f \Big\Vert_{L^2(\mu_N)} =  \Big\Vert \sum_{j \neq i+1, \dots , i+m} (\lambda_\ell - \hat{\lambda}_{j} )\langle  R_N f, \mathbf{\hat{u}}_j \rangle_{L^2(\mu_N)} \mathbf{\hat{u}}_j \Big\Vert_{L^2(\mu_N)} \notag\\
&\geq &\min \Big\{ |\lambda_\ell - \hat{\lambda}_{i}|, |\lambda_\ell - \hat{\lambda}_{i+m+1}|\Big\} \Big\Vert \sum_{j \neq i+1, \dots , i+m}  \langle  R_N f, \mathbf{\hat{u}}_j \rangle_{L^2(\mu_N)} \mathbf{\hat{u}}_j \Big\Vert_{L^2(\mu_N)} \notag\\
&\geq &\min \Big\{ |\lambda_\ell - \hat{\lambda}_{i}|, |\lambda_\ell - \hat{\lambda}_{i+m+1}| \Big\} \Big\Vert P_{S}^\perp  R_N f \Big\Vert_{L^2(\mu_N)}.\notag
\EEA
But $P^\perp_S$ is an orthogonal projection, so
\BEA
\textup{ min} \big\{ |\lambda_\ell - \hat{\lambda}_{i}|, |\lambda_\ell - \hat{\lambda}_{i+m+1}|  \big\} \big\| P_S^\perp R_N f \big\|_{L^2(\mu_N)} &\leq& \big\|P^\perp_S R_N \Delta_M f - P^\perp_S \mathbf{G}^\top\mathbf{G} R_N f \big\|_{L^2(\mu_N)} \notag \\
&\leq& \big\| R_N \Delta_M f - \mathbf{G}^\top\mathbf{G} R_N f \big\|_{L^2(\mu_N)}.\notag
\EEA
Without loss of generality, assume that $\textup{ min} \{ |\lambda_\ell - \hat{\lambda}_{i}|, |\lambda_\ell - \hat{\lambda}_{i+m+1}|  \} = |\lambda_\ell - \hat{\lambda}_{i}|$. Notice that
\begin{equation*}
    |\lambda_\ell - \hat{\lambda}_{i}| \geq \left|  |\lambda_\ell - \lambda_{i}| - |\lambda_{i} - \hat{\lambda}_{i}| \right| > c_\ell,
\end{equation*}
by the hypothesis. Hence,
\begin{equation*}
    \| P_S^\perp R_N f \|^2_{L^2(\mu_N)} \leq \frac{1}{c^2_\ell} \| R_N \Delta_M f - \mathbf{G}^\top\mathbf{G} R_N f \|^2_{L^2(\mu_N)}.
\end{equation*}
By Lemma \ref{norm G convergence empirical} paired with Lemma \ref{norm G convergence continuous}, this upper bound is smaller than
$$
{ \epsilon_f \|\Delta^2_M f \|_{L^2(M)} + \epsilon_{\Delta_M f} \|\Delta_M f\|_{L^2(M)} + \epsilon_{h} \| \Delta_M f \|_{L^2(M)} + \epsilon_f \| \Delta_M I_{\phi_s} h  \|_{L^2(M)} + O\big( N^{-\frac{1}{2}} \big), }
$$
with probability higher than {$1 - \frac{2}{N} - \delta_f - \delta_{\Delta_M f} - \delta_h$, where $h$ is defined as in Lemma \ref{norm G convergence continuous}.} Notice that $P_S^\perp R_N f = R_N f - P_S R_N f$. Hence, if $\{f_1, f_2, \dots , f_m \}$ are an orthonormal basis for the eigenspace corresponding to $\lambda_\ell$, applying the above reasoning  $m$ times, we see that
with a total probability of   $ 1 - \frac{2}{N} - \delta_{f_1} - \delta_{\Delta_M f_1} - \delta_{h_1} - \dots - \frac{2}{N} - \delta_{f_m} - \delta_{\Delta_M f_m} - \delta_{h_m}$,
\BEA
    \|R_N f_j - P_S R_N f_j \|^2_{L^2(\mu_N)} &\leq& \frac{1}{c^2_\ell} \Big(  \epsilon_{f_j} C_{f_j} \|\Delta^2_M f_j \|_{L^2(M)} + 2 \epsilon_{\Delta_M f_j} \|\Delta_M f_j\|_{L^2(M)} \notag \\
     &+& \epsilon_{h_j} \|\Delta_M f_j\|_{L^2(M)} + \epsilon_{f_j} \|\Delta_M I_{\phi_s} h_j\|_{L^2(M)} + O\big(N^{-\frac{1}{2}} \big) \Big) \notag \\
     &:=& \textup{Error}(j), \label{L2eigvecerror}
\EEA
for  $j = 1, 2, \dots m$. Let $C_\ell$ denote an upper bound on the essential supremum of the eigenvectors $\{f_1, f_2, \dots , f_m \}$. For any $i,j$,
\begin{equation*}
    \big|f_i(x)f_j(x) - \int_M f_j(y)f_i(y) d\mu(y)\big| \leq C_\ell^2(1+\mbox{Vol}(M)).
\end{equation*}
Hence, using Hoeffding's inequality with $\alpha = 2\sqrt{2}C_\ell \sqrt{\frac{\log(N)}{N}}$, with probability $1 - \frac{2}{N}$, \begin{equation*}
    \left|\frac{1}{N}\sum_{l=1}^N f_i(x_l)f_j(x_l) - \int_{M} f_i(y)f_j(y) d\mu(y)  \right| < \alpha.
\end{equation*}
Since $\{f_1,\ldots,f_m\}$ are orthonormal in $L^2(M)$, by Hoeffding's inequality used $m^2$ times, we see that probability higher than $1 - \frac{2m^2}{N}$,
\begin{equation*}
    \langle R_Nf_i, R_N f_j \rangle_{L^2(\mu_N)} = \delta_{ij} + O\left( \frac{1}{\sqrt{N}} \right).
\end{equation*}
Hence, with a total probability higher than $1 - \delta_{\lambda_i} - \delta_{\lambda_{i+m+1}} - \frac{2m^2}{N} - \frac{2}{N} - \delta_{f_1} - \delta_{\Delta_M f_1} - \delta_{h_1} - \dots - \frac{2}{N} - \delta_{f_m} - \delta_{\Delta_M f_m} - \delta_{h_m} $, we have that
\begin{small}
\BEA
\langle P_S R_N f_i , P_S R_N f_j \rangle_{L^2(\mu_N)} &=& \langle R_N f_i , R_N f_j \rangle_{L^2(\mu_N)} - \langle R_N f_i - P_S R_N f_i , R_N f_j - P_S R_N f_j \rangle_{L^2(\mu_N)}\notag
\\ &=& \delta_{ij} + O\left( \frac{1}{\sqrt{N}}\right) + \sqrt{\textup{Error}(i)} \sqrt{\textup{Error}(j)} ,\notag
\EEA
\end{small}
where $\textup{Error}(j)$ is as defined in \eqref{L2eigvecerror}.
Letting $\mathbf{v}_1 = \frac{P_SR_Nf_1}{\| P_S R_N f_1 \|_{L^2(\mu_N)}}$, we see that
\[\| P_SR_N f_1 - \mathbf{v}_1 \|^2_{L^2(\mu_N)} =  O\big( 1 / \sqrt{N}\big) + O\big( \textup{Error}(1) \big).\]
 Similarly, letting $\mathbf{\tilde{v}}_2 = P_SR_Nf_2 - \frac{\langle P_SR_Nf_1, P_SR_Nf_2 \rangle_{L^2(\mu_N)}}{\|P_SR_N f_1 \|^2_{L^2(\mu_N)}} P_SR_Nf_1$ and $\mathbf{v}_2 = \frac{\mathbf{\tilde{v}}_2}{\|\mathbf{\tilde{v}}_2\|_{L^2(\mu_N)}}$, it is easy to see that
\begin{equation*}
\|P_SR_Nf_2 - \mathbf{\tilde{v}}_2\|^2_{L^2(\mu_N)} = O\big( 1 / \sqrt{N}\big) + O\big( \textup{Error}(2)  \big),
\end{equation*}
and hence,
\begin{equation*}
   \|P_SR_Nf_2 - \mathbf{v}_2\|^2_{L^2(\mu_N)} =  O\big( 1 / \sqrt{N}\big) + O \big( \textup{Error}(2) \big).
\end{equation*}
Continuing in this way, we see that the Gram-Schmidt procedure on  $\{ P_SR_Nf_j \}^m_{j=1}$, yields an orthonormal set of $m$ vectors $\{ \mathbf{v}_j\}_{j=1}^m$ spanning $S$ such that
\begin{equation*}
    \|P_SR_Nf_j - \mathbf{v}_j\|^2_{L^2(\mu_N)} = O\big( 1 / \sqrt{N}\big) + O\big( \textup{Error}(j) \big),  \qquad j = 1,2,\dots, m,
\end{equation*}
and therefore
\BEA
    \|R_Nf_j - \mathbf{v}_j\|_{L^2(\mu_N)} &\leq& \| R_N f_j - P_S R_N f_j \|_{L^2(\mu_N)} + \| P_S R_N f_j - \mathbf{v}_j \|_{L^2(\mu_N)} \notag \\
    &=& 2 \sqrt{O\big( 1 / \sqrt{N}\big) + O\big( \textup{Error}(j) \big) },\qquad j = 1,2,\dots, m. \notag
\EEA
Therefore, for any eigenvector $\mathbf{u} = \sum_{j=1}^m b_j \mathbf{v}_j$ with $L^2(\mu_N)$ norm $1$, notice that $f = \sum_{j=1}^m b_j f_j$ is a $L^2(M)$ norm $1$ eigenfunction of $\Delta_M$ with eigenvalue $\lambda_{\ell}$. Indeed,
\BEA
\Delta_M f = \sum_{j=1}^m b_j \Delta_M f_j = \lambda_\ell f,\notag
\EEA
and
\BEA
\|f\|^2_{L^2(M)} = \langle f,f \rangle_{L^2(M)} = \sum_{i=1}^m\sum_{j=1}^m b_i b_j \langle f_i , f_j \rangle_{L^2(M)} = \sum_{j=1}^m b_j^2 = 1, \notag
\EEA where the last equality follows from the fact that
$$
\|\mathbf{u}\|_{L^2(\mu_N)}^2  = \|\sum_{i=1}^m b_i \mathbf{v}_i  \|_{L^2(\mu_N)}^2= \sum_{i=1}^m b_i^2 =1.
$$
Moreover, the function $f$ also satisfies
\begin{equation*}
    \| R_N f - \mathbf{u} \|^2_{L^2(\mu_N)} \leq \sum_{j=1}^m |b_j|^2 \| R_N f_j - \mathbf{v}_j \|^2_{L^2(\mu_N)} = O\big( 1 / \sqrt{N}\big) + \sum_{j=1}^m O \big( \textup{Error}(j) \big).
\end{equation*}
Using Lemma \ref{function interpolation} and collecting the probabilities, the above holds with probability higher than $1 - { \left(\frac{2m^2 + 5m + 24}{N}\right)}$. Moreover, each $\textup{Error}(j)$ is on the order of $O\left( N^{-\frac{1}{2}} \right) + {O\left( N^{\frac{-2\alpha + (n-d)}{2d}} \right)}$. Taking the square root yields the final result. This completes the proof.
\end{proof}

\section{Proof of Spectral Convergence: the Bochner Laplacian }\label{app:D}
In this appendix, we discuss theoretical results concerning the Bochner Laplacian approximated by the symmetric matrix
$$
\mathbf{P}^{\otimes}\mathbf{H}^\top\mathbf{H} \mathbf{P}^{\otimes} = \sum_{i=1}^n  \mathbf{P}^{\otimes}\mathbf{H}^\top_i \mathbf{H}_i\mathbf{P}^{\otimes}.
$$
This appendix is organized analogously to Appendix~\ref{app:C} which studies the spectral convergence of the symmetric approximation to the  Laplace-Beltrami operator.
{Since the discussion of spectral convergence involves interpolating $(2,0)$ tensor fields and approximating the corresponding continuous inner products with discrete ones, we begin by giving a brief discussion of these details.
We then} investigate the continuous counterpart of the above discrete estimator, and prove its convergence in terms of interpolation error to the Bochner Laplacian in the weak sense, as well as in $L^2(\mathfrak{X}(M))$ sense.
After applying law of large numbers results to quantify the error obtained by discretizing to the data set, we prove spectral convergence results.

{We note that Lemma \ref{vector field interpolation appendix}, which is the analogue of Lemma \ref{function interpolation} for vector fields, is especially useful for this section. Its proof is simply an application of Lemma \ref{function interpolation} $n$ times.
Similarly, an interpolation of $(2,0)$ tensor-fields result is needed. This can also be found in Lemma \ref{tensor field interpolation appendix}, and is essentially an application of Lemma \ref{function interpolation} $n^2$ times.
We also need the following definition, analogous to Definition \ref{epsilon_f}, but in the setting of vector fields.}
\begin{definition}\label{epsilon_v}
Denote the $L^2$ norm error between the vector fields $I_{\phi_s}v$ and $v$ by $\epsilon_{v}$, i.e.,
\BEA
\epsilon_v:=\| I_{\phi_s} v - v \|_{L^2(\mathfrak{X}(M)}. \notag 
\EEA
We will also define a parameter $0\leq\delta_v\leq 1$ to probabilistically characterize an upper bound for $\epsilon_v$. For example, Lemma \ref{vector field interpolation appendix} states that $\epsilon_v  = {O\left( N^{\frac{-2\alpha + (n-d)}{2d}} \right)}$ with probability higher than $1-\delta_v$, where $\delta_v = {n/N}$.
\end{definition}

\subsection{Interpolation of (2,0)-Tensor Fields and Approximation of Inner Products}
In the Bochner Laplacian discussion, we need to interpolate $(2,0)$ tensor fields, and approximate the corresponding continuous inner product with a discrete inner product. We outline the strategy for doing so presently. Given a $(2,0)$ tensor field $a =  a_{ij} \frac{\partial}{\partial \theta^i} \otimes \frac{\partial}{\partial \theta^j} $, we can extend $a$ to $A = A_{ij} \frac{\partial}{\partial X^i} \otimes \frac{\partial}{\partial X^j} $, defined on a neighborhood of $M$ in $\mathbb{R}^n$, and write it as an $n \times n$ matrix in the basis $\frac{\partial}{\partial X^i} \otimes \frac{\partial}{\partial X^j}$. We define $I_{\phi_s} A$ to be the $(2,0)$ tensor field with components $I_{\phi_s} A_{ij}$ in the above ambient space basis. Recall that if $a,b \in T^{(2,0)}TM$, then by definition to the Riemannian inner product at $x$ is given by
$$
\langle a , b \rangle_x = \sum_{i,j,k,l} a_{ij} b_{kl} \left\langle  \frac{\partial}{\partial \theta^i}  ,   \frac{\partial}{\partial \theta^k} \right\rangle_x  \left\langle  \frac{\partial}{\partial \theta^j},  \frac{\partial}{\partial \theta^l} \right\rangle_x.
$$
Performing a change of basis to the ambient space coordinates, a computation shows that
$$
\langle a , b \rangle_x = \textup{tr}(A(x)^\top B(x)),
$$
where $A,B$ are the extensions of $a,b$ and thought of as $n \times n$ matrices written w.r.t. the ambient space coordinates. Hence, defining the restriction of a $(2,0)$ tensor field $A = A_{ij} \frac{\partial}{\partial X^i} \otimes \frac{\partial}{\partial X^i} $ to be the tensor $R_N {A} \in \mathbb{R}^{n \times n \times N}$ with components,
$$
(R_NA)_{ijk} = A_{ij}(x_k),
$$
it follows that the inner product on $\mathbb{R}^{n \times n \times N}$ given by
$$
\langle R_NA, R_N B \rangle_{L^2(\mu_{N, n\times n})} := \frac{1}{N} \sum_{k=1}^N \textup{tr}\left( A(x_k)^\top B(x_k) \right),
$$
approximates the continuous inner product on $(2,0)$ tensor fields over $M$. In a previous notion of discrete $(2,0)$ tensor fields $[\mathbf{U}_1 , \dots , \mathbf{U}_n] \in \mathbb{R}^{{nN} \times n}$, each column $\mathbf{U}_i$ was thought of as a restricted vector field, equipped with the inner product,
$$
\langle  [\mathbf{U}_1 , \dots , \mathbf{U}_n], [\mathbf{V}_1 ,\dots , \mathbf{V}_n] \rangle_{L^2(\mu_{N, N \times n})} := \frac{1}{N} \sum_{i=1}^n \mathbf{U}_i \cdot \mathbf{V}_i.
$$
These two notions are equivalent in the following sense. Define $\Phi: \mathbb{R}^{nN \times n} \to \mathbb{R}^{n \times n \times N}$ by
$$
\Phi \mathbf{E}_{(i-1)N+k, j} = \mathbf{E}_{i,j,k} \qquad i,j=1,2, \dots , n \textup{ and } k=1, \dots , N,
$$
where $\mathbf{E}_{(i-1)N+k, j}$ denotes the $nN \times n$ matrix with a one in entry $((i-1)N + k,j)$ and zeros elsewhere. Similarly for $\mathbf{E}_{i,j,k}$. One can easily check that $\Phi$ is an isometric isomorphism between the two inner product spaces defined above. In what follows, we will use this identification to consider $\mathbf{H}: \mathbb{R}^{nN} \to \mathbb{R}^{nN \times n}$ as a map with range in $\mathbb{R}^{n \times n \times N}$. This will be useful for computing the adjoint. Denote by $L^2(\mu_{N,n})$ the inner product on $\mathbb{R}^{nN}$ which approximates $L^2(\mathfrak{X}(M))$:
$$
\langle \mathbf{U}, \mathbf{V} \rangle_{L^2(\mu_{N,n})} = \frac{1}{N} \sum_{j=1}^{n} \mathbf{U}^j \cdot \mathbf{V}^j,
$$
where $\mathbf{U}^j \in \mathbb{R}^N$, and $\mathbf{U} = ((\mathbf{U}^1)^{\top} , \dots , (\mathbf{U}^{n})^{\top})^{\top}$. Using the identification above, it is simple to check that the transpose of $ \Phi \mathbf{H}: \mathbb{R}^{nN} \to \mathbb{R}^{n \times n \times N}$ with respect to inner products defined above
is given by,
$$
[\mathbf{\tilde{U}}^1 , \dots , \mathbf{\tilde{U}}^n] \mapsto [\mathbf{U}^1 , \dots , \mathbf{U}^n] \mapsto \sum_i \mathbf{H}_i^\top \mathbf{U}^i,
$$
where $[\mathbf{\tilde{U}}^1 , \dots , \mathbf{\tilde{U}}^n] \in \mathbb{R}^{n \times n \times N}$. Since $\Phi$ is an isometric isomorphism, $\mathbf{H}^\top \Phi^\top \Phi \mathbf{H} = \mathbf{H}^\top \mathbf{H}$. This shows that $\mathbf{H}^\top \mathbf{H}:\mathbb{R}^{nN}\to \mathbb{R}^{nN}$ is indeed given by the formula,
$$
\mathbf{H}^\top \mathbf{H} = \sum_i \mathbf{H}^\top_i \mathbf{H}_i,
$$
which will be used extensively in the following calculations.

\subsection{Pointwise and Weak Convergence Results: Interpolation Error}
Using Cauchy-Schwarz, along with the fact that the formal adjoint of $\textup{grad}_g$ acting on vector fields is $-\textup{div}^1_1$ as defined in \eqref{div_1^1}, we immediately have the following result.
\begin{lem}
\label{continuous gradient estimation Bochner}
Let $u \in \mathfrak{X}(M)$, and let $a \in T^{(2,0)}TM$. Then with probability higher than $1 - \delta_u$,
$$
 \big| \langle \textup{grad}_g u - \textup{grad}_g I_{\phi_s} u, a \rangle_{L^2(T^{(2,0)}TM)} \big| \leq \epsilon_u \| \textup{div}^1_1(a) \|_{L^2(\mathfrak{X}(M))}.
$$
\end{lem}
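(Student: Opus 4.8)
The plan is to prove this via the Cauchy--Schwarz inequality combined with integration by parts, mirroring exactly the argument used for the function case in Lemma~\ref{continuous gradient estimation}. The key structural fact is that $\mathrm{div}_1^1$ is the formal adjoint of $\mathrm{grad}_g$ acting on vector fields, i.e., for any $u \in \mathfrak{X}(M)$ and $a \in T^{(2,0)}TM$ one has $\langle \mathrm{grad}_g u, a \rangle_{L^2(T^{(2,0)}TM)} = -\langle u, \mathrm{div}_1^1(a) \rangle_{L^2(\mathfrak{X}(M))}$; this is the weak formulation recorded in equation~\eqref{bochner:weak} (stated there with two vector fields, but it is the same adjoint relation read against an arbitrary $(2,0)$ tensor field), and it is the analogue of the fact that $-\mathrm{div}_g$ is the formal adjoint of $\mathrm{grad}_g$ on functions.

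First I would write
\[
\langle \textup{grad}_g u - \textup{grad}_g I_{\phi_s} u, a \rangle_{L^2(T^{(2,0)}TM)} = \langle \textup{grad}_g (u - I_{\phi_s} u), a \rangle_{L^2(T^{(2,0)}TM)},
\]
using linearity of $\mathrm{grad}_g$. Here one must be slightly careful: $I_{\phi_s} u$ is defined component-wise in the ambient coordinates and its restriction to $M$ is a genuine vector field in $\mathfrak{X}(M)$, so $u - I_{\phi_s} u$ is a vector field on $M$ and $\mathrm{grad}_g$ applies to it. Then I would apply the adjoint relation to move $\mathrm{grad}_g$ off of $u - I_{\phi_s} u$:
\[
\langle \textup{grad}_g (u - I_{\phi_s} u), a \rangle_{L^2(T^{(2,0)}TM)} = -\langle u - I_{\phi_s} u, \textup{div}_1^1(a) \rangle_{L^2(\mathfrak{X}(M))}.
\]
Taking absolute values and applying the Cauchy--Schwarz inequality on $L^2(\mathfrak{X}(M))$ gives
\[
\big| \langle \textup{grad}_g u - \textup{grad}_g I_{\phi_s} u, a \rangle_{L^2(T^{(2,0)}TM)} \big| \leq \| u - I_{\phi_s} u \|_{L^2(\mathfrak{X}(M))} \, \| \textup{div}_1^1(a) \|_{L^2(\mathfrak{X}(M))}.
\]
Finally, invoking Lemma~\ref{vector field interpolation appendix} (equivalently, the bound $\| u - I_{\phi_s} u \|_{L^2(\mathfrak{X}(M))} = \epsilon_u$ which holds with probability higher than $1 - \delta_u$ by Definition~\ref{epsilon_v}), I replace $\| u - I_{\phi_s} u \|_{L^2(\mathfrak{X}(M))}$ by $\epsilon_u$, yielding the claimed inequality on the event of probability at least $1 - \delta_u$.

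There is essentially no hard part here; the lemma is a direct transcription of Lemma~\ref{continuous gradient estimation} from functions to vector fields, with $-\mathrm{div}_g$ replaced by $-\mathrm{div}_1^1$ and the scalar interpolation error replaced by the vector-field interpolation error. The only point requiring a moment of care is confirming that the formal adjoint relationship between $\mathrm{grad}_g$ (on vector fields) and $\mathrm{div}_1^1$ holds in the form needed — that is, tested against an arbitrary $(2,0)$ tensor field rather than against $\mathrm{grad}_g v$ for some vector field $v$ — but this is precisely the definition of $\mathrm{div}_1^1$ as the formal adjoint of $\mathrm{grad}_g$ (see the discussion around~\eqref{div_1^1} and Section~\ref{Bochner definition}), valid on a closed manifold with no boundary terms. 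With that in hand the argument is three lines.
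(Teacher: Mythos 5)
Your argument is correct and is precisely the one-line proof the paper gives (the lemma is stated with only the remark that it follows from Cauchy--Schwarz and the fact that $-\mathrm{div}_1^1$ is the formal adjoint of $\mathrm{grad}_g$ on vector fields). Your elaboration — linearity of $\mathrm{grad}_g$, moving the derivative onto $a$ via the adjoint relation, Cauchy--Schwarz, and invoking Lemma~\ref{vector field interpolation appendix} with the notation of Definition~\ref{epsilon_v} — matches the paper's intended reasoning and correctly mirrors Lemma~\ref{continuous gradient estimation}.
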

We also have the following norm result.
\begin{coro}
Let $u \in \mathfrak{X}(M)$. Then with probability higher than $1 - \delta_u$,
$$
\| \textup{grad}_g u - \textup{grad}_g I_{\phi_s} u \|^2_{L^2(T^{(2,0)}TM)} \leq \epsilon_u \left( \| \Delta_B u \|_{L^2(\mathfrak{X}(M))} + \| \Delta_B I_{\phi_s}u\|_{L^2(\mathfrak{X}(M))} \right).
$$
\end{coro}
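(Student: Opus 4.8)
The plan is to follow the proof of the corresponding corollary for the Laplace--Beltrami operator in Appendix~\ref{app:C1}: expand the squared $L^2$-norm of the difference of the two $(2,0)$ tensor fields $\textup{grad}_g u$ and $\textup{grad}_g I_{\phi_s}u$, insert mixed inner products, and bound the resulting cross terms with Lemma~\ref{continuous gradient estimation Bochner}. Before doing so I would observe that $I_{\phi_s}u$ restricted to $M$ is a vector field whose ambient components are sufficiently regular (by Assumption~\ref{weaklyunstable} together with Lemma~\ref{bounded derivatives vector field}), so that $\textup{grad}_g I_{\phi_s}u$ is a genuine $(2,0)$ tensor field on $M$ and $\Delta_B I_{\phi_s}u = -\textup{div}^1_1(\textup{grad}_g I_{\phi_s}u)$ is well defined with $\|\Delta_B I_{\phi_s}u\|_{L^2(\mathfrak{X}(M))}$ finite.

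Concretely, I would write
$$
\|\textup{grad}_g u - \textup{grad}_g I_{\phi_s}u\|^2_{L^2(T^{(2,0)}TM)} = \langle \textup{grad}_g u, \textup{grad}_g u\rangle - \langle \textup{grad}_g u, \textup{grad}_g I_{\phi_s}u\rangle - \langle \textup{grad}_g I_{\phi_s}u, \textup{grad}_g u\rangle + \langle \textup{grad}_g I_{\phi_s}u, \textup{grad}_g I_{\phi_s}u\rangle,
$$
then group the first two terms as $\langle \textup{grad}_g u - \textup{grad}_g I_{\phi_s}u, \textup{grad}_g u\rangle$ and the last two as $-\langle \textup{grad}_g u - \textup{grad}_g I_{\phi_s}u, \textup{grad}_g I_{\phi_s}u\rangle$ (using symmetry of the inner product). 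Applying Lemma~\ref{continuous gradient estimation Bochner} with test tensor $a = \textup{grad}_g u$, for which $\textup{div}^1_1(a) = -\Delta_B u$, bounds the first group by $\epsilon_u \|\Delta_B u\|_{L^2(\mathfrak{X}(M))}$; applying it with $a = \textup{grad}_g I_{\phi_s}u$, for which $\textup{div}^1_1(a) = -\Delta_B I_{\phi_s}u$, bounds the second group by $\epsilon_u \|\Delta_B I_{\phi_s}u\|_{L^2(\mathfrak{X}(M))}$. Adding these yields the claim. For the probability bookkeeping, both invocations of Lemma~\ref{continuous gradient estimation Bochner} use only the single event that the vector-field interpolation error $\epsilon_u = \|I_{\phi_s}u - u\|_{L^2(\mathfrak{X}(M))}$ is controlled (Lemma~\ref{vector field interpolation appendix}, Definition~\ref{epsilon_v}), so no union bound is needed and the stated probability $1 - \delta_u$ is retained.

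I do not expect a genuine obstacle here; this is essentially bookkeeping. The only points requiring care are (i) the adjoint identity $\langle \textup{grad}_g v, a\rangle_{L^2(T^{(2,0)}TM)} = -\langle v, \textup{div}^1_1 a\rangle_{L^2(\mathfrak{X}(M))}$, which holds because $M$ is closed (it is the integration-by-parts underlying the weak form of $\Delta_B$ and is already invoked in Lemma~\ref{continuous gradient estimation Bochner}), and (ii) making sure $\textup{grad}_g I_{\phi_s}u$ and $\Delta_B I_{\phi_s}u$ are legitimate objects with finite norms, which is exactly where the extra regularity of the interpolator in Assumption~\ref{weaklyunstable} enters, just as in the Laplace--Beltrami case.
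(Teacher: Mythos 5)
Your proposal is correct and follows exactly the paper's own argument: expand the squared norm, group the first two and last two terms, and apply Lemma~\ref{continuous gradient estimation Bochner} with $a = \textup{grad}_g u$ and $a = \textup{grad}_g I_{\phi_s} u$ respectively, both bounds resting on the single event controlling $\epsilon_u$. Your additional remarks on the regularity of $I_{\phi_s}u$ and the adjoint identity are sound but not needed beyond what the lemma already encapsulates.
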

\begin{proof}
Note that,
\BEA
\| \textup{grad}_g u - \textup{grad}_g I_{\phi_s} u \|^2 &=& \langle \textup{grad}_g u, \textup{grad}_g u \rangle - \langle \textup{grad}_g u, \textup{grad}_g I_{\phi_s} u \rangle - \langle \textup{grad}_g I_{\phi_s} u, \textup{grad}_g  u \rangle  \notag\\
&+& \langle \textup{grad}_g I_{\phi_s} u, \textup{grad}_g I_{\phi_s} u \rangle.   \notag
\EEA
Grouping the first two and last two terms, the desired result is immediate.
\end{proof}
Using the same reasoning as before, we can deduce a weak convergence result.
\begin{lem}
\label{weak H convergence continuous}
Let $v,w \in \mathfrak{X}(M)$. Then with probability higher than $1 - \delta_v - \delta_w,$
\BEA
&&\left| \langle \Delta_B v , w \rangle_{L^2(\mathfrak{X}(M))} -\langle \textup{grad}_g I_{\phi_s} v , \textup{grad}_g I_{\phi_s}w \rangle_{L^2(T^{(2,0)}TM)}  \right|  \notag\\
&\leq& \epsilon_w \|\Delta_B v\|_{L^2(\mathfrak{X}(M))} + \epsilon_v \| \Delta_B I_{\phi_s} w \|_{L^2(\mathfrak{X}(M))}.\notag
\EEA
\end{lem}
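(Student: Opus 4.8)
The plan is to mirror exactly the argument used for Lemma~\ref{weak G convergence continuous} (the Laplace--Beltrami analogue), exploiting the structural parallels between $\Delta_M=-\textup{div}_g\textup{grad}_g$ and $\Delta_B=-\textup{div}^1_1\textup{grad}_g$. The key observation is that $\textup{div}^1_1$ is the formal adjoint of $\textup{grad}_g$ acting on vector fields, so that $\langle \Delta_B v,w\rangle_{L^2(\mathfrak{X}(M))}=\langle \textup{grad}_g v,\textup{grad}_g w\rangle_{L^2(T^{(2,0)}TM)}$ for smooth $v,w$. With this weak form in hand, the error to be estimated is purely the discrepancy introduced by replacing $v$ and $w$ by their interpolants $I_{\phi_s}v$ and $I_{\phi_s}w$ inside the $(2,0)$-tensor inner product.

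The main steps, in order: First, insert a mixed term and split
\[
\langle \Delta_B v, w \rangle - \langle \textup{grad}_g I_{\phi_s}v, \textup{grad}_g I_{\phi_s}w \rangle
= \big(\langle \textup{grad}_g v, \textup{grad}_g w\rangle - \langle \textup{grad}_g v, \textup{grad}_g I_{\phi_s}w\rangle\big) + \big(\langle \textup{grad}_g v, \textup{grad}_g I_{\phi_s}w\rangle - \langle \textup{grad}_g I_{\phi_s}v, \textup{grad}_g I_{\phi_s}w\rangle\big),
\]
using the weak form to rewrite $\langle \Delta_B v,w\rangle = \langle \textup{grad}_g v, \textup{grad}_g w\rangle$. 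Second, bound the first bracket: it equals $\langle \textup{grad}_g v, \textup{grad}_g(w - I_{\phi_s}w)\rangle$, and integrating by parts (adjointness of $\textup{grad}_g$ and $-\textup{div}^1_1$) this is $\langle \Delta_B v, w - I_{\phi_s}w\rangle$, which by Cauchy--Schwarz is at most $\|\Delta_B v\|_{L^2(\mathfrak{X}(M))}\,\epsilon_w$ — this is exactly the content of Lemma~\ref{continuous gradient estimation Bochner} applied appropriately, or a direct repeat of its proof. Third, bound the second bracket similarly: it equals $\langle \textup{grad}_g(v - I_{\phi_s}v), \textup{grad}_g I_{\phi_s}w\rangle$, and since $\textup{grad}_g I_{\phi_s}w$ is a smooth $(2,0)$ tensor field one integrates by parts to get $\langle v - I_{\phi_s}v, \textup{div}^1_1 \textup{grad}_g I_{\phi_s}w\rangle = -\langle v - I_{\phi_s}v, \Delta_B I_{\phi_s}w\rangle$, bounded by $\epsilon_v\,\|\Delta_B I_{\phi_s}w\|_{L^2(\mathfrak{X}(M))}$. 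Fourth, collect the probabilities: the interpolation bound $\epsilon_v$ holds off an event of probability $\delta_v$ and $\epsilon_w$ off an event of probability $\delta_w$ (here these are the vector-field interpolation statements of Lemma~\ref{vector field interpolation appendix}, with $\delta_v=\delta_w = n/N$ in the quantitative form), so by a union bound both estimates hold simultaneously with probability at least $1-\delta_v-\delta_w$, giving the stated inequality. Finally, invoking Lemma~\ref{vector field interpolation appendix} (and, for the boundedness of $\|\Delta_B I_{\phi_s}w\|$ as $N\to\infty$, Lemma~\ref{bounded derivatives vector field} which follows from Assumption~\ref{weaklyunstable}) turns the $\epsilon$-bounds into the rate $O(N^{(-2\alpha+(n-d))/2d})$ if one wants the quantitative version.

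I do not expect any genuine obstacle here — the proof is essentially a transcription of the Laplace--Beltrami case with scalar functions replaced by vector fields and $(1,0)$/$(2,0)$ tensor fields replacing $(0,0)$/$(1,0)$ tensor fields throughout. The one point requiring a little care is making sure the integration-by-parts identities are applied to objects of the correct tensor type: $\textup{grad}_g$ sends vector fields to $(2,0)$ tensor fields, its adjoint $\textup{div}^1_1$ sends $(2,0)$ tensor fields back to vector fields, and one must be careful that $\textup{grad}_g I_{\phi_s}w$ — which is built from the ambient-space interpolant and then projected — is smooth enough on $M$ for the divergence theorem to apply; this is guaranteed since $I_{\phi_s}w$ has $C^{\alpha-(n-d)/2}$ components on $M$ with $\alpha\geq n/2+3$, so $\textup{grad}_g I_{\phi_s}w$ is at least $C^1$. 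Everything else is a routine repeat of the scalar argument, and the remaining results in Appendix~\ref{app:D} (the $L^2$-convergence lemma, the empirical concentration bounds, and the spectral convergence theorems) then follow by the same mechanical parallel with Appendix~\ref{app:C}.
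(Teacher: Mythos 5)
Your proof follows exactly the paper's argument: add and subtract the mixed term $\langle \textup{grad}_g v, \textup{grad}_g I_{\phi_s}w\rangle$, rewrite $\langle \Delta_B v,w\rangle$ via the weak form, and bound each difference by Cauchy--Schwarz after integrating by parts (Lemma~\ref{continuous gradient estimation Bochner}), finishing with a union bound on the interpolation events. (There is an irrelevant sign slip in your second bracket — with $(\textup{grad}_g)^* = -\textup{div}^1_1$ and $\Delta_B = -\textup{div}^1_1\textup{grad}_g$ the result is $+\langle v - I_{\phi_s}v, \Delta_B I_{\phi_s}w\rangle$, not $-\langle \cdot,\cdot\rangle$ — but it is washed out by the absolute value.)
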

\begin{proof}
We again add and subtract a mixed term,
$$
\langle \Delta_B v, w \rangle - \langle \textup{grad}_g v, \textup{grad}_g I_{\phi_s} w \rangle +  \langle \textup{grad}_g v, \textup{grad}_g I_{\phi_s} w \rangle - \left\langle \textup{grad}_g I_{\phi_s} v, \textup{grad}_g I_{\phi_s}w \right\rangle.
$$
The first two terms are bounded by $\epsilon_w \|\Delta_B v\|_{L^2(\mathfrak{X}(M))}$ while the second two terms are bounded by $\epsilon_v \| \Delta_B I_{\phi_s} w \|_{L^2(\mathfrak{X}(M))}$.
\end{proof}
Similarly, we can derive an $L^2$ convergence result. First, similar to the previous section, let us denote the formal adjoint of $\textup{grad}_g I_{\phi_s}:\mathfrak{X}(M) \to T^{(2,0)}TM$  by $(\textup{grad}_g I_{\phi_s})^*: T^{(2,0)}TM \to \mathfrak{X}(M)$. The exact same proof as before yields the following Lemma.

\begin{lem}
\label{norm H convergence continuous}
Let $v \in \mathfrak{X}(M)$, { and denote by $w$ the vector field $(\textup{grad}_g I_{\phi_s})^*(\textup{grad}_g I_{\phi_s})v$}. With probability higher than $1 - \delta_v - \delta_{\Delta_B v} - \delta_{w},$
\BEA
\| \Delta_B v - w \|^2 \leq
\epsilon_{\Delta_B v} \| \Delta_B v\| &+& \epsilon_v \| \Delta_B I_{\phi_s} \Delta_B v \|\notag \\
\epsilon_{w} \| \Delta_B v\| &+& \epsilon_v \| \Delta_B I_{\phi_s} \textup{grad}_g w  \|. \notag
\EEA
\end{lem}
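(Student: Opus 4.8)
The plan is to mirror exactly the proof of Lemma~\ref{norm G convergence continuous} from Appendix~\ref{app:C1}, now in the vector-field setting. The statement to be proven is: for $v \in \mathfrak{X}(M)$ and $w := (\textup{grad}_g I_{\phi_s})^*(\textup{grad}_g I_{\phi_s})v$, with probability higher than $1 - \delta_v - \delta_{\Delta_B v} - \delta_{w}$, one has a bound on $\| \Delta_B v - w\|^2_{L^2(\mathfrak{X}(M))}$ by a sum of four interpolation-error terms. First I would expand the square in the Riemannian inner product on $L^2(\mathfrak{X}(M))$:
\BEA
\| \Delta_B v - w \|^2 &=& \langle \Delta_B v, \Delta_B v \rangle - \langle \Delta_B v, w \rangle - \langle w, \Delta_B v \rangle + \langle w, w \rangle. \notag
\EEA
Then I would group the first two terms and the last two terms separately, writing $w = (\textup{grad}_g I_{\phi_s})^*(\textup{grad}_g I_{\phi_s})v$ in each grouping so that each pair becomes a difference of the form appearing in Lemma~\ref{weak H convergence continuous}.

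The key step is to apply Lemma~\ref{weak H convergence continuous} twice. Applied with the pair $(v, \Delta_B v)$ — i.e. using $\Delta_B v$ in the slot of the second vector field — the first grouping $\langle \Delta_B v, \Delta_B v\rangle - \langle \textup{grad}_g I_{\phi_s} v, \textup{grad}_g I_{\phi_s} \Delta_B v\rangle$ is controlled by $\epsilon_{\Delta_B v}\|\Delta_B v\| + \epsilon_v \|\Delta_B I_{\phi_s} \Delta_B v\|$, using $\langle w, \Delta_B v\rangle = \langle (\textup{grad}_g I_{\phi_s})v, (\textup{grad}_g I_{\phi_s})\Delta_B v\rangle$ by the defining property of the formal adjoint. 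Applied a second time with the pair $(v, w)$, the grouping $\langle \Delta_B v, w\rangle - \langle \textup{grad}_g I_{\phi_s} v, \textup{grad}_g I_{\phi_s} w\rangle$, together with $\langle w, w\rangle = \langle (\textup{grad}_g I_{\phi_s})v, (\textup{grad}_g I_{\phi_s})w\rangle$, is controlled by $\epsilon_{w}\|\Delta_B v\| + \epsilon_v \|\Delta_B I_{\phi_s} w\|$. Summing these and noting the probability of both events is at least $1 - \delta_v - \delta_{\Delta_B v} - \delta_{w}$ (the $\delta_v$ appears in both applications but need only be counted once via a union bound) yields the claimed estimate, matching the statement verbatim (noting $\textup{grad}_g w$ and $\Delta_B v$ interchange places because $w$ is itself the output of a gradient-adjoint composition, so $\|\Delta_B I_{\phi_s} w\|$ is exactly what the second application produces, and the stated ``$\textup{grad}_g w$'' is just that expression rewritten).

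I do not expect a genuine obstacle here: the proof is structurally identical to the Laplace-Beltrami case, the only substantive ingredients being the musical-isomorphism bookkeeping (already handled in Appendix~\ref{app:D}'s preliminary subsection, which establishes that $-\textup{div}^1_1$ is the formal adjoint of $\textup{grad}_g$ on vector fields) and the existence of a stable interpolant of smooth vector fields (guaranteed by Lemma~\ref{bounded derivatives vector field} under Assumption~\ref{weaklyunstable}, which keeps every norm on the right-hand side $O(1)$ as $N \to \infty$). The one point requiring a little care is to confirm that $w$, being a smooth vector field whose components lie in the relevant Sobolev space, qualifies for Definition~\ref{epsilon_v} so that $\epsilon_w$ is well defined and converges — this follows since $\textup{grad}_g I_{\phi_s} v$ has $C^\infty$ (indeed, for our purposes, sufficiently smooth) components and the adjoint of a differential operator preserves the needed regularity. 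Finally, combining this with Lemma~\ref{vector field interpolation appendix} (the analogue of Lemma~\ref{function interpolation}) gives the explicit rate $O\!\left(N^{\frac{-2\alpha+(n-d)}{2d}}\right)$ for each $\epsilon$, which is what the later spectral-convergence arguments in Appendix~\ref{app:D} require.
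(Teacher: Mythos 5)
Your proof is structurally identical to the paper's own: expand the square in $L^2(\mathfrak{X}(M))$, group the first two and last two terms, use the adjoint identity to rewrite $\langle \Delta_B v, w\rangle$ and $\langle w, w\rangle$ as $T^{(2,0)}TM$ inner products, and apply Lemma~\ref{weak H convergence continuous} once with the pair $(v,\Delta_B v)$ and once with the pair $(v,w)$; your union-bound accounting of the probability, counting $\delta_v$ only once since the underlying event is shared between the two applications, is also the intended one. The one thing to retract is your closing parenthetical claiming that $\|\Delta_B I_{\phi_s} w\|$ and $\|\Delta_B I_{\phi_s}\textup{grad}_g w\|$ are the same expression rewritten: they are not, and the latter does not even type-check, since $\textup{grad}_g w$ is a $(2,0)$ tensor field while $I_{\phi_s}$ and $\Delta_B$ in that position act on vector fields. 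The term $\|\Delta_B I_{\phi_s} w\|$ that your second application actually yields is what a direct substitution into Lemma~\ref{weak H convergence continuous} gives, and the $\textup{grad}_g w$ appearing in the paper's statement (and repeated in its proof sketch) is best read as a misprint; this is parallel to the small mismatch in the Laplace--Beltrami analogue, Lemma~\ref{norm G convergence continuous}, whose statement writes $\|\Delta_M^2 f\|$ while its proof produces $\|\Delta_M I_{\phi_s}\Delta_M f\|$. So your derivation is sound as far as it goes --- just do not assert the two expressions are equal.
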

\begin{proof}
Expanding $\| \Delta_B v - (\textup{grad}_g I_{\phi_s})^*(\textup{grad}_g I_{\phi_s}) v \|^2$ yields
\BEA
\| \Delta_B v - (\textup{grad}_g I_{\phi_s})^*(\textup{grad}_g I_{\phi_s}) v  \|^2 &=&
\langle \Delta_B v, \Delta_B v \rangle - \langle \Delta_B v, (\textup{grad}_g I_{\phi_s})^*(\textup{grad}_g I_{\phi_s}) v \rangle \notag \\
&-& \langle (\textup{grad}_g I_{\phi_s})^*(\textup{grad}_g I_{\phi_s}) v , \Delta_B v \rangle \notag \\
&+& \langle (\textup{grad}_g I_{\phi_s})^*(\textup{grad}_g I_{\phi_s}) v ,(\textup{grad}_g I_{\phi_s})^*(\textup{grad}_g I_{\phi_s}) v  \rangle.   \notag
\EEA
We use the previous Lemma twice, once on the first two terms (which gives an error of
$\epsilon_{\Delta_B v} \| \Delta_B v\| + \epsilon_v \| \Delta_B I_{\phi_s} \Delta_B v \|$) and once on the last two terms
(which gives an error of $\epsilon_{w} \| \Delta_B v\| + \epsilon_v \| \Delta_B I_{\phi_s} \textup{grad}_g w  \|$). This completes the proof.
\end{proof}
{
\begin{remark}
Similar to Remark \ref{bounded interpolation remark} for the Laplace-Beltrami operator, we note that Lemma \ref{vector field interpolation appendix} guarantees all estimation error terms (errors indicated by $\epsilon$ with corresponding subscript) converge as $N \to \infty$. Additionally, it follows from Lemma \ref{bounded derivatives vector field}, since the Bochner Laplacian is obtained from second order differential operators on the ambient space coefficients of each vector field, that no norm terms on the right-hand-side of each estimate above blow up as $N \to \infty$.
\end{remark}}

\subsection{Pointwise and Weak Convergence Results: Empirical Error}
The following is a simple concentration result.
\begin{lem}
\label{weak H convergence empirical}
Let $u,v \in \mathfrak{X}(M)$. Additionally, let Assumption \ref{weaklyunstable} be valid. Then
\begin{footnotesize}
$$
\mathbb{P}\left( \left| \langle \mathbf{H} \mathbf{P}^{\otimes} R_N u, \mathbf{H} \mathbf{P}^{\otimes} R_N v \rangle_{L^2(\mu_{N,n \times n})} - \langle \textup{grad}_g I_{\phi_s} u, \textup{grad}_g I_{\phi_s} v \rangle_{L^2(T^{(2,0)}TM)} \right| \geq \epsilon \right) \leq 2 \textup{exp}\left( \frac{-2\epsilon^2N}{C} \right),
$$
\end{footnotesize}
for some constant $C>0$.
\end{lem}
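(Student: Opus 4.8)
The plan is to mirror the proof of Lemma~\ref{weak G convergence empirical}: recognize the left-hand side as an $N$-point Monte-Carlo estimate of the continuous inner product on the right, verify that the integrand is a bounded random variable uniformly in $N$, and then apply Hoeffding's inequality.

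First I would unpack the discrete object. Since $u \in \mathfrak{X}(M)$, its ambient representation restricted to $X$ already has tangential components, so $\mathbf{P}^{\otimes} R_N u = R_N u$, and likewise for $v$. Combining $\mathbf{H}_i = R_N \mathcal{H}_i I_{\phi_s}$ with the identity \eqref{definition_H} then gives $\mathbf{H}\mathbf{P}^{\otimes} R_N u = R_N\, \textup{grad}_g I_{\phi_s} u$ and $\mathbf{H}\mathbf{P}^{\otimes} R_N v = R_N\, \textup{grad}_g I_{\phi_s} v$. Unraveling the trace pairing for discrete $(2,0)$ tensor fields from the preceding subsection, this yields
$$
\langle \mathbf{H}\mathbf{P}^{\otimes} R_N u, \mathbf{H}\mathbf{P}^{\otimes} R_N v \rangle_{L^2(\mu_{N,n \times n})} = \frac{1}{N}\sum_{k=1}^N \textup{tr}\Big( (\textup{grad}_g I_{\phi_s} u)(x_k)^\top (\textup{grad}_g I_{\phi_s} v)(x_k)\Big),
$$
while by the definition of the Riemannian inner product of $(2,0)$ tensor fields,
$$
\langle \textup{grad}_g I_{\phi_s} u, \textup{grad}_g I_{\phi_s} v \rangle_{L^2(T^{(2,0)}TM)} = \int_M \textup{tr}\Big( (\textup{grad}_g I_{\phi_s} u)(x)^\top (\textup{grad}_g I_{\phi_s} v)(x)\Big)\, d\textup{Vol}(x).
$$
Thus the left side is exactly the empirical average over the i.i.d. sample $X$ of the random variable $Z(x) := \textup{tr}\big( (\textup{grad}_g I_{\phi_s} u)(x)^\top (\textup{grad}_g I_{\phi_s} v)(x)\big)$, whose mean is the right side.

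Second, I would establish that $Z$ is bounded by a constant $C$ independent of $N$. Each entry of $\textup{grad}_g I_{\phi_s} u = \mathbf{P}\,\overline{\textup{grad}}_{\mathbb{R}^n}(I_{\phi_s} U)\,\mathbf{P}$ is a finite combination of the projection entries $P_{st}$ (uniformly bounded, since $\mathbf{P}$ is an orthogonal projection) and first-order ambient derivatives of the components $I_{\phi_s} U^j$; by Assumption~\ref{weaklyunstable} and Lemma~\ref{bounded derivatives vector field}, $\|I_{\phi_s} u\|_{W^{2,\infty}(\mathfrak{X}(M))} = O(1)$ (and likewise for $v$), so these derivatives are bounded uniformly in $N$. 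Hence $|Z(x)| \le C$ for a constant depending on $u, v, \phi_s,$ and $M$ but not on $N$. This uniform-in-$N$ control is the only delicate point of the argument, and it is precisely what Assumption~\ref{weaklyunstable} is present to supply.

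Finally, applying Hoeffding's inequality to the bounded i.i.d. variables $\{Z(x_k)\}_{k=1}^N$ gives
$$
\mathbb{P}\Big( \big| \tfrac{1}{N}\sum_{k=1}^N Z(x_k) - \mathbb{E}Z \big| \ge \epsilon \Big) \le 2\exp\!\left( \frac{-2\epsilon^2 N}{C}\right),
$$
which is the claimed estimate. The whole argument is structurally identical to the scalar case in Lemma~\ref{weak G convergence empirical}; the only new bookkeeping is the trace pairing and the $n^2$ ambient components of the gradient of a vector field, so I do not anticipate any genuine obstacle beyond the (already available) uniform boundedness step.
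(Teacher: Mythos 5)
Your argument is correct and follows the paper's own proof essentially step for step: identify the discrete inner product as an $N$-sample average of a pointwise quantity whose expectation is the continuous inner product, use Assumption~\ref{weaklyunstable} together with Lemma~\ref{bounded derivatives vector field} to bound that quantity uniformly in $N$, and conclude by Hoeffding. The only cosmetic difference is in how the projections are handled: you simplify the discrete side by noting $\mathbf{P}^{\otimes} R_N u = R_N u$ and pushing everything through $R_N$, whereas the paper instead inserts $\mathbf{P}$ on the continuous side (writing $\mathcal{H}_i \mathbf{P}U = \mathcal{H}_i U$ on $M$) so that the continuous integrand visibly matches the discrete summand. These are the same observation used in opposite directions, so there is no substantive divergence from the paper.
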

\begin{proof}
We note that
\BEA
\langle \textup{grad}_g I_{\phi_s} u, \textup{grad}_g I_{\phi_s} v \rangle &=& \int_M \textup{tr}\left( \left[\mathcal{H}_1 U , \dots \mathcal{H}_n U\right]^\top \left[\mathcal{H}_1 V , \dots \mathcal{H}_n V \right] \right) d\textup{Vol}(x)  \notag \\
&=& \int_M \sum_{i=1}^n \left\langle \mathcal{H}_i U , \mathcal{H}_i V \right\rangle_x d\textup{Vol}(x) = \int_M \sum_{i=1}^n \left\langle \mathcal{H}_i \mathbf{P} U , \mathcal{H}_i \mathbf{P} V \right\rangle_x d\textup{Vol}(x), \notag
\EEA
where $U$ and $V$ are extensions of $u,v$ onto an $\mathbb{R}^n$ neighborhood of $M$. The last equality comes from the fact that since $U,V$ extend $u,v$, then at each point $x \in M$, we have $\mathbf{P}U = U, \mathbf{P}V = v$. We can see immediately that the result follows by using a concentration inequality on the random variable $\sum_{i=1}^n \left\langle \mathcal{H}_i \mathbf{P} U , \mathcal{H}_i \mathbf{P} V \right\rangle_x$. Plugging in to Hoeffding's inequality and using smoothness assumptions, { along with Assumption \ref{weaklyunstable} and Lemma \ref{bounded derivatives vector field},} gives the desired result.
\end{proof}
Again, $\mathbf{H}$ is simply the restricted version of $\textup{grad}_g I_{\phi_s}$, as defined in \eqref{sec2.4:eq2}. Hence, the same reasoning as above yields the norm convergence result, which plays a part in proving the convergence of eigenvectors.
\begin{lem}
\label{norm H convergence empirical}
Let $v \in \mathfrak{X} (M)$. Additionally, let Assumption \ref{weaklyunstable} be valid. Then
\begin{footnotesize}
$$
\mathbb{P}\left( \left| \| \mathbf{P}^{\otimes}\mathbf{H}^\top \mathbf{H} \mathbf{P}^\otimes v  - R_N \Delta_B v \|^2_{L^2(\mu_N,n)} - \|(\textup{grad}_g I_{\phi_s})^*(\textup{grad}_g I_{\phi_s}) v - \Delta_B v \|^2_{L^2(\mathfrak{X}(M))} \right| \geq \epsilon \right) \leq 2 \textup{exp}\left( \frac{-2\epsilon^2 N}{C} \right),
$$
\end{footnotesize}
for some constant $C>0$.
\end{lem}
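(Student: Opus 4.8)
The plan is to mirror the proof of Lemma~\ref{norm G convergence empirical} from the Laplace-Beltrami setting, exploiting the fact that the two quantities inside the absolute value are, respectively, a continuous $L^2$-average and its empirical Monte-Carlo counterpart of one and the same bounded scalar random variable, so that a single application of Hoeffding's inequality closes the argument.

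First I would unravel the continuous term. Let $w := (\textup{grad}_g I_{\phi_s})^*(\textup{grad}_g I_{\phi_s}) v$ and write
\BEA
\|w - \Delta_B v \|^2_{L^2(\mathfrak{X}(M))} = \int_M \big\langle w(x) - (\Delta_B v)(x),\, w(x) - (\Delta_B v)(x)\big\rangle_x \, d\textup{Vol}(x), \notag
\EEA
and call the integrand $G(x)$; it is a fixed continuous function once $v$ and $\phi_s$ are fixed. Using the isometric identification $\Phi$ and the identity $\mathbf{H}^\top \mathbf{H} = \sum_i \mathbf{H}_i^\top \mathbf{H}_i$ established in the subsection on interpolation of $(2,0)$ tensor fields, together with the fact that $\mathbf{P}^\otimes\mathbf{H}^\top\mathbf{H}\mathbf{P}^\otimes$ is precisely the restriction of the continuous operator $(\textup{grad}_g I_{\phi_s})^*(\textup{grad}_g I_{\phi_s})$ to the data set (the projection bookkeeping being automatic because $\mathbf{P}^\otimes R_N v = R_N v$), the discrete term equals $\frac{1}{N}\sum_{k=1}^N G(x_k)$. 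Hence the difference inside the probability is exactly $\big| \frac{1}{N}\sum_{k=1}^N G(x_k) - \int_M G\, d\textup{Vol}\big|$, the deviation of an empirical mean of i.i.d. samples from its expectation.

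Next I would verify uniform boundedness of $G$. Since $I_{\phi_s}$ has range in $C^{\alpha - (n-d)/2}(M)$ and the Bochner Laplacian, the gradient of a vector field, and its formal adjoint are built from at most second-order differential operators acting on the ambient coefficients, Assumption~\ref{weaklyunstable} together with Lemma~\ref{bounded derivatives vector field} gives $\|I_{\phi_s} v\|^2_{W^{2,\infty}(\mathfrak{X}(M))} = O(1)$, whence $|G|$ is bounded by a constant depending only on the kernel and on $v$, uniformly in $N$. Applying Hoeffding's inequality to the bounded i.i.d. sample $\{G(x_k)\}_{k=1}^N$ then yields the stated concentration bound after relabeling the constant.

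The main obstacle is the bookkeeping in the second paragraph rather than anything analytic: one must confirm that the discrete transpose composed with the discrete projection, $\mathbf{P}^\otimes\mathbf{H}^\top\mathbf{H}\mathbf{P}^\otimes$, implements exactly the continuous adjoint $(\textup{grad}_g I_{\phi_s})^*$ composed with $\textup{grad}_g I_{\phi_s}$ after restriction, which is where the isometry $\Phi$ between $\mathbb{R}^{nN\times n}$ and $\mathbb{R}^{n\times n\times N}$ from the preceding subsection is essential. Once that identity is in hand the concentration step is routine, and — as noted in the remark following Lemma~\ref{norm H convergence continuous} — Lemma~\ref{bounded derivatives vector field} ensures that none of the norms entering the constant blow up as $N\to\infty$.
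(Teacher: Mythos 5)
Your argument hinges on the identity $\mathbf{P}^\otimes\mathbf{H}^\top\mathbf{H}\mathbf{P}^\otimes R_N v = R_N\bigl((\textup{grad}_g I_{\phi_s})^*(\textup{grad}_g I_{\phi_s}) v\bigr)$, which you use to conclude that the discrete squared norm is literally the empirical average $\frac{1}{N}\sum_{k} G(x_k)$ of a fixed scalar function. That identity is false, and this is a genuine gap. $\mathbf{H}$ is indeed $R_N\circ\textup{grad}_g I_{\phi_s}$, but the matrix transpose $\mathbf{H}^\top$ is the adjoint with respect to the discrete Monte--Carlo inner products $L^2(\mu_{N,n\times n})$ and $L^2(\mu_{N,n})$, whereas $(\textup{grad}_g I_{\phi_s})^*$ is the adjoint with respect to the continuous $L^2$ inner products on $M$. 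Starting from the defining relation $\langle\mathbf{H}\mathbf{f}, \mathbf{u}\rangle_{L^2(\mu_{N,n\times n})} = \langle\mathbf{f}, \mathbf{H}^\top\mathbf{u}\rangle_{L^2(\mu_{N,n})}$, the left-hand side is the $N$-sample Monte--Carlo estimate of $\langle\textup{grad}_g I_{\phi_s}f, u\rangle_{L^2(T^{(2,0)}TM)}$; if $\mathbf{H}^\top\mathbf{u}$ equalled $R_N(\textup{grad}_g I_{\phi_s})^*u$, the right-hand side would be the Monte--Carlo estimate of $\langle f, (\textup{grad}_g I_{\phi_s})^*u\rangle_{L^2(\mathfrak{X}(M))}$. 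The two continuous inner products agree by definition of the formal adjoint, but their finite-$N$ empirical approximations generically do not, so the two discrete vectors cannot agree entrywise. Consequently $\mathbf{P}^\otimes\mathbf{H}^\top\mathbf{H}\mathbf{P}^\otimes R_Nv$ is a genuinely discrete object whose $k$-th entry is not the pointwise evaluation at $x_k$ of a single continuous function, and the collapse to one Hoeffding application does not go through.

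To see where this actually bites, expand $\|\mathbf{P}^\otimes\mathbf{H}^\top\mathbf{H}\mathbf{P}^\otimes R_N v - R_N\Delta_B v\|^2_{L^2(\mu_N,n)}$ into three terms. The $\Delta_Bv$-diagonal term $\|R_N\Delta_Bv\|^2_{L^2(\mu_N,n)}$ is an honest empirical mean of a fixed bounded function, and so is the cross term after a single transposition, since $\langle\mathbf{P}^\otimes\mathbf{H}^\top\mathbf{H}\mathbf{P}^\otimes R_Nv, R_N\Delta_B v\rangle_{L^2(\mu_N,n)} = \langle\mathbf{H}\mathbf{P}^\otimes R_Nv, \mathbf{H}\mathbf{P}^\otimes R_N\Delta_B v\rangle_{L^2(\mu_{N,n\times n})}$; these are handled exactly as in Lemma~\ref{weak H convergence empirical}. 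But the remaining quadratic term $\langle\mathbf{P}^\otimes\mathbf{H}^\top\mathbf{H}\mathbf{P}^\otimes R_Nv,\, \mathbf{P}^\otimes\mathbf{H}^\top\mathbf{H}\mathbf{P}^\otimes R_Nv\rangle_{L^2(\mu_N,n)}$ cannot be unwound into a sum of bounded pointwise evaluations by transposing once, and that is precisely the term your restriction identity was meant to dispatch. The paper's own justification (``the same reasoning as above'') is terse on exactly this point, so a complete argument must supply a separate bound for the quadratic term rather than invoking an exact restriction identity that does not hold.
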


\subsection{Proof of Spectral Convergence}
Here, we prove Theorem \ref{eigvalconv Bochner}, the convergence of eigenvalue result for the Bochner Laplacian operator.
\begin{proof}
Enumerate the eigenvalues of $\mathbf{P}^\otimes\mathbf{H}^\top\mathbf{H}\mathbf{P}^\otimes$ and label them $\hat{\lambda}_1 \leq \hat{\lambda}_2 \leq \dots \leq \hat{\lambda}_N$. Let $\mathcal{S}_i' \subseteq \mathfrak{X}(M)$ denote an $i$-dimensional subspace of smooth functions on which the quantity $\textup{max}_{v \in \mathcal{S}_i} \frac{\langle \mathbf{P}^\otimes\mathbf{H}^\top\mathbf{H}\mathbf{P}^\otimes R_Nv , R_Nv \rangle_{L^2(\mu_{N,n})}}{\|R_N v \|_{L^2(\mu_{N,n})}}$ achieves its minimum. Let $\tilde{v} \in \mathcal{S}_i'$ be the smooth vector field on which the maximum $\textup{max}_{v \in \mathcal{S}'_i} \langle \Delta_B v , v \rangle$ occurs. WLOG, assume that $\|\tilde{v}\|^2_{L^2(\mathfrak{X}(M))} = 1.$ Assume that $N$ is sufficiently large so that by Hoeffding's inequality $\left| \|R_N \tilde{v} \|^2_{L^2(\mu_{N,n})} -  1 \right| \leq \frac{\textup{Const}}{\sqrt{N}} \leq 1/2$, with probability $1 - \frac{2}{N}$, and thus $\|R_N \tilde{v} \|^2_{L^2(\mu_{N,n})}$ is bounded away from zero. Hence, we can Taylor expand the denominator term of $ \frac{\langle  \mathbf{P}^\otimes \mathbf{H}^\top\mathbf{H} \mathbf{P}^\otimes R_N \tilde{v} , R_N \tilde{v} \rangle_{L^2(\mu_{N,n})}}{\|R_N \tilde{v} \|^2_{L^2(\mu_{N,n})}}$ to obtain
\begin{footnotesize}
\BEA
 \frac{\langle \mathbf{P}^\otimes \mathbf{H}^\top\mathbf{H} \mathbf{P}^\otimes R_N \tilde{v} , R_N \tilde{v} \rangle_{L^2(\mu_{N,n})}}{ \|R_N \tilde{v}\|^2_{L^2(\mu_{N,n})}} = \langle \mathbf{P}^\otimes \mathbf{H}^\top\mathbf{H} \mathbf{P}^\otimes R_N \tilde{v} , R_N \tilde{v} \rangle_{L^2(\mu_{N,n})}
 -  \frac{ \textup{Const} \langle \mathbf{P}^\otimes \mathbf{H}^\top\mathbf{H} \mathbf{P}^\otimes R_N \tilde{v} , R_N \tilde{v} \rangle_{L^2(\mu_{N,n})} }{\sqrt{N}}. \notag
\EEA
\end{footnotesize}
Note that with probability higher than $1 - \frac{2}{N}$, we have that
$$
\left| \langle \mathbf{P}^\otimes \mathbf{H}^\top\mathbf{H} \mathbf{P}^\otimes R_N \tilde{v} , R_N \tilde{v} \rangle_{L^2(\mu_{N,n})} - \langle \textup{grad}_g I_{\phi_s} \tilde{v}, \textup{grad}_g I_{\phi_s} \tilde{v} \rangle_{L^2(T^{(1,1)}(TM))} \right| = O\left( N^{- \frac{1}{2}} \right),
$$
by Lemma \ref{weak H convergence empirical}, choosing $\epsilon =  \sqrt{\frac{\log(N)}{N^{1/2}}}$ and ignoring log factors. Combining with Lemma \ref{weak H convergence continuous} and plugging in the result from Lemma \ref{vector field interpolation appendix}, we obtain that
$$
\langle \Delta_B \tilde{v} , \tilde{v} \rangle_{L^2(\mathfrak{X}(M))} \leq   \frac{\langle \mathbf{P}^\otimes \mathbf{H}^\top\mathbf{H}\mathbf{P}^\otimes R_N\tilde{v} , R_N \tilde{v} \rangle_{L^2(\mu_{N,n})}}{\|R_N \tilde{v} \|^2_{L^2(\mu_{N,n})}}  + O\left( N^{- \frac{1}{2}} \right) + {O\left( N^{\frac{-2\alpha + (n-d)}{2d}} \right)},
$$
with total probability higher than $1 - {\frac{2n + 2}{N}}$. Since $\tilde{v} = \arg\max_{v \in \mathcal{S}_i'}\langle \Delta_B v, v \rangle_{L^2(\mathfrak{X}(M))}$,  and since, \[
\frac{\langle \mathbf{P}^\otimes \mathbf{H}^\top\mathbf{H} \mathbf{P}^\otimes R_N \tilde{v} , R_N \tilde{v} \rangle_{L^2(\mu_{N,n})}}{\|R_N v \|_{L^2(\mu_{N,n})}} \leq \textup{max}_{v \in \mathcal{S}_i'} \frac{\langle  \mathbf{P}^\otimes \mathbf{H}^\top\mathbf{H} \mathbf{P}^\otimes R_Nv , R_Nv \rangle_{L^2(\mu_{N,n})}}{\|R_N v \|_{L^2(\mu_{N,n})}},\]
we have the following:
\begin{small}
$$
\textup{max}_{v \in \mathcal{S}_i'} \langle \Delta_B v , v \rangle_{L^2(\mathfrak{X}(M))} \leq  \textup{max}_{v \in \mathcal{S}_i'} \frac{\langle \mathbf{P}^\otimes \mathbf{H}^\top\mathbf{H} \mathbf{P}^\otimes R_Nv , R_Nv \rangle_{L^2(\mu_{N,n})}}{\|R_N v \|_{L^2(\mu_{N,n})}}  + O\left( N^{- \frac{1}{2}} \right) + O\left(N^{-\frac{2\alpha + (n-d)}{2d}}\right).
$$
\end{small}
Since $\mathcal{S}_i'$ is the exact subspace on which $\textup{max}_{v \in \mathcal{S}_i} \frac{\langle \mathbf{P}^\otimes \mathbf{H}^\top\mathbf{H}\mathbf{P}^\otimes R_Nv , R_Nv \rangle_{L^2(\mu_{N,n})}}{\|R_N v \|_{L^2(\mu_{N,n})}}$ achieves its minimum, then
$$
\textup{max}_{v \in \mathcal{S}_i'} \langle \Delta_B v , v \rangle_{L^2(\mathfrak{X}(M))} \leq \hat{\lambda}_i +  O\left( N^{- \frac{1}{2}} \right) + {O\left( N^{\frac{-2\alpha + (n-d)}{2d}} \right)}.
$$
But the left-hand-side certainly bounds above the minimum of $\textup{max}_{v \in \mathcal{S}_i} \langle \Delta_B v , v \rangle_{L^2(\mathfrak{X}(M))}$ over all $i$-dimensional smooth subspaces $\mathcal{S}_i$. Hence,
$$
\lambda_i \leq \hat{\lambda}_i +  O\left( N^{- \frac{1}{2}} \right) + {O\left( N^{\frac{-2\alpha + (n-d)}{2d}} \right)}.
$$
The same argument yields that $\hat{\lambda}_i \leq \lambda_i +  O\left( N^{- \frac{1}{2}} \right) + {O\left( N^{\frac{-2\alpha + (n-d)}{2d}} \right)}$, with probability higher than $1 - {\frac{2+2n}{N}}$. This completes the proof.
\end{proof}
\begin{remark}
    Since the exact same proof as the eigenvector convergence for the Laplace Beltrami operator is valid, we do not rewrite it in full detail. Instead, we simply note that while Theorem \ref{conveigvec} is proved using Theorem \ref{eigvalconv}, as well as  Lemmas \ref{function interpolation}, \ref{norm G convergence continuous}, and \ref{norm G convergence empirical}, similarly, we have that Theorem \ref{conveigvec Bochner} can be proved with the exact same argument, using instead Theorem \ref{eigvalconv Bochner}, along with Lemmas \ref{vector field interpolation appendix}, \ref{norm H convergence continuous}, and \ref{norm H convergence empirical}. Statements and proofs of the lemmas mentioned above can be found in the present section.
\end{remark}

{
\section{Additional Numerical Results with Diffusion Maps}\label{app:E}

In this section, we report additional numerical results with diffusion maps on the 2D torus example to support the conclusion in Section~\ref{sec:gentorus}. Particularly, we would like to demonstrate that other choices of nearest neighbor parameter, $K$, and kernel bandwidth parameter, $\epsilon$, do not improve the accuracy of the estimates, and thus, the particular choice of $K$ and $\epsilon$ in Section~\ref{sec:gentorus} is representative.

Figure~\ref{fig_gentor1} shows the eigenvector estimates for mode $k=13$. Each row in these figures corresponds to manually tuned $\epsilon$ for a fixed $K$. In the last row, we also include the auto-tuned $\epsilon$ (using the method that was originally proposed in \cite{coifman2008TuningEpsilon}). One can see that the auto-tuned $\epsilon$ seems to only work for smaller values of $K$. The denser the matrix, the estimates seem to be more sensitive to the choice of $\epsilon$. On the other hand, one can also see that qualitatively the eigenvector estimates for various choices of $K$ and $\epsilon$ (including $K=N$) are not very different from those with smaller values of $K$. In Figure~\ref{fig_gentor2}, we give further details on errors in the estimation of eigenvalues and eigenvectors for each $K$ and $\epsilon$. As a baseline, we plot the errors corresponding to SRBF $\hat{\mathbf{P}}$. Overall, one can see that when $K$ is small, the leading eigenvalues can be more accurately estimated by DM with an appropriate choice of $\epsilon$. For larger $K$ (e.g. $K=400, 2500$, see panel (d)), one can see $\epsilon=0.05$ leads to accurate estimation of only the leading eigenvalues (red curves) but for $\epsilon=0.2$ the non-leading spectra becomes more accurate in the expense of less accurate leading eigenvalue estimation. At the same time, the accuracy in the estimation of the corresponding eigenfunction is more or less similar for any choice of $K$, confirming the qualitative results shown in Figure~\ref{fig_gentor1}. Indeed if one chooses $K=400$ and $\epsilon=0.2$, while the errors of the estimation of eigenvalues are smaller than those of SRBF $\hat{\mathbf{P}}$ (compare grey and cyan curves in panel (d) in Figure~\ref{fig_gentor2}), the corresponding eigenvector estimate for mode $k=13$ does not qualitatively reflect on an accurate estimation at all (see row 4, column 5 in Figure~\ref{fig_gentor1}).

Based on the results in Figure~\ref{fig_gentor2} panel (f), Notice that the best estimate corresponds to the case of $K=100$ (red curves). Based on this empirical result, we present the case $K=100$ with auto-tuned $\epsilon$ in Figures~\ref{fig_gentor_1}-\ref{fig_gentor_5}.

\begin{figure*}[tbp]
{\scriptsize \centering
\begin{tabular}{c}
{\small {\bf{manually-tuned}} $\epsilon$, {\bf{mode}} $k={\mathbf{13}}$ }  \\
\includegraphics[height=1.15
in]{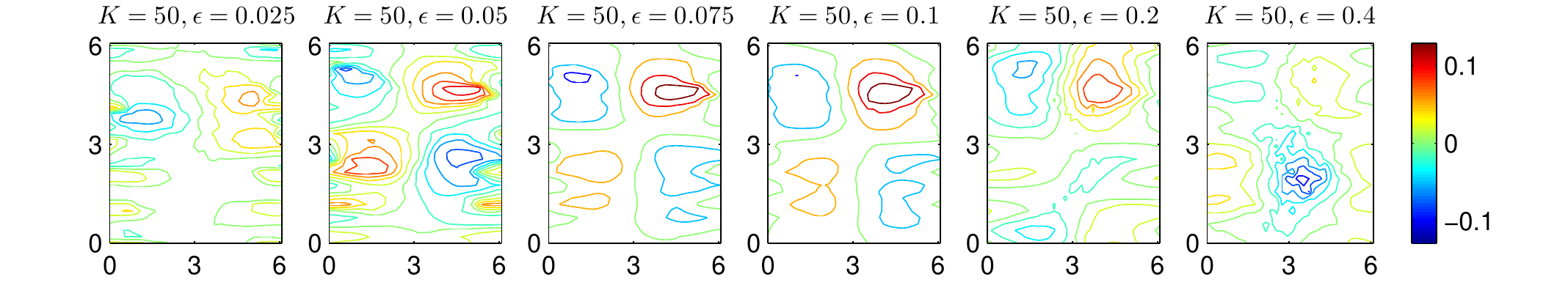}
\\
\includegraphics[height=1.15
in]{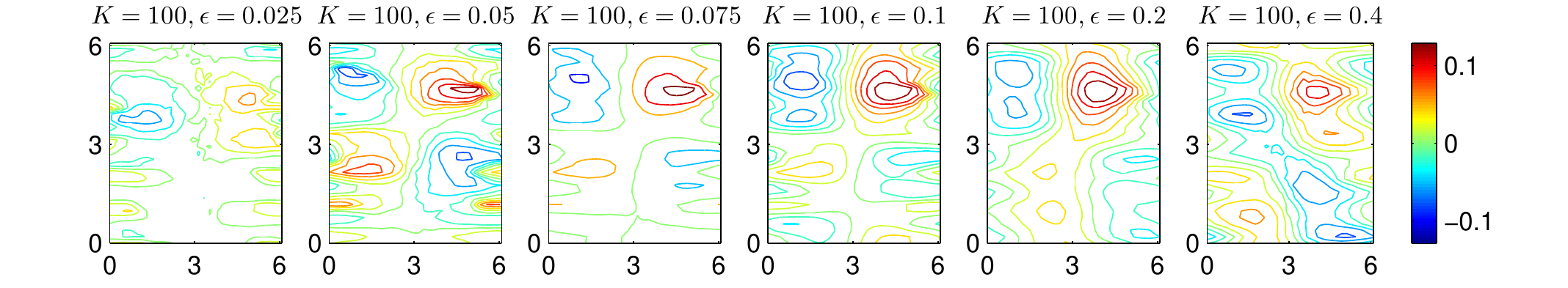}
\\
\includegraphics[height=1.15
in]{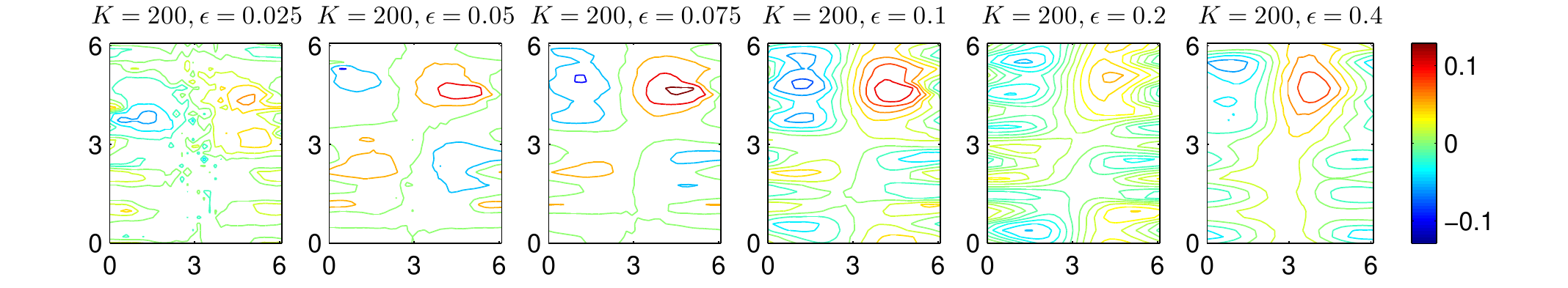}
\\
\includegraphics[height=1.15
in]{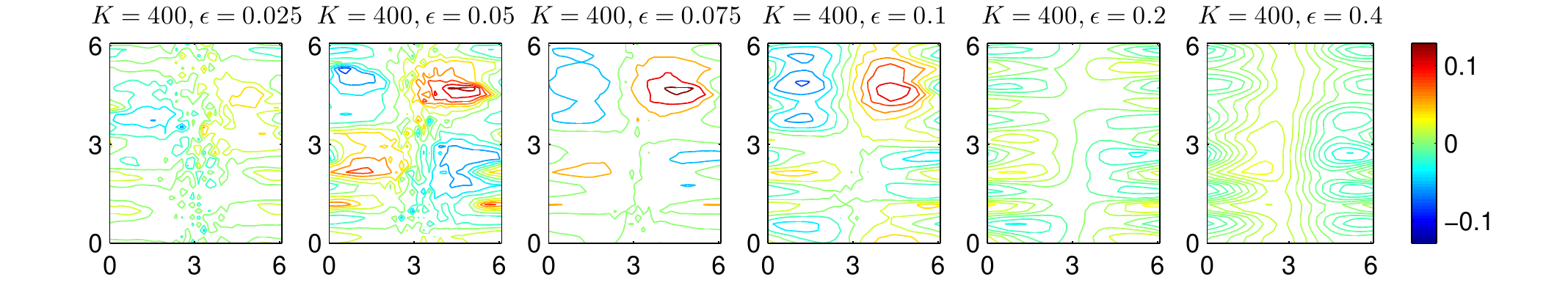}
\\
\includegraphics[height=1.15
in]{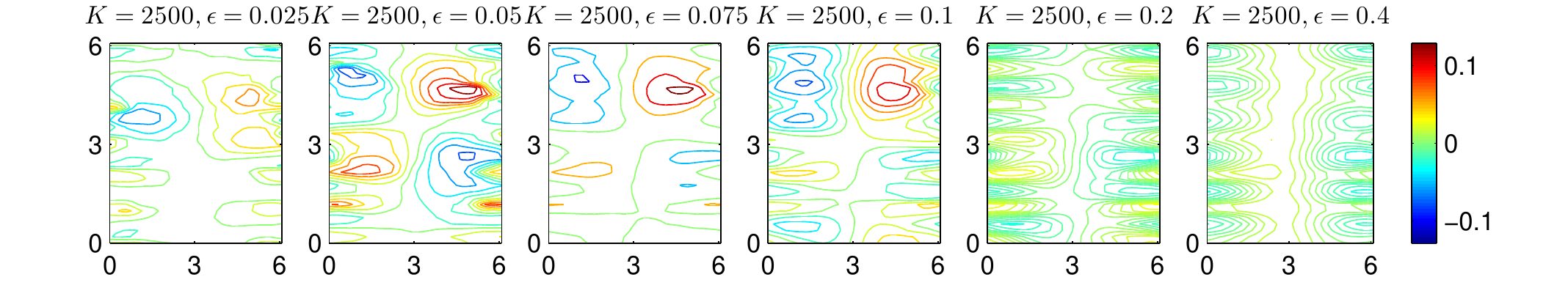}%
\\
\hline
{\small {\bf{auto-tuned}} $\epsilon$, {\bf{mode}} $k={\mathbf{13}}$ }  \\
\includegraphics[height=1.15
in]{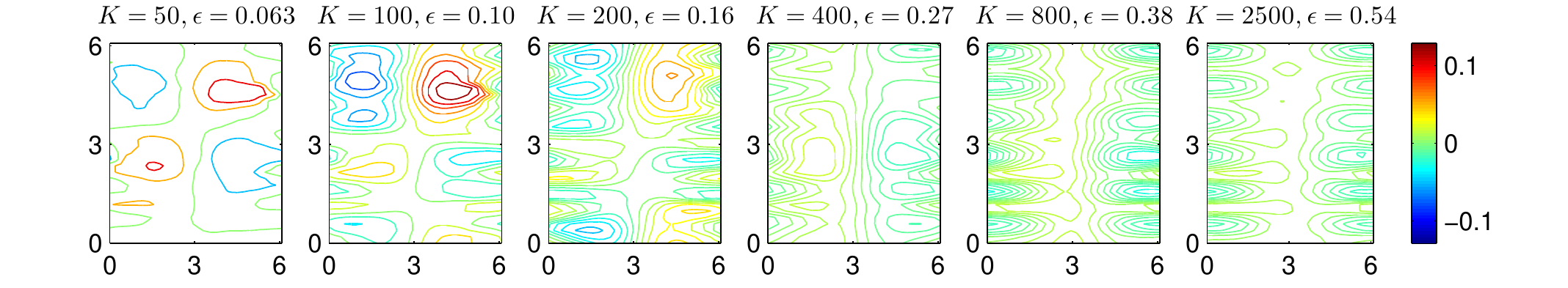}%
\end{tabular}
}
\caption{\textbf{2D general torus in $\mathbb{R}^{21}$.} Mode $k=13$. Comparison of
eigenfunctions of Laplace -Beltrami among various DMs with different $K$-nearest neighbors and bandwidth $\epsilon$.
Panels from the first row  to the fifth row  correspond to manually-tuned $\protect%
\epsilon$ for $K=50,100,200,400,2500$, respectively. Panels in the last row correspond
to auto-tuned $\epsilon$ for various $K$.
The
randomly distributed $N=2500$ data points on the manifold are used for computing
the eigenvalue problem.
}
\label{fig_gentor1}
\end{figure*}

\begin{figure*}[tbp]
{\scriptsize \centering
\begin{tabular}{cc|cc}
{\small (a) $K={\mathbf{50}}$} & {\small manually tuned $\epsilon$} & {\small (b) $K={\mathbf{100}}$} & {\small manually tuned $\epsilon$} \\
{\small Error of Eigenvalues} & {\small Error of Eigenvectors} & {\small
Error of Eigenvalues } & {\small Error of Eigenvectors } \\
\includegraphics[width=1.5
in, height=1.1 in]{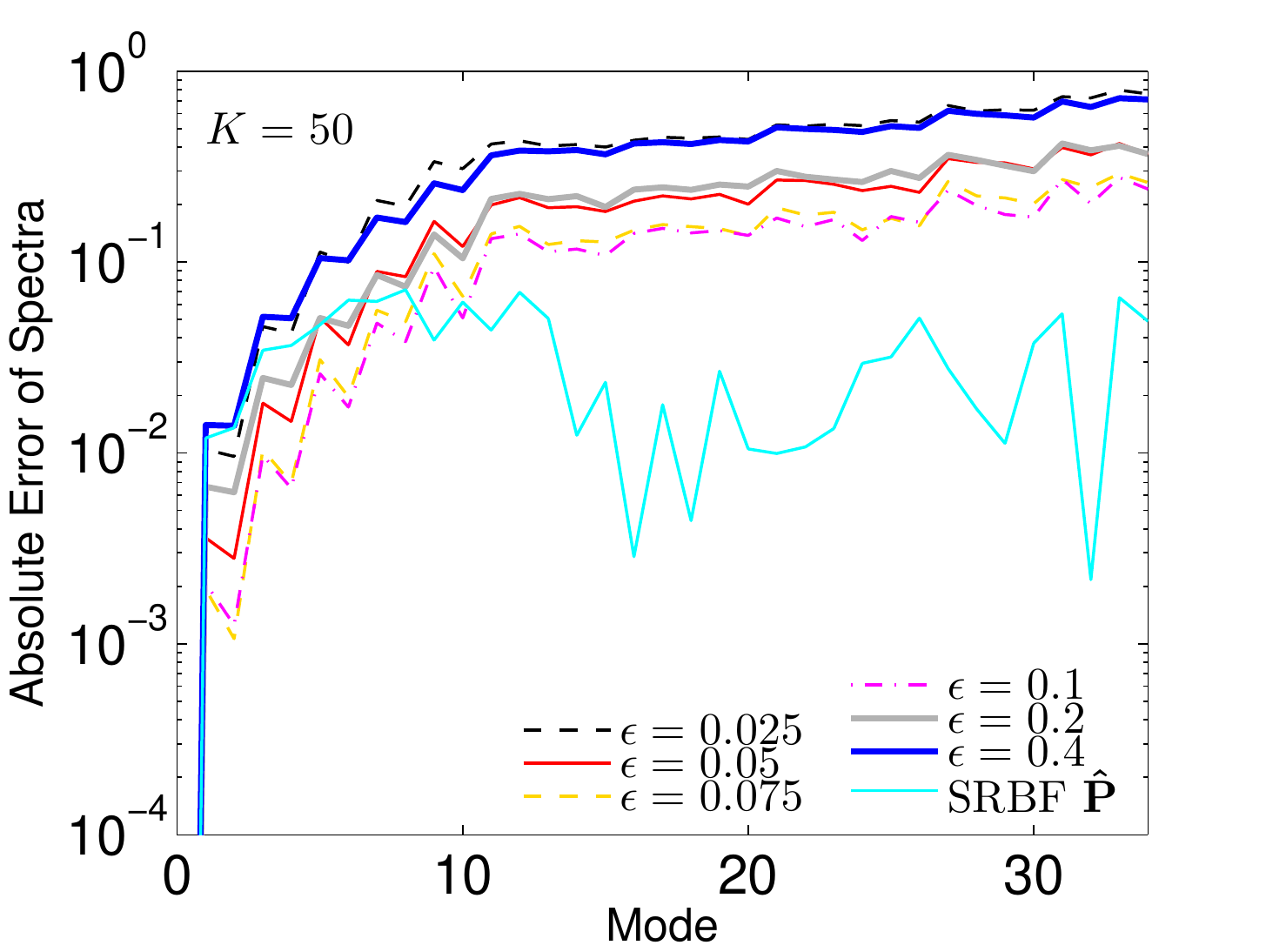}
&
\includegraphics[width=1.5
in, height=1.1 in]{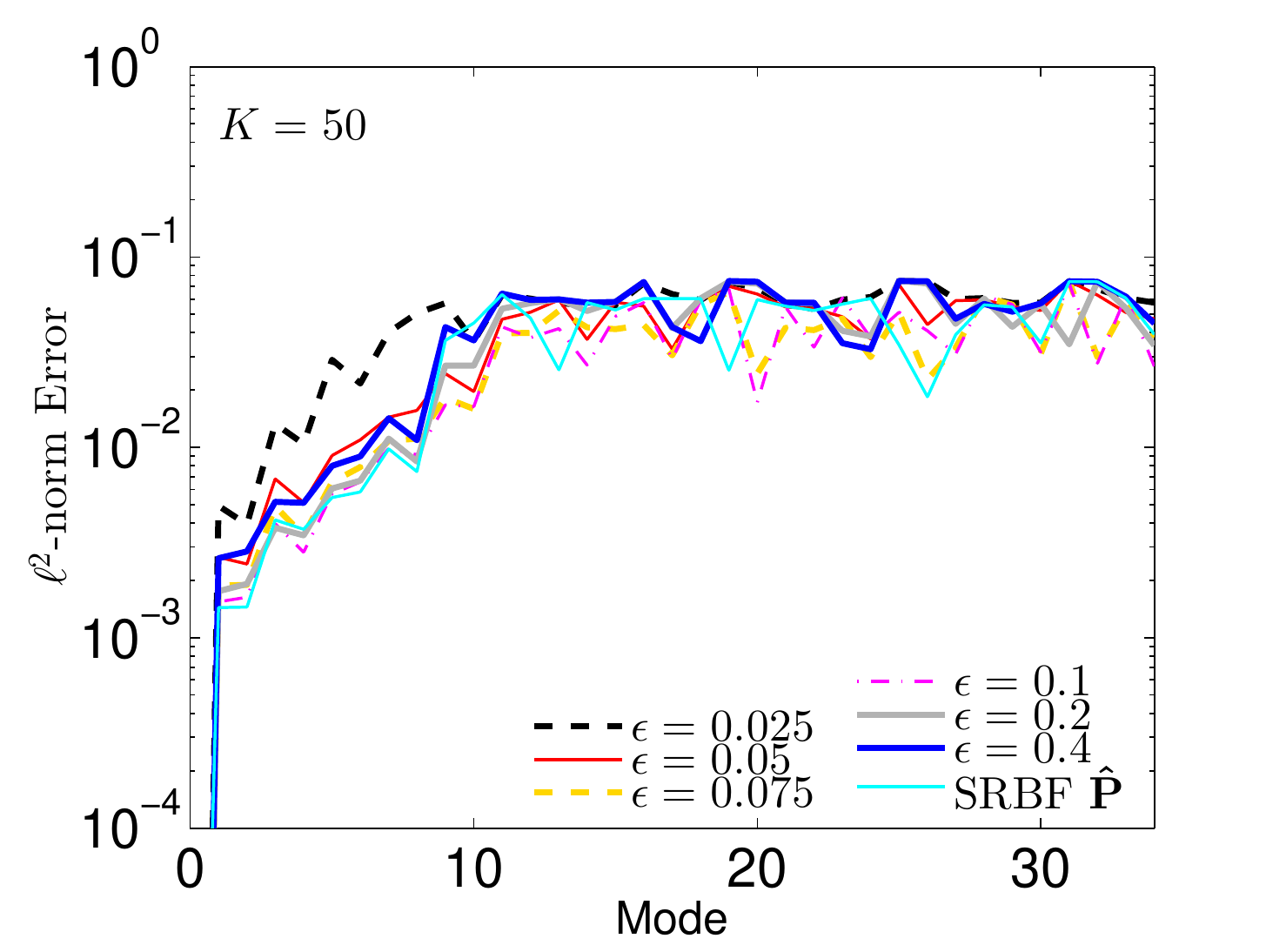}
&
\includegraphics[width=1.5
in, height=1.1 in]{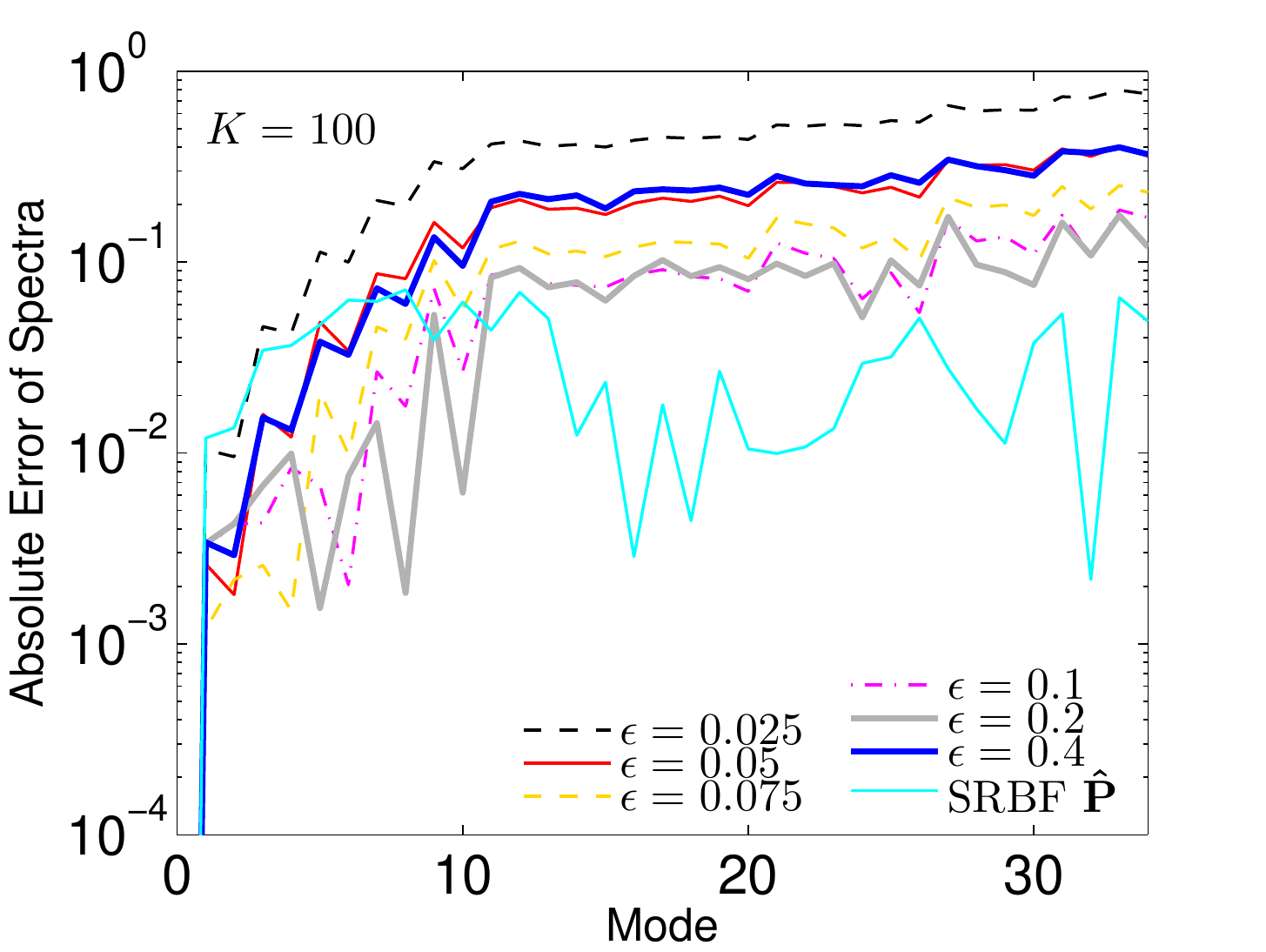}
&
\includegraphics[width=1.5
in, height=1.1 in]{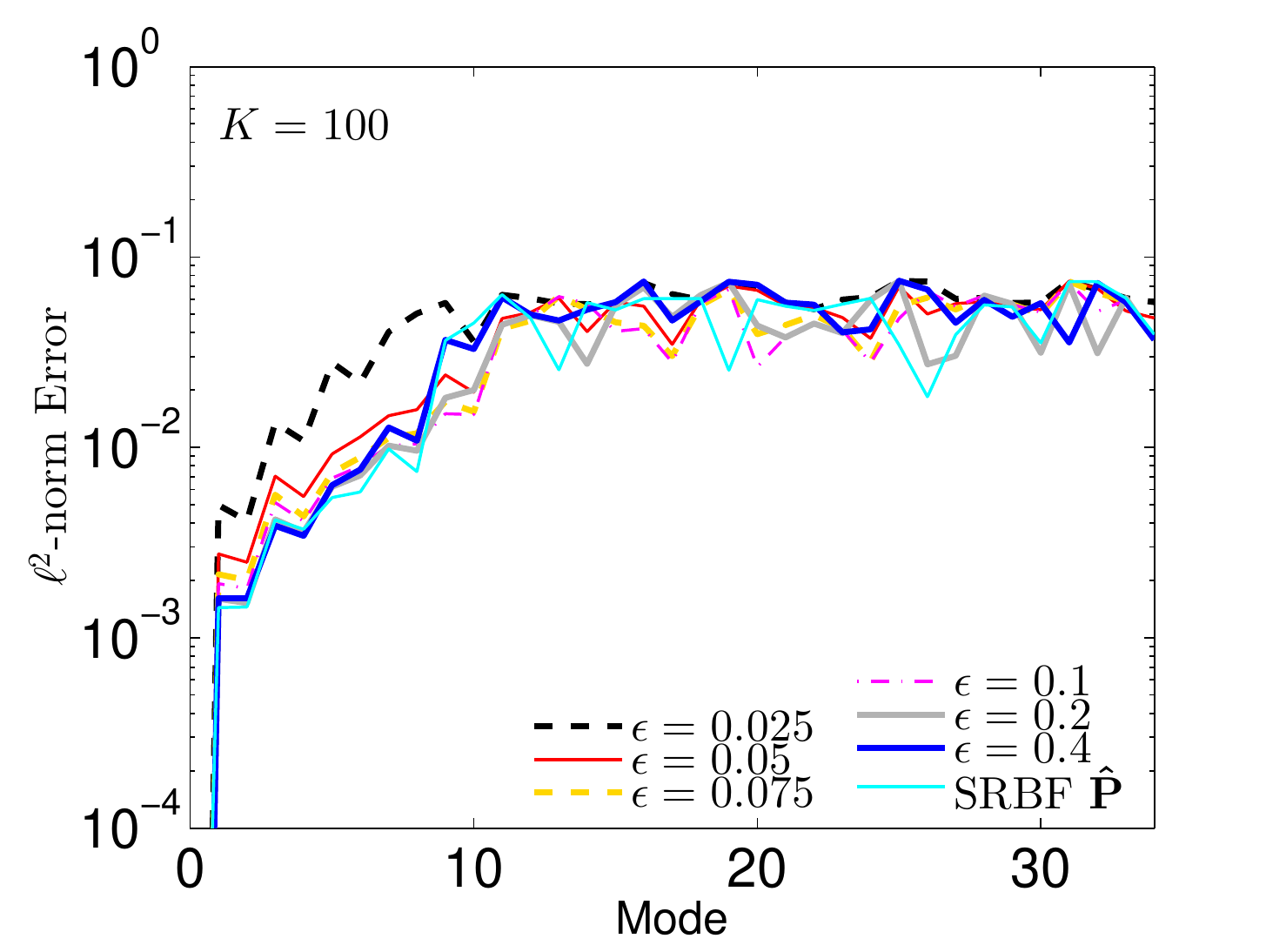}
\\
\hline
{\small (c) $K={\mathbf{200}}$} & {\small manually tuned $\epsilon$} & {\small (d) $K={\mathbf{400}}$} & {\small manually tuned $\epsilon$} \\
{\small Error of Eigenvalues} & {\small Error of Eigenvectors} & {\small
Error of Eigenvalues } & {\small Error of Eigenvectors } \\
\includegraphics[width=1.5
in, height=1.1 in]{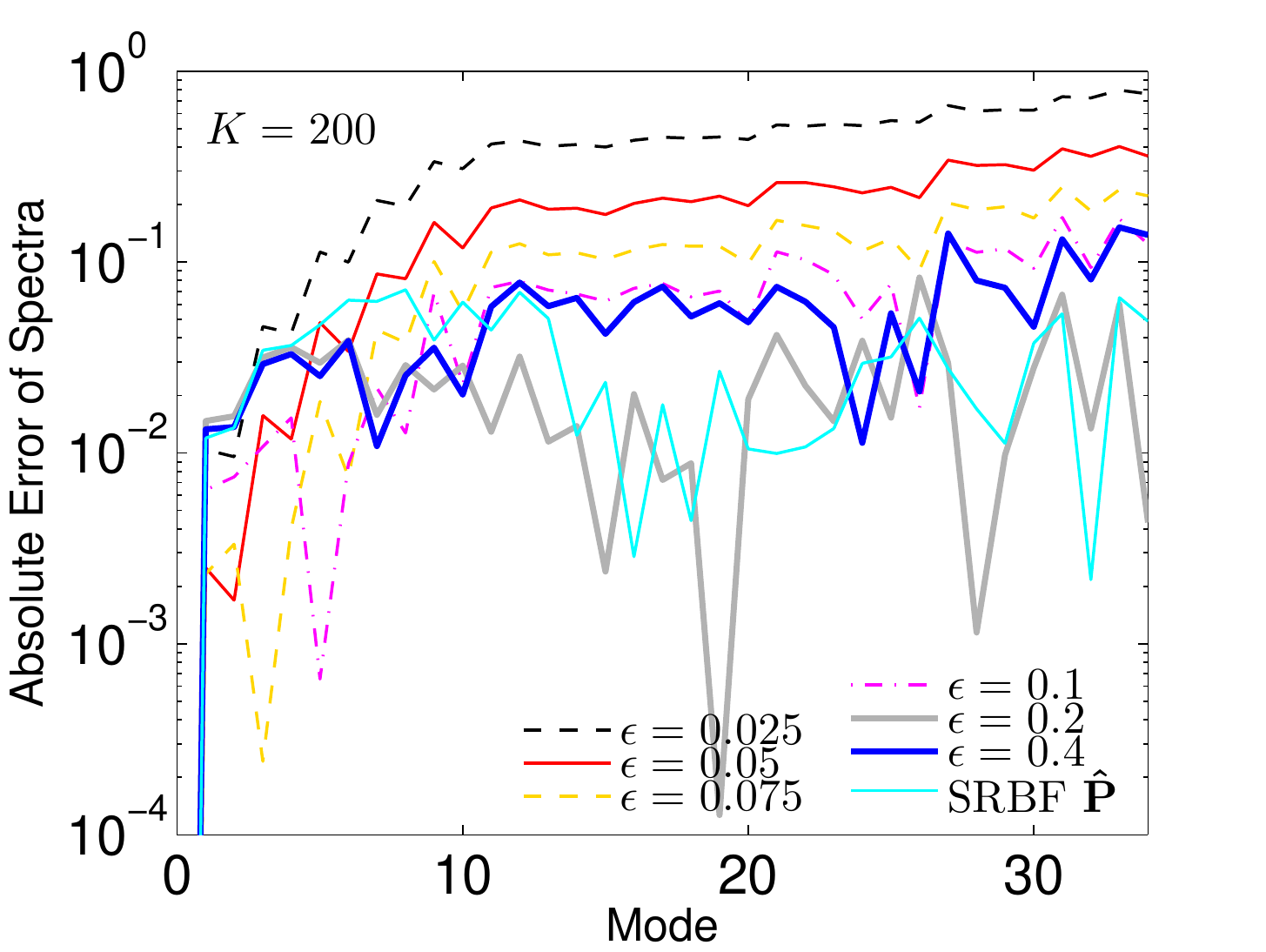}
&
\includegraphics[width=1.5
in, height=1.1 in]{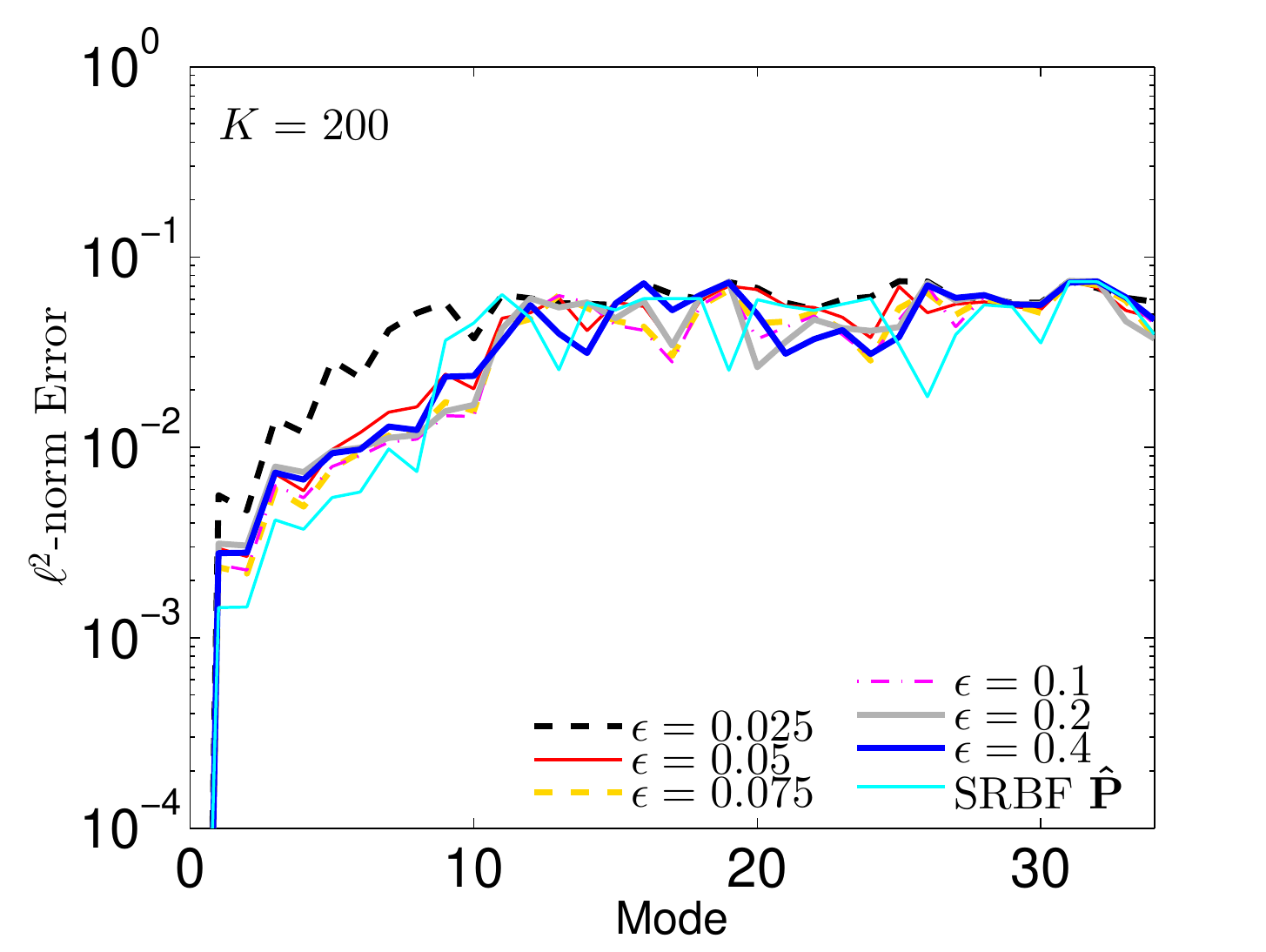}
&
\includegraphics[width=1.5
in, height=1.1 in]{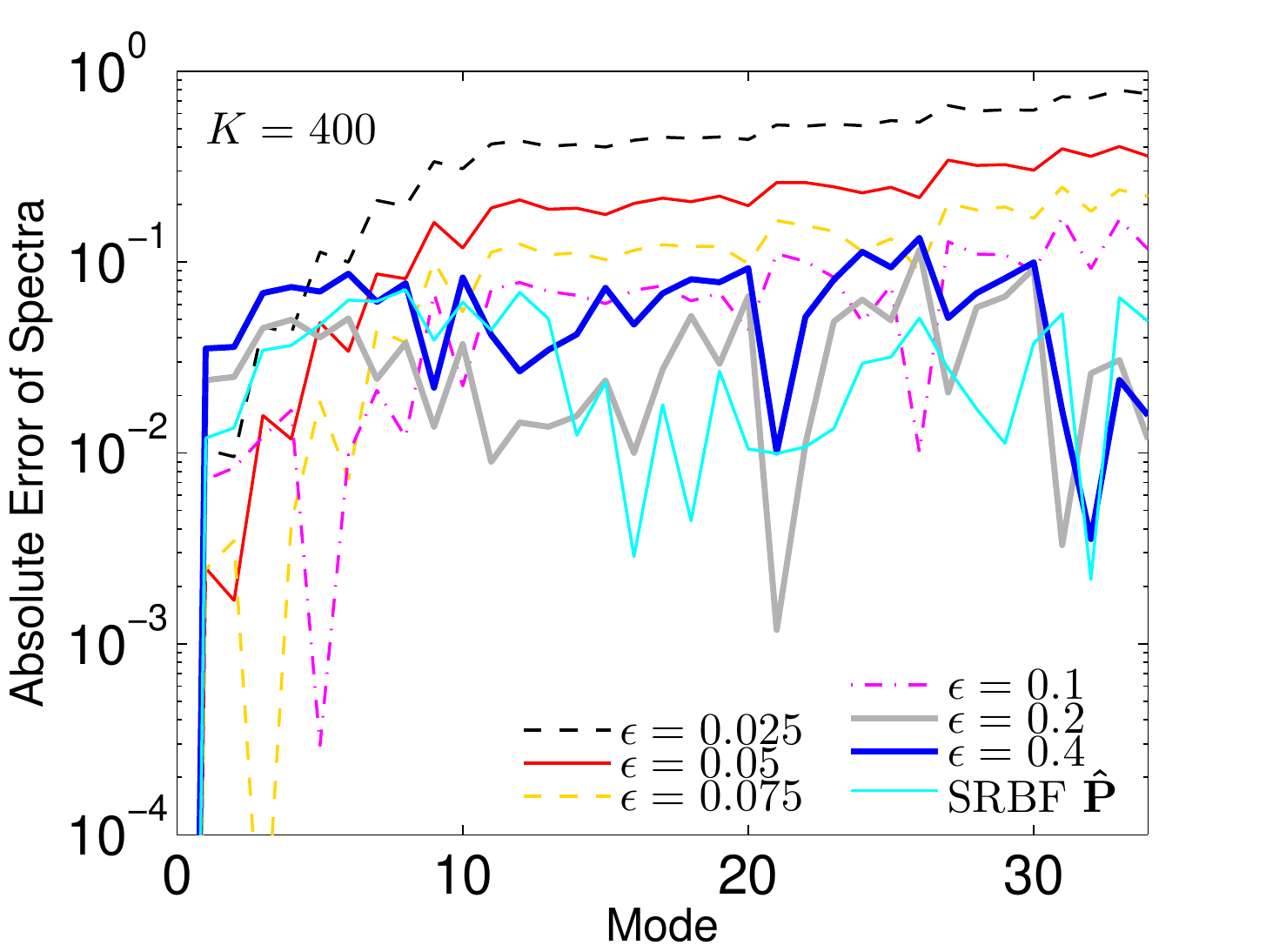}%
&
\includegraphics[width=1.5
in, height=1.1 in]{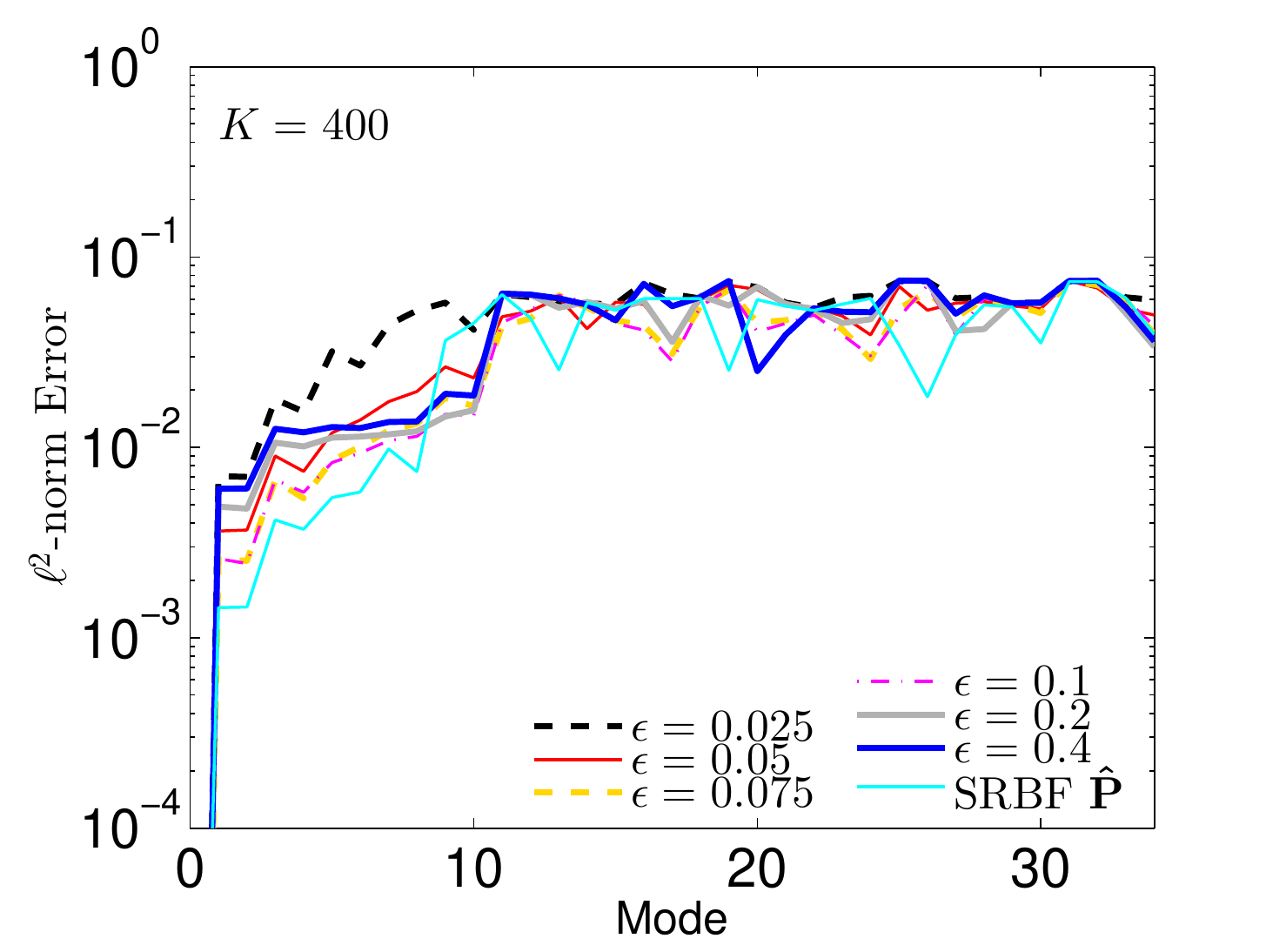}%
\\
\hline
{\small (e) $K={\mathbf{2500}}$} & {\small manually tuned $\epsilon$} & {\small (f) {\bf{auto-tuned}} $\epsilon$} & {\small {\bf{auto-tuned}} $\epsilon$} \\
{\small Error of Eigenvalues} & {\small Error of Eigenvectors} & {\small
Error of Eigenvalues } & {\small Error of Eigenvectors } \\
\includegraphics[width=1.5
in, height=1.1 in]{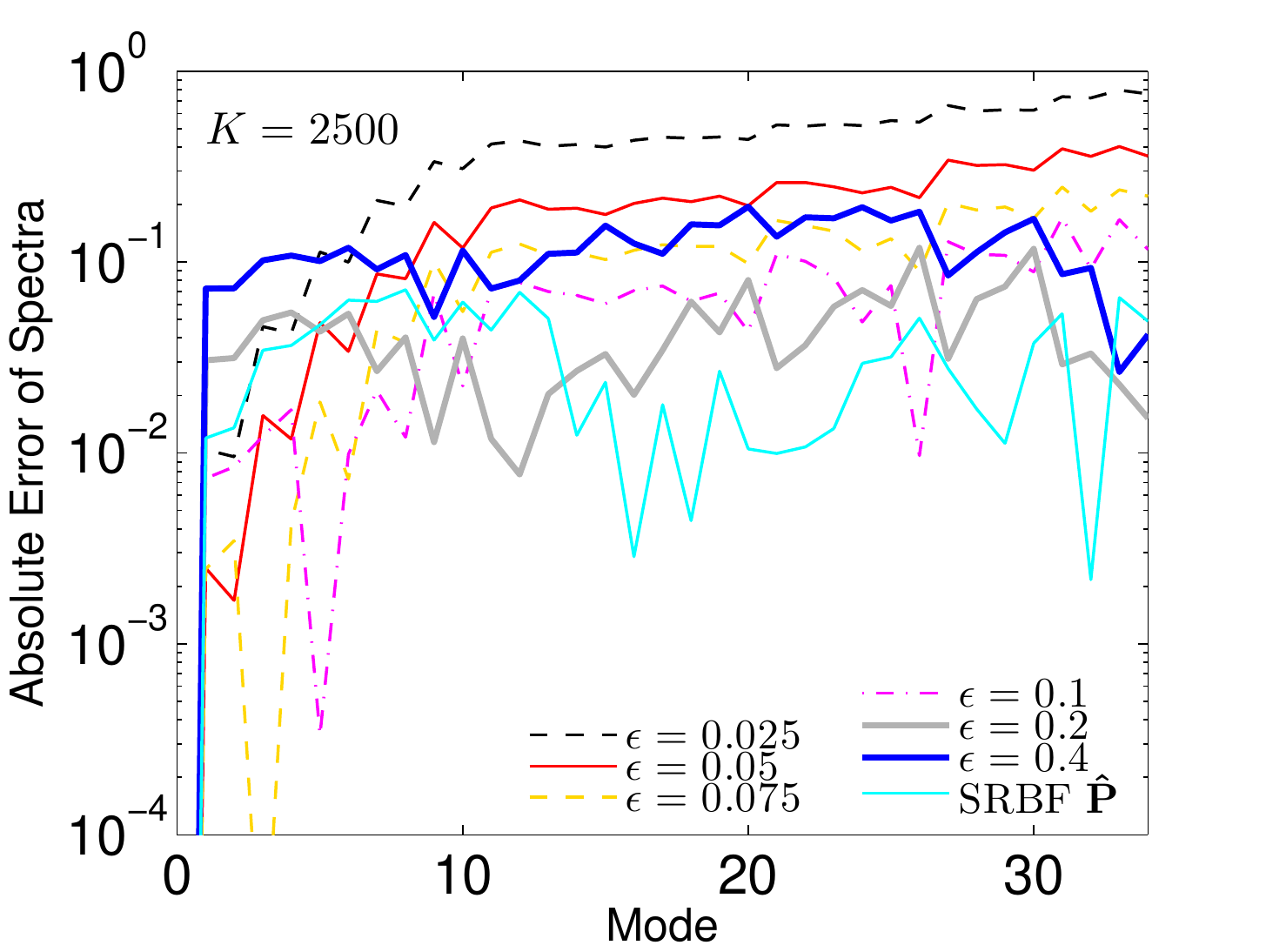}
&
\includegraphics[width=1.5
in, height=1.1 in]{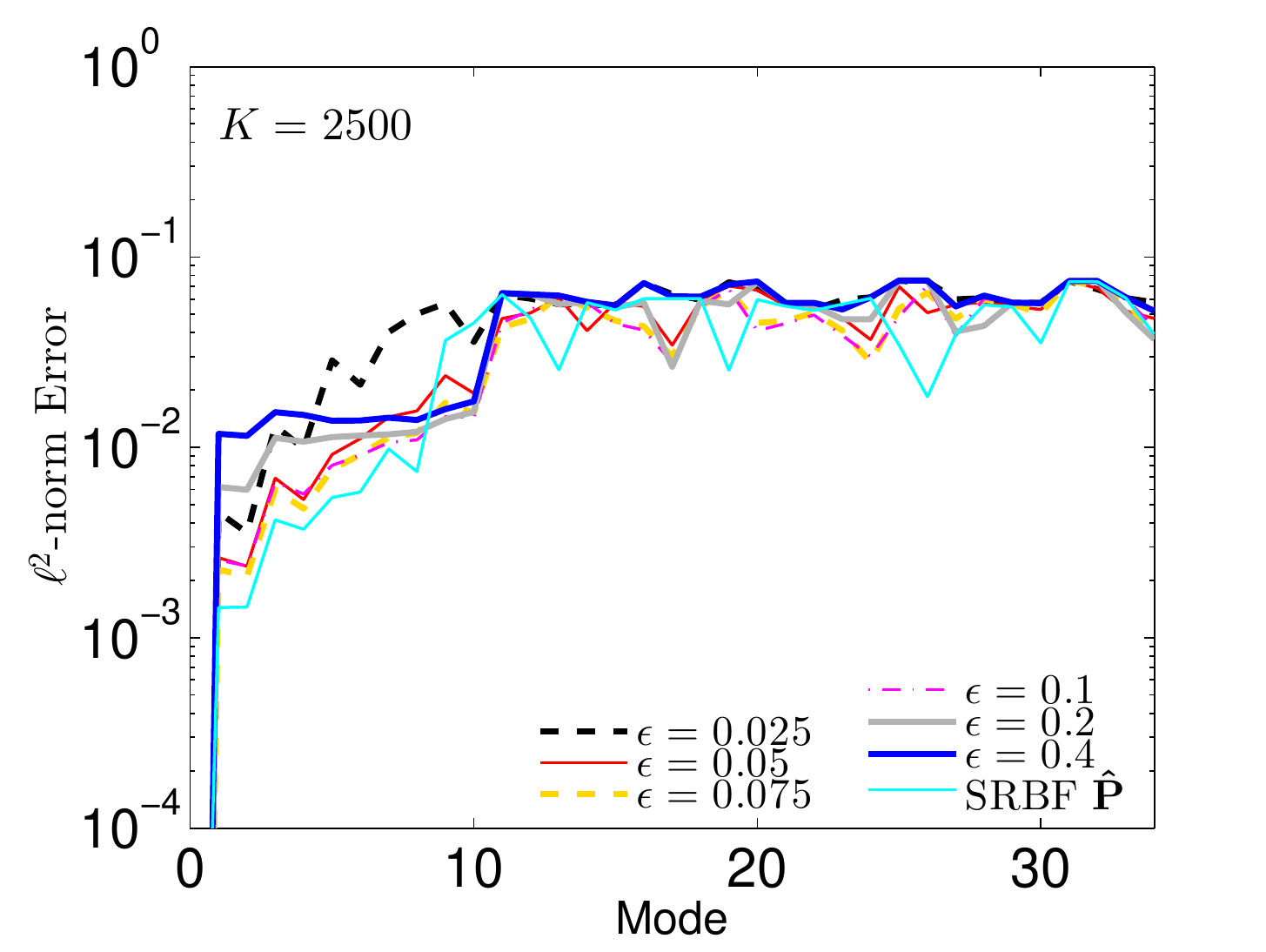}
&
\includegraphics[width=1.5
in, height=1.1 in]{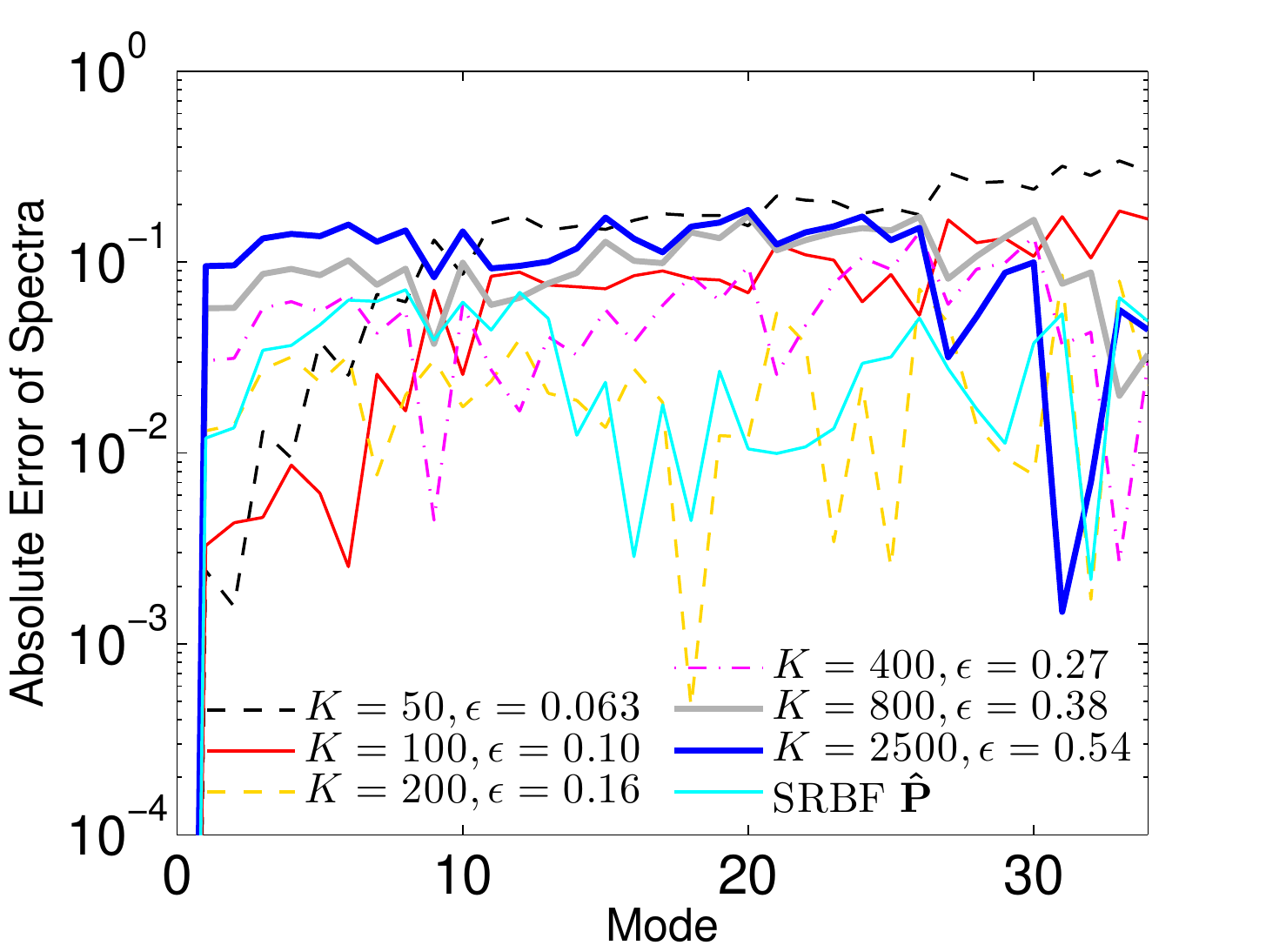}
&
\includegraphics[width=1.5
in, height=1.1 in]{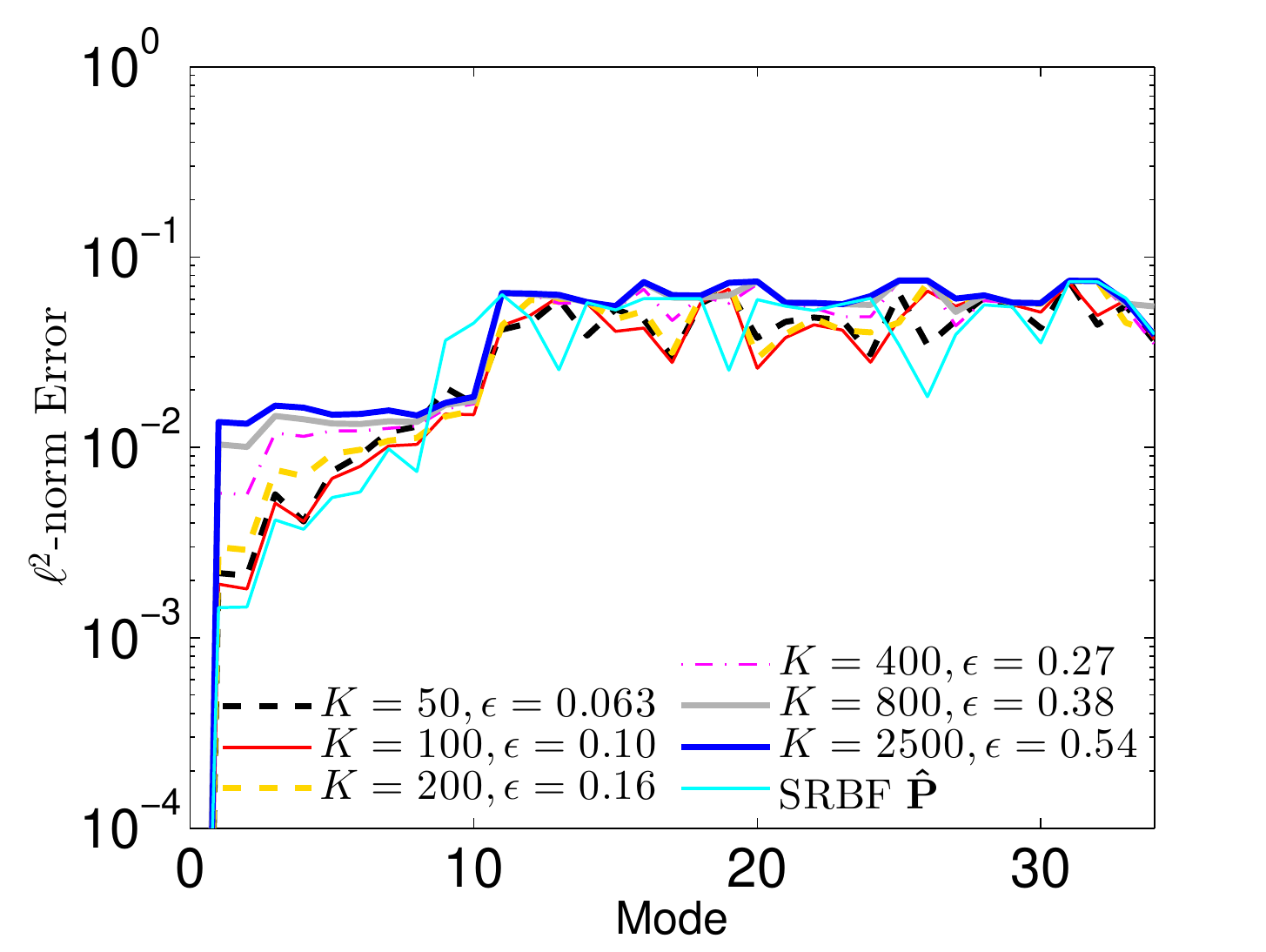}%
\end{tabular}
}
\caption{\textbf{2D general torus in $\mathbb{R}^{21}$.} Comparison of errors of eigenvalues and  errors of eigenvectors for DM among various parameters.
Panels (a)-(e)  correspond to manually-tuned $\protect%
\epsilon$ for $K=50,100,200,400,2500$, respectively. Panel (f) corresponds
to auto-tuned $\epsilon$ for various $K$.
The randomly
distributed $N=2500$ data points on the manifold are used for solving the
eigenvalue problem. }
\label{fig_gentor2}
\end{figure*}

}

\end{document}